 \def\Spnr{Sp(d,\R)}
 \def\Gltwonr{GL(2d,\R)}
\newcommand{\tfs}{time-frequency shift}
\newcommand{\modsp}{modulation space}
\newtheorem{tm}{Theorem}[section]
\newtheorem{lemma}[tm]{Lemma}
\newtheorem{prop}[tm]{Proposition}
\newtheorem{cor}[tm]{Corollary}
\newtheorem{theorem}{Theorem}[section]
\newtheorem{corollary}[theorem]{Corollary}
\newtheorem{definition}[theorem]{Definition}
\newtheorem{example}[theorem]{Example}
\newtheorem{proposition}[theorem]{Proposition}
\newtheorem{remark}[theorem]{Remark}
\newcommand{\beqa}{\begin{eqnarray*}}
\newcommand{\eeqa}{\end{eqnarray*}}
\newcommand{\field}[1]{\mathbb{#1}}
\newcommand{\bR}{\field{R}}    
\newcommand{\bN}{\field{N}}    
\newcommand{\bZ}{\field{Z}}    
\newcommand{\bT}{\field{T}}    %
\def\la{\lambda}
\def\eps{\epsilon}
 \def\cF{\mathcal{F}}       
 \def\cS{\mathcal{S}}
 \def\cD{\mathcal{D}}
 \def\cB{\mathcal{B}}
 \def\cG{\mathcal{G}}
 \def\cM{\mathcal{M}}
 \def\cA{\mathcal{A}}
 \def\cI{\mathcal{I}}
 \def\cC{\mathcal{C}}
 \def\cW{\mathcal{W}}
\def\rd{\bR^d}
\def\rdd{{\bR^{2d}}}
\def\zdd{{\bZ^{2d}}}
\def\lrd{L^2(\rd)}
\def\lrdd{L^2(\rdd)}
\def\intrd{\int_{\rd}}
\def\intrdd{\int_{\rdd}}
\def\R{\right)}
\def\<{\left<}
\def\>{\right>}
\def\inv{^{-1}}
\def\mv1{M_v^1}
\def\phas{(x,\xi )}
\def\o{\xi}
\def\R{\mathbb{R}}
\def\Ren{\mathbb{R}^d}
\def\Opw{Op_{w}}
\def\Sn2{S_{2}(L^{2}(\Ren))}
\def\S1{S_{1}(L^{2}(\Ren))}
\def\sig00{\sigma_{0,0}}
\def\la{\langle}
\def\ra{\rangle}
\def\spdr{{\mathfrak {sp}}(d,\R)}
\newcommand{\A}{\mathcal{A}}
\begin{document}

\title[Wigner Analysis of
Operators]{Wigner Analysis of
Operators.\\ Part II: Schr\"{o}dinger equations}

\author{Elena Cordero, Gianluca Giacchi and Luigi Rodino}
\address{Department of Mathematics, University of Torino, Italy}
\address{Dipartimento di Matematica, Università di Bologna, Italy; University of Lausanne, Switzerland; Centre Hospitalier Universitaire Vadois, Switzerland; Haute Ecole Spécialisée de Suisse Occidentale, Switzerland}
\address{Dipartimento di Matematica, University of Torino, Italy}
\email{elena.cordero@unito.it}
\email{gianluca.giacchi2@unibo.it}
\email{luigi.rodino@unito.it}

\subjclass[2010]{42B35,35B65, 35J10, 35B40} \keywords{Schr\"odinder equation,Wigner distribution, metaplectic
representation, modulation spaces, pseudodifferential operators}
\date{}

\begin{abstract}
	We study the phase-space concentration of the so-called \emph{generalized metaplectic operators} whose main examples are Schr\"odinger equations with bounded perturbations. 
	
	To reach this goal, we perform a so-called \emph{$\cA$-Wigner analysis} of the previous equations, as started in Part I, cf. \cite{CR2021}. Namely, the classical Wigner distribution is extended by considering a class of time-frequency representations constructed as images of metaplectic operators acting on symplectic matrices $\cA\in Sp(2d,\bR)$. Sub-classes of these representations, related to  covariant symplectic matrices, reveal to be particularly suited for the time-frequency study of the Schr\"odinger evolution. This testifies the effectiveness of this  approach for such equations, highlighted by the development of a related wave front set. 
	
	We first study the properties of $\cA$-Wigner representations and related pseudodifferential operators needed for our goal. This approach paves the way to new quantization procedures.

	As a byproduct, we introduce new   quasi-algebras of generalized metaplectic operators containing Schr\"odinger equations with more general potentials, extending the results contained in the previous works  \cite{CGNRJMPA,CGNRJMP2014}.

\end{abstract}

\maketitle

\section{Introduction}

Cauchy problems for Schr\"odinger
equations  have been studied by a variety of authors in many different frameworks. Limiting attention to the Microlocal Analysis context, let us mention as a partial list of contributions \cite{asada-fuji,Craig1995,HW2005,Ito2006,Ito2009,Nakamura2005,NR2015,Robbiano1999,Wunsch1999}.

As more recent issues, under the influence of the new Time-Frequency methods, we may refer to \cite{KB2020,CarypisWahlberg21,CGNRJMPA,CNR2015,CNRadv2015,CR2022,Elena-book,CR2021,DdGP2013,Birkbis,deGossonQHA21,KKI2012,PRW2018,P2018,W2018}.

Here we propose  a new approach, in terms of phase-space concentration of suitable time-frequency distributions. The basic idea in terms of Wigner distribution is not new, though. It goes back to Wigner  1932 \cite{Wigner32} 
 (later  developed by Cohen and  many other authors, see e.g. \cite{Cohen1,Cohen2}).
 \begin{definition} Consider $f,g\in\lrd$. 
 	The cross-Wigner distribution $W(f,g)$ is
 	\begin{equation}\label{CWD}
 	W(f,g)\phas=\intrd f(x+\frac t2)\overline{g(x-\frac t2)}e^{-2\pi i t\o}\,dt.
 	\end{equation} If $f=g$ we write $Wf:=W(f,f)$, the so-called Wigner distribution of $f$.
 \end{definition}
 For a given linear operator $P$ acting on $\lrd$ (or a more general functional space), Wigner considered an operator $K$ on $\lrdd$ such that 
\begin{equation}\label{I3}
W(Pf) = KW(f)
\end{equation}
and its kernel $k$
\begin{equation}\label{I4}
W(Pf)\phas = \intrdd k(x,\xi,y,\eta) Wf(y,\eta)\,dyd\eta.
\end{equation}

We continue the development of a theory started in the Part I \cite{CR2021}, addressed to $P$ pseudodifferential operators with $W$ replaced by the more general $\tau$-Wigner distributions. Here the  main concern is the study of Cauchy propagators for linear Schr\"odinger
equations 
\begin{equation}\label{C1}
\begin{cases} i \displaystyle\frac{\partial
	u}{\partial t} +H u=0\\
u(0,x)=u_0(x),
\end{cases}
\end{equation}
with $t\in \bR$ and the initial condition $u_0\in\cS(\rd)$ (Schwartz class) or in 
some modulation space as explained below. The Hamiltonian has the form
\begin{equation}\label{C1bis}
H=Op_w(a)+ Op_w(\sigma),
\end{equation}
where $Op_w(a)$ is the Weyl quantization of a real homogeneous quadratic polynomial on
$\rdd$ and $Op_w(\sigma)$ is a pseudodifferential operator with a
symbol $\sigma$ in suitable modulation spaces, namely $\sigma\in M^{\infty,q}_{1\otimes v_s}(\rdd)$, $s\geq 0$, $0<q\leq 1$ (see Section 2.2 below for the definitions) which guarantee that $Op_{w}(\sigma)$ is bounded
on $\lrd $ (and in more general spaces).  This implies that the operator $H$ in \eqref{C1bis} is a bounded perturbation of the generator $H_0 = Op_w(a)$ of a unitary
group (cf. \cite{RS75} for details).

As special instances of the Hamiltonian above we find the Schr\"odinger equation $H= \Delta - V(x)$ and the perturbation of the
harmonic oscillator $H= \Delta - |x|^2 - V(x)$ with a potential $V\in
M^{\infty,q} (\rd )$. Observe that $V$ is bounded, but not necessarily smooth.

The unperturbed case $\sigma=0$, was already considered in \cite{CR2022}
\begin{equation}\label{C12}
\begin{cases} i \displaystyle\frac{\partial
	u}{\partial t} +Op_w(a) u=0\\
u(0,x)=u_0(x).
\end{cases}
\end{equation}
The solution is given by the metaplectic operators
$u=\mu(\chi_t)u_0$, for a suitable symplectic matrix
$\chi_t$, see for example the textbooks \cite{folland,Birkbis}. Precisely, if $a(x,\xi)=\frac{1}{2}xAx+\xi B
x+\frac{1}{2}\xi C\xi$, with $A, C$ symmetric and $B$ invertible, we
can consider the classical evolution, given by the linear Hamiltonian
system
\[
\begin{cases}
2\pi \dot x=\nabla _\xi a =Bx+C\xi\\
2\pi \dot \xi=-\nabla _x a =-A x-B^T\xi
\end{cases}
\]
(the factor $2\pi$ is due to our normalization of the Fourier transform) with Hamiltonian matrix $\mathbb{D}:=\begin{pmatrix}B&C\\
-A&-B^T\end{pmatrix}\in \mathrm{sp} (d,\bR )$. Then we have
$\chi_t=e^{t\mathbb{D}}\in Sp(d,\R)$.

The solution to \eqref{C12} is  the Schr\"odinger propagator \begin{equation}\label{e7}u(t,x)=e^{it\Opw(a)}u_0(x)=\mu(\chi_t)u_0,\end{equation}
and the Wigner transform with respect to the space variable $x$ is given by
$$Wu(t,z)=W u_0(\chi_t^{-1}z), \quad z=\phas,$$
as already observed in the works of Wigner \cite{Wigner32} and Moyal-Bartlett \cite{BM49}. Hence \eqref{I4} reads in this case
\begin{equation}\label{3bis}
W(e^{it Op_w(a)}u_0)(z)=\int_{\rdd} k(t,z,w) Wu_0(w)\,dw,
\end{equation}
with $k(t,z,w)$ given by the delta density $\delta_{z=\chi_tw}$. \par The aim of \cite{CR2022} was  to reconsider \eqref{e7} and \eqref{3bis} in the functional frame of the modulation spaces, in terms of the general $\cA$-Wigner transform introduced in \cite{CR2021}, see Definition \ref{def4.1} below.
The propagator of the  perturbed problem 
\eqref{C1bis} 
is a generalized metaplectic operator, as already exhibited in Theorem 4.1 \cite{CGNRJMP2014} for symbols in the Sj\"ostrand class. 

Here, to deal with further non-smooth potentials $Op_w(\sigma)$ in \eqref{C1bis}, $\sigma\in M^{\infty,q}_{1\otimes v_s}(\rdd)$,  we enlarge the class of generalized metaplectic operators, including quasi-algebras of operators, which allow better decay at infinity than the original Sj\"ostrand class.  
To quantify the decay we use the Wiener amalgam spaces $W(C,L^p_{v_s})(\rdd)$, which  consist of the continuous functions $F$ on $\rdd$ such that
\begin{equation}\label{Wiener-space}
\|F\|_{W(C,L^p)}:=\left(\sum_{k\in\zdd}(\sup_{z\in [0,1]^{2d}}|F(z+k)|)^pv_s(k)^p\right)^{\frac1p}<\infty
\end{equation}
(obvious changes for $p=\infty$),
where $v_s(k)=(1+|k|^2)^{1/2}$.

\begin{definition}\label{def1.1} Given $\chi \in \Spnr $,
	$g\in\cS(\rd)$, $0< q\leq 1$, we say that a
	linear operator $T:\cS(\rd)\to\cS'(\rd)$ is a \textbf{generalized metaplectic operator} in the
	class $FIO(\chi,q,v_s)$ if there exists a function $H\in W(C,L^q_{v_s})(\rdd)$, 
	such that the kernel of $T$ with respect to \tfs s satisfies the decay
	condition
	\begin{equation}\label{asterisco}
	|\langle T \pi(z) g,\pi(w)g\rangle|\leq H(w-\chi z),\qquad \forall w,z\in\rdd.
	\end{equation}
\end{definition}

We infer boundedness, quasi-algebras and spectral properties of the previous operators, see Section 6 below. Moreover, we shall show that they can be represented as
$$T=Op_w(\sigma_1)\mu(\chi)\quad \mbox{or}\quad T=\mu(\chi)Op_w(\sigma_2),$$
that is, they can be viewed as composition of metaplectic operators with Weyl operators with symbols in the modulation spaces $M^{\infty,q}_{1\otimes v_s}(\rdd)$.

The solution $e^{itH}$ to \eqref{C1} is a generalized metaplectic operator of this type  for every $t\in\bR$, so that it enjoys the phase-space concentration of this class.

The main work of this paper relies in preparing all the instruments we need to study the Wigner kernel
 of $e^{itH}$, namely $k(t,z,w)$, $w,z\in\rdd$, such that
$$ W(e^{itH}u_0)(z)=\int k(t,z,w) Wu_0(w)\,dw$$
and possible generalizations to $\A$-Wigner distributions, defined as follows. 

{\begin{definition}\label{def4.1}
		Let $\cA\in Sp(2d,\bR)$ be a $4d\times 4d$ symplectic matrix. We
		define the \textbf{metaplectic Wigner distribution} associated to $\cA$ by 
		\begin{equation}\label{WignerA}
		W_\cA (f,g)=\mu(\cA) (f\otimes \bar{g}),\quad f,g\in\lrd,
		\end{equation}
		and set $W_\cA f:= W_\cA (f,f)$.
	\end{definition}
	When the context requires to stress the symplectic matrix $\cA$, that defines the metaplectic Wigner distribution $W_\cA$, we refer to $W_\cA$ as to \textbf{$\cA$-Wigner distribution}.}

We shall focus on  $\cA$   shift-invertible, covariant symplectic matrices, see Definitions 4.5 and Subsection 4.1 and 4.2 below for  definitions and properties. 
Furthermore we limit to  $\cA$   shift-invertible, covariant symplectic matrices such that
the related metaplectic Wigner distribution $W_\cA$ is in the Cohen class $Q_{\Sigma}$, namely it can be written as 
$$W_\cA (f,g)= W(f,g)\ast \Sigma_\cA$$
where the kernel $\Sigma_\cA$ is related to $\cA$ by \eqref{CohenSymbol}, \eqref{covBCohen} below.

Let us define 
$\Sigma_{\cA,t}(z)=\Sigma_\cA(\chi_t(z))$ and denote by $\cA_t$ the  covariant matrix such that
$$W_{\cA_t}=Wf\ast \Sigma_{\cA,t}.$$
Then from the results of \cite{CR2022} we have from the unperturbed equation \eqref{C12}, as counterpart of \eqref{3bis} 
$$
W_\cA(e^{itOp_w(a)}u_0)(z)=\int_{\rdd} \delta_{z=\chi_t w} (W_{\cA_t}u_0)(w)\,dw.
$$
So we keep the action of the classical Hamiltonian flow according to the original idea of Wigner \cite{Wigner32}, provided the matrix $\cA_t$ is defined as before.\par
We prove that the result does not change so much for the perturbed equation. Namely, under the stronger assumption $\sigma\in S^{0}_{0,0}(\rdd)$, we prove (see Proposition \ref{Y} below)
$$
	W_\cA(e^{itH}u_0)(z)=\int_{\rdd} k_{\cA} (t,z,w) (W_{\cA_t}u_0)(w)\,dw
	$$
	where, for every $N\geq0$,
	$$k_{\cA} (t,z,w)\la z-\chi_t(w)\ra ^{2N}$$
	is the kernel od an operator bounded on $\lrd$.

Starting from this, we may obtain the propagation result for the Wigner wave front set
$$\mathcal{W}\cF_{\cA}(e^{itH}u_0)=\chi_t(\mathcal{W}\cF_{\cA}u_0),$$
see Definition \ref{YYY} in the sequel. In particular, for $W_\cA=W=W_t$, defining as in \cite{CR2021,CR2022} $z_0\notin \mathcal{W}\cF f$, $z_0\not=0$, if there exists a conic neighbourhood $\Gamma_{z_0}\subset \rdd$ of $z_0$ such that for all $N\geq0$, 
$$\int_{\Gamma_{z_0}}\la z\ra^{2N} |Wf(z)|^2\,dz<\infty,$$
we obtain 
$$\mathcal{W}\cF(e^{itH}u_0)=\chi_t(\mathcal{W}\cF u_0).$$


The outline of this article is as follows. In Section 2 we establish some background
and notation. In Section 3 we present the main properties of metaplectic Wigner distributions and introduce their related pseudodifferential operators. Different symplectic matrices give rise to different quantizations: we show the link between different quantizations (see Lemma \ref{lemmaComm} below) and generalize the equality in \eqref{I3} to any $\cA$-Wigner distribution and $\cA$-pseudodifferential operator. This is a valuable result of its own, we believe it could be useful in the framework of operator theory and quantum mechanics.  Section $4$ is devoted to study subclasses of $\cA$-Wigner distributions and pseudodifferential operators: covariant, totally Wigner-decomposable and Wigner-decomposable. The last ones provide a new characterization of modulation spaces (cf. Theorem \ref{thm28revised} below).
Next, we show that covariant matrices belong to the Cohen class (Theorem \ref{ThmCohen}) and compute the related kernel. As for the Wigner case, we are able to give an explicit expression of the $\cA$-Wigner when $\cA$ is covariant (see Theorem \ref{charWAcovariant}).
Section $5$ contains a deep study of $\cA$-pseudodifferential operators on modulation spaces, which will be used in the applications to Schr\"odinger equations (Section $7$). Section $6$ introduces new algebras of generalized metaplectic operators and their main properties. Finally, Section $7$ exhibits an application of the theory developed so far to  Schr\"odinger equations.

\section{Preliminaries and notation}

\subsection{Test functions, tempered distributions, Fourier transform}
We denote with $\mathcal{S}(\mathbb{R}^d)$ the space of Schwartz functions and with $\mathcal{S}'(\mathbb{R}^d)$ the space of the tempered distributions, with vector topologies given respectively by the topology of the seminorms of $\mathcal{S}(\mathbb{R})$ and the weak-$\ast$ topology.

 We write $\langle \cdot,\cdot\rangle$ for the unique extension to $\mathcal{S}'(\mathbb{R}^d)\times \mathcal{S}(\mathbb{R}^d)$ of the sesquilinear inner product of $L^2(\mathbb{R}^d)$, namely
\[
	\langle f,g\rangle=\int_{\mathbb{R}^d}f(t)\overline{g(t)}dt \ \ \ \ \ f,g\in L^2(\mathbb{R}^d).
\]
For all $p\in(0,+\infty]$, one has $\mathcal{S}(\mathbb{R}^d)\hookrightarrow L^p(\mathbb{R}^d)\hookrightarrow \mathcal{S}'(\mathbb{R}^d)$ and if $p\neq\infty$, $\mathcal{S}(\mathbb{R}^d)$ is dense in $L^p(\mathbb{R}^d)$.

If $f,g$ are complex-valued Lebesgue-measurable functions on $\mathbb{R}^d$, we denote with $f\otimes g$ the function
\[
	(f\otimes g)(x,y)=f(x)g(y),\quad x,y\in\mathbb{R}^d.
\]


The Fourier transform of a function $f\in\mathcal{S}(\mathbb{R}^d)$ is defined as
\begin{equation}\label{defF}
	\hat{f}(\xi)=\int_{\mathbb{R}^d}f(x)e^{-2\pi i\xi\cdot t}dt \ \ \ \ t\in\mathbb{R}^d,
\end{equation}
where $\xi\cdot t$ denotes the real canonical inner product of $\mathbb{R}^d$. We name $\mathcal{F}$ the Fourier transform operator, mapping $f\in\mathcal{S}(\mathbb{R})$ into $\hat f$, which is a surjective isomorphism of $\mathcal{S}(\mathbb{R})$ into itself with inverse $\cF^{-1}$. It defines a unitary operator on $L^2(\mathbb{R}^d)$:
\[
	\langle f,g\rangle=\langle \hat f,\hat g\rangle, \ \ \ \ \  f,g\in L^2(\mathbb{R}^d)
\]
and, in particular, $\Vert f\Vert_2=\Vert \hat f\Vert_2$, where $\Vert\cdot\Vert_p$ denotes the $L^p$ (quasi-)norm of $L^p(\mathbb{R}^d)$, $0<p\leq\infty$.
If $f\in\mathcal{S}'(\mathbb{R}^d)$, the Fourier transform of $f$ is defined as the tempered distribution $\hat f$ such that
\[
	\langle \hat f,\varphi\rangle=\langle f,\mathcal{F}^{-1}\varphi\rangle, \ \ \ \ \ \forall\varphi\in\mathcal{S}(\mathbb{R}).
\]
If $\Phi\in\mathcal{S}(\mathbb{R}^{2d})$, we define the \textit{partial Fourier transform $\mathcal{F}_2$ of $\Phi$} w.r.t. the second variable as 
\begin{equation}\label{F2}
	\mathcal{F}_2\Phi(x,\xi)=\int_{\mathbb{R}^d}\Phi(x,y)e^{-2\pi i\xi\cdot y}dy.
\end{equation}
The operator $\mathcal{F}_2$ on $\mathcal{S}'(\mathbb{R}^{2d})$ is defined, by density, as 
\[
	\langle\mathcal{F}_2(f\otimes g),\Phi\rangle=\langle f\otimes g,\mathcal{F}_2^{-1}\Phi\rangle, \ \ \ \ \ \  \Phi\in\mathcal{S}(\mathbb{R}^{2d}).
\] 
\subsection{Short-time Fourier transform and modulation spaces}
In this paper $v$ is a continuous, positive, submultiplicative weight function on $\rd$, i.e., 
$ v(z_1+z_2)\leq v(z_1)v(z_2)$, for all $ z_1,z_2\in\Ren$.
A weight function $m$ is in $\mathcal{M}_v(\rd)$ if $m$ is a positive, continuous weight function on $\Ren$ and it is {\it
	$v$-moderate}:
$ m(z_1+z_2)\lesssim v(z_1)m(z_2)$. This notation means that there exists a universal constant $C>0$ such that the inequality $m(z_1+z_2)\leq C v(z_1)m(z_2)$ holds for all $z_1,z_2\in\Ren$. 

 In the following, we will work with weights on $\rdd$ of the type
\begin{equation}\label{weightvs}
v_s(z)=\la z\ra^s=(1+|z|^2)^{s/2},\quad z\in\rdd,
\end{equation}

For $s<0$, $v_s$ is $v_{|s|}$-moderate.\par 
In particular, we shall use the weight functions on $\bR^{4d}$:
\begin{equation}\label{weight1tensorvs}
( v_s\otimes 1)(z,\zeta)=(1+|z|^2)^{s/2},\quad (1\otimes v_s)(z,\zeta)=(1+|\zeta|^2)^{s/2},\quad z,\zeta\in\rdd.
\end{equation}

For a fixed $g\in\mathcal{S}(\mathbb{R}^d)\setminus\{0\}$, the \textbf{short-time Fourier transform} (STFT) of $f\in L^2(\mathbb{R}^d)$ is defined  as
\begin{equation}\label{STFT}
	V_gf(x,\xi)=\int_{\mathbb{R}^d}f(t)\overline{g(t-x)}e^{-2\pi i\xi\cdot t}dt,\quad x,\xi\in\mathbb{R}^d.
\end{equation}
For $x,\xi\in\mathbb{R}^d$, we denote with $T_x$ and $M_\xi$ the translation and the modulation operators respectively, i.e. the unitary operators on $L^2(\mathbb{R}^d)$ defined as
\begin{equation*}
	T_xg(t)=g(t-x),\quad 
	 M_\xi g(t)=e^{2\pi i\xi\cdot t}g(t),\quad g\in L^2(\mathbb{R}^d).
\end{equation*} 
If $z=(x,\xi)\in\mathbb{R}^{2d}$, the operator $\pi(z)=M_\xi T_x$ is called \textbf{time-frequency shift}.
The definition of STFT can be extended to all tempered distributions: fixed $g\in\mathcal{S}(\rd)\setminus\{0\}$,  $f\in\mathcal{S}'(\mathbb{R}^d)$ 
\[
	V_gf(x,\xi)=\langle f,\pi(x,\xi)g\rangle.
\]

The \emph{modulation spaces}, introduced by Feichtinger in \cite{feichtinger-modulation} and extended to the quasi-Banach setting by Galperin and Samarah \cite{Galperin2004}, are now available in many textbooks, see e.g. \cite{KB2020,Elena-book,grochenig}. We recall their definition and main properties.

	Fix a non-zero window $g$ in the Schwartz class $\cS(\rd)$. Consider a weight function $m\in\mathcal{M}_v(\rdd)$ and indices $0<p,q\leq \infty$. The modulation space $M^{p,q}_m(\rd)$ is the subspace of tempered distributions $f\in\cS'(\rd)$ with
\begin{equation}\label{norm-mod}
\|f\|_{M^{p,q}_m}=\|V_gf\|_{L^{p,q}_m}=\left(\intrd\left(\intrd |V_g f \phas|^p m\phas^p dx \right)^{\frac qp}d\o\right)^\frac1q <\infty
\end{equation}
(natural changes with $p=\infty$ or $q=\infty)$. 
We write $M^p_m(\rd)$ for $M^{p,p}_m(\rd)$ and $M^{p,q}(\rd)$ if $m\equiv 1$. 
The space $M^{p,q}_m(\mathbb{R}^d)$ is a (quasi-)Banach space whose definition is
independent of the choice of the window $g$: \emph{different
	non-zero window functions in $\cS(\rd)$ yield equivalent (quasi)-norms}. 
For $1\leq p,q\leq \infty$, they are Banach spaces and the window class can be extended to the modulation space $M^1_v(\rd)$ (Feichtinger algebra). The modulation space $M^{\infty,1}(\rd)$ coincide with the Sj\"ostrand's class in \cite{Sjostrand1}.\par
We recall their inclusion properties: 
\begin{equation}
\mathcal{S}(\mathbb{R}^{d})\subseteq M^{p_{1},q_{1}}_m(\mathbb{R}%
^{d})\subseteq M^{p_{2},q_{2}}_m(\mathbb{R}^{d})\subseteq \mathcal{S}^{\prime
}(\mathbb{R}^{d}),\quad p_{1}\leq p_{2},\,\,q_{1}\leq q_{2}.
\label{modspaceincl1}
\end{equation}%

If $m\in \mathcal{M}_{v_s}$, denoting by $%
\mathcal{M}_m^{p,q}(\mathbb{R}^d)$ the closure of $\mathcal{S}(\mathbb{R}^d)$ in the $M^{p,q}_m$-norm, we observe
\begin{equation*}
\mathcal{M}_m^{p,q}(\mathbb{R}^d) \subseteq M^{p,q}_m(\mathbb{R}^d), \quad 0<p,q\leq\infty,
\end{equation*}
 and 
 \begin{equation*} \mathcal{M}^{p,q}_m (\mathbb{R}^d) = M^{p,q}_m (\mathbb{R}^d),\quad 0<p,q<\infty.
\end{equation*}

\subsection{The symplectic group $Sp(d,\mathbb{R})$ and the metaplectic operators}
We recall definitions and properties of symplectic matrices and metaplectic operators  in a nutshell, referring to \cite{Birkbis} for  details.\par
The standard symplectic matrix $J$ is
\begin{equation}\label{J}
J=\begin{pmatrix} 0_{d\times d}&I_{d\times d}\\-I_{d\times d}&0_{d\times d}\end{pmatrix}
\end{equation}
 and the symplectic group can be defined as
 \begin{equation}\label{defsymplectic}
 \Spnr=\left\{\cA\in\Gltwonr:\;\cA^T J\cA=J\right\},
 \end{equation}
 where $\Gltwonr$ is the group of $2d\times 2d$ real invertible matrices, $\cA^T$ is the transpose of $\cA$ and the  matrix
 $J$ is defined in \eqref{J}.\par
 The symplectic  algebra
 $\spdr$ is the set of
 $2d\times 2d$ real matrices
 $\A$   such that $e^{t \A}
 \in \Spnr$ for all
 $t\in\R$.
 Recall that the metaplectic representation
 $\mu$ is a unitary
 representation of the double
 cover of $\Spnr$ on $\lrd$.
 For elements of $\Spnr$ of special form we can compute the metaplectic
 representation explicitly.  Precisely,  for
 $f\in L^2(\R^d)$,  $C$ real symmetric $d\times d$ matrix ($C^T=C$) we consider the symplectic matrix 
 \begin{equation}\label{Vc}
 V_C=\begin{pmatrix}
 I_{d\times d} & 0_{d\times d}\\
 C & I_{d\times d} 
 \end{pmatrix};
 \end{equation}
 then, up to a phase factor,
 \begin{equation}\label{muVc}
 \mu(V_C)f(t)=e^{i\pi Ct\cdot t}f(t)
 \end{equation}
 for all $f\in L^2(\rd)$. Hence the previous operator is a multiplication by the \emph{chirp}
 \begin{equation}\label{chirp}
\Phi_C(t)= e^{i\pi Ct\cdot t},\quad t\in\rd.
 \end{equation}
 For the standard matrix $J$ in \eqref{J},
 \begin{equation}
 \mu(J)f=\cF f\label{muJ}.
 \end{equation}
 For any $L\in GL(d,\bR)$, we can define the symplectic matrix 
\begin{equation}\label{MotL}
\cD_L= \left(\begin{array}{cc}
L^{-1} &0_{d\times d}\\
0_{d\times d} & L^T
\end{array}\right)  \in Sp(d,\bR),
\end{equation}
 and,  up to a phase factor, 
 \begin{equation}\label{AdL}
 \mu(\cD_L)F(t)=\sqrt{|\det L|}F(Lt)=\mathfrak{T}_{L} F(t),\quad F\in\lrd.
 \end{equation}
 These operators are also called \emph{rescaling operators}.  
 The metaplectic operators enjoy a group structure with respect to the composition.
 \begin{proposition}\label{deGosson96}
 	The metaplectic group is generated by the operators $\mu(J), \mu(\cD_L)$ and $\mu(V_C)$.
 \end{proposition}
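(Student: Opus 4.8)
The plan is to reduce the statement to the group‑theoretic fact that the symplectic matrices $J$, $\cD_L$ ($L\in GL(d,\bR)$) and $V_C$ ($C=C^T$) generate $Sp(d,\bR)$, and then transport this through $\mu$. For the transport step one uses that $\mu$ turns a product of symplectic matrices into the product of the corresponding operators up to a phase, together with the remark that any subgroup of the metaplectic group which projects onto all of $Sp(d,\bR)$ must be the whole metaplectic group, since the extension $Mp(d,\bR)\to Sp(d,\bR)$ does not split (so the central element $-\mathrm{Id}$ is picked up automatically). Thus the heart of the matter is the factorization of an arbitrary $\chi=\begin{pmatrix}A&B\\ C&D\end{pmatrix}\in Sp(d,\bR)$ into the three prescribed types.

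First I would handle the case in which the upper‑left block $A$ is invertible. From the symplectic identities $AB^T=BA^T$, $A^TC=C^TA$ and $A^TD-C^TB=I$ one checks that $CA^{-1}$ and $A^{-1}B$ are symmetric and that $D=(A^T)^{-1}+CA^{-1}B$, which yields the block factorization
$$\chi=\begin{pmatrix}I&0\\ CA^{-1}&I\end{pmatrix}\begin{pmatrix}A&0\\ 0&(A^T)^{-1}\end{pmatrix}\begin{pmatrix}I&A^{-1}B\\ 0&I\end{pmatrix}=V_{CA^{-1}}\,\cD_{A^{-1}}\,\bigl(J\,V_{-A^{-1}B}\,J^{-1}\bigr),$$
where the middle factor is $\cD_L$ with $L=A^{-1}$, and the last identity uses $JV_CJ^{-1}=\begin{pmatrix}I&-C\\ 0&I\end{pmatrix}$ together with the fact that $J^{-1}$ lies in the group generated by $J$. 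Hence every $\chi$ with invertible upper‑left block belongs to the subgroup generated by $J$, the $\cD_L$ and the $V_C$.

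The remaining, and main, obstacle is to reduce the general case to this one. Given $\chi$ with $A$ possibly singular, I would right‑multiply by $V_{C_0}$: this replaces the upper‑left block by $A+BC_0$, and since $(A+BC_0)v=0$ is equivalent to $(v,C_0v)\in\ker\begin{pmatrix}A&B\end{pmatrix}$, the matrix $A+BC_0$ is invertible precisely when the graph Lagrangian $\{(v,C_0v):v\in\bR^d\}$ is transversal to $\ker\begin{pmatrix}A&B\end{pmatrix}$. Now $\ker\begin{pmatrix}A&B\end{pmatrix}$ is a $d$‑dimensional Lagrangian subspace of $\bR^{2d}$ — its Euclidean orthogonal complement is the row space of $\begin{pmatrix}A&B\end{pmatrix}$, which is Lagrangian by the relation $AB^T=BA^T$ — so the required $C_0$ exists by the classical fact that every Lagrangian subspace of $\bR^{2d}$ admits a complementary "free" Lagrangian of the form $\{(v,C_0v)\}$ with $C_0=C_0^T$. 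With such a $C_0$, the matrix $\chi V_{C_0}$ has invertible upper‑left block, hence lies in the generated subgroup by the previous step, and therefore so does $\chi=(\chi V_{C_0})V_{-C_0}$. This shows that $J$, the $\cD_L$ and the $V_C$ generate $Sp(d,\bR)$, and the proposition follows. (Alternatively, one may take the generation of $Sp(d,\bR)$ by these three families as known, e.g. from \cite{Birkbis}, and only carry out the metaplectic bookkeeping.)
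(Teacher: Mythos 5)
Your argument is correct, but note that the paper does not actually prove Proposition \ref{deGosson96}: it is recalled as a known fact, with the reader referred to \cite{Birkbis} for details. The standard proof there (and the one implicit in the paper's own toolkit, cf.\ Lemma \ref{lemmaFreeSymp}) runs through \emph{free} symplectic matrices: every $\chi\in Sp(d,\bR)$ is a product of two matrices with invertible upper-\emph{right} block, and for such a matrix the explicit quadratic Fourier transform \eqref{muFree} visibly factors as chirp $\times$ rescaling $\times$ Fourier transform $\times$ chirp, i.e.\ as a word in $\mu(V_C)$, $\mu(\cD_L)$, $\mu(J)$. You instead normalize the upper-\emph{left} block: the $LDU$-type factorization $\chi=V_{CA^{-1}}\,\cD_{A^{-1}}\,(J V_{-A^{-1}B}J^{-1})$ when $A$ is invertible is verified correctly (the symmetry of $CA^{-1}$ and $A^{-1}B$ and the identity $D=(A^T)^{-1}+CA^{-1}B$ all follow from $\chi^TJ\chi=J$ and $\chi J\chi^T=J$, and $J^{-1}=J^3$ lies in the generated subgroup), and the reduction of the general case via right-multiplication by $V_{C_0}$ rests on the genuinely classical fact that $\ker\begin{pmatrix}A&B\end{pmatrix}$ is Lagrangian and admits a transversal graph Lagrangian. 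Your lifting step is also sound: a proper subgroup of $Mp(d,\bR)$ surjecting onto $Sp(d,\bR)$ would split the (non-split) double cover. So the proposal buys a self-contained proof where the paper offers a citation; the free-matrix route has the advantage of reusing Lemma \ref{lemmaFreeSymp} and producing the explicit integral formula \eqref{muFree} along the way, while yours trades that for a transversality lemma on the Lagrangian Grassmannian, which you should either prove or cite precisely if this were to be written out in full.
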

The following issue will be used in the sequel.
\begin{lemma}\label{lemmaUC}
Consider the symplectic matrix $V_C$ in \eqref{Vc}. Then,
 \begin{equation}\label{Vcinvtransp}
V_C^{-T}=(V_C^{-1})^T=\begin{pmatrix}
I_{d\times d} & -C\\
0_{d\times d} & I_{d\times d} 
\end{pmatrix}
\end{equation}
and, up to a phase factor, the metaplectic operator $\mu(V_C^{-T})$ is a convolution operator:
\begin{equation}\label{vcT}
\mu(V_C^{-T})f=\cF(\Phi_{C})\ast f,\quad f\in\lrd,
\end{equation}
with $\Phi_{C}$ being the chirp function in \eqref{chirp}. In particular, if the symmetric matrix $C$ is invertible, then
\begin{equation}\label{vcT-inv}
\mu(V_C^T)f=|\det C|\, (\Phi_{-C^{-1}}\ast f),\quad f\in\lrd.
\end{equation} 
\end{lemma}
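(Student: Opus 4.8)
The plan is to reduce both convolution formulas to the two metaplectic operators already computed in \eqref{muVc} and \eqref{muJ} --- multiplication by the chirp $\Phi_C$ and the Fourier transform $\cF$ --- by exhibiting $V_C^{-T}$ and $V_C^T$ as $J$-conjugates of $V_C^{\pm1}$. The matrix identity \eqref{Vcinvtransp} itself is immediate: block multiplication gives $V_CV_{-C}=I$, hence $V_C^{-1}=V_{-C}=\begin{pmatrix}I&0\\-C&I\end{pmatrix}$, and transposing and using $C^T=C$ yields \eqref{Vcinvtransp}. Since $\Spnr$ is stable under transposition and inversion, $V_C^{-T}\in\Spnr$, so $\mu(V_C^{-T})$ is well defined (up to a phase, as always for the metaplectic representation).

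For \eqref{vcT} I would first verify, by a direct block computation using $J^{-1}=-J$, the conjugation relation $V_C^{-T}=J\,V_C\,J^{-1}$. Because $\mu$ is a (projective) unitary representation, this gives $\mu(V_C^{-T})=\mu(J)\,\mu(V_C)\,\mu(J)^{-1}$ up to a unimodular constant; inserting $\mu(J)=\cF$ from \eqref{muJ} and $\mu(V_C)f=\Phi_C\,f$ from \eqref{muVc}--\eqref{chirp} we obtain, up to a phase,
\[
\mu(V_C^{-T})f=\cF\big(\Phi_C\cdot\cF^{-1}f\big)=\cF(\Phi_C)\ast f,
\]
the last equality being the convolution theorem $\cF(gh)=\cF g\ast\cF h$ for our normalization of the Fourier transform. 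Here $\Phi_C\in L^\infty(\rd)\subset\cS'(\rd)$, so $\cF(\Phi_C)\in\cS'(\rd)$ and the convolution is meant in the standard distributional sense; on the Fourier side the operator is multiplication by the bounded function $\Phi_C$, which makes the identity extend from $\cS(\rd)$ to all of $\lrd$.

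For \eqref{vcT-inv} the same scheme applies to $V_C^T=(V_C^{-T})^{-1}$. Either I invert \eqref{vcT}, or --- more directly --- I use the block identity $V_C^T=J\,V_C^{-1}\,J^{-1}$ together with $\mu(V_C^{-1})f=\Phi_{-C}\,f=\overline{\Phi_C}\,f$ to get $\mu(V_C^T)f=\cF(\Phi_{-C})\ast f$ up to a phase. When $C$ is invertible it then remains to compute $\cF(\Phi_{\mp C})$ explicitly: diagonalizing $C=O^TDO$ with $O$ orthogonal reduces this to a product of one-dimensional Fresnel integrals $\int_{\bR}e^{\pm i\pi\lambda s^2}e^{-2\pi i\eta s}\,ds$, each evaluated by completing the square. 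This produces a ``dual'' chirp $\Phi_{\mp C^{-1}}$ times an explicit amplitude depending on $|\det C|$ and a signature-dependent phase $e^{\pm i\frac{\pi}{4}\operatorname{sgn}C}$; substituting back and absorbing the unimodular factors into the phase yields \eqref{vcT-inv}.

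I expect the only genuinely delicate point to be this last evaluation of the chirp's Fourier transform, where one must track the normalization by a power of $|\det C|$ and the Maslov-type signature phase coming from the Fresnel integrals. Everything else is bookkeeping of $2d\times2d$ block matrices and of the intrinsic phase ambiguity of $\mu$, which is exactly what ``up to a phase factor'' in the statement is there to absorb; in particular Proposition \ref{deGosson96} is not needed, the two conjugation identities $V_C^{-T}=JV_CJ^{-1}$ and $V_C^T=JV_C^{-1}J^{-1}$ doing all the work.
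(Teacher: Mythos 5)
Your proof is correct and follows essentially the same route as the paper's: both establish the conjugation $V_C^{-T}=J\,V_C\,J^{-1}$, apply $\mu(J)=\cF$ and $\mu(V_C)f=\Phi_C f$, and conclude via the convolution theorem, the only difference being that you evaluate $\cF\Phi_{\mp C}$ by explicit Fresnel integrals where the paper simply cites \cite{folland}. Your careful computation in fact yields the amplitude $|\det C|^{-1/2}$ (times a signature-dependent unimodular phase), so the factor $|\det C|$ appearing in \eqref{vcT-inv} is a normalization slip of the paper rather than a defect of your argument --- and since it is a modulus, not a phase, it cannot be absorbed into the ``up to a phase factor'' disclaimer.
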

\begin{proof} Formula \eqref{Vcinvtransp} is an easy computation.
For any $f\in\lrd$, using the definition \eqref{defsymplectic} we can write
	\begin{align*}
	\mu(V_C^{-T})f&=\mu(J V_C J^{-1})=\mu(J)\mu(V_C)\mu(J^{-1})f\\
	&=\cF(\Phi_C \cdot \cF^{-1}f)=\cF(\Phi_C)\ast f.
	\end{align*}
	For $C$ invertible, up to a phase factor, $\cF \Phi_C=|\det C| \Phi_{-C^{-1}}$, see e.g.  \cite{folland}, so that we obtain formula \eqref{vcT-inv}.
\end{proof}

In particular, observe that $V_C^T$ is obtained from $V_C^{-T}$ in \eqref{Vcinvtransp} by replacing the matrix $C$ with $-C$, so that 
\begin{equation}\label{muVctrans}
\mu(V_C^{T})f=\cF(\Phi_{-C})\ast f,\quad f\in\lrd,
\end{equation}
and, if $C$ is invertible,
\begin{equation}\label{muVctrans-inv}
\mu(V_C^{T})f=|\det C|\, (\Phi_{C^{-1}}\ast f),\quad f\in\lrd.
\end{equation}
 This paper deals both with the symplectic group  $Sp(d,\bR)$ of  $2d\times 2d$  matrices and  $Sp(2d,\bR)$ of  $4d\times 4d$ ones. To avoid confusions, in what follows the  matrix $\cA$ denotes a symplectic matrix in $Sp(2d,\bR)$ whereas $\chi$ a matrix in $Sp(d,\bR)$.
\begin{definition}\label{FreeSympMatrices}
	A matrix $\cA\in Sp(2d,\mathbb{R})$, with block decomposition
	\begin{equation}\label{blockAfree}
		\cA=\begin{pmatrix}
			A & B\\  C & D
		\end{pmatrix}
	\end{equation}
	 is a \textbf{free symplectic matrix} if $\det B\neq0$.
\end{definition} 

In this work, free symplectic matrices will be important for two main reasons. For the following Lemma, we refer to \cite[Theorem 4.53]{folland} and \cite[Theorem 60]{Birkbis}.

\begin{lemma}\label{lemmaFreeSymp}
	Let $\cA\in Sp(2d,\mathbb{R})$. Then,\\
	(i) there exist $\cA_1,\cA_2\in Sp(2d,\mathbb{R})$ free such that $\cA=\cA_1\cA_2$;\\
	(ii) if $\cA$ is free with block decomposition (\ref{blockAfree}) then, for every $F\in\mathcal{S}(\rdd)$,
	\begin{equation}\label{muFree}
		\mu(\cA)F(x)=(\det(B))^{-1/2}e^{-i\pi DB^{-1}x\cdot x}\int_{\rdd}F(y)e^{2\pi i (B^{-1}x\cdot y-\frac{1}{2}B^{-1}Ay\cdot y)}dy.
	\end{equation}
\end{lemma}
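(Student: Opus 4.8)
\textbf{Proof plan for Lemma \ref{lemmaFreeSymp}.}

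The plan is to treat the two statements separately, since (i) is a structural claim about $Sp(2d,\bR)$ while (ii) is an explicit computation of a metaplectic operator.

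For part (i), I would argue by the Bruhat-type decomposition together with Proposition \ref{deGosson96}. The key observation is that a product of two free symplectic matrices can realize any element of $Sp(2d,\bR)$: indeed, if $\cA$ has block decomposition $\begin{pmatrix} A & B\\ C & D\end{pmatrix}$ with $\det B = 0$, one perturbs by multiplying with a suitable free matrix to force the off-diagonal block to become invertible. Concretely, pick $\cA_2$ free (e.g. a matrix of the form $\begin{pmatrix} 0 & -I\\ I & 0\end{pmatrix}$-type block, or more generally $J$-like, which is free since its upper-right block is $-I$), and check that $\cA_1 := \cA\cA_2^{-1}$ has invertible upper-right block for a generic choice of $\cA_2$ in a neighbourhood; since the free matrices form an open dense subset and the condition ``$\cA\cA_2^{-1}$ is free'' is the non-vanishing of a polynomial in the entries of $\cA_2$, one only needs to exhibit a single $\cA_2$ making it work, which is a direct block computation. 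This is exactly the content of \cite[Theorem 4.53]{folland} and \cite[Theorem 60]{Birkbis}, so I would cite those and sketch only the perturbation argument.

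For part (ii), the strategy is to reduce a free $\cA$ to a normal form built from the generators $\mu(J)$, $\mu(\cD_L)$, $\mu(V_C)$ of Proposition \ref{deGosson96}, and then compose their explicit actions \eqref{muVc}, \eqref{muJ}, \eqref{AdL}. Specifically, any free $\cA = \begin{pmatrix} A & B\\ C & D\end{pmatrix}$ admits the factorization
\begin{equation*}
\cA = V_{DB^{-1}}^{-T}\,\cdot\, \cD_{B}^{-1}\text{-type factor}\,\cdot\, J \,\cdot\, V_{B^{-1}A},
\end{equation*}
or more precisely a product of a lower-triangular $V_C$, a rescaling $\cD_L$ with $L$ related to $B$, the Fourier matrix $J$, and another $V_C$; the exact bookkeeping of which chirp and which rescaling appears is forced by matching the $(1,2)$ block to $B$ and using $\cA^T J\cA = J$ to pin down $A, C, D$. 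Applying $\mu$ turns this into: multiply by a chirp $e^{-i\pi DB^{-1}x\cdot x}$ (from the outer $V_{DB^{-1}}^{-T}$ via \eqref{muVc} after transposition), rescale by $|\det B|^{-1/2}$ and $B^{-1}$ (from $\cD$, via \eqref{AdL}), Fourier transform (from $J$, via \eqref{muJ}), and multiply by the inner chirp $e^{-i\pi B^{-1}Ay\cdot y}$ (from $V_{B^{-1}A}$). Writing out the composition of these four operations on $F\in\cS(\rdd)$ and collecting the Gaussian exponents gives exactly \eqref{muFree}; the phase factors (which $\mu$ is only defined up to) are absorbed into the normalization so that the formula is stated up to a unimodular constant.

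The main obstacle I anticipate is not conceptual but bookkeeping: getting the decomposition of a general free $\cA$ into the standard generators with the \emph{correct} chirp matrices $DB^{-1}$ and $B^{-1}A$ and the correct rescaling $B^{-1}$, including tracking the transposes (since $V_C^{-T}$ rather than $V_C$ produces the convolution/chirp needed, cf. Lemma \ref{lemmaUC}), and verifying that the symplectic relations $A^TC = C^TA$, $B^TD = D^TB$, $A^TD - C^TB = I$ make all the blocks consistent so that the single formula \eqref{muFree} is self-consistent. For this reason I would lean on the cited references \cite[Theorem 4.53]{folland}, \cite[Theorem 60]{Birkbis} for the verification and present the generator-decomposition plus the composition of \eqref{muVc}, \eqref{muJ}, \eqref{AdL} as the proof sketch, checking the Gaussian integral in \eqref{muFree} directly against this composition to confirm the normalizing constant $(\det B)^{-1/2}$ and the sign conventions in the exponents.
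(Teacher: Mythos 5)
The paper offers no proof of this lemma at all — it simply quotes \cite[Theorem 4.53]{folland} and \cite[Theorem 60]{Birkbis} — and your sketch is precisely the standard argument behind those references, so in substance you are taking the same route with the details filled in. Both halves of your plan are sound: for (i) the genericity/perturbation argument works (free matrices are open and dense in the connected group $Sp(2d,\bR)$, so one can choose $\cA_2$ free with $\cA\cA_2^{-1}$ free), and for (ii) the generator decomposition plus composition of \eqref{muVc}, \eqref{muJ}, \eqref{AdL} yields \eqref{muFree}.

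One precision worth fixing before you write it out: the correct factorization of a free $\cA$ uses only \emph{lower-triangular} chirp factors, namely $\cA=V_{DB^{-1}}\,\cD_{B^{-1}}\,J\,V_{B^{-1}A}$ in the paper's notation \eqref{Vc}--\eqref{MotL}, so both chirps enter as multiplications via \eqref{muVc}; your displayed $V_{DB^{-1}}^{-T}$ would instead produce a convolution by Lemma \ref{lemmaUC} and does not have the right block structure (its $(2,1)$ block vanishes). Composing $\mu$ along this factorization gives $|\det B|^{-1/2}e^{i\pi DB^{-1}x\cdot x}\int F(y)e^{i\pi B^{-1}Ay\cdot y}e^{-2\pi iB^{-1}x\cdot y}\,dy$, which agrees with \eqref{muFree} only after fixing the convention $\mu(J)=\cF^{-1}$ (equivalently, up to replacing the phase by its conjugate); this is exactly the sign bookkeeping you flagged, and it is convention, not error.
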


\section{Metaplectic Pseudodifferential Operators}
We recall  some basic examples and properties of the metaplectic Wigner distribution, for detail see cf. \cite{CR2021}.

\begin{example}\label{es1}
	For $\tau\in[0,1]$, the matrix of $Sp(2d,\mathbb{R})$
	\[
		\cA_{\tau,2d}:=\begin{pmatrix}
		(1-\tau)I_{d\times d} & \tau I_{d\times d} & 0_{d\times d} & 0_{d\times d}\\
		0_{d\times d} & 0_{d\times d} & \tau I_{d\times d} & -(1-\tau)I_{d\times d}\\
		0_{d\times d} & 0_{d\times d} & I_{d\times d} & I_{d\times d}\\
		-I_{d\times d} & I_{d\times d} & 0_{d\times d} & 0_{d\times d}
		\end{pmatrix},
	\]
	defines the $\tau$-Wigner distribution on $\mathcal{S}(\rd)\times\mathcal{S}(\rd)$, that is
	\[
		W_\tau(f,g)(x,\xi)=\int_{\rd} f(x+\tau y)\overline{g(x-(1-\tau)y)}e^{-2\pi iy\cdot\xi}dy.
	\]
	In particular, if $\tau=0$ and $\tau=1$ we recover the Rihaczek and the conjugate-Rihaczek distributions,  while for $\tau=1/2$ we get the classical Wigner distribution ($W=W_{1/2}$) in \eqref{CWD}.
\end{example}

\begin{example}\label{es2}
	The metaplectic operator associated to the symplectic matrix 
	\[
		\textbf{A}_{\textbf{ST}}=\begin{pmatrix}
			I_{d\times d} & -I_{d\times d} & 0_{d\times d} & 0_{d\times d}\\
			0_{d\times d} & 0_{d\times d} & I_{d\times d} & I_{d\times d}\\
			0_{d\times d} & 0_{d\times d} & 0_{d\times d} & -I_{d\times d}\\
			-I_{d\times d} & 0_{d\times d} & 0_{d\times d} & 0_{d\times d}
		\end{pmatrix}
	\]
	defines the STFT, namely $W_{\textbf{A}_{\textbf{ST}}}(f,g)=V_gf$ in \eqref{STFT}.
\end{example}

The basic continuity properties of $W_\cA$ can be summarized as follows:
\begin{prop}\label{propWellDefOpA}
	If $\mathcal{A}\in Sp(2d,\mathbb{R})$, \\
	(i) The mapping $W_\mathcal{A}:L^2(\mathbb{R}^d)\times L^2(\mathbb{R}^d)\to L^2(\mathbb{R}^{2d})$ is continuous;\\
	(ii) The mapping $W_\mathcal{A}:\mathcal{S}(\mathbb{R}^d)\times \mathcal{S}(\mathbb{R}^d)\to \mathcal{S}(\mathbb{R}^{2d})$ is continuous;\\
	(iii) The mapping $W_\mathcal{A}:\mathcal{S}'(\mathbb{R}^d)\times \mathcal{S}'(\mathbb{R}^d)\to \mathcal{S}'(\mathbb{R}^{2d})$ is continuous.
\end{prop}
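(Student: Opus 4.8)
The plan is to exploit the defining identity $W_\cA(f,g)=\mu(\cA)(f\otimes\bar g)$, factoring $W_\cA$ through the bilinear tensor map $\mathsf{t}\colon(f,g)\mapsto f\otimes\bar g$ followed by the fixed linear metaplectic operator $\mu(\cA)$. Then it suffices to check that each factor is continuous on the three scales $L^2$, $\cS$, $\cS'$, and to compose.

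First I would dispose of the tensor map. On $L^2$ this is immediate: Fubini gives $\|f\otimes\bar g\|_{L^2(\mathbb{R}^{2d})}=\|f\|_2\|g\|_2$, so $\mathsf{t}$ is a bounded bilinear map $L^2(\mathbb{R}^d)\times L^2(\mathbb{R}^d)\to L^2(\mathbb{R}^{2d})$, hence continuous. On $\cS$ I would note that $\partial^{\alpha_1}_x\partial^{\alpha_2}_y(f\otimes\bar g)=(\partial^{\alpha_1}f)\otimes\overline{(\partial^{\alpha_2}g)}$ and $\langle(x,y)\rangle\le\langle x\rangle\langle y\rangle$, so every Schwartz seminorm of $f\otimes\bar g$ is controlled by a product of Schwartz seminorms of $f$ and of $g$; thus $\mathsf{t}\colon\cS(\mathbb{R}^d)\times\cS(\mathbb{R}^d)\to\cS(\mathbb{R}^{2d})$ is continuous. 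On $\cS'$ one has the usual tensor product of tempered distributions (extended by duality as in Section 2.1), and I would check separate weak-$*$ continuity: fixing $g$, for each $\Phi\in\cS(\mathbb{R}^{2d})$ the functional $f\mapsto\langle f\otimes\bar g,\Phi\rangle$ coincides with $f\mapsto\langle f,\Psi\rangle$ for the Schwartz function $\Psi$ obtained by partially evaluating $\Phi$ against $\bar g$, and this is weak-$*$ continuous by definition; symmetrically in $g$. So $\mathsf{t}$ is separately continuous $\cS'(\mathbb{R}^d)\times\cS'(\mathbb{R}^d)\to\cS'(\mathbb{R}^{2d})$.

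Next I would invoke the mapping properties of metaplectic operators. By Proposition \ref{deGosson96}, $\mu(\cA)$ is a finite product of Fourier transforms $\mu(J)=\cF$, rescalings $\mu(\cD_L)=\mathfrak{T}_L$, and chirp multiplications $\mu(V_C)$ on $\mathbb{R}^{2d}$, each of which is unitary on $L^2(\mathbb{R}^{2d})$, a topological automorphism of $\cS(\mathbb{R}^{2d})$, and — by duality, its adjoint being of the same type — a weak-$*$ topological automorphism of $\cS'(\mathbb{R}^{2d})$. Hence the same holds for $\mu(\cA)$ itself, and composing with $\mathsf{t}$ yields (i) ($W_\cA$ bounded bilinear), (ii) ($W_\cA$ a composition of continuous maps), and (iii) ($W_\cA$ separately continuous, $\mu(\cA)$ being a single fixed continuous linear operator).

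I expect the only delicate point to be (iii): here ``continuous'' must be read as \emph{separately} continuous, since the weak-$*$ topology on $\cS'(\mathbb{R}^d)$ is not metrizable and joint continuity of bilinear maps is not automatic; and the argument rests on the standard fact that the partial pairing of a two-variable Schwartz function with a tempered distribution is again a Schwartz function — which is precisely what makes $f\otimes\bar g$ a well-defined element of $\cS'(\mathbb{R}^{2d})$ depending weak-$*$ continuously on each argument. Everything else is routine.
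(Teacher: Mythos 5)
Your proof is correct. The paper states this proposition without proof, recalling it from Part I \cite{CR2021}; your factorization of $W_\cA$ as $\mu(\cA)$ composed with the tensor map $(f,g)\mapsto f\otimes\bar g$, with $\mu(\cA)$ treated through the generators $\mu(J)$, $\mu(\cD_L)$, $\mu(V_C)$ of Proposition \ref{deGosson96} (each unitary on $L^2$, an automorphism of $\cS$, and extendable to $\cS'$ by duality), is precisely the standard argument, and your caveat that item (iii) must be read as \emph{separate} continuity for the weak-$*$ topology the paper fixes on $\cS'$ is the correct reading.
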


%

If $\cA\in Sp(2d,\mathbb{R})$ is a general symplectic matrix, we can write explicitly $W_\cA(f,g)$ as a FIO of type II, using Lemma \ref{lemmaFreeSymp}. 

\begin{proposition}\label{propGenCase}
	Let $\cA\in Sp(2d,\mathbb{R})$ have factorization $\cA=\cA_1\cA_2$ with $\cA_j$,  $j=1,2$, free with block decomposition
	\[		\cA_j=\begin{pmatrix}
		A_j & B_j\\
		C_j & D_j
		\end{pmatrix}.	\]
 Then, up to a unitary factor, for every $F\in\mathcal{S}(\rdd)$, $(x,\xi)\in\rdd$,
 	\begin{equation}\begin{split}\label{FIO2}
	\mu(\cA)F(x,\xi)&=\left|\frac{\det(B_1)}{\det(B_2)}\right|^{1/2}\Phi_{-D_1B_1^{-1}}(x,\xi)\\
		&\qquad\times\,\int_{\mathbb{R}^{4d}}F(z,\zeta)e^{-2\pi i[\Phi_\cA(z,\zeta,y,\eta)-(x,\xi)\cdot(y,\eta)]}\tau_\cA(z,\zeta,y,\eta)dzd\zeta dyd\eta,
	\end{split}\end{equation}
	where the phase is given by
	\[
		\Phi_\cA(z,\zeta,y,\eta)=\frac{1}{2}[B_2^{-1}A_2(z,\zeta)\cdot(z,\zeta)+B_1(B_1^{-1}A_1+D_2B_2^{-1})B_1^T(y,\eta)\cdot(y,\eta)] 
		\]
	and the symbol is given by 
	\[ 
	\tau_\cA(z,\zeta,y,\eta)=e^{2\pi i B_2^{-1}B_1^T(y,\eta)\cdot(z,\zeta)},\quad z,\zeta,y,\eta\in\rd.
	\]
	In particular, for every $f,g\in\mathcal{S}(\rd)$,
	\begin{equation}\begin{split}\label{WAgeneral}
		W_\cA(f,g)(x,\xi)&=\left|\frac{\det(B_1)}{\det(B_2)}\right|^{1/2}\Phi_{-D_1B_1^{-1}}(x,\xi)\\
		&\qquad\times\,\int_{\mathbb{R}^{4d}}f(z)\overline{g(\zeta)}e^{-2\pi i[\Phi_\cA(z,\zeta,y,\eta)-(x,\xi)\cdot(y,\eta)]}\tau_\cA(z,\zeta,y,\eta)dzd\zeta dyd\eta.
		\end{split}
	\end{equation}
\end{proposition}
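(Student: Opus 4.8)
The plan is to reduce the general case to the known formula \eqref{muFree} for free symplectic matrices, applied twice along the factorization $\cA=\cA_1\cA_2$ from Lemma \ref{lemmaFreeSymp}(i). First I would write $\mu(\cA)F=\mu(\cA_1)\big(\mu(\cA_2)F\big)$, invoking the (projective) multiplicativity of the metaplectic representation, which costs only a unitary factor — this is why the statement is phrased ``up to a unitary factor''. Then I would apply \eqref{muFree} to $\mu(\cA_2)F$ to get
\[
\mu(\cA_2)F(y,\eta)=(\det B_2)^{-1/2}e^{-i\pi D_2B_2^{-1}(y,\eta)\cdot(y,\eta)}\int_{\rdd}F(z,\zeta)e^{2\pi i(B_2^{-1}(y,\eta)\cdot(z,\zeta)-\frac12 B_2^{-1}A_2(z,\zeta)\cdot(z,\zeta))}\,dz\,d\zeta,
\]
and then apply \eqref{muFree} again with $\cA_1$ to the function $(y,\eta)\mapsto\mu(\cA_2)F(y,\eta)$, producing the outer chirp $\Phi_{-D_1B_1^{-1}}(x,\xi)$, the prefactor $(\det B_1)^{-1/2}$, and an integral in $(y,\eta)$ of $\mu(\cA_2)F(y,\eta)e^{2\pi i(B_1^{-1}x\cdot y' - \frac12 B_1^{-1}A_1 y'\cdot y')}$ where $y'=(y,\eta)$ and $x$ here stands for $(x,\xi)$.

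Next I would substitute the inner integral into the outer one and collect the quadratic and bilinear exponents. The $(z,\zeta)$-quadratic term $\tfrac12 B_2^{-1}A_2(z,\zeta)\cdot(z,\zeta)$ and the $(y,\eta)$-quadratic terms combine: the $-\tfrac12 B_1^{-1}A_1(y,\eta)\cdot(y,\eta)$ from the second application of \eqref{muFree} and the $-D_2B_2^{-1}(y,\eta)\cdot(y,\eta)$ coming from the chirp inside $\mu(\cA_2)F$ must be merged into $\tfrac12 B_1(B_1^{-1}A_1+D_2B_2^{-1})B_1^T(y,\eta)\cdot(y,\eta)$ as written in $\Phi_\cA$; here I would keep careful track of the change of variables implicit in how \eqref{muFree} pairs $B_1^{-1}x$ against $y$ rather than against $B_1^T y$, which is precisely the source of the $B_1$ and $B_1^T$ conjugating factors. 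The cross term $B_2^{-1}(y,\eta)\cdot(z,\zeta)$ becomes the symbol $\tau_\cA(z,\zeta,y,\eta)=e^{2\pi i B_2^{-1}B_1^T(y,\eta)\cdot(z,\zeta)}$ after the same reparametrization, and the linear term in $(x,\xi)$ is exactly $(x,\xi)\cdot(y,\eta)$, matching the bracket $[\Phi_\cA-(x,\xi)\cdot(y,\eta)]$. The scalar prefactor assembles to $|\det B_1|^{1/2}/|\det B_2|^{1/2}$ once the sign ambiguities in the square roots are absorbed into the unitary factor (a reciprocal appears because $\det B_2$ enters through $\mu(\cA_2)F$, which is then transformed by $\mu(\cA_1)$ contributing $(\det B_1)^{-1/2}$ — the bookkeeping of where $\det B_2$ lands relative to the $(y,\eta)$-integration gives the exponent $+1/2$). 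Finally, specializing $F=f\otimes\bar g$ and using Definition \ref{def4.1}, $W_\cA(f,g)=\mu(\cA)(f\otimes\bar g)$, gives \eqref{WAgeneral} with $F(z,\zeta)$ replaced by $f(z)\overline{g(\zeta)}$.

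The main obstacle I expect is purely the algebra of composing two oscillatory integrals: correctly completing the square in the $(y,\eta)$ variables, tracking the transpose/inverse patterns of $B_1$, and verifying that every quadratic and bilinear form lands exactly as stated — there is genuine risk of sign or transposition errors, and one must check that $B_1^{-1}A_1+D_2B_2^{-1}$ is symmetric (so the chirp notation $\Phi_{(\cdot)}$ makes sense) using the symplectic relations for $\cA_1$ and $\cA_2$, e.g. $A_j^TD_j-C_j^TB_j=I$ and $A_j^TC_j$, $B_j^TD_j$ symmetric. A secondary, more technical point is justifying the formal interchange of integrals and the validity of \eqref{muFree} at the intermediate step: one should argue on $\mathcal{S}(\rdd)$, where $\mu(\cA_2)F$ is again Schwartz by Proposition \ref{propWellDefOpA}(ii) (applied with $2d$ in place of $d$), so Fubini and the distributional pairing are legitimate, and then everything extends by the density and continuity already recorded in Proposition \ref{propWellDefOpA}. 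I would present the computation for $F\in\mathcal{S}(\rdd)$ and remark that \eqref{WAgeneral} follows by restricting to tensor products.
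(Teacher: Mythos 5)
Your proposal is correct and follows essentially the same route as the paper: write $\mu(\cA)F=\mu(\cA_1)\mu(\cA_2)F$, apply the free-matrix formula \eqref{muFree} twice, then reparametrize the intermediate variable $(y,\eta)\mapsto B_1^T(y,\eta)$, which simultaneously produces the conjugated quadratic form $B_1(B_1^{-1}A_1+D_2B_2^{-1})B_1^T$, the symbol $e^{2\pi iB_2^{-1}B_1^T(y,\eta)\cdot(z,\zeta)}$, the clean linear term $(x,\xi)\cdot(y,\eta)$, and, via its Jacobian $|\det B_1|$, the prefactor $|\det B_1/\det B_2|^{1/2}$. The paper then specializes $F=f\otimes\bar g$ exactly as you do, so no gap remains.
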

\begin{proof}
	Using the decomposition of Lemma \ref{lemmaFreeSymp} $(i)$, we can write, for every $F\in \mathcal{S}(\rdd)$, and up to a unitary factor,
	$$	\mu(\cA)F(x,\xi)=\mu(\cA_1)\mu(\cA_2)F(x,\xi).	$$
	For the rest of the proof, we write
	\[
		X=(x,\xi), \quad Y=(y,\eta), \quad Z=(z,\zeta),
	\]
	while $dY=dyd\eta$ and $dZ=dzd\zeta$.\\
	Applying Lemma \ref{lemmaFreeSymp} $(ii)$ twice and changing variables, up to a unitary constant,
	\begin{equation}\label{calcProp331}\begin{split}
		&\mu(\cA)F(X)=|\det(B_1)|^{-1/2}e^{-i\pi D_1B_1^{-1}X\cdot X}\int_{\rdd}(\mu(\cA_2)F)(Z)e^{2\pi i[B_1^{-1}X\cdot Y-\frac{1}{2}B_1^{-1}A_1Y\cdot Y]}dY\\
		&\qquad=|\det(B_1B_2)|^{-1/2}\!e^{-i\pi D_1B_1^{-1}X\cdot X}\!\!\int_{\rdd}\!e^{-i\pi D_2B_2^{-1}Y\cdot Y}\!\!\int_{\rdd}\!F(Z)e^{2\pi i[B_2^{-1}Y\cdot Z-\frac{1}{2}B_2^{-1}A_2Z\cdot Z]}dZ\\
		&\qquad\qquad\qquad\quad\times e^{2\pi i[B_1^{-1}X\cdot Y-\frac{1}{2}B_1^{-1}A_1Y\cdot Y]}dY\\
		&\qquad=\left|\frac{\det(B_1)}{\det(B_2)}\right|^{1/2}\Phi_{-D_1B_1^{-1}}(X)\!\!\int_{\mathbb{R}^{4d}}F(Z)e^{2\pi i[-\frac{1}{2}(B_1(B_1^{-1}A_1+D_2B_2^{-1})B_1^T Y\cdot Y+B_2^{-1}A_2Z\cdot Z)+X\cdot Y]}\\
		&\qquad\qquad\qquad\quad\times e^{2\pi i B_2^{-1}B_1^TY\cdot Z}dYdZ.
	\end{split}\end{equation}
	This proves (\ref{FIO2}) and (\ref{WAgeneral}) follows plugging $F(z,\zeta)=(f\otimes\bar{g})(z,\zeta)$ in (\ref{FIO2}).
\end{proof}

Moreover, we also have explicit integral formulas for metaplectic Wigner distributions in terms of their factorization via free symplectic matrices.

\begin{corollary}\label{corAlTeoPrecedente}
	Under the same notation of Proposition \ref{propGenCase}, up to a unitary factor and for every $f,g\in\mathcal{S}(\rd)$,
	\begin{equation}\label{WAgeneral}\begin{split}
		&W_\cA(f,g)(x,\xi)=|\det(B_1B_2)|^{-1/2}\Phi_{-D_1B_1^{-1}}(x,\xi)\\
		&\qquad\times\int_{\rdd}\!\!f(y)\overline{g(\eta)}\Phi_{-B_2^{-1}A_2}(y,\eta)\mathcal{F}^{-1}(\Phi_{-(B_1^{-1}A_1+D_2B_2^{-1})})(B_1^{-1}(x,\xi)+B_2^{-T}(y,\eta))dyd\eta\\
		&\quad\quad=|\det(B_1B_2)|^{-1/2}\Phi_{-D_1B_1^{-1}}(x,\xi)\\
		&\quad\quad\times[((f\otimes\bar{g})\Phi_{-B_2^{-1}A_2})\ast(\mathcal{F}^{-1}\Phi_{-(B_1^{-1}A_1+D_2B_2^{-1})}\circ(-B_2^{-T}))]\circ(-B_2^TB_1^{-1})(x,\xi),\notag
	\end{split}
	\end{equation}
where the chirp $\Phi$ is defined in \eqref{chirp}.  In particular, if $B_1^{-1}A_1+D_2B_2^{-1}$ is invertible, then, up to a phase factor,
	\begin{equation}\label{WAgeneralMod}
	\begin{split}
	&W_\cA(f,g)(x,\xi)=|\det(B_1B_2)|^{-1/2}|\det(B_1^{-1}A_1+D_2B_2^{-1})|^{-1}\Phi_{-D_1B_1^{-1}}(x,\xi)\notag\\
		&\qquad\times\int_{\rdd}f(y)\overline{g(\eta)}\Phi_{-B_2^{-1}A_2}(y,\eta)\Phi_{(B_1^{-1}A_1+D_2B_2^{-1})^{-1}}(B_1^{-1}(x,\xi)+B_2^{-T}(y,\eta))dyd\eta.
	\end{split}
	\end{equation}
\end{corollary}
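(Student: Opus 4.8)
The plan is to specialize the general formula \eqref{WAgeneral} from Proposition \ref{propGenCase}, tracing through the Gaussian/chirp integral in the variables $Y=(y,\eta)$ while keeping $Z=(z,\zeta)$ frozen. First I would isolate, in the exponent of \eqref{WAgeneral}, the part that depends on $Y$: it is a quadratic-plus-linear expression of the form $-\pi (B_1(B_1^{-1}A_1+D_2B_2^{-1})B_1^T Y\cdot Y) + 2\pi (X + B_2^{-T}Z)\cdot Y$ (here the linear coefficient collects the $X\cdot Y$ term and the $B_2^{-1}B_1^T Y\cdot Z$ term, using $B_2^{-1}B_1^T Y\cdot Z = Y\cdot B_1 B_2^{-T} Z$ and a relabelling, whence the $B_2^{-T}(y,\eta)$ argument appears). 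The $Y$-integral is then exactly a Fourier transform of the chirp $\Phi_{-(B_1^{-1}A_1+D_2B_2^{-1})}$ evaluated at $B_1^{-1}X + B_2^{-T}Z$, once one accounts for the conjugation by $\mathfrak{T}_{B_1}$, i.e. the rescaling \eqref{AdL} that turns $B_1(\cdots)B_1^T$ acting on $Y$ into $(\cdots)$ acting on $B_1^T Y$; the Jacobian of that rescaling produces the $|\det B_1|^{-1/2}$ already displayed in the prefactor (together with the $|\det B_2|^{-1/2}$ from \eqref{FIO2}).

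Next I would carry out the $Z$-integral. After the $Y$-integration, what remains is $\int_{\rdd} f(z)\overline{g(\zeta)}\,\Phi_{-B_2^{-1}A_2}(z,\zeta)\,\big(\mathcal{F}^{-1}\Phi_{-(B_1^{-1}A_1+D_2B_2^{-1})}\big)\!\big(B_1^{-1}X + B_2^{-T}Z\big)\,dZ$, times the stated prefactor $|\det(B_1B_2)|^{-1/2}\Phi_{-D_1B_1^{-1}}(X)$. Recognizing this integral in $Z$ as a convolution is the second key step: writing $B_1^{-1}X + B_2^{-T}Z = B_2^{-T}(Z - (-B_2^T B_1^{-1}X))$, the integral becomes the convolution of $(f\otimes\bar g)\Phi_{-B_2^{-1}A_2}$ with the dilated kernel $\big(\mathcal{F}^{-1}\Phi_{-(B_1^{-1}A_1+D_2B_2^{-1})}\big)\circ(-B_2^{-T})$, evaluated at $-B_2^T B_1^{-1}X$ — which is precisely the composition-with-$(-B_2^T B_1^{-1})$ appearing in the second displayed line. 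One should note that $\mathcal{F}^{-1}$ and $\mathcal{F}$ of an even chirp agree up to the complex conjugation of the matrix, and since $B_1^{-1}A_1+D_2B_2^{-1}$ is symmetric (this follows from $\cA_1,\cA_2\in Sp(2d,\bR)$, where the symplectic relations force $A_j^T C_j$, $B_j^T D_j$ symmetric and $A_j^T D_j - C_j^T B_j = I$), the chirp notation $\Phi_{(\cdot)}$ is consistent throughout.

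For the last assertion, assuming $M:=B_1^{-1}A_1+D_2B_2^{-1}$ invertible, I would just substitute the closed form of the Fourier transform of a nondegenerate Gaussian chirp, namely $\mathcal{F}^{-1}\Phi_{-M} = |\det M|^{-1}\Phi_{M^{-1}}$ up to a phase (this is the inversion-matrix rule for Gaussians, as in \cite{folland}, and consistent with Lemma \ref{lemmaUC} where the same identity is used for $\mathcal{F}\Phi_C = |\det C|\Phi_{-C^{-1}}$). Plugging this into the convolution representation and expanding the convolution back into an integral over $(y,\eta)$ gives \eqref{WAgeneralMod} directly, the factor $|\det M|^{-1} = |\det(B_1^{-1}A_1+D_2B_2^{-1})|^{-1}$ joining the prefactor.

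The only real obstacle is bookkeeping: keeping straight the several linear changes of variable (the $\mathfrak{T}_{B_1}$-rescaling in $Y$, the $B_2^{-T}$-dilation in $Z$, and the final reparametrization of the convolution argument by $-B_2^T B_1^{-1}$), making sure every Jacobian is absorbed into the two explicit determinant factors and the rest into the "up to a unitary factor" clause, and tracking the phase factors (which the statement deliberately suppresses) so that the chirp $\Phi_{-D_1 B_1^{-1}}(X)$ survives intact while all other unimodular prefactors are legitimately discarded. Since all integrals here are Gaussian-chirp integrals against Schwartz data, convergence and the interchange of the $Y$- and $Z$-integrations are unproblematic and need no separate justification beyond a reference to Proposition \ref{propWellDefOpA}(ii).
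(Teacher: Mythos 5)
Your proposal is correct and follows essentially the same route as the paper: factor $\cA=\cA_1\cA_2$ into free matrices, reduce to the quadratic Fourier integral of Lemma \ref{lemmaFreeSymp}(ii), evaluate the inner Gaussian-chirp integral as $\mathcal{F}^{-1}\Phi_{-(B_1^{-1}A_1+D_2B_2^{-1})}$ at $B_1^{-1}(x,\xi)+B_2^{-T}(y,\eta)$, and invoke $\mathcal{F}^{-1}\Phi_{-M}=|\det M|^{-1}\Phi_{M^{-1}}$ in the nondegenerate case. The only (cosmetic) difference is that you start from the already-established formula \eqref{FIO2} and undo the $B_1(\cdot)B_1^T$ sandwich by the rescaling $\mathfrak{T}_{B_1}$, whereas the paper reapplies Lemma \ref{lemmaFreeSymp}(ii) twice with the roles of the two integration variables interchanged, so the inner quadratic form appears directly without that conjugation; your determinant and Jacobian bookkeeping nonetheless lands on the same prefactor $|\det(B_1B_2)|^{-1/2}$.
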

\begin{proof}
	Using (\ref{FIO2}), we can write, for every $F\in \mathcal{S}(\rdd)$, and up to a unitary factor,
	$$	\mu(\cA)F(x,\xi)=\mu(\cA_1)\mu(\cA_2)F(x,\xi).	$$
	Applying Lemma \ref{lemmaFreeSymp} $(ii)$ twice, up to a unitary constant,
	\begin{equation}\label{calcProp331}\begin{split}
		&\mu(\cA)F(x,\xi)=|\det(B_1B_2)|^{-1/2}e^{-i\pi D_1B_1^{-1}(x,\xi)\cdot (x,\xi)}\int_{\rdd}F(y,\eta)e^{-\pi iB_2^{-1}A_2(y,\eta)\cdot (y,\eta)}\\
		&\qquad\quad\times \int_{\rdd} e^{2\pi i[B_1^{-1}(x,\xi)\cdot (z,\zeta)-\frac{1}{2}B_1^{-1}A_1(z,\zeta)\cdot (z,\zeta)-\frac{1}{2}D_2B_2^{-1}(z,\zeta)\cdot (z,\zeta)+B_2^{-1}(z,\zeta)\cdot (y,\eta)]}dzd\zeta dyd\eta\\
		&\qquad=|\det(B_1B_2)|^{-1/2}\Phi_{-D_1B_1^{-1}}(x,\xi)\int_{\rdd}F(y,\eta)\Phi_{-B_2^{-1}A_2}(y,\eta)\\
		&\qquad\quad\times \int_{\rdd} e^{2\pi i[(B_1^{-1}(x,\xi)+B_2^{-T}(y,\eta))\cdot(z,\zeta)-\frac{1}{2}(B_1^{-1}A_1+D_2B_2^{-1})(z,\zeta)\cdot(z,\zeta)]}dzd\zeta.
	\end{split}\end{equation}
The inner integral is worked out as
	\begin{equation}\label{calcProp332}\begin{split}
		\int_{\rdd}& e^{2\pi i[(B_1^{-1}(x,\xi)+B_2^{-T}(y,\eta))\cdot(z,\zeta)-\frac{1}{2}(B_1^{-1}A_1-D_2B_2^{-1})(z,\zeta)\cdot(z,\zeta)]}dzd\zeta\\
		&=\int_{\rdd}\Phi_{-(B_1^{-1}A_1-D_2B_2^{-1})}(z,\zeta)e^{2\pi i[B_1^{-1}(x,\xi)+B_2^{-T}(y,\eta)]\cdot(z,\zeta)}dzd\zeta\\
		&=\mathcal{F}^{-1}(\Phi_{-(B_1^{-1}A_1+D_2B_2^{-1})})(B_1^{-1}(x,\xi)+B_2^{-T}(y,\eta)).
	\end{split}\end{equation}
	Observe that if $B_1^{-1}A_1+D_2B_2^{-1}$ is invertible, then 
	\[
	\mathcal{F}^{-1}(\Phi_{-(B_1^{-1}A_1+D_2B_2^{-1})})=\frac{1}{|\det(B_1^{-1}A_1+D_2B_2^{-1})|}\Phi_{(B_1^{-1}A_1+D_2B_2^{-1})^{-1}}.
	\]
	Pluggin (\ref{calcProp332}) into (\ref{calcProp331}) with $F(y,\eta)=f(y)\overline{g(\eta)}$ the assertion follows.
\end{proof}

The integral expression of $W_\cA$ provided by Corollary \ref{corAlTeoPrecedente} is useful to establish continuity properties for $W_\cA(f,g)$. In practice, the factorization of $\cA$ via free matrices may be unknown.

	{\begin{definition}\label{defMetaplPsiDo}
	Let $a\in\mathcal{S}'(\mathbb{R}^{2d})$. The \textbf{metaplectic pseudodifferential operator} with \textbf{symbol} $a$ and symplectic matrix $\cA$ is the operator $Op_\mathcal{A}(a):\mathcal{S}(\mathbb{R}^d)\to\mathcal{S}'(\mathbb{R}^{d})$ such that
	\[		\langle Op_\mathcal{A}(a)f,g\rangle=\langle a,W_\mathcal{A}({g},f)\rangle, \quad g\in\mathcal{S}(\mathbb{R}^d).	\]
	\end{definition}
	Observe that this operator is well defined by Proposition \ref{propWellDefOpA}, item $(iii)$. Moreover, when the context requires to stress the matrix $\cA$ that defines $Op_\cA$, we refer to $Op_\cA$ to as the \textbf{$\cA$-pseudodifferential operator} with {symbol} $a$. }
	{\begin{remark}
		In principle, the full generality of metaplectic framework provides a wide variety of unexplored time-frequency representations that fit many different contexts. Namely, in Definition \ref{defMetaplPsiDo}, the symplectic matrix $\cA$ plays the role of a quantization and the quantization of a pseudodifferential operator is typically chosen depending on the the properties that must be satisfied in a given setting.
	\end{remark}}
	
	\begin{example}
		Definition \ref{defMetaplPsiDo} in the case of $\cA_{1/2,2d}\in Sp(2d,\mathbb{R})$ in Example \ref{es1}, provides the well-known Weyl quantization for pseudodifferential operators, that we denote with $Op_{w,2d}(a)$, i.e.
		\[
			Op_{w,2d}(a)f(x)=\int_{\rdd}a\left(\frac{x+y}{2},\xi\right)f(y)e^{2\pi i(x-y)\cdot\xi}dyd\xi, \quad f\in\mathcal{S}(\rd).
		\]
		When $d$ is clear from the context or irrelevant, we write $Op_w$ instead of $Op_{w,2d}$.
	\end{example}
	
	In the following result, we see how the symbols of {metaplectic pseudodifferential operators} change when we modify the symplectic matrix. 
	
	\begin{lemma}\label{lemmaComm}
		Consider $\mathcal{A},\mathcal{B}\in Sp(2d,\mathbb{R})$ and $a,b\in\mathcal{S}'(\mathbb{R}^{2d})$. Then, 
		\begin{equation}\label{changeMatrix}Op_{\mathcal{A}}(a)=Op_\mathcal{B}(b)\quad \Longleftrightarrow\,\,\quad
		b=\mu(\mathcal{B}\mathcal{A}^{-1})(a).
		\end{equation}
	\end{lemma}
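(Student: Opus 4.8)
The plan is to work directly from Definition \ref{defMetaplPsiDo}, which characterizes $Op_{\cA}(a)$ by the sesquilinear pairing $\langle Op_{\cA}(a)f,g\rangle=\langle a,W_{\cA}(g,f)\rangle$ for all $f,g\in\cS(\rd)$. Since $\cS(\rd)$ is dense in $L^2(\rd)$ and the pairing on the right is continuous in $g$ by Proposition \ref{propWellDefOpA}(iii), two metaplectic pseudodifferential operators coincide (as maps $\cS(\rd)\to\cS'(\rd)$) if and only if they induce the same bilinear form, i.e. $\langle a,W_{\cA}(g,f)\rangle=\langle b,W_{\cB}(g,f)\rangle$ for all $f,g\in\cS(\rd)$. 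So the equality $Op_{\cA}(a)=Op_{\cB}(b)$ is equivalent to $\langle a,W_{\cA}(g,f)\rangle=\langle b,W_{\cB}(g,f)\rangle$ for all test functions.

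The key step is then to relate $W_{\cA}$ and $W_{\cB}$. By Definition \ref{def4.1}, $W_{\cA}(g,f)=\mu(\cA)(g\otimes\bar f)$ and $W_{\cB}(g,f)=\mu(\cB)(g\otimes\bar f)$. Writing $F=g\otimes\bar f$ and using the (projective) group homomorphism property of the metaplectic representation, $\mu(\cB)=\mu(\cB\cA^{-1})\mu(\cA)$ up to a phase factor, we get $W_{\cB}(g,f)=\mu(\cB\cA^{-1})W_{\cA}(g,f)$. Substituting into the pairing, $\langle b,W_{\cB}(g,f)\rangle=\langle b,\mu(\cB\cA^{-1})W_{\cA}(g,f)\rangle=\langle \mu(\cB\cA^{-1})^{*}b,W_{\cA}(g,f)\rangle$, where $\mu(\cB\cA^{-1})^{*}=\mu(\cB\cA^{-1})^{-1}=\mu(\cA\cB^{-1})$ since $\mu$ is unitary. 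Comparing with $\langle a,W_{\cA}(g,f)\rangle$ and using that $\{W_{\cA}(g,f):f,g\in\cS(\rd)\}$ spans a dense subspace of $\cS(\rdd)$ (which follows since $\mu(\cA)$ is an isomorphism of $\cS(\rdd)$ by Proposition \ref{propWellDefOpA}(ii), and finite linear combinations of $g\otimes\bar f$ are dense in $\cS(\rdd)$), we conclude $a=\mu(\cA\cB^{-1})b$, equivalently $b=\mu(\cB\cA^{-1})a$. Both directions are captured simultaneously by this chain of equivalences.

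The main subtlety — rather than an obstacle — is the phase factor: the metaplectic representation is only a representation of the double cover, so $\mu(\cB\cA^{-1})\mu(\cA)$ equals $\mu(\cB)$ only up to a unimodular constant. Since $Op_{\cA}(a)$ depends on $a$ linearly and a unimodular scalar multiplying the symbol does not change the operator up to that same scalar, one must either (a) fix a consistent choice of metaplectic lifts throughout, or (b) observe that the statement $b=\mu(\cB\cA^{-1})(a)$ is itself understood up to the same phase ambiguity already present in all the formulas of the paper (e.g. \eqref{muVc}, \eqref{AdL}), so the identity is meaningful and correct at that level of precision. I would state the proof modulo this standard convention, exactly as the rest of Section 3 does, and note that the density of $\{W_{\cA}(g,f)\}$ in $\cS(\rdd)$ together with the continuity statements of Proposition \ref{propWellDefOpA} makes the pairing argument rigorous. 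No hard estimate is needed; the content is purely the unitarity and the group law of $\mu$.
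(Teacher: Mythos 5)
Your proof is correct and follows essentially the same route as the paper's: both arguments rest on the unitarity of $\mu$ (so that the adjoint is the inverse), the projective group law $\mu(\cB)=\mu(\cB\cA^{-1})\mu(\cA)$, and the density of the span of elementary tensors $g\otimes\bar f$ in $\cS(\rdd)$ (equivalently, of $\{W_\cA(g,f)\}$, since $\mu(\cA)$ is an isomorphism of $\cS(\rdd)$); the paper merely pulls $\mu(\cA)$ and $\mu(\cB)$ back onto $a$ and $b$ separately and compares $\mu(\cA^{-1})a=\mu(\cB^{-1})b$, which is a trivial rearrangement of your computation. Your explicit remark on the phase ambiguity of the metaplectic representation is a point the paper leaves implicit, and it is handled appropriately.
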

	\begin{proof}
		Let $f,g\in\mathcal{S}(\mathbb{R}^d)$. Then,
		\begin{align*}
			&\langle Op_\mathcal{A}(a)f,g\rangle=\langle a,\mu(\mathcal{A})(f\otimes \bar{g})\rangle=\langle \mu(\mathcal{A}^{-1})a,f\otimes \bar{g}\rangle,\\
			&\langle Op_\mathcal{B}(b)f,g\rangle=\langle b,\mu(\mathcal{B})(f\otimes \bar{g})\rangle=\langle \mu(\mathcal{B}^{-1})b,f\otimes \bar{g}\rangle.
		\end{align*}
		Since $\mathcal{S}(\mathbb{R}^d)\otimes\mathcal{S}(\mathbb{R}^d)$ is dense in $\mathcal{S}(\mathbb{R}^{2d})$, we deduce that the equality between the two lines holds if and only if
		\[
			\mu(\mathcal{A}^{-1})a=\mu(\mathcal{B}^{-1})b,
		\]
		which is the same as (\ref{changeMatrix}).
	\end{proof}

As a direct consequence of Lemma \ref{lemmaComm} we get two corollaries. The first one provides the distributional kernel of $Op_\cA$.
	
	\begin{cor} Consider $\cA\in Sp(2d,\mathbb{R})$, $a\in \mathcal{S}'(\rdd)$. Then, for all $f,g\in\mathcal{S}(\rd)$,
	\begin{equation}\label{kernelMPO}
		\langle Op_\cA(a)f,g\rangle=\langle k_\cA(a),g\otimes \bar f\rangle,		
	\end{equation}
	where the kernel is given by $k_\cA(a)=\mu(\cA^{-1})a$. 
	\end{cor}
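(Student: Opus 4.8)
The plan is to deduce the kernel formula directly from Lemma \ref{lemmaComm} by specializing the second symplectic matrix $\mathcal{B}$ to the one whose associated quantization is the ordinary integral (kernel) representation. Concretely, I would observe that there is a symplectic matrix, call it $\mathcal{B}_0\in Sp(2d,\mathbb{R})$, for which the metaplectic pseudodifferential operator $Op_{\mathcal{B}_0}(b)$ is simply the integral operator with kernel $b$, i.e. $\langle Op_{\mathcal{B}_0}(b)f,g\rangle=\langle b,g\otimes\bar f\rangle$. This follows immediately from Definition \ref{defMetaplPsiDo}: we need $W_{\mathcal{B}_0}(g,f)=g\otimes\bar f$, i.e. $\mu(\mathcal{B}_0)(g\otimes\bar f)=g\otimes\bar f$, which holds for $\mathcal{B}_0=I_{4d\times 4d}$ (up to the phase ambiguity in the metaplectic representation, which is harmless here since both sides of \eqref{kernelMPO} involve the same operator).

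The key steps, in order, are as follows. First I would recall that by Definition \ref{defMetaplPsiDo}, $\langle Op_\mathcal{A}(a)f,g\rangle=\langle a,W_\mathcal{A}(g,f)\rangle=\langle a,\mu(\mathcal{A})(g\otimes\bar f)\rangle$, and then move $\mu(\mathcal{A})$ to the other side of the pairing, using that $\mu(\mathcal{A})$ is unitary on $L^2$ and extends to the duality $\mathcal{S}'\times\mathcal{S}$, to get $\langle Op_\mathcal{A}(a)f,g\rangle=\langle\mu(\mathcal{A}^{-1})a,g\otimes\bar f\rangle$. Setting $k_\cA(a):=\mu(\mathcal{A}^{-1})a$, this is exactly \eqref{kernelMPO}. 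Alternatively, and perhaps more in the spirit of ``direct consequence of Lemma \ref{lemmaComm},'' I would apply Lemma \ref{lemmaComm} with $\mathcal{B}=I$: writing $Op_\cA(a)=Op_I(b)$, the lemma gives $b=\mu(I\cdot\mathcal{A}^{-1})(a)=\mu(\mathcal{A}^{-1})(a)$, and then I would identify $Op_I(b)$ as the integral operator with kernel $b$ from Definition \ref{defMetaplPsiDo}, which yields \eqref{kernelMPO}.

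I do not expect any serious obstacle here: the statement is essentially a reorganization of the defining pairing of $Op_\cA$ combined with the unitarity of the metaplectic representation. The one point requiring a word of care is the phase-factor ambiguity in $\mu$: since $\mu$ is a representation of the double cover of $Sp(2d,\mathbb{R})$, $\mu(\mathcal{A}^{-1})$ is only defined up to a unimodular constant, so $k_\cA(a)=\mu(\mathcal{A}^{-1})a$ is well-defined only up to such a constant. This is consistent, because $Op_\cA(a)$ itself is defined via $\langle a,\mu(\mathcal{A})(g\otimes\bar f)\rangle$ and hence inherits the same ambiguity, so the two sides of \eqref{kernelMPO} match for any fixed choice of lift. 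I would simply note this parenthetically. The proof is then two lines: expand the definition, use unitarity of $\mu(\mathcal{A})$ on the $\mathcal{S}'$--$\mathcal{S}$ pairing, and read off $k_\cA(a)=\mu(\mathcal{A}^{-1})a$.
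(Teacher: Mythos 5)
Your proposal is correct and coincides with the paper's own proof, which consists precisely of plugging $\mathcal{B}=I_{4d\times 4d}$ into \eqref{changeMatrix}; your alternative direct argument via unitarity of $\mu(\mathcal{A})$ on the $\mathcal{S}'$--$\mathcal{S}$ pairing is just an unwinding of the same computation. Your parenthetical remark on the phase ambiguity is a reasonable (and harmless) addition.
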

	\begin{proof}
	Plug $\mathcal{B}=I_{4d\times 4d}$ into (\ref{changeMatrix}) to get (\ref{kernelMPO}). 
	\end{proof}
	
	Corollary \ref{corollarioallacomm} is a generalization of \eqref{I3} for metaplectic Wigner distributions and pseudodifferential operators. For its statement, we introduce the following notation: if $a\in\mathcal{S}'(\rdd)$, $a\otimes 1$ denotes the tempered distribution of $\mathcal{S}'(\mathbb{R}^{4d})$ defined via tensor product as 
	\begin{equation}\label{defsigma}
	(a\otimes 1)(r,y,\rho,\eta):=a(r,\rho),\quad r,y,\rho,\eta\in\rd.
	\end{equation} 
	
	\begin{cor}\label{corollarioallacomm}
		Consider $\cA\in Sp(4d,\mathbb{R})$, $\mathcal{B}\in Sp(2d,\mathbb{R})$ and $a\in\mathcal{S}'(\mathbb{R}^{2d})$. Then, for all $\mathcal{B}_0\in Sp(4d,\mathbb{R})$, $f,g\in\mathcal{S}(\rd)$,
		\begin{equation}\label{commOpW}
			W_\cA(Op_{\mathcal{B}}(a)f,{g})=Op_{\mathcal{B}_0}(\mu(\mathcal{B}_0\mathcal{A}_{1/2,4d}^{-1})((\mu(\mathcal{A}_{1/2,2d}\mathcal{B}^{-1})a)\otimes 1)\circ\cA^{-1})W_\cA(f,g).
		\end{equation}
		In particular, \\
		(i) if $\mathcal{B}_0=\cA_{1/2,4d}$, then
		\begin{equation}\label{commOpWParticolare}
				W_\cA(Op_\mathcal{B}(a)f,g)=Op_{w,4d}(((\mu(\mathcal{A}_{1/2,2d}\mathcal{B}^{-1})a)\otimes 1)\circ\cA^{-1})W_\cA(f,g);
		\end{equation}
		\noindent
		(ii) if $\mathcal{B}_0=\cA_{1/2,4d}$ and $\mathcal{B}=\cA_{1/2,2d}$, then
		\begin{equation}\label{commOpWParticolare2}
			W_\cA(Op_{w,2d}(a)f,g)=Op_{w,4d}((a\otimes 1)\circ\cA^{-1})W_\cA(f,g).
		\end{equation}
	\end{cor}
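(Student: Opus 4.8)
The plan is to derive Corollary \ref{corollarioallacomm} directly from Lemma \ref{lemmaComm}, by computing how the operator identity $W_\cA \circ Op_\cB(a) = Op_{\cB_0}(c) \circ W_\cA$ translates into a relation on symbols. First I would fix $f,g\in\cS(\rd)$ and unwind the definition of $W_\cA$: by Definition \ref{def4.1}, $W_\cA(Op_\cB(a)f,g)=\mu(\cA)\big((Op_\cB(a)f)\otimes\bar g\big)$. The key structural observation is that the map $f\mapsto (Op_\cB(a)f)\otimes\bar g$ on $\rdd$ is itself a metaplectic pseudodifferential operator in $2d$ dimensions (i.e. acting on functions of $2d$ variables): tensoring a $2d$-dimensional Weyl-type operator with the identity in the remaining $2d$ variables produces, after the identification \eqref{defsigma}, the operator $Op_{w,4d}\big((b\otimes 1)\circ(\text{reshuffle})\big)$ where $b=\mu(\cA_{1/2,2d}\cB^{-1})a$ is the Weyl symbol of $Op_\cB(a)$ as given by Lemma \ref{lemmaComm} with $\cB$ there replaced by $\cA_{1/2,2d}$. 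Concretely $(Op_\cB(a)f)\otimes\bar g = Op_{w,4d}((b\otimes 1))(f\otimes\bar g)$, because the Weyl calculus tensorizes and the Weyl symbol of the identity operator is the constant $1$.

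Once $(Op_\cB(a)f)\otimes\bar g = Op_{w,4d}(b\otimes 1)(f\otimes\bar g)$ is established, I would apply $\mu(\cA)$ to both sides. On the left this gives $W_\cA(Op_\cB(a)f,g)$. On the right, I want to move $\mu(\cA)$ across $Op_{w,4d}(b\otimes 1)$, i.e. conjugate the $4d$-dimensional Weyl operator by $\mu(\cA)$. Here Lemma \ref{lemmaComm} is exactly the tool: $\mu(\cA)\,Op_{w,4d}(c)\,\mu(\cA)^{-1} = Op_\cA(c)$ as operators (since $Op_{w,4d}=Op_{\cA_{1/2,4d}}$ and conjugation by $\mu(\cA)$ shifts the defining symplectic matrix from $\cA_{1/2,4d}$ to $\cA\,\cA_{1/2,4d}\cdots$ — more precisely one uses $\langle Op_\cA(c)F,G\rangle=\langle c,\mu(\cA)(F\otimes\bar G)\rangle$ together with the intertwining $\mu(\cA)(F\otimes\bar G)$, which is the content already exploited in the proof of Lemma \ref{lemmaComm}). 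Then, changing the quantization of this $4d$-operator from matrix $\cA\cA_{1/2,4d}^{-1}\cdot(\cdots)$ to an arbitrary $\cB_0\in Sp(4d,\bR)$ is a second application of Lemma \ref{lemmaComm}, which replaces the symbol $(b\otimes 1)\circ\cA^{-1}$ by $\mu(\cB_0\cA_{1/2,4d}^{-1})\big((b\otimes 1)\circ\cA^{-1}\big)$, yielding precisely \eqref{commOpW} with $b=\mu(\cA_{1/2,2d}\cB^{-1})a$.

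Finally, items (i) and (ii) are immediate specializations: for (i), set $\cB_0=\cA_{1/2,4d}$, so $\mu(\cB_0\cA_{1/2,4d}^{-1})=\mu(I)=\Id$ and \eqref{commOpW} collapses to \eqref{commOpWParticolare}; for (ii), additionally set $\cB=\cA_{1/2,2d}$, so $\mu(\cA_{1/2,2d}\cB^{-1})=\Id$ and the symbol $b=a$, giving \eqref{commOpWParticolare2}. I would note in passing that \eqref{commOpWParticolare2} is the genuine generalization of Wigner's identity \eqref{I3}: the $4d$-Weyl operator with symbol $(a\otimes 1)\circ\cA^{-1}$ plays the role of Wigner's kernel operator $K$.

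The step I expect to be the main obstacle — and the one deserving the most care in the write-up — is the tensorization identity $(Op_\cB(a)f)\otimes\bar g = Op_{w,4d}\big((\mu(\cA_{1/2,2d}\cB^{-1})a)\otimes 1\big)(f\otimes\bar g)$, together with getting the bookkeeping of variables exactly right so that the reshuffling implicit in \eqref{defsigma} and in the precomposition with $\cA^{-1}$ matches the stated formula. One must verify that the Weyl symbol of the operator $F\mapsto (Op_{w,2d}(b)\otimes\Id)F$ on $\cS(\rdd)$ is indeed $(r,y,\rho,\eta)\mapsto b(r,\rho)$ in the correct ordering of the $4d$ phase-space coordinates used throughout the paper; this is a direct computation from the Weyl integral formula but the index ordering is where an error would hide. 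Everything else is a mechanical double application of Lemma \ref{lemmaComm} plus the duality $\langle Op_\cA(c)F,G\rangle=\langle c,W_\cA(G,F)\rangle$.
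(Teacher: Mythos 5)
Your proposal is correct and follows essentially the same route as the paper: convert $Op_\cB(a)$ to Weyl form via Lemma \ref{lemmaComm}, use the tensorization identity $(Op_{w,2d}(b)f)\otimes\bar g=Op_{w,4d}(b\otimes 1)(f\otimes\bar g)$ (which the paper imports as Lemma 4.1 of \cite{CR2021}), conjugate by $\mu(\cA)$, and apply Lemma \ref{lemmaComm} once more to reach the $\cB_0$-quantization. The only slight imprecision is that the conjugation step $\mu(\cA)Op_{w,4d}(\sigma)\mu(\cA)^{-1}=Op_{w,4d}(\sigma\circ\cA^{-1})$ is the standard symplectic covariance of the Weyl calculus, invoked by the paper as a separate fact \eqref{commclassW} rather than as a consequence of Lemma \ref{lemmaComm}.
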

	\begin{proof}
		By \cite[Lemma 4.1]{CR2021}, for all $f,g\in\mathcal{S}(\rd)$ and $a\in\mathcal{S}'(\rdd)$,
		\begin{equation}\label{CR202141}
			(Op_{w,2d}(a)f)\otimes\bar g=Op_{w,4d}(\sigma)(f\otimes \bar g).
		\end{equation}
		Moreover, for all $\mathcal{A}\in Sp(4d,\mathbb{R})$, 
		\begin{equation}
		\label{commclassW}
			\mu(\cA)Op_{w,4d}(\sigma)\mu(\cA)^{-1}=Op_{w,4d}(\sigma\circ\cA^{-1}).
		\end{equation}
		Therefore, using Lemma \ref{lemmaComm}, (\ref{CR202141}) and (\ref{commclassW}) respectively,
		\begin{align*}
			W_\cA(Op_\mathcal{B}(a)f,g)&=\mu(\cA)(Op_{\mathcal{B}}(a)f\otimes\bar g)=\mu(\cA)(Op_{w,2d}(\mu(\mathcal{A}_{1/2,2d}\mathcal{B}^{-1})a)f\otimes \bar g)\\
			&=\mu(\cA)(Op_{w,4d}((\mu(\mathcal{A}_{1/2,2d}\mathcal{B}^{-1})a)\otimes 1)(f\otimes\bar g))\\
			&=Op_{w,4d}(((\mu(\mathcal{A}_{1/2,2d}\mathcal{B}^{-1})a)\otimes 1)\circ\cA^{-1})\mu(\cA)(f\otimes\bar g)\\
			&=Op_{w,4d}(((\mu(\mathcal{A}_{1/2,2d}\mathcal{B}^{-1})a)\otimes 1)\circ\cA^{-1})W_\cA(f,g).
		\end{align*}
		Then, by Lemma \ref{lemmaComm},
		\begin{align*}
			W_\cA(Op_\mathcal{B}(a)f,g)&=Op_{\mathcal{B}_0}(\mu(\mathcal{B}_0\mathcal{A}_{1/2,4d}^{-1})((\mu(\mathcal{A}_{1/2,2d}\mathcal{B}^{-1})a)\otimes 1)\circ\cA^{-1})W_\cA(f,g)
		\end{align*}
		and we are done.
	\end{proof}
	
	\begin{remark}
	Formula (\ref{commOpW}) will be used in the form of (\ref{commOpWParticolare2}) to deduce boundedness properties on modulation spaces for metaplectic pseudodifferential operators. However, the strength of Corollary \ref{corollarioallacomm} relies on its generality: the matrix $\mathcal{B}_0$ in (\ref{commOpW}) can be chosen in $Sp(4d,\mathbb{R})$ arbitrarily, depending on the context. 
	\end{remark}

\section{Decomposability and covariance}

In this section, we focus on metaplectic Wigner distribution as well as metaplectic pseudodifferential operators that are defined in terms of symplectic matrices that satisfy {decomposability} and covariance properties. We will prove that if a matrix $\cA$ satisfies one of these properties, it can be easily factorized to get an explicit expressions for $W_\cA$ and $Op_\cA$ in terms of its blocks.


\subsection{Decomposability and shift-invertibility}
We define decomposable metaplectic Wigner distributions directly in terms of their factorization, as follows. Let $\mathcal{A}$ be a symplectic matrix that factorizes as 
\begin{equation}\label{SympDec}
\mathcal{A}=\mathcal{A}_{FT2}\mathcal{D}_L,
\end{equation} 
where $\mathcal{D}_L$ is defined in \eqref{F2} and 
\[
	\mathcal{A}_{FT2}=\begin{pmatrix}
		I_{d\times d} & 0_{d\times d} & 0_{d\times d} & 0_{d\times d}\\
		0_{d\times d} & 0_{d\times d} & 0_{d\times d} & I_{d\times d}\\
		0_{d\times d} & 0_{d\times d} & I_{d\times d} & 0_{d\times d}\\
		0_{d\times d} & -I_{d\times d} & 0_{d\times d} & 0_{d\times d}
	\end{pmatrix}.
\]
Up to a phase factor, 
\[
	\mu(\mathcal{A}_{FT2})=\cF_2.
\]

\begin{definition}\label{WigDecDef} We say that $\mathcal{A}\in Sp(2d,\mathbb{R})$ is a \textbf{totally Wigner-decomposable} (symplectic) matrix if (\ref{SympDec}) holds for some $L\in GL(2d,\mathbb{R})$. If $\cA$ is totally Wigner-decomposable, we say that $W_\cA$ is of the \textbf{classic type}.
\end{definition} 

\begin{example}
	The matrices of Examples \ref{es1} and \ref{es2} are totally Wigner-decomposable with
	\[
		L_\tau=\begin{pmatrix}
			I_{d\times d} & \tau I_{d\times d}\\
			I_{d\times d} & -(1-\tau) I_{d\times d}
		\end{pmatrix}
	\]
	and
	\[
		L_{ST}=\begin{pmatrix} 0_{d\times d} & I_{d\times d}\\ -I_{d\times d} & I_{d\times d} \end{pmatrix}=J
	\]
	($J$ defined in \eqref{J}) respectively.
\end{example}


Roughly speaking, Wigner distributions of classic type are immediate generalizations of the classical time-frequency representations, such as the (cross)-Wigner distribution $W$ and the STFT.\\

The following result characterizes totally Wigner-decomposable symplectic matrices in terms of their block decomposition.

\begin{proposition}\label{explA}
	Let $\mathcal{A}\in Sp(2d,\mathbb{R})$ be a totally Wigner-decomposable matrix 
	having block decomposition
		\begin{equation}\label{blockAgen}
	\mathcal{A}=\begin{pmatrix}
	A_{11} & A_{12} & A_{13} & A_{14}\\
	A_{21} & A_{22} & A_{23} & A_{24}\\
	A_{31} & A_{32} & A_{33} & A_{34}\\
	A_{41} & A_{42} & A_{43} & A_{44}
	\end{pmatrix},
\end{equation}
	with $A_{ij}\in\mathbb{R}^{d\times d}$ ($i,j=1,\ldots,4$). Then, \\
(i) $\mathcal{A}$ has the block decomposition 
\begin{equation}\label{Arepr}
\mathcal{A}=\begin{pmatrix}
	A_{11} & A_{12} & 0_{d\times d} & 0_{d\times d}\\
	0_{d\times d} & 0_{d\times d} & A_{23} & A_{24}\\
	0_{d\times d} & 0_{d\times d} & A_{33} & A_{34}\\
	A_{41} & A_{42} & 0_{d\times d} & 0_{d\times d}
	\end{pmatrix};
\end{equation} 
	(ii) $L$ and its inverse are related to $\mathcal{A}$ by:
	\begin{equation}\label{relAL}
	L=\begin{pmatrix}
		A_{33}^T & A_{23}^T\\
		A_{34}^T & A_{24}^T
	\end{pmatrix}, \ \ \ L^{-1}=\begin{pmatrix}
			A_{11} & A_{12}\\
			-A_{41} & -A_{42}
		\end{pmatrix}.
	\end{equation}
\end{proposition}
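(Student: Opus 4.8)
The plan is to compute directly the product $\cA = \cA_{FT2}\cD_L$ using the block forms given, and then read off both the special block pattern (\ref{Arepr}) and the relations (\ref{relAL}). First I would write $L \in GL(2d,\bR)$ in $d\times d$ sub-blocks, say $L = \begin{pmatrix} L_{11} & L_{12} \\ L_{21} & L_{22}\end{pmatrix}$, so that $L^{-1}$ and $L^T$ each decompose accordingly; recall from \eqref{MotL} that $\cD_L = \mathrm{diag}(L^{-1}, L^T)$ as a $4d\times 4d$ matrix, i.e. it has $L^{-1}$ in the top-left $2d\times 2d$ block and $L^T$ in the bottom-right $2d\times 2d$ block, with zeros off the $2d$-diagonal. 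Multiplying on the left by $\cA_{FT2}$, whose only nonzero $d\times d$ blocks are $I$ in positions $(1,1)$, $(2,4)$, $(3,3)$ and $-I$ in position $(4,2)$, permutes and sign-changes the block-rows of $\cD_L$: block-row $1$ of $\cA$ is block-row $1$ of $\cD_L$, block-row $2$ of $\cA$ is block-row $4$ of $\cD_L$, block-row $3$ is block-row $3$, and block-row $4$ is $-1$ times block-row $2$. Since $\cD_L$ is block-anti-diagonal (top-left and bottom-right blocks only), this immediately produces the pattern in (\ref{Arepr}): rows $1$ and $4$ of $\cA$ have nonzero entries only in the first two block-columns (coming from $L^{-1}$), while rows $2$ and $3$ have nonzero entries only in the last two block-columns (coming from $L^T$).

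Carrying out the bookkeeping, writing $L^{-1} = \begin{pmatrix} P_{11} & P_{12} \\ P_{21} & P_{22}\end{pmatrix}$, block-row $1$ of $\cA$ gives $(A_{11}, A_{12}) = (P_{11}, P_{12})$ and block-row $4$ gives $(A_{41}, A_{42}) = -(P_{21}, P_{22})$; hence $L^{-1} = \begin{pmatrix} A_{11} & A_{12} \\ -A_{41} & -A_{42}\end{pmatrix}$, which is the second identity in (\ref{relAL}). For the upper identity, block-rows $2$ and $3$ of $\cA$ come from the block-columns $3,4$ carrying $L^T = \begin{pmatrix} L_{11}^T & L_{21}^T \\ L_{12}^T & L_{22}^T\end{pmatrix}$: block-row $3$ gives $(A_{33}, A_{34}) = (L_{11}^T, L_{21}^T)$ and block-row $2$ gives $(A_{23}, A_{24}) = (L_{12}^T, L_{22}^T)$. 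Transposing and re-assembling yields $L = \begin{pmatrix} A_{33}^T & A_{23}^T \\ A_{34}^T & A_{24}^T\end{pmatrix}$, which is the first identity in (\ref{relAL}). Part (i) is then exactly the observation that the remaining blocks ($A_{13}, A_{14}, A_{21}, A_{22}, A_{31}, A_{32}, A_{43}, A_{44}$) are forced to vanish by this computation.

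There is essentially no hard obstacle here; the only thing to be careful about is conventions. One must fix whether $\cA_{FT2}\cD_L$ or $\cD_L\cA_{FT2}$ is intended — the statement of Definition \ref{WigDecDef} together with $\mu(\cA_{FT2}) = \cF_2$ and $\mu(\cD_L) = \mathfrak{T}_L$ pins this down via \eqref{WignerA}, and one should check that the resulting block identities are genuinely consistent with $\cA \in Sp(2d,\bR)$ (for instance, that the $L$ so recovered is the same as the $L$ appearing in $\cD_L$, which is automatic once the multiplication is done). A minor point worth a sentence is that $\cD_L$ being block-anti-diagonal (rather than block-diagonal) as a $4d\times 4d$ matrix is exactly what makes the four nonzero blocks of $\cA$ land in the claimed off-diagonal $d\times d$ positions; I would state this explicitly to avoid any index confusion. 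Finally, uniqueness of $L$ given $\cA$ follows from (\ref{relAL}) since $L$ is expressed directly in terms of the blocks of $\cA$, so no ambiguity remains in Definition \ref{WigDecDef}.
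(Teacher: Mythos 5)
Your proposal is correct and follows essentially the same route as the paper: both simply compute the block product $\cA_{FT2}\cD_L$, observe that left multiplication by $\cA_{FT2}$ permutes (and sign-changes) the block-rows of the block-diagonal matrix $\cD_L=\mathrm{diag}(L^{-1},L^{T})$, and read off \eqref{Arepr} and \eqref{relAL}. The only blemish is terminological: $\cD_L$ is block-\emph{diagonal} (top-left and bottom-right $2d\times 2d$ blocks), not block-anti-diagonal as you twice call it — your actual computation uses the correct structure, so nothing mathematical is affected.
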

\begin{proof}
Let
\begin{equation}\label{decompL}
	L=\begin{pmatrix}
	L_{11} & L_{12}\\
	L_{21} & L_{22}
	\end{pmatrix} \ \ \text{and} \ \ L^{-1}=\begin{pmatrix}
	L_{11}' & L_{12}'\\
	L_{21}' & L_{22}'
	\end{pmatrix}
\end{equation}
be the block decompositions of $L$ and $L^{-1}$ respectively, where $L_{ij}, L_{ij}'\in \mathbb{R}^{d\times d}$ ($i,j=1,2$). Then, the identity (\ref{SympDec}) reads as
\[
	\begin{pmatrix}
	A_{11} & A_{12} & A_{13} & A_{14}\\
	A_{21} & A_{22} & A_{23} & A_{24}\\
	A_{31} & A_{32} & A_{33} & A_{34}\\
	A_{41} & A_{42} & A_{43} & A_{44}
	\end{pmatrix}=\begin{pmatrix}
		L_{11}' & L_{12}' & 0_{d\times d} & 0_{d\times d}\\
		0_{d\times d} & 0_{d\times d} & L_{12}^T & L_{22}^T\\
		0_{d\times d} & 0_{d\times d} & L_{11}^T & L_{21}^T\\
		-L_{21}' & -L_{22}' & 0_{d\times d} & 0_{d\times d}
	\end{pmatrix}.
\]
Thus the expressions for the matrices in $(i)$ and $(ii)$ easily follow. 
\end{proof}

\begin{remark}
	Under the hypothesis of Proposition \ref{explA}, it is easy to check that the identities $LL^{-1}=L^{-1}L=I_{2d\times2d}$ read in terms of the blocks of $L$ and $L^{-1}$ as
	\begin{equation}\label{sympRelAAm1}
		\begin{cases}
			A_{33}^TA_{11}-A_{23}^TA_{41}=I_{d\times d},\\
			A_{33}^TA_{12}=A_{23}^TA_{42},\\
			A_{34}^TA_{11}=A_{24}^TA_{41},\\
			A_{34}^TA_{12}-A_{24}^TA_{42}=I_{d\times d}
		\end{cases} \ \ \text{and} \ \ \begin{cases}
		A_{11}A_{33}^T+A_{12}A_{34}^T=I_{d\times d},\\
			A_{11}A_{23}^T=-A_{12}A_{24}^T,\\
			A_{41}A_{33}^T=-A_{42}A_{34}^T,\\
			A_{41}A_{23}^T+A_{42}A_{24}^T=-I_{d\times d}.
		\end{cases}
	\end{equation}
	These are exactly the block relations that $\mathcal{A}$ and
	 \begin{equation}\label{Ainv}
	 	\mathcal{A}^{-1}=\mathcal{D}_L^{-1}\mathcal{A}_{FT2}^{-1}=\begin{pmatrix}
			A_{33}^T & 0_{d\times d} & 0_{d\times d} & -A_{23}^T\\
			A_{34}^T & 0_{d\times d} & 0_{d\times d} & -A_{24}^T\\
			0_{d\times d} & -A_{41}^T & A_{11}^T & 0_{d\times d}\\
			0_{d\times d} & -A_{42}^T & A_{12}^T & 0_{d\times d}
		\end{pmatrix}
	\end{equation}
	satisfy as symplectic matrices.
\end{remark}

\begin{definition}\label{shift-invertible}
	Given $\cA\in Sp(2d,\bR)$, we say that $W_{\cA}$ (by abuse, $\cA$) is \emph{\bf shift-invertible} if there exists an invertible matrix $E_\cA\in GL(2d,\bR)$ such that
	$$ |W_{\cA}(\pi(w)f,g)|=|T_{E_\cA(w)}W_{\cA}(f,g)|,\quad f,g\in\lrd,\quad w\in\rdd,$$
	where $$T_{E_\cA(w)}W_{\cA}(f,g)(z)=W_{\cA}(f,g)(z-E_\cA w),\quad w,z\in\rdd.$$
\end{definition}

As pointed out in \cite{CR2022}, shift-invertibility of symplectic matrices appears to be the fundamental property that a metaplectic Wigner distribution shall satisfy in order for $W_\cA(\cdot,g)$ to replace the STFT in the definition of modulation spaces.

\begin{lemma}\label{ShiftInvert}
	Let $\mathcal{A}\in Sp(2d,\mathbb{R})$ be a totally Wigner-decomposable as in (\ref{SympDec}). The following statements are equivalent:\\
	(i) $L$ is right-regular\footnote{\begin{definition}
			A $d$-block matrix $L=\left(\begin{array}{cc}
				L_{11} & L_{12}\\
				L_{21} & L_{22}
			\end{array}\right)\in\mathbb{R}^{2d\times2d}$ is called left-regular (resp. right-regular) if the submatrices $L_{11},L_{21}\in\mathbb{R}^{d\times d}$
			(resp. $L_{12},L_{22}\in\mathbb{R}^{d\times d}$) are invertible. 
	\end{definition}};\\
	(ii)
	the matrix
	\begin{equation}\label{defEA}
	E_{\mathcal{A}}:=\begin{pmatrix}
	A_{11} & 0_{d\times d}\\
	0_{d\times d} & A_{23}
	\end{pmatrix}
	\end{equation}
	is invertible;\\
	(iii) $W_{\cA}$ is shift-invertible with $E_\mathcal{A}$ given as in (\ref{defEA}).
\end{lemma}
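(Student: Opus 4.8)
The plan is to prove the chain $(i)\Rightarrow(ii)\Rightarrow(iii)\Rightarrow(i)$ using the explicit block structure from Proposition \ref{explA}.

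First, for $(i)\Leftrightarrow(ii)$: by \eqref{relAL} we have $L^{-1}=\begin{pmatrix} A_{11} & A_{12}\\ -A_{41} & -A_{42}\end{pmatrix}$, and since inversion swaps left- and right-regularity in the obvious way (if $L$ is right-regular, i.e.\ $L_{12}, L_{22}$ invertible, then the relations in \eqref{sympRelAAm1} can be used to see what this forces on the blocks of $L^{-1}$), the cleanest route is to recall from \eqref{decompL} that $L_{12}'=A_{12}$ and $L_{11}'=A_{11}$, while the matching with \eqref{relAL} gives $L_{12}=A_{23}^T$ and $L_{22}=A_{24}^T$. So $L$ right-regular means $A_{23}^T$ and $A_{24}^T$ are invertible, hence $A_{23}$ invertible. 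Combined with the fact that $L^{-1}$ is then automatically left-regular (its $(1,1)$ and $(2,1)$ blocks $A_{11}$ and $-A_{41}$ are invertible — this needs a short argument from the symplectic relations, or alternatively one notes $\cD_L^{-1}$ still equals $\cD_{L^{-1}}$ up to reshuffling and right-regularity of $L$ is equivalent to left-regularity of $L^{-1}$), we get $A_{11}$ invertible. Thus $(i)$ is equivalent to invertibility of both diagonal blocks of $E_\cA$, i.e.\ to $(ii)$.

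For $(ii)\Rightarrow(iii)$: I would compute $W_\cA(\pi(w)f,g)$ directly. Since $\cA=\cA_{FT2}\cD_L$, we have $\mu(\cA)=\cF_2\circ\mathfrak{T}_L$ up to a phase, so $W_\cA(f,g)=\cF_2\big(\mathfrak{T}_L(f\otimes\bar g)\big)$. Writing $w=(w_1,w_2)$ and using $\pi(w)f=M_{w_2}T_{w_1}f$, the tensor $\pi(w)f\otimes\bar g$ differs from $f\otimes\bar g$ by a translation in the first $d$ coordinates by $w_1$, a modulation in the first $d$ coordinates by $w_2$, and a unimodular constant. Applying $\mathfrak{T}_L$ (which is the substitution $F\mapsto\sqrt{|\det L|}\,F(L\cdot)$) turns the translation-by-$(w_1,0)$ into a translation by $L^{-1}(w_1,0)$ and the modulation-by-$(w_2,0)$ into a modulation by $L^T(w_2,0)$ (up to constants). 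Finally $\cF_2$ converts the translation in the second block of variables into a modulation and the modulation in the second block into a translation, while leaving the first-block translation/modulation essentially intact. Tracking which translations survive in modulus, one finds $|W_\cA(\pi(w)f,g)(z)| = |W_\cA(f,g)(z - E_\cA w)|$ with $E_\cA$ exactly the matrix in \eqref{defEA}: the first-block shift comes from $L^{-1}(w_1,0)$ picking out $A_{11}w_1$ via \eqref{relAL}, and the second-block shift comes from the $\cF_2$-transform of the $L^T(w_2,0)$-modulation, whose translation part is governed by $A_{23}$. Since $A_{11}$ and $A_{23}$ are invertible by $(ii)$, $E_\cA\in GL(2d,\bR)$, so $W_\cA$ is shift-invertible with this $E_\cA$.

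For $(iii)\Rightarrow(i)$ (or equivalently $(iii)\Rightarrow(ii)$): if $W_\cA$ is shift-invertible, the matrix $E_\cA$ realizing the translation must be invertible by definition; since the computation above shows that the only candidate for $E_\cA$ (as a linear map $w\mapsto$ translation vector) is $\mathrm{diag}(A_{11},A_{23})$ — uniqueness follows because distinct translations give distinct functions for generic $f,g$ — invertibility of $E_\cA$ forces $A_{11}$ and $A_{23}$ invertible, which is $(ii)$, hence $(i)$. The main obstacle I anticipate is the bookkeeping in $(ii)\Rightarrow(iii)$: carefully propagating the translation and modulation operators through $\mathfrak{T}_L$ and then $\cF_2$, keeping the block structure straight, and isolating precisely which shift survives in absolute value — the phase factors and modulations must be shown to be irrelevant for $|W_\cA|$, and one must verify that the surviving translation vector is genuinely $E_\cA w$ and not some other combination. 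The relations \eqref{relAL} tying $L$ to the blocks $A_{ij}$ are what make this identification work cleanly.
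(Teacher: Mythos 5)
Your route differs from the paper's in one substantive way: the paper disposes of the equivalence $(ii)\Leftrightarrow(iii)$ by citing \cite{CR2022}, whereas you compute it directly by pushing $\pi(w)=M_{w_2}T_{w_1}$ through $\mathfrak{T}_L$ and then $\cF_2$. That computation is correct as sketched: $\mathfrak{T}_L M_{(w_2,0)}T_{(w_1,0)}=M_{L^T(w_2,0)}T_{L^{-1}(w_1,0)}\mathfrak{T}_L$, and after $\cF_2$ the surviving translation is $(A_{11}w_1,A_{23}w_2)=E_\cA w$ (using $L^{-1}(w_1,0)=(A_{11}w_1,-A_{41}w_1)$ and $L^T(w_2,0)=(A_{33}w_2,A_{23}w_2)$ from \eqref{relAL}), while the leftover factors are modulations, irrelevant in modulus. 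This makes your argument more self-contained than the paper's. For $(i)\Leftrightarrow(ii)$ both you and the paper work through Proposition \ref{explA} and the relations \eqref{sympRelAAm1}.

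Two concrete problems remain. First, your cycle $(i)\Rightarrow(ii)\Rightarrow(iii)\Rightarrow(i)$ is not closed: at the last step you reduce $(iii)$ to $(ii)$ (invertibility of $A_{11}$ and $A_{23}$) and then write ``hence $(i)$'', but the implication $(ii)\Rightarrow(i)$ is exactly what you never prove --- you must still show that invertibility of $A_{11}$ and $A_{23}$ forces invertibility of $A_{24}$, i.e.\ right-regularity of $L$. The paper gets this in one line from the relation $A_{11}A_{23}^T=-A_{12}A_{24}^T$ in \eqref{sympRelAAm1}: the left side is invertible, so the square factors $A_{12}$ and $A_{24}^T$ are invertible. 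Second, your ``alternative'' justification that ``right-regularity of $L$ is equivalent to left-regularity of $L^{-1}$'' is false for general invertible matrices: for $L=\left(\begin{smallmatrix}I&I\\0&I\end{smallmatrix}\right)$ one has $L$ right-regular but $L^{-1}=\left(\begin{smallmatrix}I&-I\\0&I\end{smallmatrix}\right)$ not left-regular (zero $(2,1)$ block). The paper's corresponding claim involves $L^{-T}$, not $L^{-1}$, and rests on the Schur-complement formulas of \cite{LS}. Your primary route --- a ``short argument from the symplectic relations'' --- does work and should be carried out: from $A_{34}^TA_{11}=A_{24}^TA_{41}$ and $A_{33}^TA_{11}-A_{23}^TA_{41}=I_{d\times d}$ one gets $(A_{33}^T-A_{23}^TA_{24}^{-T}A_{34}^T)A_{11}=I_{d\times d}$, so $A_{11}$ is invertible once $A_{24}$ is. With these two points filled in, the proof is complete.
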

\begin{proof}
	The equivalence between $(ii)$ and $(iii)$ is proved in \cite{CR2022}. We prove that $(i)$ and $(ii)$ are equivalent.
	
	$(i)\Rightarrow (ii)$. Assume that $L$ is right-regular. We have to prove that both $A_{23}$ and $A_{11}$ are invertible. The right-regularity of $L$ is equivalent to the invertibility of $A_{23}$ and $A_{24}$, hence it remains to check that $A_{11}$ is invertible. 

	It is easy to verify that $L$ is right-regular if and only if $L^{-T}$ is left-regular. By Proposition \ref{explA} $(ii)$,
	\[
		L^{-T}=\begin{pmatrix}
			A_{11}^T & -A_{41}^T\\
			A_{12}^T & -A_{42}^T
		\end{pmatrix},
	\]
	so that $L^{-T}$ is left-regular if and only if $A_{11}$ and $A_{12}$ are invertible, which gives the invertibility of $A_{11}$.
	
	$(i)\Leftarrow (ii)$. If $E_\mathcal{A}$ is invertible, then $A_{11}$ and $A_{23}$ are invertible. By the identity $A_{11}A_{23}^T=-A_{12}A_{24}^T$ in (\ref{sympRelAAm1}), we also have the invertibility of $A_{12}$ and $A_{24}$. Hence, $A_{23}$ and $A_{24}$ are invertible.
\end{proof}
%

\begin{corollary}\label{corReprL}
	Let $\mathcal{A}$ satisfy (\ref{SympDec}) with block decomposition as in (\ref{Arepr}). The following statements are equivalent:\\
	(i) $L$ is right-regular;\\
	(ii) $A_{11}$, $A_{12}$, $A_{23}$ and $A_{24}$ are invertible.\\
	Moreover, if $L$ is right-regular, \\
	(iii) $A_{33}$ is invertible if and only if $A_{42}$ is invertible;\\
	(iv) $A_{34}$ is invertible if and only if $A_{41}$ is invertible.
\end{corollary}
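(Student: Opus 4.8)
The plan is to derive the equivalence $(i)\Leftrightarrow(ii)$ by making explicit an argument already present in the proof of Lemma~\ref{ShiftInvert}, and to obtain $(iii)$ and $(iv)$ by reading off two of the symplectic block identities in \eqref{sympRelAAm1}. By the definition of right-regularity and the first formula in \eqref{relAL}, which gives $L=\begin{pmatrix}A_{33}^T & A_{23}^T\\ A_{34}^T & A_{24}^T\end{pmatrix}$, the matrix $L$ is right-regular precisely when $A_{23}$ and $A_{24}$ are invertible. I would then use the elementary linear-algebra fact (already invoked in the proof of Lemma~\ref{ShiftInvert}, and provable by block inversion with pivot on the invertible block $A_{24}^T$) that $L$ is right-regular if and only if $L^{-T}$ is left-regular. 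Since the second formula in \eqref{relAL} yields $L^{-T}=\begin{pmatrix}A_{11}^T & -A_{41}^T\\ A_{12}^T & -A_{42}^T\end{pmatrix}$, left-regularity of $L^{-T}$ is equivalent to invertibility of $A_{11}$ and $A_{12}$. Putting these together, $L$ is right-regular if and only if all of $A_{11},A_{12},A_{23},A_{24}$ are invertible, which is $(ii)$.

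For the "moreover" part, I would assume $L$ right-regular, so by the above $A_{11},A_{12},A_{23},A_{24}$ are all invertible, and simply invoke two identities from the left-hand system in \eqref{sympRelAAm1}. From $A_{33}^TA_{12}=A_{23}^TA_{42}$ and the invertibility of $A_{12}$ and $A_{23}$ we get $A_{33}^T=A_{23}^TA_{42}A_{12}^{-1}$, so $A_{33}$ is invertible if and only if $A_{42}$ is, which is $(iii)$. Similarly, from $A_{34}^TA_{11}=A_{24}^TA_{41}$ together with the invertibility of $A_{11}$ and $A_{24}$ we obtain $A_{34}^T=A_{24}^TA_{41}A_{11}^{-1}$, so $A_{34}$ is invertible if and only if $A_{41}$ is, which is $(iv)$.

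No step poses a genuine obstacle: the only ingredient worth isolating is the block-matrix duality "$L$ right-regular $\Leftrightarrow$ $L^{-T}$ left-regular", and everything else is substitution into \eqref{relAL} and \eqref{sympRelAAm1}. If one prefers not to rely on that duality, an alternative is to note that $L^{-1}=\begin{pmatrix}A_{11} & A_{12}\\ -A_{41} & -A_{42}\end{pmatrix}$ together with the right-hand identity $A_{11}A_{23}^T=-A_{12}A_{24}^T$ of \eqref{sympRelAAm1} already forces, once $A_{23},A_{24}$ are invertible, that $A_{11}$ and $A_{12}$ are invertible or both singular simultaneously; invertibility of $L^{-1}$ then rules out the singular case, giving $(ii)$ directly.
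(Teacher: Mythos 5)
Your proof is correct and follows essentially the same route as the paper's: the equivalence $(i)\Leftrightarrow(ii)$ is exactly the content extracted from the proof of Lemma~\ref{ShiftInvert} (via \eqref{relAL} and the duality between right-regularity of $L$ and left-regularity of $L^{-T}$), and $(iii)$, $(iv)$ are read off from the identities $A_{33}^TA_{12}=A_{23}^TA_{42}$ and $A_{34}^TA_{11}=A_{24}^TA_{41}$ in \eqref{sympRelAAm1}, which is precisely what the paper's terse proof asserts. Your write-up simply makes those citations explicit.
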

\begin{proof}
	The equivalence between $(i)$ and $(ii)$ is just a restatement of Lemma \ref{ShiftInvert}. $(iii)$ and $(iv)$ follow directly from $(ii)$ and the equalities in \eqref{sympRelAAm1}.
\end{proof}

\begin{remark}
	Assume that $L$ is right-regular with block decomposition as in (\ref{decompL}). Since $L$ is also invertible by its definition, all the assumptions of Theorem 2.1 (ii) and Theorem 2.2 (i) of \cite{LS} are verified. Thus, we can write a Wigner-decomposable matrix $\mathcal{A}$, with $L$ right-regular, explicitly in terms of the blocks of $L$ both as
	\[
		\mathcal{A}=\begin{pmatrix}
			A_{11} & A_{12} & 0_{d\times d} & 0_{d\times d}\\
			0_{d\times d} & 0_{d\times d} & L_{12}^T & L_{22}^T\\
			0_{d\times d} & 0_{d\times d} & L_{11}^T & L_{21}^T\\
		A_{41} & A_{42} & 0_{d\times d} & 0_{d\times d}
		\end{pmatrix}
	\]
	with
	\begin{align*}
	A_{11}&=(L_{11}-L_{12}L_{22}^{-1}L_{21})^{-1},\\
	A_{12}&=-(L_{11}-L_{12}L_{22}^{-1}L_{21})^{-1}L_{12}L_{22}^{-1},\\
	A_{41}&=L_{22}^{-1}L_{21}(L_{11}-L_{12}L_{22}^{-1}L_{21})^{-1},\\ A_{42}&=-L_{22}^{-1}-L_{22}^{-1}L_{21}(L_{11}-L_{12}L_{22}^{-1}L_{21})^{-1}L_{12}L_{22}^{-1};
	\end{align*}
	 or, equivalently, as
	\[\mathcal{A}=\begin{pmatrix}
		A_{11} & A_{12}& 0_{d\times d} & 0_{d\times d}\\
		0_{d\times d} & 0_{d\times d} & L_{12}^T & L_{22}^T\\
		0_{d\times d} & 0_{d\times d} & L_{11}^T & L_{21}^T\\
		 A_{41}& A_{42} & 0_{d\times d} & 0_{d\times d}\end{pmatrix},
	\]
where 
\begin{align*}
A_{11}&=-(L_{21}-L_{22}L_{12}^{-1}L_{11})^{-1}L_{22}L_{12}^{-1},\\ A_{12}&=(L_{21}-L_{22}L_{12}^{-1}L_{11})^{-1},\\ A_{41}&=-L_{12}^{-1}-L_{12}^{-1}L_{11}(L_{21}-L_{22}L_{12}^{-1}L_{11})^{-1}L_{22}L_{12}^{-1}\\ A_{42}&=-L_{12}^{-1}L_{11}(L_{21}-L_{22}L_{12}^{-1}L_{11})^{-1}L_{22}L_{12}^{-1}.
\end{align*}
\end{remark}

We now prove a generalization of \cite[Theorem 2.28]{CR2022}, see also \cite{BCGT20}. 

\begin{theorem}\label{thm227revised}
	Let $L$ be right-regular and $\mathcal{A}$ be as in (\ref{Arepr}). Then, up to a phase factor, for all $f,g\in L^2(\mathbb{R}^d)$ and  $x,\xi\in\mathbb{R}^d$,
	\begin{equation}\label{WaVg}
		W_\mathcal{A}(f,g)(x,\xi)=\sqrt{|\det(L)|}|\det(A_{23})|^{-1}e^{2\pi iA_{23}^{-1}\xi\cdot A_{33}^Tx}V_{\widetilde{g}}f(c(x),d(\xi)),
	\end{equation}
	where
	\begin{equation}\label{defgcd}
		\widetilde{g}(t):=g(A_{24}^T(A_{23}^T)^{-1}t), \ c(x)=(A_{33}^T-A_{23}^T(A_{24}^T)^{-1}A_{34}^T)x \ \text{and} \ d(\xi)=A_{23}^{-1}\xi.
	\end{equation}
\end{theorem}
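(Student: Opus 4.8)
The plan is to derive the explicit formula by combining the factorization $\mathcal{A} = \mathcal{A}_{FT2}\mathcal{D}_L$ with the known metaplectic actions $\mu(\mathcal{A}_{FT2}) = \mathcal{F}_2$ and $\mu(\mathcal{D}_L)F(t) = \sqrt{|\det L|}\,F(Lt)$, then identifying the resulting integral as a short-time Fourier transform of $f$ against a rescaled window $\widetilde g$, up to a chirp. First I would write, for $f,g \in \mathcal{S}(\rd)$ and $u = (u_1,u_2) \in \rdd$,
\[
\mu(\mathcal{D}_L)(f\otimes \bar g)(u) = \sqrt{|\det L|}\,(f\otimes\bar g)(Lu) = \sqrt{|\det L|}\, f\big((Lu)_1\big)\,\overline{g\big((Lu)_2\big)},
\]
where, writing $L$ in blocks as in \eqref{decompL}, $(Lu)_1 = L_{11}u_1 + L_{12}u_2$ and $(Lu)_2 = L_{21}u_1 + L_{22}u_2$. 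Then I would apply $\mathcal{F}_2$ in the second variable: $W_\mathcal{A}(f,g)(x,\xi) = \mathcal{F}_2\big[\mu(\mathcal{D}_L)(f\otimes\bar g)\big](x,\xi)$, which is an integral over $u_2 \in \rd$ of the above against $e^{-2\pi i \xi\cdot u_2}$ with $u_1 = x$ fixed.

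The next step is the change of variables in the $u_2$-integral. Since $L$ is right-regular, $L_{12}$ and $L_{22}$ are invertible; I would substitute so that the argument of $\bar g$ becomes a clean variable. Concretely, set the new integration variable to be (a multiple of) $(Lu)_2 = L_{21}x + L_{22}u_2$, or alternatively work directly to match the STFT integrand $f(t)\overline{g(t - c(x))}e^{-2\pi i d(\xi)\cdot t}$ appearing in \eqref{STFT}. At this point I must translate the blocks $L_{ij}$ back into the blocks $A_{ij}$ of $\mathcal{A}$ via Proposition \ref{explA}(ii), which gives $L_{11}^T = A_{33}$, $L_{12}^T = A_{23}$, $L_{21}^T = A_{34}$, $L_{22}^T = A_{24}$; equivalently $L_{12} = A_{23}^T$, $L_{22} = A_{24}^T$, etc. The factor $|\det(A_{23})|^{-1}$ will emerge as the Jacobian of this substitution (from $L_{12}^T = A_{23}$), the window deformation $\widetilde g(t) = g(A_{24}^T(A_{23}^T)^{-1}t)$ comes from expressing $(Lu)_2$ as $A_{24}^T(A_{23}^T)^{-1}$ times the shifted STFT variable, and the chirp $e^{2\pi i A_{23}^{-1}\xi \cdot A_{33}^T x}$ together with the arguments $c(x), d(\xi)$ come from collecting the $x$- and $\xi$-dependent exponential terms after the substitution. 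One should also use the symplectic block relations \eqref{sympRelAAm1} to simplify $c(x)$ into the stated form $(A_{33}^T - A_{23}^T(A_{24}^T)^{-1}A_{34}^T)x$.

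The main obstacle I anticipate is bookkeeping: correctly tracking which transposes and inverses of the $A_{ij}$-blocks appear, matching the exponent $B_1^{-1}x\cdot y$-type terms against the STFT's $e^{-2\pi i \xi\cdot t}$ and $g(t-x)$ structure, and verifying that all the pieces of the phase that are \emph{not} of the advertised chirp/STFT form actually cancel or recombine — this is exactly where the symplectic identities \eqref{sympRelAAm1} must be invoked rather than just linear algebra. A secondary point is the "up to a phase factor" caveat: the metaplectic representation is only projective, so I would not attempt to pin down the unimodular constant and would simply state the identity modulo such a factor, consistently with the hypotheses. Finally, since all manipulations are continuous on $L^2$ by Proposition \ref{propWellDefOpA}(i) and the STFT is continuous, proving the identity for $f,g \in \mathcal{S}(\rd)$ and extending by density gives the claim for all $f,g \in L^2(\rd)$.
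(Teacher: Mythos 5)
Your proposal is correct, but it takes a different route from the paper: the paper disposes of this theorem in one line by citing an external result (\cite[Theorem 3.8]{CT2020}), whereas you give a self-contained derivation from the factorization $\cA=\cA_{FT2}\cD_L$. Your computation does go through: writing $L$ as in \eqref{relAL}, one has
\begin{equation*}
W_\cA(f,g)(x,\xi)=\sqrt{|\det L|}\int_{\rd} f(A_{33}^Tx+A_{23}^Ty)\,\overline{g(A_{34}^Tx+A_{24}^Ty)}\,e^{-2\pi i\xi\cdot y}\,dy,
\end{equation*}
and the substitution $t=A_{33}^Tx+A_{23}^Ty$ produces the Jacobian $|\det A_{23}|^{-1}$, the phase $e^{2\pi iA_{23}^{-1}\xi\cdot A_{33}^Tx}$, the frequency variable $d(\xi)=A_{23}^{-1}\xi$, and the window argument $A_{24}^TA_{23}^{-T}t+(A_{34}^T-A_{24}^TA_{23}^{-T}A_{33}^T)x=A_{24}^TA_{23}^{-T}\bigl(t-c(x)\bigr)$, which is exactly $\widetilde g(t-c(x))$ with $c(x)$ as in \eqref{defgcd}. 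One remark: your anticipated need for the symplectic relations \eqref{sympRelAAm1} to simplify $c(x)$ does not materialize — the stated form of $c(x)$ is obtained by pure linear algebra from the change of variables, and right-regularity of $L$ already supplies the invertibility of $A_{23}$ and $A_{24}$ directly (they are $L_{12}^T$ and $L_{22}^T$). The trade-off is the usual one: the paper's citation is shorter and inherits the generality of the quoted result, while your argument is verifiable within the paper and makes transparent where each factor in \eqref{WaVg} comes from.
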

Observe that all the inverses that appear in (\ref{WaVg}) exist if $L$ is right-regular by Corollary \ref{corReprL} $(ii)$ and Theorem 2.1 $(ii)$ of \cite{LS}.
\begin{proof}
	The proof is a straightforward consequence of \cite[Theorem 3.8]{CT2020}. 
\end{proof}

\begin{theorem}\label{thm28revised}
	Let $0<p,q\leq\infty$, $L$ be right-regular and $\mathcal{A}$ be as in (\ref{Arepr}). Let $m\in\mathcal{M}_v$ be such that 
	\begin{equation}\label{assumptOnm}
		m((A_{33}^T-A_{23}^T(A_{24}^T)^{-1}A_{34}^T)\cdot,A_{23}^{-1}\cdot)\asymp {m(\cdot,\cdot)}.
	\end{equation}
	Then, for all $g\in\mathcal{S}(\mathbb{R}^d)$,
	\begin{equation}\label{charMods}
		f\in M^{p,q}_m(\rd) \ \ \ \Longleftrightarrow \ \ \ W_\mathcal{A}(f,g)\in L^{p,q}_m(\mathbb{R}^{2d}).
	\end{equation}
	Moreover, if $1\leq p,q\leq\infty$ and there exist $0<C_1(L)\leq C_2(L)$ such that
	\begin{equation}\label{reqOnv}
		C_1(L)v(x,\xi)\leq v((A_{23}^{T}(A_{24}^T)^{-1})x,A_{23}^{-1}A_{24}\xi)\leq C_2(L)v(x,\xi), \quad (x,\xi)\in\rdd,
	\end{equation}
	then $g$ can be chosen in the larger class $M^1_v(\mathbb{R}^d)$.
\end{theorem}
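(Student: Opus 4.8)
My plan is to deduce the statement directly from the explicit formula of Theorem~\ref{thm227revised}, turning the characterization into the classical one for the STFT. Since $L$ is right-regular, Corollary~\ref{corReprL}$(ii)$ guarantees that $A_{23}$ and $A_{24}$ are invertible, and the Remark following it (together with the block relations) shows that the linear maps in \eqref{defgcd} are invertible automorphisms of $\rd$; in fact $c=A_{11}^{-1}$ and $d=A_{23}^{-1}$. Moreover $\widetilde g=g\circ\big(A_{24}^T(A_{23}^T)^{-1}\big)$ is again a nonzero Schwartz function. Because the phase factor in \eqref{WaVg} has modulus one, Theorem~\ref{thm227revised} yields the pointwise identity
\[
|W_\cA(f,g)(x,\xi)|=\sqrt{|\det L|}\,|\det A_{23}|^{-1}\,|V_{\widetilde g}f(c(x),d(\xi))|,\qquad x,\xi\in\rd.
\]
I would first extend this from $f,g\in\lrd$ to $f\in\cS'(\rd)$, $g\in\cS(\rd)$: both sides make sense there ($W_\cA(f,g)=\mu(\cA)(f\otimes\bar g)$ and $V_{\widetilde g}f=\langle f,\pi(\cdot)\widetilde g\rangle$) and the identity passes to the limit by density and the continuity of Proposition~\ref{propWellDefOpA}.

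Next I would insert this identity into the defining norm \eqref{norm-mod} and perform the substitution $x'=c(x)$ in the inner integral and $\xi'=d(\xi)$ in the outer one. The crucial point is that $c$ acts only on the first $d$ phase-space variables and $d$ only on the last $d$, so the change of variables respects the mixed $(p,q)$-structure and merely produces Jacobian constants depending on $L,p,q$, while the weight $m(x,\xi)$ is replaced by $m(c^{-1}x',d^{-1}\xi')$. Hypothesis \eqref{assumptOnm} says exactly $m(c(\cdot),d(\cdot))\asymp m(\cdot,\cdot)$, hence equivalently $m(c^{-1}\cdot,d^{-1}\cdot)\asymp m(\cdot,\cdot)$, so I would conclude
\[
\|W_\cA(f,g)\|_{\Lmpq}\asymp\|V_{\widetilde g}f\|_{\Lmpq},
\]
with constants independent of $f$ (natural modifications with $p=\infty$ or $q=\infty$).

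Then I would invoke the characterization of $\Mmpq(\rd)$ via the STFT, valid for $0<p,q\le\infty$ and $m\in\cM_v$ with a window-independent family of equivalent quasi-norms (see e.g. \cite{grochenig,Galperin2004}): applied with the admissible window $\widetilde g\in\cS(\rd)\setminus\{0\}$ it gives $V_{\widetilde g}f\in\Lmpq(\rdd)$ iff $f\in\Mmpq(\rd)$, which combined with the previous step proves \eqref{charMods}. For the last assertion ($1\le p,q\le\infty$), the same STFT-characterization holds for windows in $\mvv(\rd)$, so it remains to check $\widetilde g\in\mvv(\rd)$ whenever $g\in\mvv(\rd)$. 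Writing $B:=A_{24}^T(A_{23}^T)^{-1}$, so that $B^{-1}=A_{23}^T(A_{24}^T)^{-1}$ and $B^{T}=A_{23}^{-1}A_{24}$, formulas \eqref{MotL}--\eqref{AdL} give $\widetilde g=|\det B|^{-1/2}\mu(\cD_B)g$ with $\cD_B=\mathrm{diag}(B^{-1},B^T)$. Since $\mu(\cD_B)$ maps $\mvv(\rd)$ boundedly onto $M^1_{v\circ\cD_B}(\rd)$ --- on the STFT side it composes phase space with $\cD_B$, which sends $(x,\xi)$ to $(A_{23}^T(A_{24}^T)^{-1}x,\,A_{23}^{-1}A_{24}\xi)$ --- and since \eqref{reqOnv} is precisely the two-sided bound $v\circ\cD_B\asymp v$, we get $M^1_{v\circ\cD_B}(\rd)=\mvv(\rd)$ and hence $\widetilde g\in\mvv(\rd)$; the arguments above then apply verbatim.

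The proof has no deep difficulty: it is essentially a corollary of Theorem~\ref{thm227revised} and the classical STFT theory of modulation spaces. The only points demanding care are the weight-covariance bookkeeping encoded in \eqref{assumptOnm} and \eqref{reqOnv} (and the observation that the substitution keeps the two halves of phase space decoupled, which is what makes the mixed-norm argument work), and, for the Banach endpoint, the identification of the modified window $\widetilde g$ as a metaplectic rescaling of $g$ so that \eqref{reqOnv} keeps it admissible. These I regard as the main --- but modest --- obstacles.
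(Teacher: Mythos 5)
Your proposal is correct and follows essentially the same route as the paper: both proofs insert the identity \eqref{WaVg} of Theorem \ref{thm227revised} into the mixed norm, use \eqref{assumptOnm} to absorb the change of variables into the weight, invoke window-independence of the $M^{p,q}_m$-(quasi-)norm for the Schwartz window $\widetilde g$, and for the endpoint case show $\widetilde g\in M^1_v$ iff $g\in M^1_v$ by relating $V_\varphi\widetilde g$ to $V_{\widetilde\varphi}g$ composed with the diagonal matrix $\mathrm{diag}\big(A_{23}^T(A_{24}^T)^{-1},A_{23}^{-1}A_{24}\big)$ and applying \eqref{reqOnv}. Your phrasing of the last step via the metaplectic rescaling $\mu(\cD_B)$ is only a cosmetic repackaging of the paper's direct STFT computation.
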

\begin{proof}
	The proof is a straightforward consequence of Theorem \ref{thm227revised}. In fact, for $g\in\mathcal{S}(\mathbb{R}^d)$ and $L$ right-regular, the function $\widetilde{g}$ defined as in (\ref{defgcd}) is in $\mathcal{S}(\mathbb{R}^d)$ and by (\ref{WaVg}),
\begin{align*}
		\Vert f\Vert_{M^{p,q}_m}&\asymp \Vert V_{\widetilde{g}}f\Vert_{L^{p,q}_m}\asymp \Vert W_\mathcal{A}(f,g)((A_{33}^T-A_{23}^T(A_{24}^T)^{-1}A_{34}^T)^{-1}\cdot,A_{23}\cdot)\Vert_{L^{p,q}_m}\\
		&\asymp\Vert W_{\mathcal{A}}(f,g)\Vert_{L^{p,q}_m},
\end{align*}
	by assumption (\ref{assumptOnm}).
	
	Assume that $\varphi\in \mathcal{S}(\mathbb{R}^d)$. Then, 
	\begin{align*}
		V_\varphi\widetilde{g}(x,\xi)&=\int_{\mathbb{R}^d}g(Ct)e^{-2\pi i\xi \cdot t}\overline{\varphi(t-x)}dt\asymp \int_{\mathbb{R}^d}g(s)e^{-2\pi i((C^{-1})^T\xi)\cdot s}\overline{\varphi(C^{-1}(s-Cx))}ds\\
		&=V_{\widetilde{\varphi}}g(B(x,\xi)),
	\end{align*}
	where
	\[
		C=A_{24}^TA_{23}^{-T}\text{,} \quad\quad B=\begin{pmatrix} C & 0_{d\times d}\\ 0_{d\times d} & C^{-T}
		\end{pmatrix}=\mathcal{D}_{A_{24}^{-1}}\mathcal{D}_{A_{23}^T}
	\]
	and $\widetilde\varphi(t)=\varphi(C^{-1}t)$. Condition (\ref{reqOnv}) implies that $g\in M^1_v$ if and only if $\widetilde{g}\in M^1_v$: 
\begin{align*}	
		\Vert \widetilde{g}\Vert_{M^1_{v}}&\asymp\Vert V_\varphi \widetilde{g}\Vert_{L^1_v}\asymp\Vert V_{\widetilde\varphi}g(B\cdot)v(\cdot)\Vert_{L^1}
		\asymp\Vert V_{\widetilde\varphi}g(\cdot) v(B^{-1}\cdot)\Vert_{L^1} \\
		&\asymp\Vert V_{\widetilde\varphi}g(\cdot)v(\cdot)\Vert_{L^1}
		\asymp\Vert g\Vert_{M^1_v}.
	\end{align*}
	Hence, for $1\leq p,q\leq\infty$, we can choose $g$ in $M^1_v(\mathbb{R}^d)$.
\end{proof}

Next, we generalize the metaplectic Wigner distributions associated to Wigner-decomposable matrices in order to include multiplications by chirps. These Wigner distributions, along with the right-regularity condition on $L$, characterize modulation spaces. 
\begin{definition}
	We say that a matrix $\mathcal{A}\in Sp(d,\mathbb{R})$ is \textbf{Wigner-decomposable} if $\mathcal{A}=V_C\mathcal{A}_{FT2}\mathcal{D}_L$ for some $C\in\rd$ symmetric and $L\in GL(d,\mathbb{R})$.
\end{definition}
\begin{theorem}\label{charWAtotWDec}
		Let $\mathcal{A}\in Sp(2d,\mathbb{R})$ be a Wigner-decomposable matrix with decomposition $\cA=V_C\cA_{FT2}\mathcal{D}_L$
		\[
			C=\begin{pmatrix}
				C_{11} & C_{12}\\
				C_{12}^T & C_{22}
			\end{pmatrix}
		\]
		$C_{11}^T=C_{11}$ and $C_{22}^T=C_{22}$. Then,  for all $f,g\in L^2(\mathbb{R}^d)$,  up to a phase factor,
		\begin{equation}\label{CtypeFormula}\begin{split}
			W_\cA(f,g)(x,\xi)=\widetilde{\Phi}_{C}(x,\xi)
			\int_{\mathbb{R}^d} f(x+(I_{d\times d}-A_{11})y)\overline{g(x-A_{11}y)}e^{-2\pi i\xi \cdot y}dy,
		\end{split}\end{equation}
		with 
		$$\widetilde{\Phi}_{C}(x,\xi)=e^{2\pi i C_{12}^Tx\cdot\xi}\Phi_{C_{11}}(x)\Phi_{C_{22}}(\xi)$$
		and the chirp functions $\Phi_{C_{11}}$, $\Phi_{C_{22}}$ are defined in \eqref{chirp}.
		If the matrix $L$ is right-regular, 
		\begin{equation}\label{WASTFTCtype}
		\begin{split}
			W_\mathcal{A}(f,g)(x,\xi)= |\det(I_{d\times d}-A_{11})|^{-1} \widetilde{\Phi}'_{C}(x,\xi)
			 V_{\tilde{g}}f(A_{11}^{-1}x,(I-A_{11}^T)^{-1}\xi),
		\end{split}
		\end{equation}
		where
		$$\widetilde{\Phi}'_{C}(x,\xi)=\Phi_{C_{11}}(x)\Phi_{C_{22}}(\xi)e^{2\pi i( C_{12}^T+(I-A_{11})^{-1})x\cdot\xi}, $$
		and $\tilde{g}(t)=g(-A_{11}(I_{d\times d}-A_{11})^{-1}t)$.
	\end{theorem}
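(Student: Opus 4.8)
The plan is to reduce everything to the metaplectic calculus already developed. First I would write the defining factorization $\cA = V_C \cA_{FT2} \mathcal{D}_L$ and pass to the level of metaplectic operators: up to a phase factor,
\[
\mu(\cA) = \mu(V_C)\,\mu(\cA_{FT2})\,\mu(\mathcal{D}_L) = \Phi_C \cdot \big(\cF_2 (\mathfrak{T}_L \,\cdot\,)\big),
\]
using \eqref{muVc} for $\mu(V_C)$ (multiplication by the chirp $\Phi_C$ on $\rdd$, which with the block form of $C$ factors as $e^{2\pi i C_{12}^T x\cdot \xi}\Phi_{C_{11}}(x)\Phi_{C_{22}}(\xi) = \widetilde{\Phi}_C(x,\xi)$), the identity $\mu(\cA_{FT2}) = \cF_2$, and \eqref{AdL} for $\mu(\mathcal{D}_L)$. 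Applying this to $F = f\otimes\bar g$ and tracking the change of variables induced by $\mathfrak{T}_L$ and by $\cF_2$ yields an integral of the form $\widetilde\Phi_C(x,\xi)\int f(\,\cdot\,)\overline{g(\,\cdot\,)}e^{-2\pi i\xi\cdot y}\,dy$. The task is then to verify that the linear substitutions in the arguments of $f$ and $g$ collapse to $x+(I-A_{11})y$ and $x - A_{11}y$; here I would invoke Proposition \ref{explA} to express the relevant blocks of $\mathcal{A}$ (in particular $A_{11}$) in terms of the blocks of $L$, so that the combinatorics of $L$, $L^{-1}$ disappear and only $A_{11}$ survives. This gives \eqref{CtypeFormula}.

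For the second formula \eqref{WASTFTCtype}, I would start from \eqref{CtypeFormula} and recognize the integral as a short-time Fourier transform after a rescaling. Substituting $t = -A_{11}(I-A_{11})^{-1}y$ (legitimate once $I - A_{11}$ is invertible, which follows from right-regularity of $L$ via Corollary \ref{corReprL} and the block relations: indeed $A_{11}$ is invertible and, tracing through \eqref{relAL}, $I - A_{11}$ is invertible precisely when the appropriate submatrix of $L$ is), the integrand becomes $f(x - A_{11}(I-A_{11})^{-1}\cdot\text{something})$ paired with $\overline{g(-A_{11}(I-A_{11})^{-1}t)} = \overline{\widetilde g(t)}$, and the exponential reorganizes into the STFT kernel $e^{-2\pi i \eta\cdot s}$ with shifted arguments. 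Collecting the Jacobian $|\det(I-A_{11})|^{-1}$ and the leftover chirp factor (which produces the extra phase $e^{2\pi i(I-A_{11})^{-1}x\cdot\xi}$ added to $\widetilde\Phi_C$, giving $\widetilde\Phi'_C$) yields the claimed expression with arguments $(A_{11}^{-1}x, (I - A_{11}^T)^{-1}\xi)$.

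I expect the main obstacle to be bookkeeping rather than anything conceptual: correctly propagating the several linear changes of variables (from $\mathcal{D}_L$, from the partial Fourier transform $\cF_2$, and from the final substitution to the STFT variable) while keeping the chirp factors and Jacobians straight, and in particular checking that all the matrix products conspire so that only $A_{11}$ and $I - A_{11}$ appear in the end. A secondary point requiring care is the "up to a phase factor" disclaimer: each application of \eqref{muVc}, \eqref{muJ}, \eqref{AdL}, and each change of variables can introduce a unimodular constant, so I would state once at the outset that all identities are modulo such a factor and not track it further. The invertibility of $I - A_{11}$ under the right-regularity hypothesis is the only genuinely algebraic lemma needed, and it follows from the explicit block expressions for $A_{11}$ in terms of $L$ recorded in the Remark following Corollary \ref{corReprL}.
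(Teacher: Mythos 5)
Your proposal is correct and follows essentially the same route as the paper: factor $\mu(\cA)=\mu(V_C)\,\mu(\cA_{FT2})\,\mu(\cD_L)$, apply the explicit formulas \eqref{muVc}, \eqref{AdL} and $\mu(\cA_{FT2})=\cF_2$ to $f\otimes\bar g$ to get \eqref{CtypeFormula}, and then recognize the resulting integral as a rescaled STFT. The only cosmetic difference is that for \eqref{WASTFTCtype} the paper invokes the ready-made formula \eqref{WaVg} applied to $\cA_{FT2}\cD_L$, whereas you carry out the substitution $t=x+(I_{d\times d}-A_{11})y$ by hand; both yield the same extra phase $e^{2\pi i (I_{d\times d}-A_{11})^{-1}x\cdot\xi}$ and Jacobian $|\det(I_{d\times d}-A_{11})|^{-1}$.
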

	\begin{proof}
		Formula (\ref{CtypeFormula}) is proved using the explicit definitions of the operators associated to $V_C$, $\mathcal{A}_{FT2}$ and $\mathcal{D}_L$. In fact, up to a phase factor,
		\begin{align*}
			\mu(\mathcal{A}_{FT2}\mathcal{D}_L)(f\otimes\bar g)(x,\xi)&=\mathcal{F}_2\mathfrak{T}_{L}(f\otimes \bar{g})(x,\xi)=\int_{\rd} (f\otimes \overline{g})(L(x,y))e^{-2\pi i\xi\cdot y}dy\\
			&=\int_{\rd} f(x+(I_{d\times d}-A_{11})y)\overline{g(x-A_{11}y)}e^{-2\pi i\xi \cdot y}dy.
		\end{align*}
		For $z=(x,\xi)$ and $V_C$ as in the statement, we have by \eqref{muVc}
		\[
			\mu(V_C)F(z)=e^{\pi i Cz\cdot z}F(z)=e^{i\pi [(C_{11}x+C_{12}\xi)\cdot x+(C_{12}^T x+C_{22}\xi)\cdot \xi]}F(z), \quad F\in L^2(\mathbb{R}^{2d}).
		\]	
		Furthermore, formula (\ref{WaVg}) applied to the symplectic matrix $\mathcal{A}_{FT2}\mathcal{D}_L$ ($L$ as in the statement, see \cite[Theorem 2.27]{CR2022}, where the formula was obtained in this particular case) tells that, up to a unitary constant, 
		\begin{align*}
			W_\mathcal{A}(f,g)(z)&=e^{\pi i (Cz)\cdot z}|\det(I_{d\times d}-A_{11})|^{-1}e^{2\pi i((I_{d\times d}-A_{11}^T)^{-1}\xi)\cdot x}\\
			&\qquad\qquad \times V_{\tilde{g}}f(A_{11}^{-1}x,(I_{d\times d}-A_{11}^T)^{-1}\xi),
		\end{align*}
		for $f,g\in L^2(\mathbb{R}^d)$ and $\tilde g$ being as in the statement.
	\end{proof}
	
	As a consequence, we extend \cite[Theorem 2.28]{CR2022} to all Wigner-decomposable matrices. 
	
	\begin{corollary}\label{lemmaE}
		Under the notation of Theorem \ref{charWAtotWDec}, the following statements are equivalent:\\
		(i) $V_C\mathcal{A}_{FT2}\mathcal{D}_L$ is shift-invertible,\\
		(ii) $\mathcal{A}_{FT2}\mathcal{D}_L$ is shift-invertible,\\
		(iii) $L$ is right-regular.
	\end{corollary}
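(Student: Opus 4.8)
The plan is to collapse the three-way equivalence onto Lemma~\ref{ShiftInvert}, so that the only genuinely new point is the harmlessness of the chirp factor $V_C$. The implication (ii)$\Leftrightarrow$(iii) is immediate: the matrix $\cA_{FT2}\cD_L$ is totally Wigner-decomposable exactly in the sense of \eqref{SympDec}, so Lemma~\ref{ShiftInvert} applies verbatim and yields that $\cA_{FT2}\cD_L$ is shift-invertible precisely when $L$ is right-regular, with $E_\cA=\mathrm{diag}(A_{11},A_{23})$. It therefore remains only to prove (i)$\Leftrightarrow$(ii).

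For this I would use that, by \eqref{muVc}, the operator $\mu(V_C)$ is — up to a phase factor — multiplication on $L^2(\rdd)$ by the unimodular chirp $z\mapsto e^{\pi i Cz\cdot z}$. Writing $W_{V_C\cA_{FT2}\cD_L}(f,g)=\mu(V_C)\mu(\cA_{FT2}\cD_L)(f\otimes\bar g)=\mu(V_C)W_{\cA_{FT2}\cD_L}(f,g)$, this gives the pointwise identity
$$|W_{V_C\cA_{FT2}\cD_L}(f,g)(z)|=|W_{\cA_{FT2}\cD_L}(f,g)(z)|,\qquad z\in\rdd,$$
for every $f,g\in L^2(\rd)$ (the same fact is also directly visible from \eqref{CtypeFormula}, whose only difference from the Wigner-type integral is the unimodular prefactor $\widetilde{\Phi}_C$). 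Applying this identity once with $f$ replaced by $\pi(w)f$ and once with $f$ itself, and then comparing with Definition~\ref{shift-invertible}, one checks that the defining relation $|W_\cB(\pi(w)f,g)|=|T_{E(w)}W_\cB(f,g)|$ holds for $\cB=V_C\cA_{FT2}\cD_L$ with a given invertible $E$ if and only if it holds for $\cB=\cA_{FT2}\cD_L$ with the same $E$. Hence $V_C\cA_{FT2}\cD_L$ and $\cA_{FT2}\cD_L$ are shift-invertible simultaneously, and in fact share the same matrix $E_\cA$; here one only needs the elementary remark that $V_C$ modifies merely the bottom $2d$ rows of $\cA_{FT2}\cD_L$, so that the blocks $A_{11},A_{23}$ — and thus $E_\cA$ in \eqref{defEA} — are unaffected by the passage from $\cA_{FT2}\cD_L$ to $\cA$.

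There is essentially no obstacle in this argument: the whole point is the triviality that chirp multiplication is an isometry of $L^2(\rdd)$ with unimodular symbol, which forces $|W_\cA|$ to be insensitive to the $V_C$ factor. The single line deserving care is verifying that the candidate matrix $E_\cA$ does not change when $V_C$ is inserted, which follows from the block structure of $V_C$; everything else is a bookkeeping combination of Lemma~\ref{ShiftInvert} with Definition~\ref{shift-invertible}.
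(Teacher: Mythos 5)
Your proof is correct and follows essentially the same route as the paper: the equivalence of (ii) and (iii) is delegated to Lemma~\ref{ShiftInvert}, and the equivalence of (i) and (ii) rests on the pointwise identity $|W_{V_C\mathcal{A}_{FT2}\mathcal{D}_L}(f,g)|=|W_{\mathcal{A}_{FT2}\mathcal{D}_L}(f,g)|$ coming from the unimodular chirp prefactor (the paper extracts this from Theorem~\ref{charWAtotWDec}, you from \eqref{muVc} directly, which is the same fact). Your added observation that $E_\cA$ is literally unchanged because $V_C$ only alters the bottom $2d$ rows is a nice extra precision not spelled out in the paper, but the argument is otherwise identical.
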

	\begin{proof}
		The equivalence $(i)\Leftrightarrow(ii)$ is proved in Corollary \ref{corReprL}. The equivalence $(ii)\Leftrightarrow(iii)$ follows from Theorem \ref{charWAtotWDec}, which gives:
		\[
			W_\mathcal{A}(f,g)(x,\xi)=e^{i\pi [(C_{11}x+C_{12}\xi)\cdot x+(C_{12}^T x+C_{22}\xi)\cdot \xi]} W_{\mathcal{A}_{FT2}\mathcal{D}_L}(f,g)(x,\xi),
		\]
		so that: 
		\begin{equation}\label{eqModuli}
			|W_\mathcal{A}(f,g)(x,\xi)|=|W_{\mathcal{A}_{FT2}\mathcal{D}_L}(f,g)(x,\xi)|.
		\end{equation}
		This gives
		\[
			|W_\mathcal{A}(\pi(w)f,g)|=|W_{\mathcal{A}_{FT2}\mathcal{D}_L}(\pi(w)f,g)|, \quad \forall w\in\rdd,
		\]
		which proves the claim.
	\end{proof}
	
	\begin{corollary}
		Let $\mathcal{A}\in Sp(2d,\mathbb{R})$ be Wigner-decomposable, with matrix $L$ right-regular. Then, for any $g\in \mathcal{S}(\mathbb{R}^d)\setminus\{0\}$, $0<p,q\leq\infty$,
		\[
			f\in M^{p,q}_{v_s}(\rd)\quad\Longleftrightarrow\quad W_\mathcal{A}(f,g)\in L^{p,q}_{v_s}(\rdd).
		\]
		For $1\leq p,q\leq\infty$, the window $g$ can be chosen in $M^1_{v_s}(\rd)$.
	\end{corollary}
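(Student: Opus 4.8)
The statement is a specialization of Theorem~\ref{thm28revised} to the weight $m=v_s$, and the strategy is simply to verify that the two hypotheses of that theorem --- namely \eqref{assumptOnm} and \eqref{reqOnv} --- are automatically satisfied when $m=v=v_s$. Once this is done, the conclusion \eqref{charMods} with $m=v_s$ is exactly the first displayed equivalence, and the last sentence about the window class $M^1_{v_s}$ follows from the corresponding statement in Theorem~\ref{thm28revised}.

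First I would check \eqref{assumptOnm}. Write $S=A_{33}^T-A_{23}^T(A_{24}^T)^{-1}A_{34}^T$ and recall $d(\xi)=A_{23}^{-1}\xi$, so that the left-hand side is $v_s(Sx,A_{23}^{-1}\xi)$. Since $v_s(z)=\la z\ra^s$ and any fixed invertible linear map $R\in GL(2d,\bR)$ satisfies $c_R\,|z|\le |Rz|\le C_R\,|z|$ for positive constants $c_R,C_R$, one gets $v_s(Rz)\asymp v_s(z)$ with implied constants depending only on $R$ and $s$; here $R=\mathrm{diag}(S,A_{23}^{-1})$, which is invertible because $L$ is right-regular (Corollary~\ref{corReprL}(ii), together with Theorem~2.1(ii) of \cite{LS} for the invertibility of $S$). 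This gives \eqref{assumptOnm}. The same elementary observation applied to $R=\mathrm{diag}(A_{23}^T(A_{24}^T)^{-1},A_{23}^{-1}A_{24})$ yields \eqref{reqOnv} in the case $v=v_s$, with suitable constants $C_1(L)\le C_2(L)$. Note that when $s<0$ the weight $v_s$ is $v_{|s|}$-moderate rather than submultiplicative; this does not affect the argument, since the only facts used are that $v_s$ is continuous, positive, and comparable under fixed linear changes of variable.

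With both hypotheses verified, Theorem~\ref{thm28revised} applies with $m=v=v_s$: for every $g\in\cS(\rd)$ we obtain $f\in M^{p,q}_{v_s}(\rd)$ if and only if $W_\cA(f,g)\in L^{p,q}_{v_s}(\rdd)$, for all $0<p,q\le\infty$; and for $1\le p,q\le\infty$ the window may be taken in $M^1_{v_s}(\rd)$. This is precisely the assertion. I do not expect any genuine obstacle here --- the content is entirely in Theorem~\ref{thm28revised}, and the only thing to be careful about is the bookkeeping of which blocks must be invertible (all guaranteed by right-regularity of $L$ via Corollary~\ref{corReprL}) and the harmless distinction between the submultiplicative and moderate cases of $v_s$.
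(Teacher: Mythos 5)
There is a genuine gap in the reduction step, not in the weight computations. You invoke Theorem \ref{thm28revised} directly for $\cA$, but that theorem requires $\cA$ to be \emph{totally} Wigner-decomposable, i.e.\ $\cA=\mathcal{A}_{FT2}\mathcal{D}_L$ with the block structure \eqref{Arepr}. The corollary, however, concerns \emph{Wigner-decomposable} matrices $\cA=V_C\mathcal{A}_{FT2}\mathcal{D}_L$ with a possibly nonzero symmetric $C$: left multiplication by $V_C$ adds $C$ times the first two block rows to the last two and destroys the pattern \eqref{Arepr}, so the blocks $A_{11},A_{23},A_{24},A_{33},A_{34}$ entering \eqref{WaVg}, \eqref{assumptOnm} and \eqref{reqOnv} are no longer the blocks of $\cA$, and the theorem cannot be applied to $\cA$ as stated. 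The missing step is the reduction to the chirp-free case: by Theorem \ref{charWAtotWDec}, $\mu(V_C)$ acts as multiplication by the unimodular chirp $\widetilde{\Phi}_C$, whence $|W_\cA(f,g)|=|W_{\mathcal{A}_{FT2}\mathcal{D}_L}(f,g)|$ (this is \eqref{eqModuli}); since membership in $L^{p,q}_{v_s}(\rdd)$ depends only on the modulus, the assertion for $\cA$ is equivalent to the assertion for $\mathcal{A}_{FT2}\mathcal{D}_L$, which \emph{is} of the form \eqref{Arepr} and has the same $L$.

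Once that one-line reduction is inserted, your argument closes the proof: your verification of \eqref{assumptOnm} and \eqref{reqOnv} for $m=v=v_s$ is correct (for a fixed invertible $R$ one has $v_s(Rz)\asymp v_s(z)$, and the relevant blocks and Schur complement are invertible by right-regularity of $L$ via Corollary \ref{corReprL} and \cite[Theorem 2.1]{LS}), so Theorem \ref{thm28revised} applies to $\mathcal{A}_{FT2}\mathcal{D}_L$. This is marginally more self-contained than the paper's route, which after the same reduction notes via Corollary \ref{lemmaE} that $\mathcal{A}_{FT2}\mathcal{D}_L$ is covariant and shift-invertible and then quotes \cite[Theorem 2.28]{CR2022} instead of re-verifying the weight conditions.
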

	\begin{proof}
	With the same notations as Theorem \ref{charWAtotWDec}, write $\mathcal{A}=V_C\mathcal{A}_{FT2}\mathcal{D}_L$, with $L$ is right-invertible. By (\ref{eqModuli}),
		\[	W_\mathcal{A}(f,g)\in L^{p,q}_{v_s}(\rdd)\quad\Longleftrightarrow \quad W_{\mathcal{A}_{FT2}\mathcal{D}_L}(f,g)\in L^{p,q}_{v_s}(\rdd).
		\]
		 By Corollary \ref{lemmaE}, $\mathcal{A}_{FT2}\mathcal{D}_L$ is a covariant (see Subsection \ref{subsec:ss42} below), shift-invertible matrix. Then the claim follows from \cite[Theorem 2.28]{CR2022}. 
	\end{proof}

\subsection{Covariance}\label{subsec:ss42}
	According to \cite[Proposition 2.10]{CR2022}, for a given symplectic matrix $\mathcal{A}$, the {metaplectic Wigner distribution} $W_\mathcal{A}$ satisfies
	\begin{equation}\label{covWA}
		W_\mathcal{A}(\pi(z)f,\pi(z)g)=T_zW_\mathcal{A}(f,g), \ \ \ \ f,g\in\mathcal{S}(\mathbb{R}^d), \ \ \ z\in\rdd,
	\end{equation}
	if and only if $\mathcal{A}$ has block decomposition
	\begin{equation}\label{blockAcov}
		\mathcal{A}=\begin{pmatrix}
			A_{11} & I_{d\times d}-A_{11} & A_{13} & A_{13}\\
			A_{21} & -A_{21} & I_{d\times d}-A_{11}^T & -A_{11}^T\\
			0_{d\times d} & 0_{d\times d} & I_{d\times d} & I_{d\times d}\\
			-I_{d\times d} & I_{d\times d} & 0_{d\times d} & 0_{d\times d}
		\end{pmatrix},
	\end{equation}
	with $A_{13}=A_{13}^T$ and $A_{21}=A_{21}^T$. We refer to such matrices as to \textbf{covariant matrices} and to property (\ref{covWA}) as to the \textbf{covariance} property of $W_\mathcal{A}$. It was proved in \cite{CR2022} that a covariant matrix with block decomposition (\ref{blockAcov}) is totally Wigner-decomposable if and only if $A_{21}=A_{13}=0_{d\times d}$. Moreover, if
	\begin{equation}
	\label{covBCohen}
	B_\mathcal{A}:=\begin{pmatrix}
		A_{13} & \frac{1}{2}I_{d\times d}-A_{11}\\
		\frac{1}{2}I_{d\times d} & -A_{21}
	\end{pmatrix},
	\end{equation}
	
	and $W$ is the classical Wigner distribution, the following result holds: 
	
	\begin{theorem}\label{ThmCohen}
		Let $\mathcal{A}\in Sp(2d,\mathbb{R})$ be a covariant matrix in the form (\ref{blockAcov}). Then, 
		\begin{equation}\label{CohenCovWA}
		W_\mathcal{A}(f,g)=W(f,g)\ast \Sigma_\cA, \quad f,g\in\mathcal{S}(\mathbb{R}^d),
		\end{equation}
		where
		\begin{equation}\label{CohenSymbol}
			\Sigma_\cA(z)=\mathcal{F}^{-1}(e^{-\pi i\zeta\cdot B_{\mathcal{A}}\zeta})\in\mathcal{S}'(\mathbb{R}^{2d}),
		\end{equation}
		and $B_\mathcal{A}$ defined as in (\ref{covBCohen}).
	\end{theorem}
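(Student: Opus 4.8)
The plan is to realize $W_\cA$ as a metaplectic multiplier acting on the classical Wigner distribution $W$, and then to recognize that multiplier as a ``chirp convolution''. Since $W=W_{\cA_{1/2,2d}}$, the composition law of the metaplectic representation gives, for $f,g\in\cS(\rd)$,
\[
	W_\cA(f,g)=\mu(\cA)(f\otimes\bar g)=\mu(\cA\cA_{1/2,2d}^{-1})\,\mu(\cA_{1/2,2d})(f\otimes\bar g)=\mu(\cS_\cA)\,W(f,g),
\]
where $\cS_\cA:=\cA\cA_{1/2,2d}^{-1}\in Sp(2d,\bR)$. Since $W(f,g)\in\cS(\rdd)$ by Proposition \ref{propWellDefOpA}(ii), the statement reduces to showing that $\mu(\cS_\cA)$ is the convolution operator $F\mapsto F\ast\Sigma_\cA$ on $\cS(\rdd)$.

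Next I would compute $\cS_\cA$ in $2d\times 2d$ block form. Writing $\cA=\begin{pmatrix}A&B\\C&D\end{pmatrix}$ and $\cA_{1/2,2d}=\begin{pmatrix}A_0&B_0\\C_0&D_0\end{pmatrix}$, the key observation is that, by Example \ref{es1} with $\tau=1/2$ and by \eqref{blockAcov}, these matrices share their last two block-rows, i.e.\ $C=C_0$ and $D=D_0$. Using the inverse formula $\cA_{1/2,2d}^{-1}=\begin{pmatrix}D_0^T&-B_0^T\\-C_0^T&A_0^T\end{pmatrix}$ and multiplying out, the lower-left block of $\cS_\cA$ vanishes by the symplectic relation $CD^T=DC^T$, while the diagonal blocks equal $I_{2d}$ (by $AD^T-BC^T=I_{2d}$ for $\cA$, resp.\ for $\cA_{1/2,2d}$). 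Hence $\cS_\cA=\begin{pmatrix}I_{2d}&N_\cA\\0_{2d}&I_{2d}\end{pmatrix}$, with $N_\cA=N_\cA^T$ forced by symplecticity of $\cS_\cA$. Carrying out the same multiplication with the explicit entries of \eqref{blockAcov} identifies $N_\cA$ as the symmetric $2d\times 2d$ matrix whose associated quadratic form is $\zeta\mapsto\zeta\cdot B_\cA\zeta$, with $B_\cA$ as in \eqref{covBCohen} (only the symmetric part of $B_\cA$ enters the quadratic form).

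By \eqref{Vc} we have $\cS_\cA=V_{-N_\cA}^{-T}$, so Lemma \ref{lemmaUC}, applied in dimension $2d$, gives $\mu(\cS_\cA)F=\cF(\Phi_{-N_\cA})\ast F=\cF(e^{-\pi i N_\cA\zeta\cdot\zeta})\ast F$ for $F\in\cS(\rdd)$. The chirp $e^{-\pi i N_\cA\zeta\cdot\zeta}$ is an even function, so $\cF$ and $\cF^{-1}$ agree on it, and since $N_\cA\zeta\cdot\zeta=\zeta\cdot B_\cA\zeta$ this kernel equals $\Sigma_\cA=\cF^{-1}(e^{-\pi i\zeta\cdot B_\cA\zeta})$ from \eqref{CohenSymbol}. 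Taking $F=W(f,g)$ and recalling the first step yields $W_\cA(f,g)=\Sigma_\cA\ast W(f,g)=W(f,g)\ast\Sigma_\cA$.

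The hard part is the block computation of $\cA\cA_{1/2,2d}^{-1}$ and the verification that its upper-right block produces exactly the quadratic form prescribed by \eqref{covBCohen}; this requires care with the $2d$-block conventions and with the symmetrizations. A minor point is the projective ambiguity of $\mu$: the factorization $\mu(\cA)=\mu(\cS_\cA)\mu(\cA_{1/2,2d})$ holds with the standard normalizations of the operators involved, so the stated identity is exact with those choices (and holds up to a unimodular constant otherwise).
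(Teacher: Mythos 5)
The paper does not actually prove Theorem \ref{ThmCohen}: it is stated as imported from \cite{CR2022}. Your argument is therefore a genuine, self-contained proof, and its structure is sound: factor $\cA=\cS_\cA\,\cA_{1/2,2d}$ with $\cS_\cA=\cA\cA_{1/2,2d}^{-1}$, observe that the covariant block form \eqref{blockAcov} and $\cA_{1/2,2d}$ share their lower block rows so that the symplectic relations force $\cS_\cA=\left(\begin{smallmatrix}I_{2d\times 2d}&N_\cA\\0_{2d\times 2d}&I_{2d\times 2d}\end{smallmatrix}\right)=V_{-N_\cA}^{-T}$, and then invoke Lemma \ref{lemmaUC} in dimension $2d$ to turn $\mu(\cS_\cA)$ into convolution against $\cF(\Phi_{-N_\cA})=\cF^{-1}(\Phi_{-N_\cA})$ (the chirp being even). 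All of these steps check out, including the vanishing of the lower-left block via symmetry of $CD^T$ and the identities $AD^T-BC^T=I$.

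The one point you should fix is the identification of $N_\cA$ with $B_\cA$. Carrying out your own block multiplication with $A_0^T=\left(\begin{smallmatrix}\frac12 I&0\\\frac12 I&0\end{smallmatrix}\right)$ and $B_0^T=\left(\begin{smallmatrix}0&\frac12 I\\0&-\frac12 I\end{smallmatrix}\right)$ gives
\begin{equation*}
N_\cA=-AB_0^T+BA_0^T=\begin{pmatrix} A_{13} & \tfrac12 I_{d\times d}-A_{11}\\ \tfrac12 I_{d\times d}-A_{11}^T & -A_{21}\end{pmatrix},
\end{equation*}
whose quadratic form is \emph{not} $\zeta\cdot B_\cA\zeta$ with $B_\cA$ taken literally from \eqref{covBCohen}: the two differ by $-A_{11}\zeta_2\cdot\zeta_1$, and passing to the symmetric part of the printed $B_\cA$ does not repair this (its off-diagonal block is $\tfrac12(I_{d\times d}-A_{11})$, not $\tfrac12 I_{d\times d}-A_{11}$). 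What your computation actually reveals is a typo in \eqref{covBCohen}: the lower-left block should read $\tfrac12 I_{d\times d}-A_{11}^T$, which is exactly how $B_{\cA}$ is used everywhere else in the paper (compare \eqref{Bevol} and \eqref{aggiunta9}; with the printed \eqref{covBCohen} one would get $\zeta\cdot B_{\mathbf{A}_{1/2}}\zeta=\tfrac12\zeta_1\cdot\zeta_2\neq 0$ for the classical Wigner case, contradicting $\Sigma_{\cA_{1/2,2d}}=\delta$). So your $N_\cA$ is the correct Cohen kernel matrix and your proof is complete once you state the identification as $N_\cA=B_\cA$ with the corrected (symmetric) $B_\cA$, rather than appealing to the symmetric part of the formula as printed.
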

	Recalling our chirp function in \eqref{chirp}, the equality in \eqref{CohenSymbol} can be rewritten as
	\begin{equation}\label{Cohen-chirp}
\Sigma_\cA=\mathcal{F}^{-1}\Phi_{- B_{\mathcal{A}}}.
	\end{equation}
	If a time-frequency representation $Q(f,g)$ satisfies 
	\[	
		Q(f,g)=W(f,g)\ast \Sigma
	\]
	for some $\Sigma\in\mathcal{S}'(\rdd)$, we say that $Q$ belongs to the \textbf{Cohen class}, cf. \cite{Cohen1,Cohen2}.\\
	
%

	Theorem \ref{ThmCohen} sheds light on the importance of covariant matrices in the context of time-frequency analysis, stating that $\mathcal{A}\in Sp(2d,\mathbb{R})$ is covariant if and only if $W_\mathcal{A}$ belongs to the Cohen class. The following result shows that covariant matrices are exactly those that decompose as the product of symplectic matrices $V_C^T$, $\mathcal{A}_{FT2}$ and $\mathcal{D}_L$ for some $d\times d$ symmetric matrix $C$ and $L\in GL(2d,\rd)$.
	
	\begin{theorem}\label{charWAcovariant}
		Let $\mathcal{A}\in Sp(2d,\mathbb{R})$ be covariant with block decomposition (\ref{blockAcov}). Then, 
		\begin{equation}\label{decompCovMatrices}
			\mathcal{A}=V_C^T\mathcal{A}_{FT2}\mathcal{D}_L,
			\end{equation}
			 where
		\begin{equation}\label{defVcELcov}
			C=\begin{pmatrix}
			A_{13} & 0_{d\times d}\\
		0_{d\times d} & -A_{21}
			\end{pmatrix}		
\ \ \ \text{and} \ \ \ 
			L=\begin{pmatrix}
				I_{d\times d} & I_{d\times d}-A_{11}\\
				I_{d\times d} & -A_{11}
			\end{pmatrix}.
		\end{equation}
		As a consequence, up to a phase factor, for all $f,g\in \mathcal{S}(\mathbb{R}^d)$,
		\begin{equation}\label{forInPiu}
		W_\mathcal{A}(f,g)(x,\xi)=\int_{\rd} [\mathcal{F}(\Phi_{- A_{13}})\ast (f\otimes \bar{g})(L(\cdot,\eta))](x)\Phi_{A_{21}} (\eta)e^{-2\pi i\xi\cdot \eta}d\eta.
		\end{equation}
		
		 In particular, if $A_{13}=0_{d\times d}$, then
		\begin{equation}\label{forInPius}
			W_\mathcal{A}(f,g)(x,\xi)=\int_{\rd}  f(x+(I_{d\times d}-A_{11})\eta)\overline{g(x-A_{11}\eta)}\Phi_{A_{21}}(\eta)e^{-2\pi i\xi\cdot \eta}d\eta.
		\end{equation}
	\end{theorem}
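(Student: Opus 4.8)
The proof has two stages: first I would establish the matrix identity \eqref{decompCovMatrices}, and then deduce the integral representation \eqref{forInPiu} by composing the three elementary metaplectic operators attached to $V_C^T$, $\mathcal{A}_{FT2}$ and $\mathcal{D}_L$, whose actions on $L^2$ are recorded in Section~2.

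\emph{Step 1 (the factorization).} Here the plan is a direct block computation, writing all the $4d\times 4d$ matrices in $2d\times 2d$ blocks. By \eqref{Vcinvtransp} and $C^T=C$ one has $V_C^T=\left(\begin{smallmatrix} I & C\\ 0 & I\end{smallmatrix}\right)$; from its definition $\mathcal{A}_{FT2}=\left(\begin{smallmatrix} P & Q\\ R & S\end{smallmatrix}\right)$ with $P=S=\left(\begin{smallmatrix} I_{d\times d} & 0\\ 0 & 0\end{smallmatrix}\right)$, $Q=\left(\begin{smallmatrix} 0 & 0\\ 0 & I_{d\times d}\end{smallmatrix}\right)$, $R=\left(\begin{smallmatrix} 0 & 0\\ 0 & -I_{d\times d}\end{smallmatrix}\right)$; and $\mathcal{D}_L=\left(\begin{smallmatrix} L^{-1} & 0\\ 0 & L^T\end{smallmatrix}\right)$. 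For $L$ as in \eqref{defVcELcov} the Schur complement of the top-left block of $L$ equals $-I_{d\times d}$, so $\det L=(-1)^d$, $|\det L|=1$, and
\[
L^{-1}=\begin{pmatrix} A_{11} & I_{d\times d}-A_{11}\\ I_{d\times d} & -I_{d\times d}\end{pmatrix},\qquad L^T=\begin{pmatrix} I_{d\times d} & I_{d\times d}\\ I_{d\times d}-A_{11}^T & -A_{11}^T\end{pmatrix}.
\]
Then $V_C^T\mathcal{A}_{FT2}\mathcal{D}_L=\left(\begin{smallmatrix}(P+CR)L^{-1} & (Q+CS)L^T\\ RL^{-1} & SL^T\end{smallmatrix}\right)$; substituting $C=\left(\begin{smallmatrix} A_{13} & 0\\ 0 & -A_{21}\end{smallmatrix}\right)$ yields $P+CR=\left(\begin{smallmatrix} I_{d\times d} & 0\\ 0 & A_{21}\end{smallmatrix}\right)$ and $Q+CS=\left(\begin{smallmatrix} A_{13} & 0\\ 0 & I_{d\times d}\end{smallmatrix}\right)$, and carrying out the four products recovers precisely the four $2d\times 2d$ blocks of \eqref{blockAcov}. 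This computation is routine but is the one laborious point of the proof; everything afterwards is a mechanical composition of operators already made explicit in the excerpt.

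\emph{Step 2 (the integral formula).} Since $\mu$ is a projective representation, up to a phase factor $W_\mathcal{A}(f,g)=\mu(V_C^T)\,\mathcal{F}_2\,\mathfrak{T}_L(f\otimes\bar g)$, using $\mu(\mathcal{A}_{FT2})=\mathcal{F}_2$ and \eqref{AdL}. As $|\det L|=1$, $\mathfrak{T}_L(f\otimes\bar g)(x,y)=(f\otimes\bar g)(L(x,y))=f(x+(I_{d\times d}-A_{11})y)\overline{g(x-A_{11}y)}$, so $\mathcal{F}_2\mathfrak{T}_L(f\otimes\bar g)(x,\xi)=\int_{\rd}f(x+(I_{d\times d}-A_{11})y)\overline{g(x-A_{11}y)}e^{-2\pi i\xi\cdot y}\,dy$. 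By \eqref{muVctrans} (applied with $2d$ in place of $d$), $\mu(V_C^T)$ is convolution on $\rdd$ against $\mathcal{F}(\Phi_{-C})$; since $C$ is block-diagonal, $\Phi_{-C}=\Phi_{-A_{13}}\otimes\Phi_{A_{21}}$, hence $\mathcal{F}(\Phi_{-C})=\mathcal{F}(\Phi_{-A_{13}})\otimes\mathcal{F}(\Phi_{A_{21}})$. Splitting the $2d$-dimensional convolution into its $x$- and $\xi$-components: the $\xi$-convolution against $\mathcal{F}(\Phi_{A_{21}})$ of the partial Fourier transform displayed above equals, by the convolution theorem $\mathcal{F}(u)\ast\mathcal{F}(v)=\mathcal{F}(uv)$, the $\xi$-Fourier transform of $y\mapsto\Phi_{A_{21}}(y)f(x+(I_{d\times d}-A_{11})y)\overline{g(x-A_{11}y)}$; a final application of Fubini moves the remaining $x$-convolution against $\mathcal{F}(\Phi_{-A_{13}})$ inside the $y$-integral, and one reads off exactly \eqref{forInPiu} (with $y$ renamed $\eta$). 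Finally, for $A_{13}=0_{d\times d}$ one has $\mathcal{F}(\Phi_0)=\delta$, so the $x$-convolution disappears and \eqref{forInPius} follows immediately. All identities are understood at the level of tempered distributions and hold, as usual, up to a phase factor; since $f,g\in\mathcal{S}(\rd)$ the Fubini and convolution-theorem steps are justified.
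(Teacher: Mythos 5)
Your proposal is correct and follows essentially the same route as the paper: the paper likewise dismisses \eqref{decompCovMatrices} as a straightforward computation (which you carry out explicitly and correctly) and then obtains \eqref{forInPiu} by composing $\mu(V_C^T)=\mathcal{F}(\Phi_{-C})\ast\,\cdot\,$, $\mathcal{F}_2$ and $\mathfrak{T}_L$. The only cosmetic difference is that you split the $2d$-dimensional convolution via the tensor structure $\Phi_{-C}=\Phi_{-A_{13}}\otimes\Phi_{A_{21}}$ and the convolution theorem, whereas the paper reaches the same factor $\Phi_{A_{21}}(\eta)e^{-2\pi i\xi\cdot\eta}$ by evaluating the corresponding iterated oscillatory integral explicitly.
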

	\begin{proof}
Equality \eqref{decompCovMatrices} is a straightforward computation. Now, using  \eqref{muVctrans},
\[\begin{split}
	W_\cA(f,g)(x,\xi)&=\mu(\cA)(f\otimes\bar g)(x,\xi)=\mu(V_C^T)\mu(\mathcal{A}_{FT2})\mu(\mathcal{D}_{L})(f\otimes\bar g)(x,\xi)\\
		&=\mathcal{F}(\Phi_{-C})\ast\Big(\int_{\rd}(f\otimes \bar{g})(L(\cdot,\eta))e^{-2\pi i\eta\cdot(\bullet)}d\eta\Big)(x,\xi)\\
	&=\int_{\rdd}\mathcal{F}(\Phi_{-C})(x-y,\xi-\omega)\Big(\int_{\rd}(f\otimes \bar g)(L(y,\eta))e^{-2\pi i\eta\cdot \omega}d\eta\Big)dyd\omega\\
	&=\int_{\rdd}\Big(\int_{\rdd}e^{-i\pi[A_{13}u\cdot u-A_{21}v\cdot v]}e^{-2\pi i[(x-y)\cdot u+(\xi-\omega)\cdot v]}dudv\Big)\\
	&\qquad\qquad\times\Big(\int_{\rd}(f\otimes \bar g)(L(y,\eta))e^{-2\pi i\eta\cdot \omega}d\eta\Big)dyd\omega.
\end{split}
\]
Observe that
\[
	\begin{split}
		\int_{\rdd}e^{i\pi A_{21}v\cdot v}e^{-2\pi i\xi\cdot v}e^{2\pi i\omega\cdot (v-\eta)}dvd\omega=e^{\pi iA_{21}\eta\cdot\eta}e^{-2\pi i \xi\cdot \eta},
	\end{split}
\]
so that
\[
	\begin{split}
	W_\cA(f,g)(x,\xi)&=\int_{\mathbb{R}^{d}}\Big(\int_{\rdd}e^{-i\pi A_{13}u\cdot u}e^{2\pi i u\cdot(x-y)}(f\otimes\bar g)(L(y,\eta))dudy\Big)e^{\pi iA_{21}\eta\cdot\eta}e^{-2\pi i\xi\cdot\eta}d\eta.
	\end{split}
\]
Next, we apply
\[\begin{split}
	\int_{\rdd}\varphi_1(u)\varphi_2(y)e^{2\pi iu\cdot x}e^{-2\pi iu\cdot y}dudy&=\int_{\rd}\Big(\int_{\rd}\varphi_2(y)e^{-2\pi iu\cdot y}dy\Big)\varphi_1(u)e^{2\pi iu\cdot x}du\\
	&=\int_{\rd}\hat\varphi_2(u)\varphi_1(u)e^{2\pi ix\cdot u}du=\mathcal{F}^{-1}(\varphi_1\hat \varphi_2)(x)\\
	&=(\mathcal{F}^{-1}(\varphi_1)\ast\varphi_2)(x)
\end{split}
\]
to the inner integral, to get \eqref{forInPiu}.
	\end{proof}
	
	
		\begin{remark}
		Theorem \ref{charWAcovariant} states that the class of covariant symplectic matrices is invariant with respect to left-multiplication by matrices $V_C^T$. Equivalently, the class of metaplectic Wigner distributions associated to covariant matrices is invariant with respect to convolutions by kernels in the form $\Phi_C$, $C$ $d\times d$ real symmetric matrix.
	\end{remark}
	
	\begin{remark}\label{remA1321}
		Theorem \ref{charWAcovariant} clarifies the roles that the blocks $A_{13}$ and $A_{21}$ have in Wigner metaplectic operators associated to covariant matrices. The block $A_{13}$ appears in the convolution factor $\mathcal{F}(\Phi_{-A_{13}})(\cdot)$ and acts on $(f\otimes \bar g)\circ L(\cdot,\eta)$, whereas $A_{21}$ produces the phase factor $\Phi_{A_{21}}$. 
	\end{remark}
	
	As we pointed out, covariant matrices play a key part in the theory of pseudodifferential operators, as they belong to the Cohen class. In the following result we prove an explicit integral formula for metaplectic pseudodifferential operators associated to covariant matrices.
	
	\begin{proposition}
		Let $\mathcal{A}\in Sp(2d,\mathbb{R})$ be a covariant matrix with decomposition in \eqref{decompCovMatrices}. Then, for every $f\in\mathcal{S}(\rd)$ and  $a\in\mathcal{S}'(\rdd)$, up to a phase factor,
		\begin{equation}\label{formulaOpA}
			Op_\mathcal{A}(a)f(x)=\int_{\rdd}(\cF(\Phi_{C})\ast a)(A_{11}x+(I_{d\times d}-A_{11})y, \xi)f(y)e^{2\pi i\xi\cdot(x-y)}dyd\xi,
		\end{equation}
		where the chirp function $\Phi_C$ is defined in \eqref{chirp}.
	\end{proposition}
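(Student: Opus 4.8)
The plan is to compute $Op_\cA(a)$ directly from its definition $\langle Op_\cA(a)f,g\rangle=\langle a,W_\cA(g,f)\rangle$, $g\in\mathcal{S}(\rd)$, feeding in the explicit integral expression \eqref{forInPiu} for $W_\cA(g,f)$ that is available precisely because $\cA$ is covariant and therefore factors as in \eqref{decompCovMatrices}. I would first carry out the computation for $a\in\mathcal{S}(\rdd)$, where all the integrals below converge absolutely and Fubini applies, and then recover the general case $a\in\mathcal{S}'(\rdd)$ by density of $\mathcal{S}(\rdd)$ in $\mathcal{S}'(\rdd)$ together with the continuity of $a\mapsto Op_\cA(a)f$ from $\mathcal{S}'(\rdd)$ to $\mathcal{S}'(\rd)$ guaranteed by Proposition \ref{propWellDefOpA}(iii). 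Spelling out \eqref{forInPiu} (with the roles of $f$ and $g$ exchanged), so that $(g\otimes\bar f)(L(u,\eta))=g(u+(I_{d\times d}-A_{11})\eta)\overline{f(u-A_{11}\eta)}$, and taking the complex conjugate demanded by the sesquilinearity of $\langle\cdot,\cdot\rangle$, the pairing $\langle a,W_\cA(g,f)\rangle$ turns into a four-fold integral in the variables $x,\xi$ (the arguments of $W_\cA$), the convolution variable $u$ and the fibre variable $\eta$, whose integrand carries $a(x,\xi)$, $\mathcal{F}(\Phi_{A_{13}})(x-u)$ --- note that conjugating $\mathcal{F}(\Phi_{-A_{13}})$ gives back $\mathcal{F}(\Phi_{A_{13}})$, since $A_{13}=A_{13}^T$ makes $\Phi_{A_{13}}$, and hence its Fourier transform, even --- together with the factors $\overline{g(u+(I_{d\times d}-A_{11})\eta)}$, $f(u-A_{11}\eta)$, $\Phi_{-A_{21}}(\eta)$ and $e^{2\pi i\xi\cdot\eta}$.

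Next I would apply the linear substitution $(u,\eta)\mapsto(x_0,y)$ defined by $x_0=u+(I_{d\times d}-A_{11})\eta$ and $y=u-A_{11}\eta$, with inverse $u=A_{11}x_0+(I_{d\times d}-A_{11})y$, $\eta=x_0-y$; a $2\times2$ block determinant computation shows its Jacobian has modulus $1$. This change of variables produces at once the first argument $A_{11}x_0+(I_{d\times d}-A_{11})y$ and the exponential $e^{2\pi i\xi\cdot(x_0-y)}$ of the claimed formula, while the window factors collapse to $\overline{g(x_0)}$ and $f(y)$; pairing against $\overline{g(x_0)}$ then displays $Op_\cA(a)f(x_0)$ as the remaining triple integral in $x,\xi,y$. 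It only remains to absorb the two chirps into partial convolutions of the symbol. The $x$-integral gives $\int a(x,\xi)\mathcal{F}(\Phi_{A_{13}})(x-A_{11}x_0-(I_{d\times d}-A_{11})y)\,dx=\big(a(\cdot,\xi)\ast\mathcal{F}(\Phi_{A_{13}})\big)(A_{11}x_0+(I_{d\times d}-A_{11})y)$, again by evenness; the $\xi$-integral is dealt with by the elementary identity $\int\beta(\xi)\,\Phi_{-A_{21}}(t)\,e^{2\pi i\xi\cdot t}\,d\xi=\int\big(\mathcal{F}(\Phi_{-A_{21}})\ast\beta\big)(\xi)\,e^{2\pi i\xi\cdot t}\,d\xi$, expressing that multiplication by a chirp on the Fourier side equals convolution by its Fourier transform. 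Since $C=\mathrm{diag}(A_{13},-A_{21})$ in \eqref{defVcELcov} is block diagonal, one has $\mathcal{F}(\Phi_C)=\mathcal{F}(\Phi_{A_{13}})\otimes\mathcal{F}(\Phi_{-A_{21}})$, so these two partial convolutions coalesce into the single convolution $\mathcal{F}(\Phi_C)\ast a$ evaluated at $(A_{11}x_0+(I_{d\times d}-A_{11})y,\xi)$, and renaming $x_0\to x$ gives exactly \eqref{formulaOpA}.

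The part that needs the most care, rather than any real difficulty, is the sign bookkeeping through the conjugation step: one must check that $\Phi_{-A_{13}}$ becomes $\Phi_{A_{13}}$ and $\Phi_{A_{21}}$ becomes $\Phi_{-A_{21}}$, so that the matrix built from these two diagonal blocks is precisely the $C$ of \eqref{defVcELcov} rather than $-C$. A secondary point is justifying the Fubini/substitution steps and the two chirp-to-convolution identities when $a$ is only tempered, which is handled by the density and continuity argument above. A shorter but equivalent route would be to use Lemma \ref{lemmaComm} to write $Op_\cA(a)=Op_{w,2d}\big(\mu(\cA_{1/2,2d}\cA^{-1})a\big)$, compute $\mu(\cA_{1/2,2d}\cA^{-1})$ from the factorization \eqref{decompCovMatrices} using the explicit metaplectic actions of $V_C^T$ (Lemma \ref{lemmaUC}), of $\cA_{FT2}$ and of $\cD_L$ (formula \eqref{AdL}), and then recast the resulting Weyl operator in the matrix-$\tau$ form \eqref{formulaOpA}; the computational substance is identical.
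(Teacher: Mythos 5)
Your proposal is correct and follows essentially the same route as the paper: both arguments rest on the factorization $\cA=V_C^T\cA_{FT2}\cD_L$ of \eqref{decompCovMatrices} together with the identity $\mu(V_C^{-T})a=\cF(\Phi_C)\ast a$ of \eqref{vcT}, the only difference being that the paper transfers $\mu(\cA)$ onto the symbol by adjointness, writing $\langle a,\mu(\cA)(g\otimes\bar f)\rangle=\langle\mathfrak{T}_{L^{-1}}\cF_2^{-1}\mu(V_C^{-T})a,g\otimes\bar f\rangle$ and reading off the kernel via $L^{-1}$, whereas you expand \eqref{forInPiu} into explicit integrals and perform the change of variables $(u,\eta)\mapsto(x_0,y)$ — the same computation in dual form, with your sign bookkeeping ($\Phi_{-A_{13}}\to\Phi_{A_{13}}$, $\Phi_{A_{21}}\to\Phi_{-A_{21}}$, recombining into the $C$ of \eqref{defVcELcov}) checking out. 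The paper's duality manipulations make sense directly for $a\in\cS'(\rdd)$ with the integrals understood weakly, so the one extra step your version needs — the density/continuity argument passing from $a\in\cS(\rdd)$ to $a\in\cS'(\rdd)$ — is avoided, but this is a matter of presentation rather than substance.
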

	\begin{proof}
		We use the expression of $W_{\cA}$ and Theorem \ref{charWAcovariant}. Namely, for every $f,g\in\mathcal{S}(\rd)$, $a\in\mathcal{S}'(\rdd)$, up to a unitary factor,
		\begin{align*}
		\langle Op_\mathcal{A}(a)f,g\rangle&=\langle a,W_\mathcal{A}(g,f)\rangle=\langle a, \mu(V_C^T\mathcal{A}_{FT2}\mathcal{D}_L)(g\otimes \bar f)\rangle=\langle \mathfrak{T}_{L^{-1}}\mathcal{F}_2^{-1}\mu(V_C^{-T})a,g\otimes \bar f\rangle,
		\end{align*}
		where we used $\mu(V_C^T)^{\ast}=\mu(V_C^{-T})$. Since $|\det(L)|=1$, we can write
		\begin{align*}
			\langle Op_\mathcal{A}(a)f,g\rangle&=\int_{\rdd}\mathcal{F}_2^{-1}\mu(V_C^{-T})a(L^{-1}(x,y))\overline{g(x)} f(y)dxdy\\
			&=\int_{\rd}\left(\int_{\rd}\mathcal{F}_2^{-1}\mu(V_C^{-T})a(L^{-1}(x,y)) f(y)dy\right) \overline{g(x)} dx\\
			&=\left\langle \int_{\rd}\mathcal{F}_2^{-1}\mu(V_C^{-T})a(L^{-1}(x,y)) f(y)dy,g\right\rangle,
		\end{align*}
		where the integrals must be intended in the weak sense. Hence,
		\begin{equation}\label{defOpAimplicita}
			Op_{\mathcal{A}}f(x)=\int_{\rd}((\mathcal{F}_2^{-1}\mu(V_C^{-T}))a)(L^{-1}(x,y)) f(y)dy.
		\end{equation}
		Using 
		\begin{equation}\label{invdiLcov}
					L^{-1}=\begin{pmatrix}
						A_{11} & I_{d\times d}-A_{11}\\
						I_{d\times d} & -I_{d\times d}
					\end{pmatrix},
				\end{equation}
				we compute
\begin{align*}
			Op_{\mathcal{A}}f(x)&=\int_{\rd}\left(\int_{\rd} (\mu(V_C^{-T})a)(A_{11}x+(I_{d\times d}-A_{11})y, \xi)e^{2\pi i\xi\cdot(x-y)}d\xi\right)f(y)dy\\
			&=\int_{\rdd}(\mu(V_C^{-T})a)(A_{11}x+(I_{d\times d}-A_{11})y, \xi)e^{2\pi i\xi\cdot(x-y)}f(y)dyd\xi\\
			&=\int_{\rdd}(\cF(\Phi_{C})\ast a)(A_{11}x+(I_{d\times d}-A_{11})y, \xi)e^{2\pi i\xi\cdot(x-y)}f(y)dyd\xi
		\end{align*}
		where in the last-but-one row we used the expression of $\mu(V_C^{-T})$ computed in \eqref{vcT}. 
	\end{proof}
	
	\begin{remark}
		With the same fashion of Remark \ref{remA1321}, we stress that (\ref{formulaOpA}) sheds light on the role of the matrix $V_C^T$, in the decomposition of a covariant matrix $\cA$, on the pseudodifferential operator with quantization given by $\cA$. Basically, it produces the chirp $\mathcal{F}\Phi_C$ which acts on the symbol $a$ via convolution. 
	\end{remark}

	To study the solution $u=u(x,t)$ to the Schr\"odinger equation in \eqref{C12} we need to know information about his projection $\chi_t$ in \eqref{e7}.
		\begin{lemma}
		Consider a covariant matrix $\mathcal{A}\in Sp(2d,\mathbb{R})$ having block decomposition (\ref{blockAcov}) and related matrix $B_\cA$ in \eqref{covBCohen}. For $\chi_t$, $t\in\bR$, in \eqref{e7}, assume that its inverse  $\chi_t^{-1}\in Sp(d,\mathbb{R})$,  has the $d\times d$ block decomposition
		\[
			\chi_t^{-1}=\begin{pmatrix}
				X_t & Y_t\\ W_t & Z_t
			\end{pmatrix}.
		\]
		Let ${B}_{\mathcal{A}_t}=\chi_t^{-T}B_\mathcal{A}\chi_t^{-1}$ and $\mathcal{A}_t\in Sp(2d,\mathbb{R})$ be the symplectic matrix associated to ${B}_{\mathcal{A}_t}$. Then, $\mathcal{A}_t$ is the covariant matrix having block decomposition 
		\begin{equation}\label{blockAtcov}
		\mathcal{A}_t=\begin{pmatrix}
			A_{t,11} & I_{d\times d}-A_{t,11} & A_{t,13} & A_{t,13}\\
			A_{t,21} & -A_{t,21} & I_{d\times d}-A_{t,11}^T & -A_{t,11}^T\\
			0_{d\times d} & 0_{d\times d} & I_{d\times d} & I_{d\times d}\\
			-I_{d\times d} & I_{d\times d} & 0_{d\times d} & 0_{d\times d}
		\end{pmatrix},
	\end{equation}
		with
		\begin{align*}
			&A_{t,11}=-W_t^TY_t-X_t^T[A_{13}Y_t-A_{11}Z_t]+W_t^T[A_{11}^TY_t+A_{21}Z_t],\\
			&A_{t,13}=X_t^TW_t+X_t^T[A_{13}X_t-A_{11}W_t]-W_t^T[A_{11}^TX_t+A_{21}W_t],\\
			&A_{t,21}=-Z_t^TY_t-Y_t^T[A_{13}Y_t-A_{11}Z_t]+Z_t^T[A_{11}^TY_t+A_{21}Z_t].
		\end{align*}
	\end{lemma}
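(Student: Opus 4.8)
The plan is to exploit Theorem \ref{ThmCohen}, which identifies a covariant metaplectic Wigner distribution $W_\cA$ with the Cohen-class element $W(f,g)\ast\Sigma_\cA$, where $\Sigma_\cA=\mathcal{F}^{-1}\Phi_{-B_\cA}$ and $B_\cA$ is the symmetric $2d\times 2d$ matrix in \eqref{covBCohen}. The key observation is that the map $\cA\mapsto B_\cA$ sets up a bijective correspondence between covariant symplectic matrices and symmetric $2d\times 2d$ matrices, so to identify $\cA_t$ it suffices to (i) verify that $B_{\cA_t}=\chi_t^{-T}B_\cA\chi_t^{-1}$ is indeed the matrix attached to a covariant $\cA_t$ (it is automatically symmetric, being a congruence of the symmetric $B_\cA$), and then (ii) read off the blocks $A_{t,11}, A_{t,13}, A_{t,21}$ from the four $d\times d$ blocks of $B_{\cA_t}$ by inverting the relations in \eqref{covBCohen}.

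Concretely, I would first write $B_\cA$ in $d\times d$ blocks using \eqref{covBCohen}:
\[
B_\cA=\begin{pmatrix} A_{13} & \tfrac12 I-A_{11}\\ \tfrac12 I & -A_{21}\end{pmatrix},
\]
and then compute the congruence $\chi_t^{-T}B_\cA\chi_t^{-1}$ blockwise using
\[
\chi_t^{-1}=\begin{pmatrix} X_t & Y_t\\ W_t & Z_t\end{pmatrix},\qquad
\chi_t^{-T}=\begin{pmatrix} X_t^T & W_t^T\\ Y_t^T & Z_t^T\end{pmatrix}.
\]
This is a $2\times2$ block multiplication of block matrices; carrying it out gives the four blocks of $B_{\cA_t}$ as quadratic expressions in $X_t,Y_t,W_t,Z_t$ with coefficients $A_{11},A_{13},A_{21}$. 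The $(1,1)$ block of $B_{\cA_t}$ must equal $A_{t,13}$, the $(2,2)$ block must equal $-A_{t,21}$, and the $(1,2)$ block must equal $\tfrac12 I-A_{t,11}$; the $(2,1)$ block is then the transpose of the $(1,2)$ block, which is consistent since $B_{\cA_t}$ is symmetric. Solving these three identifications yields exactly the stated formulas for $A_{t,11},A_{t,13},A_{t,21}$, and one checks that with these blocks $\cA_t$ has the covariant form \eqref{blockAtcov}.

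The one point that needs a little care is the bookkeeping of the $\tfrac12 I$ terms when extracting $A_{t,11}$ from the $(1,2)$ block of $B_{\cA_t}$: the $\tfrac12 I$ entries of $B_\cA$ get transported by $\chi_t^{-1}$, and one must use that $\chi_t^{-1}\in Sp(d,\mathbb{R})$ — i.e. the symplectic relations among $X_t,Y_t,W_t,Z_t$ (such as $X_t^TZ_t-W_t^TY_t=I$, $X_t^TW_t$ and $Y_t^TZ_t$ symmetric) — to collapse the constant pieces back to a single $\tfrac12 I$ and thereby recover the claimed closed form. I expect this reconciliation via the symplectic identities to be the main (though still routine) obstacle; everything else is a direct blockwise computation. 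Finally, the fact that $\cA_t$ so constructed is genuinely symplectic and covariant follows from Theorem \ref{ThmCohen} and Theorem \ref{charWAcovariant}, since any symmetric $2d\times 2d$ matrix arises as $B_{\cA'}$ for a unique covariant $\cA'$.
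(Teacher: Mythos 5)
Your proposal is correct and follows essentially the same route as the paper: the paper likewise computes the congruence $\chi_t^{-T}B_\mathcal{A}\chi_t^{-1}$ blockwise, invokes exactly the symplectic relations $W_t^TX_t=X_t^TW_t$, $Z_t^TY_t=Y_t^TZ_t$, $Z_t^TX_t-Y_t^TW_t=I_{d\times d}$ to recast the off-diagonal block as $\tfrac12 I-A_{t,11}$, and then identifies $A_{t,11},A_{t,13},A_{t,21}$ through the correspondence between covariant matrices and their Cohen kernels $B_{\mathcal{A}}$. The only step you flag as delicate (collapsing the constant $\tfrac12 I$ pieces via the symplectic identities) is precisely the step the paper carries out, so nothing is missing.
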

	\begin{proof}
		Plugging ${B}_{\mathcal{A}_t}=\chi_t^{-T}B_\mathcal{A}\chi_t^{-1}$ into the block decomposition (\ref{covBCohen}) for ${B}_\mathcal{A}$ and using the symplectic properties 
		\begin{align*}
			&W_t^TX_t=X_t^TW_t,\\
			&Z_t^TY_t=Y_t^TZ_t,\\
			&Z^T_tX_t-Y_t^TW_t=I_{d\times d}
		\end{align*}
		of $\chi_t^{-1}$ we get
		\begin{equation}\label{Bevol}
			{B}_{\mathcal{A}_t}=\begin{pmatrix}
				A_{t,13} & \frac{1}{2}I_{d\times d}-A_{t,11}\\
				\frac{1}{2}I_{d\times d}-A_{t,11}^T & -A_{t,21}
			\end{pmatrix},
		\end{equation}
		where $A_{t,11},A_{t,13}$ and $A_{t,21}$ are defined as in the assertion. Since the covariance of $\mathcal{A}$ is inherited by $\mathcal{A}_t$, we have that these blocks are exactly the ones defining the block decomposition of $\mathcal{A}_t$ as a covariant matrix. 
	\end{proof}

	We can now express the phase-space concentration of the solution $u(x,t)$ to the  free particle equation in terms of $\cA$-Wigner distribution.
	\begin{example}[The free particle]\label{E2}
	We shall prove the  formula originally announced in Part I, see Example $4.9$ in \cite{CR2021}, formula $(126)$ therein (see also formula (108) in \cite{CR2022}).
		
	In Example $4.9$ in \cite{CR2021} we computed the
		$\tau$-Wigner  of the solution  $u(t,x)$ to the Cauchy problem of the  free particle equation:
		\begin{equation}\label{C26-0} 
		\begin{cases}
		i\partial_t u+\Delta u=0,\\
		u(0,x)=u_0(x),
		\end{cases}
		\end{equation}
		with $(t,x)\in\bR\times\bR^d$, $d\geq1$. Namely,  we obtained that 
		\begin{equation}\label{FP1}
		W_\tau u(t,x,\xi)=W_{\cA_{\tau,t}} u_0(x-4\pi t \xi,\xi),
		\end{equation}
		where the representation 	$W_{\cA_{\tau,t}}$ is of Cohen class:
		\begin{equation}\label{E103}
			W_{\cA_{\tau,t}}f=Wf\ast\Sigma_{\tau,t},
		\end{equation}
		with kernel
		\begin{equation*}
		\Sigma_{\tau,t}(x,\xi)=\Sigma_\tau(x+4\pi t \xi, \xi),
		\end{equation*}
	where, for $\tau\not=1/2$ (Wigner case), the $\tau$-kernel is given by
		\begin{equation*}
		\Sigma_\tau \phas= 	\frac{2^d}{|2\tau -1|^d}e^{2\pi i\frac{2}{2\tau -1}x \xi}.
		\end{equation*}
	The matrix  $B_{\bf{A}_\tau}$ in \eqref{covBCohen} can be computed as
	\begin{equation}\label{aggiunta9}
	B_{\bf{A}_\tau}= \left(\begin{array}{cc}
	0_{d\times d} & (\tau-\frac12)I_{d\times d}\\
	(\tau-\frac12)I_{d\times d}&0_{d\times d}
	\end{array}\right),
	\end{equation}	
	and by \eqref{Bevol} (see also Proposition 4.4 in \cite{CR2022}),
	$$B_{\cA_{\tau,t}}=\chi_t^{-T} B_{\bf{A}_\tau}\chi_t^{-1}=\left(\begin{array}{cc}
	0_{d\times d}& (\tau-\frac12)I_{d\times d}\\
	(\tau-\frac12)I_{d\times d} &  (4\pi t)(1-2\tau)I_{d\times d}
	\end{array}\right).$$	
		The representation \eqref{E103} can be equivalently written as (cf. \eqref{Cohen-chirp})
		$$	W_{\cA_{\tau,t}}f=Wf\ast\cF^{-1}\Phi_{\cA_{\tau,t}}.
		$$
Hence,  the  ${\cA_{\tau,t}}$-Wigner representation  computed in \eqref{forInPius} with $$A_{t,13}=0_{d\times d}, \quad A_{t,11}=(1-\tau) I_{d\times d}, \quad A_{t,21}= -(4\pi t)(1-2\tau)I_{d\times d}$$ becomes
	\begin{equation*}
W_{\cA_{\tau,t}}(f,g)(x,\xi)=\int_{\rd}  f(x+\tau\eta)\overline{g(x-(1-\tau)\eta)}e^{-2\pi i(\xi\cdot \eta+2\pi t(1-2\tau)\eta^2)}d\eta,
\end{equation*}
as desired.
\end{example}

\section{Continuity on modulation spaces}
For many quantizations, $Op_\cA$ is an \textit{integral superposition} of time-frequency shifts. Stated differently, these fundamental operators of time-frequency analysis represent the building blocks of pseudodifferential operators. Concretely, the Weyl quantization of a pseudodifferential operator with symbol $a\in\mathcal{S}'(\rdd)$ is given by
	\[
		Op_w(a)=\int_{\rdd}\hat a(\eta,-z)e^{-i\pi\eta\cdot z}\pi(z,\eta)dzd\eta.
	\]
	On the other hand, if $f\in M^{p,q}_m$ for some $m\in\mathcal{M}_{v_s}$ and $0<p,q\leq\infty$, then $\pi(z,\eta)f\in M^{p,q}_m$ for all $z,\eta\in\rd$. This turns out to be one one of the main reasons why modulation spaces appear in the theory of pseudodifferential operators.
	
	In this section, we use the results in the first part of this paper to investigate the continuity properties of metaplectic pseudodifferential operators on modulation spaces. Since weighted modulation spaces measure the phase-space concentration of signals, as well as their decay properties, an investigation of their continuity on these spaces reveals how the time-frequency concentration of signals changes when a pseudodifferential operator is applied. \\
	
	The first result we present involves the explicit expression of the symbol $b:=(a\otimes 1)\circ\cA^{-1}$, as in the equality (\ref{commOpWParticolare2}) above, when $\cA$ is totally Wigner-decomposable or covariant.
	
	\begin{proposition}
		Consider $\mathcal{A}\in Sp(2d,\mathbb{R})$, $a\in\mathcal{S}'(\rdd)$ and $b=\sigma\circ\mathcal{A}^{-1}$, with $\sigma=a\otimes 1$ as defined in (\ref{defsigma}).  For every $x,\xi,u,v\in\rd$ we can state:\\
		(i) if $\mathcal{A}$ is totally Wigner-decomposable with block decomposition as in Proposition \ref{explA}, then
		\begin{equation}\label{expressbsymb}
			b(x,\xi,u,v)=a(A_{33}^Tx-A_{23}^Tv,-A_{41}^T\xi+A_{11}^Tu);
		\end{equation}
		(ii) if $\mathcal{A}$ is covariant with block decomposition as in (\ref{blockAcov}), then
		\begin{equation}\label{expressbsymb2}
			b(x,\xi,u,v)=a(x-A_{13}u+(A_{11}-I)v,\xi+A_{11}^Tu+A_{21}v).
		\end{equation}
	\end{proposition}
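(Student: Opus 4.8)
The plan is to reduce both statements to an explicit block computation of $\mathcal{A}^{-1}$, and then to exploit the trivial fact that, by \eqref{defsigma}, $\sigma=a\otimes 1$ depends only on the first and third $d$-dimensional blocks of its argument in $\mathbb{R}^{4d}$. Consequently $b=(a\otimes 1)\circ\mathcal{A}^{-1}$ is obtained simply by reading off the first and third block-rows of $\mathcal{A}^{-1}$, evaluating them at $(x,\xi,u,v)$, and feeding the two resulting vectors into $a(\cdot,\cdot)$.

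First I would dispose of (i): here $\mathcal{A}^{-1}$ is already recorded in \eqref{Ainv}. Its first block-row applied to $(x,\xi,u,v)$ gives $A_{33}^Tx-A_{23}^Tv$ and its third block-row gives $-A_{41}^T\xi+A_{11}^Tu$; substituting these into $\sigma(r,y,\rho,\eta)=a(r,\rho)$ yields \eqref{expressbsymb} with no further work.

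For (ii) the only computation to perform is the inverse of a covariant matrix $\mathcal{A}$ with block form \eqref{blockAcov}. I would use the symplectic identity $\mathcal{A}^{-1}=J^{-1}\mathcal{A}^{T}J$, where $J$ is the $4d\times4d$ standard symplectic matrix; written in $d\times d$ blocks this operation merely permutes and negates block-rows and block-columns of $\mathcal{A}^T$, and after invoking the symmetries $A_{13}^T=A_{13}$, $A_{21}^T=A_{21}$ one arrives at
\[
\mathcal{A}^{-1}=\begin{pmatrix}
I_{d\times d} & 0_{d\times d} & -A_{13} & A_{11}-I_{d\times d}\\
I_{d\times d} & 0_{d\times d} & -A_{13} & A_{11}\\
0_{d\times d} & I_{d\times d} & A_{11}^T & A_{21}\\
0_{d\times d} & -I_{d\times d} & I_{d\times d}-A_{11}^T & -A_{21}
\end{pmatrix}.
\]
(Equivalently, one could invert the factorization $\mathcal{A}=V_C^{T}\mathcal{A}_{FT2}\mathcal{D}_L$ furnished by Theorem \ref{charWAcovariant}.) The first block-row of this matrix applied to $(x,\xi,u,v)$ is $x-A_{13}u+(A_{11}-I_{d\times d})v$, and the third block-row is $\xi+A_{11}^Tu+A_{21}v$; inserting these into $a(r,\rho)$ gives exactly \eqref{expressbsymb2}. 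A one-line verification that $\mathcal{A}\mathcal{A}^{-1}=I_{4d\times4d}$, using once more the two symmetry relations, confirms the formula for $\mathcal{A}^{-1}$.

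There is no real obstacle here: the proof is purely mechanical linear algebra. The only points that demand a little care are the bookkeeping of which two of the four $d$-blocks of $\mathbb{R}^{4d}$ survive under $a\otimes 1$ (the first and the third), and keeping the block arithmetic in $J^{-1}\mathcal{A}^{T}J$ straight — in particular noting that it is precisely the symmetry of $A_{13}$ and $A_{21}$ that lets the transposed blocks reassemble into a matrix of the expected covariant-inverse shape.
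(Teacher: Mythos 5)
Your proposal is correct and follows essentially the same route as the paper: both reduce the claim to evaluating $\mathcal{A}^{-1}(x,\xi,u,v)^T$ and reading off the first and third $d$-blocks, using \eqref{Ainv} for case (i) and the explicit block inverse of the covariant matrix (which matches the one recorded in the paper's proof) for case (ii). The extra detail you supply — deriving that inverse via $J^{-1}\mathcal{A}^TJ$ and the symmetry of $A_{13}$, $A_{21}$, plus the check $\mathcal{A}\mathcal{A}^{-1}=I$ — is a harmless elaboration of the same computation.
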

	\begin{proof}
		The proof follows by the straightforward calculation of 
	\begin{equation}\label{E1}
			\mathcal{A}^{-1}(x,\xi,u,v)^T.
		\end{equation}
		Namely, to get \eqref{expressbsymb} one applies \eqref{E1} with $\mathcal{A}^{-1}$ as in (\ref{Ainv}), whereas \eqref{expressbsymb2} is obtained applying \eqref{E1} with 
		\[
			\mathcal{A}^{-1}=\begin{pmatrix}
				I_{d\times d} & 0_{d\times d} & -A_{13} & A_{11}-I_{d\times d}\\
				I_{d\times d} & 0_{d\times d} & -A_{13} & A_{11}\\
				0_{d\times d} & I_{d\times d} & A_{11}^T & A_{21}\\
				0_{d\times d} & -I_{d\times d} & I_{d\times d}-A_{11}^T & -A_{21}
			\end{pmatrix}.
		\]
	\end{proof}	
	
	For $a\in\cS(\rdd)$, define $\sigma:=a\otimes1$ as in (\ref{defsigma}), and
	\begin{align*}
		\tilde\sigma(r,y,\rho,\eta)=1_{(r,\rho)}\otimes\bar a(y,-\eta).
	\end{align*}
	For $\mathcal{A}\in Sp(2d,\mathbb{R})$ we set
	\begin{align}
	\label{defb}
		&b(x,\xi,u,v)=(\sigma\circ\mathcal{A}^{-1})(x,\xi,u,v),\\
		\label{deftb}
		&\tilde b(x,\xi,u,v)=(\tilde\sigma\circ\mathcal{A}^{-1})(x,\xi,u,v),\\
		\label{defc}
		& c(x,\xi,u,v)=b(x,\xi,u,v)\tilde b(x,\xi,u,v).
	\end{align}
	
	The following result extends Lemma 5.1 in \cite{CR2021} to general symplectic matrices.	
	\begin{lemma}\label{lemma18}
		Let $\mathcal{A}\in Sp(2d,\mathbb{R})$, $a\in M^{\infty,q}_{1\otimes v_s}(\rdd)$, $0<q\leq\infty$ and $s\geq0$. Let $b,\tilde b$ and $c$ be defined as in (\ref{defb}), (\ref{deftb}) and (\ref{defc}), respectively. Then $b, \tilde b, c$ are in $M^{\infty,q}_{1\otimes v_s}(\mathbb{R}^{4d})$.
	\end{lemma}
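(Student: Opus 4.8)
The plan is to reduce the statement to three stability features of the weighted class $M^{\infty,q}_{1\otimes v_s}$ and then assemble them. Write $\mathbf 1$ for the constant function $\equiv 1$. I would first establish, each by an elementary STFT computation: \textbf{(A)} $\mathbf 1\in M^{\infty,q}_{1\otimes v_s}(\rdd)$ for all $0<q\le\infty$ and $s\ge0$; \textbf{(B)} $f_1\in M^{\infty,q}_{1\otimes v_s}(\rdd)$ and $f_2\in M^{\infty,q}_{1\otimes v_s}(\rdd)$ imply $f_1\otimes f_2\in M^{\infty,q}_{1\otimes v_s}(\mathbb{R}^{4d})$; \textbf{(C)} $M^{\infty,q}_{1\otimes v_s}(\mathbb{R}^{4d})$ is stable under complex conjugation and under composition with any $S\in GL(4d,\mathbb{R})$, with $\|f\circ S\|_{M^{\infty,q}_{1\otimes v_s}}\lesssim_S\|f\|_{M^{\infty,q}_{1\otimes v_s}}$. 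Granting these, set $P(r,y,\rho,\eta):=(r,\rho,y,-\eta)\in GL(4d,\mathbb{R})$; then, directly from the definition (\ref{defsigma}) of $\sigma$ and those of $\tilde\sigma,b,\tilde b,c$ preceding Lemma \ref{lemma18}, the paper's objects are the ordinary tensor products read in a fixed order, i.e.
\[
	\sigma=(a\otimes\mathbf 1)\circ P,\qquad \tilde\sigma=(\mathbf 1\otimes\bar a)\circ P,\qquad \sigma\,\tilde\sigma=(a\otimes\bar a)\circ P .
\]
Since $a,\bar a,\mathbf 1\in M^{\infty,q}_{1\otimes v_s}(\rdd)$, (A)--(B) put $a\otimes\mathbf 1$, $\mathbf 1\otimes\bar a$ and $a\otimes\bar a$ in $M^{\infty,q}_{1\otimes v_s}(\mathbb{R}^{4d})$, and (C) then yields
\[
	b=(a\otimes\mathbf 1)\circ(P\mathcal{A}^{-1}),\qquad \tilde b=(\mathbf 1\otimes\bar a)\circ(P\mathcal{A}^{-1}),\qquad c=b\tilde b=(a\otimes\bar a)\circ(P\mathcal{A}^{-1})
\]
all in $M^{\infty,q}_{1\otimes v_s}(\mathbb{R}^{4d})$, together with $\|b\|,\|\tilde b\|\lesssim_{\mathcal{A}}\|a\|_{M^{\infty,q}_{1\otimes v_s}}$ and $\|c\|\lesssim_{\mathcal{A}}\|a\|_{M^{\infty,q}_{1\otimes v_s}}^2$. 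These bounds, proved for $a\in\mathcal S(\rdd)$, also let one interpret and extend the statement to arbitrary $a\in M^{\infty,q}_{1\otimes v_s}$, the three displayed identities holding verbatim for tempered distributions.

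The three facts themselves are short. For (A): fixing a Gaussian $g$, compute $V_g\mathbf 1(x,\xi)=e^{-2\pi i\xi\cdot x}\overline{\hat g(-\xi)}$, so $\sup_x|V_g\mathbf 1(x,\xi)|=|\hat g(-\xi)|$ is Schwartz in $\xi$ and $\|\mathbf 1\|^q_{M^{\infty,q}_{1\otimes v_s}}=\int_{\rdd}|\hat g(-\xi)|^q\langle\xi\rangle^{sq}\,d\xi<\infty$ (obvious change for $q=\infty$). For (B): with a product window $g_1\otimes g_2$ one has $V_{g_1\otimes g_2}(f_1\otimes f_2)(x_1,x_2,\xi_1,\xi_2)=V_{g_1}f_1(x_1,\xi_1)V_{g_2}f_2(x_2,\xi_2)$, so the supremum over $(x_1,x_2)$ splits as a product; then $\langle(\xi_1,\xi_2)\rangle^s\le\langle\xi_1\rangle^s\langle\xi_2\rangle^s$, valid for $s\ge0$ since $1+|\xi_1|^2+|\xi_2|^2\le(1+|\xi_1|^2)(1+|\xi_2|^2)$, makes the weighted $L^q$-norm in the frequency variables factor as well, giving $\|f_1\otimes f_2\|_{M^{\infty,q}_{1\otimes v_s}}\lesssim\|f_1\|_{M^{\infty,q}_{1\otimes v_s}}\|f_2\|_{M^{\infty,q}_{1\otimes v_s}}$. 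For (C): the dilation covariance of the STFT reads $V_\Psi(f\circ S)(x,\xi)=|\det S|^{-1}V_{\Psi\circ S^{-1}}f(Sx,S^{-T}\xi)$; taking $\sup_x$ — this is exactly where $p=\infty$ is used, since $\sup_x h(Sx)=\sup_y h(y)$ — substituting $\eta=S^{-T}\xi$, and invoking $\langle S^{T}\eta\rangle\asymp\langle\eta\rangle$ (true for invertible $S$, $s\ge0$) together with the window-independence of the modulation (quasi-)norm, one obtains the asserted bound; conjugation follows from $|V_{\bar g}\bar f(x,\xi)|=|V_gf(x,-\xi)|$ and the evenness of $v_s$.

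The one point that needs care — the ``main obstacle'', such as it is — is the bookkeeping identifying $\tilde\sigma(r,y,\rho,\eta)=\overline{a(y,-\eta)}$ and $(\sigma\tilde\sigma)(r,y,\rho,\eta)=a(r,\rho)\overline{a(y,-\eta)}$ with $(\mathbf 1\otimes\bar a)\circ P$ and $(a\otimes\bar a)\circ P$ for the single coordinate map $P$ above. The payoff is that $c=b\tilde b$ is realized as one composition $(a\otimes\bar a)\circ(P\mathcal{A}^{-1})$ and never has to be treated as a product of two separate modulation-space distributions. Consequently no multiplicative/algebra property of $M^{\infty,q}$ is invoked — which is essential, since $M^{\infty,q}$ is a pointwise algebra only for $q\le1$ — so the proof works uniformly over the whole range $0<q\le\infty$. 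It is also worth noting that symplecticity of $\mathcal{A}$ is never used: only $\mathcal{A}^{-1}\in GL(4d,\mathbb{R})$ matters (the normalization $\det\mathcal{A}=1$ merely tidies the constants). This is precisely where the present argument extends \cite[Lemma 5.1]{CR2021}, which covered only special matrices $\mathcal{A}$.
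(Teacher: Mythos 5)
Your proof is correct, and for the symbol $c$ it takes a genuinely different route from the paper. For $b$ and $\tilde b$ the two arguments essentially coincide: the paper defers to \cite[Lemma 5.1]{CR2021} after noting that $1\in M^{\infty,q}_{1\otimes v_s}(\rdd)$, which amounts to your facts (A)--(C) (tensor with the constant function, then compose with the invertible matrix $\cA^{-1}$). The real divergence is in the treatment of $c=b\tilde b$: the paper handles it as a pointwise product of two elements of $M^{\infty,q}_{1\otimes v_s}(\bR^{4d})$, invoking the multiplication properties of modulation spaces \cite[Proposition 2.4.23]{Elena-book} for $q\geq 1$ and \cite{GCFZ19} for $0<q<1$, whereas you observe that both factors are composed with the \emph{same} map $\cA^{-1}$, so that $c=(\sigma\tilde\sigma)\circ\cA^{-1}=(a\otimes\bar a)\circ(P\cA^{-1})$ is a single linear change of variables applied to an honest tensor product. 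This buys you three things: no product/algebra theorem for $M^{\infty,q}$ is needed, so there is no case split in $q$ and no dependence on the Young-type index conditions that make such product theorems delicate outside $q\leq 1$; the argument is uniform over $0<q\leq\infty$; and $c$ is manifestly well defined as a tempered distribution (via the tensor product $a\otimes\bar a$) even when $b$ and $\tilde b$ are not functions, which clarifies the meaning of \eqref{defc} for rough $a$. Your closing remark that only $\cA^{-1}\in GL(4d,\bR)$ — not symplecticity — is used is also accurate and consistent with how the lemma is applied later.
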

	\begin{proof}
	The proof that $b$ and $\tilde b$ are in $M^{\infty,q}_{1\otimes v_s}(\rdd)$ is analogous to that of \cite[Lemma 5.1]{CR2021}. In fact, observe $1_{(y,\eta)}$ is in $M^{\infty,q}_{1\otimes v_s}(\rdd)$ for every $0<q\leq\infty$ and $s\geq 0$. 
	For $ c=b\tilde b$, if $q\geq1$ we use the product properties for modulation spaces in \cite[Proposition 2.4.23]{Elena-book}, the quasi-Banach case $0<q<1$ is contained in \cite{GCFZ19}.
	\end{proof}
	
	Recall the following boundedness result for Weyl quantization, see \cite[Theorem 14.5.6]{grochenig}, \cite{Toftweight2004} and \cite[Theorem 3.1]{ToftquasiBanach2017}.
	\begin{proposition}\label{lemmaGrochenig}
		If $0< p,q,r \leq\infty$ with $r=\min\{1,p,q\}$, $s\in\bR$, $\sigma\in M^{\infty,r}_{1\otimes v_{|s|}}(\rdd)$, then {$Op_w(\sigma):\mathcal{S}(\mathbb{R}^d)\to\mathcal{S}'(\mathbb{R}^d)$} extends to a bounded operator on $\mathcal{M}_{v_s}^{p,q}(\mathbb{R}^d)$.
	\end{proposition}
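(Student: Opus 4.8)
The plan is to prove this by the Gabor-frame (almost-diagonalization) method, which covers the Banach and quasi-Banach ranges uniformly. First I would fix $g\in\cS(\rd)\setminus\{0\}$ and a lattice $\Lambda=\alpha\zd\times\alpha\zd$ so dense that $\{\pi(\lambda)g\}_{\lambda\in\Lambda}$ is a Gabor frame for $\lrd$ and simultaneously provides the norm equivalence $\|f\|_{M^{p,q}_{v_s}}\asymp\|(\langle f,\pi(\lambda)g\rangle)_{\lambda\in\Lambda}\|_{\ell^{p,q}_{v_s}(\Lambda)}$ for all $0<p,q\leq\infty$, with a dual window $\gamma\in\cS(\rd)$; this is Gr\"ochenig's Gabor theory in the Banach regime and its Galperin--Samarah extension when $p,q<1$. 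Since $\cS(\rd)$ is dense in $\mathcal{M}^{p,q}_{v_s}(\rd)$ (the closure remark in Section 2.2), it suffices to establish the a priori estimate $\|Op_w(\sigma)f\|_{M^{p,q}_{v_s}}\lesssim\|\sigma\|_{M^{\infty,r}_{1\otimes v_{|s|}}}\|f\|_{M^{p,q}_{v_s}}$ for $f\in\cS(\rd)$, which by the frame expansion reduces to the boundedness on $\ell^{p,q}_{v_s}(\Lambda)$ of the Gabor matrix $M_{\mu\lambda}:=\langle Op_w(\sigma)\pi(\lambda)g,\pi(\mu)g\rangle$.

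To estimate this matrix I would use the identity $\langle Op_w(\sigma)f,h\rangle=\langle\sigma,W(h,f)\rangle$ together with the covariance of the cross-Wigner distribution: up to a unimodular factor, $W(\pi(\mu)g,\pi(\lambda)g)$ is a time-frequency shift of $W(g,g)$ with translation $\tfrac12(\mu+\lambda)$ and modulation $J(\mu-\lambda)$. Hence $|M_{\mu\lambda}|=|V_{W(g,g)}\sigma\big(\tfrac12(\mu+\lambda),J(\mu-\lambda)\big)|\leq H(\mu-\lambda)$, where $H(\eta):=\sup_{z\in\rdd}|V_{W(g,g)}\sigma(z,J\eta)|$ and $W(g,g)\in\cS(\rdd)$. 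Because $\sigma\in M^{\infty,r}_{1\otimes v_{|s|}}(\rdd)$, the function $\zeta\mapsto\sup_z|V_{W(g,g)}\sigma(z,\zeta)|$ lies in $L^r_{v_{|s|}}(\rdd)$; a standard Wiener-amalgam/sampling argument (legitimate since $V_{W(g,g)}\sigma$ is continuous and $M^{\infty,r}_{1\otimes v_{|s|}}$ with a Schwartz window embeds into the corresponding $W(C,L^r_{v_{|s|}})$-type amalgam) together with $v_{|s|}\circ J\asymp v_{|s|}$ yields $H\in\ell^r_{v_{|s|}}(\Lambda)$.

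It remains to pass from the off-diagonal decay $|M_{\mu\lambda}|\leq H(\mu-\lambda)$ to boundedness on $\ell^{p,q}_{v_s}$. Using that $v_s$ is $v_{|s|}$-moderate, $v_s(\mu)\lesssim v_{|s|}(\mu-\lambda)\,v_s(\lambda)$, this reduces to showing that convolution by $Hv_{|s|}\in\ell^r(\Lambda)$ maps the unweighted $\ell^{p,q}(\Lambda)$ into itself. For $r=\min\{1,p,q\}$ this is the (quasi-Banach) Young inequality $\ell^r\ast\ell^{p,q}\hookrightarrow\ell^{p,q}$: when $p,q\geq1$ it follows from $\ell^r\subseteq\ell^1$ and classical Young, and when $p$ or $q<1$ from the convolution inequality $\ell^a\ast\ell^b\hookrightarrow\ell^b$ valid for $0<a\leq\min\{1,b\}$, applied in each of the two mixed-norm variables. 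This gives $\|Mc\|_{\ell^{p,q}_{v_s}}\lesssim\|H\|_{\ell^r_{v_{|s|}}}\|c\|_{\ell^{p,q}_{v_s}}\lesssim\|\sigma\|_{M^{\infty,r}_{1\otimes v_{|s|}}}\|c\|_{\ell^{p,q}_{v_s}}$, and translating back through the frame equivalence shows that $Op_w(\sigma)$ extends to a bounded operator on $\mathcal{M}^{p,q}_{v_s}(\rd)$.

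The main obstacle is the second step: making the covariance formula for $W(\pi(\mu)g,\pi(\lambda)g)$ completely explicit, with the correct phase factors and symplectic change of variables, so that the ``frequency'' argument of $V_{W(g,g)}\sigma$ is genuinely $J(\mu-\lambda)$ and the matrix is therefore dominated by a single function of $\mu-\lambda$. The quasi-Banach frame theory and the $\ell^r$-Young inequality for $r<1$ are the other points where one must substitute the extensions of Galperin--Samarah and Toft for the classical Banach-space arguments, rather than anything conceptually new.
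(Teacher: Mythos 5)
Your proposal is correct in outline, but note that the paper does not prove this proposition at all: it is stated as a recollection of known results, with the proof delegated to the cited references (Gr\"ochenig's Theorem 14.5.6 for the Banach case, Toft \cite{Toftweight2004} for the weights, and Toft \cite{ToftquasiBanach2017} for the quasi-Banach range $r<1$). What you have reconstructed is essentially the standard almost-diagonalization argument underlying those references, and each step is sound: the covariance identity $|\langle Op_w(\sigma)\pi(\lambda)g,\pi(\mu)g\rangle|=|V_{W(g,g)}\sigma(\tfrac{\lambda+\mu}{2},J(\mu-\lambda))|$ is the classical one, and the two points you flag as delicate are exactly where the cited machinery enters: the Galperin--Samarah frame characterization of $M^{p,q}_{v_s}$ for $p,q<1$, and the change-of-window/amalgam estimate showing that $\zeta\mapsto\sup_z|V_{W(g,g)}\sigma(z,\zeta)|$ lies in $W(C,L^r_{v_{|s|}})$ (not merely $L^r_{v_{|s|}}$, which would not suffice for sampling); the latter is the same fact the paper invokes from \cite{BC2021} in its Theorem \ref{cara}. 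One small point worth making explicit if you write this up: for $p=\infty$ or $q=\infty$ the conclusion concerns $\mathcal{M}^{p,q}_{v_s}$, the closure of $\mathcal{S}$, so after the a priori bound on Schwartz functions you should also check that the densely defined extension maps into $\mathcal{M}^{p,q}_{v_s}$ rather than merely into $M^{p,q}_{v_s}$; this is handled in \cite{ToftquasiBanach2017} and is the reason the statement is formulated on $\mathcal{M}^{p,q}_{v_s}$.
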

	
	We generalize Proposition \ref{lemmaGrochenig} to {metaplectic pseudodifferential operators}:
	
	\begin{theorem}\label{thmGrochenig}
		Consider $\mathcal{A}\in Sp(2d,\mathbb{R})$ a covariant matrix such that $B_\mathcal{A}$ in (\ref{covBCohen}) is invertible. For $0< p,q\leq\infty$, set $r=\min\{1,p,q\}$. If $a\in M^{\infty,r}_{1\otimes v_s}$, $s\geq0$, then $Op_\mathcal{A}(a):\mathcal{S}(\rd)\to\mathcal{S}'(\rd)$ extends to a bounded operator on $\mathcal{M}_{v_s}^{p,q}(\mathbb{R}^d)$.
	\end{theorem}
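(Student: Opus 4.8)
The plan is to reduce the assertion to the Weyl case, i.e. to Proposition~\ref{lemmaGrochenig}, by identifying $Op_\mathcal{A}(a)$ with a Weyl operator whose symbol still lies in $M^{\infty,r}_{1\otimes v_s}$. First I would apply Lemma~\ref{lemmaComm} with $\mathcal{B}=\mathcal{A}_{1/2,2d}$, which gives $Op_\mathcal{A}(a)=Op_w(b)$ with $b=\mu(\mathcal{A}_{1/2,2d}\mathcal{A}^{-1})a\in\mathcal{S}'(\rdd)$. To compute $\mathcal{A}_{1/2,2d}\mathcal{A}^{-1}$ I would use covariance: by Theorem~\ref{ThmCohen} one has $W_\mathcal{A}(f,g)=W(f,g)\ast\Sigma_\mathcal{A}$ with $\Sigma_\mathcal{A}=\mathcal{F}^{-1}\Phi_{-B_\mathcal{A}}$, while by Lemma~\ref{lemmaUC}, see \eqref{muVctrans}, $\mu(V_{B_\mathcal{A}}^T)h=\mathcal{F}(\Phi_{-B_\mathcal{A}})\ast h$; since the chirp $\Phi_{-B_\mathcal{A}}$ is even, $\mathcal{F}(\Phi_{-B_\mathcal{A}})=\mathcal{F}^{-1}(\Phi_{-B_\mathcal{A}})=\Sigma_\mathcal{A}$, so $\mu(\mathcal{A})$ and $\mu(V_{B_\mathcal{A}}^T\mathcal{A}_{1/2,2d})$ agree on the dense span of the tensors $f\otimes\bar g$, whence $\mathcal{A}=V_{B_\mathcal{A}}^T\mathcal{A}_{1/2,2d}$. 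Therefore $\mathcal{A}_{1/2,2d}\mathcal{A}^{-1}=V_{B_\mathcal{A}}^{-T}$ and, by \eqref{vcT},
\[
b=\mu(V_{B_\mathcal{A}}^{-T})a=\mathcal{F}(\Phi_{B_\mathcal{A}})\ast a ,
\]
equivalently $\widehat b=\Phi_{B_\mathcal{A}}\,\widehat a$.

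The core step is to show that $b\in M^{\infty,r}_{1\otimes v_s}(\rdd)$ with $\|b\|_{M^{\infty,r}_{1\otimes v_s}}\lesssim\|a\|_{M^{\infty,r}_{1\otimes v_s}}$. Here the invertibility of $B_\mathcal{A}$ enters crucially: $\mathcal{F}(\Phi_{B_\mathcal{A}})$ is, up to a non-zero constant, the chirp $\Phi_{-B_\mathcal{A}^{-1}}$, and a chirp $\Phi_Q$ with $Q$ real, symmetric and invertible belongs to $M^{1,\infty}(\rdd)$, since a direct Gaussian computation shows $|V_\psi\Phi_Q(w,\omega)|$ is a constant times a Gaussian in the variable $Qw-\omega$, so that $\sup_\omega\int_{\rdd}|V_\psi\Phi_Q(w,\omega)|\,dw<\infty$ (for a degenerate $Q$ this would fail). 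Then I would invoke the weighted convolution relation for modulation spaces
\[
M^{1,\infty}(\rdd)\ast M^{\infty,r}_{1\otimes v_s}(\rdd)\hookrightarrow M^{\infty,r}_{1\otimes v_s}(\rdd),\qquad 0<r\le\infty ,
\]
which is legitimate here because $1\otimes v_s$ is translation invariant in the position variable and $v_s\ge1$, so it is its own convolution weight; for $0<r<1$ one uses the quasi-Banach convolution estimates of \cite{GCFZ19} already invoked in Lemma~\ref{lemma18}. This gives $b\in M^{\infty,r}_{1\otimes v_s}(\rdd)$. (Alternatively, one can argue directly on the STFT: since $\widehat b=\Phi_{B_\mathcal{A}}\widehat a$ is $\widehat a$ multiplied by a chirp, the time-frequency plane of $\widehat a$ is sheared by $(Z,\mathcal{Z})\mapsto(Z,\mathcal{Z}+B_\mathcal{A}Z)$, up to a Schwartz change of window; as the first Lebesgue index is $\infty$, the supremum over the position variable absorbs the resulting shift, while the weight, depending only on the frequency variable, is untouched — whence $\|b\|_{M^{\infty,r}_{1\otimes v_s}}\asymp\|a\|_{M^{\infty,r}_{1\otimes v_s}}$.)

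To conclude: since $s\ge0$ we have $M^{\infty,r}_{1\otimes v_s}=M^{\infty,r}_{1\otimes v_{|s|}}$, so with $r=\min\{1,p,q\}$ Proposition~\ref{lemmaGrochenig} applies to $Op_w(b)$, which therefore extends to a bounded operator on $\mathcal{M}^{p,q}_{v_s}(\rd)$. As $Op_\mathcal{A}(a)=Op_w(b)$ on $\mathcal{S}(\rd)$, which is dense in $\mathcal{M}^{p,q}_{v_s}(\rd)$, the operator $Op_\mathcal{A}(a)$ extends to a bounded operator on $\mathcal{M}^{p,q}_{v_s}(\rd)$ as well.

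The step I expect to be the main obstacle is the stability of $M^{\infty,r}_{1\otimes v_s}$ under convolution with the chirp $\mathcal{F}(\Phi_{B_\mathcal{A}})$: this relies precisely on the first index being $\infty$ and on the weight charging only the frequency variable — it would fail for a finite first index or for a weight touching the position variable — and on the invertibility of $B_\mathcal{A}$, which is exactly what puts $\mathcal{F}(\Phi_{B_\mathcal{A}})$ in $M^{1,\infty}$ rather than in a space of degenerate chirps supported on a proper subspace. A secondary technical point is to make sure the convolution (or product) estimates for modulation spaces are available in the quasi-Banach range $0<r<1$, which is the setting already handled in Lemma~\ref{lemma18}.
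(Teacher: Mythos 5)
Your proof is correct and follows essentially the same route as the paper's: both reduce $Op_\cA(a)$ to the Weyl operator with symbol $a\ast\cF^{-1}\Phi_{B_\cA}$ (you via Lemma \ref{lemmaComm} and the factorization $\cA=V_{B_\cA}^T\cA_{1/2,2d}$, the paper via a direct duality computation using the Cohen-class kernel), then show that this convolution preserves $M^{\infty,r}_{1\otimes v_s}$ precisely because $B_\cA$ is invertible, and conclude with Toft's boundedness theorem, i.e. Proposition \ref{lemmaGrochenig}. The only divergence is in the middle step, where you justify the convolution estimate self-containedly by the Gaussian computation showing $\cF\Phi_{B_\cA}=c\,\Phi_{-B_\cA^{-1}}\in M^{1,\infty}(\rdd)$, whereas the paper cites the membership $\cF^{-1}\Phi_{B_\cA}\in M^{r,\infty}_{v_s\otimes 1}(\rdd)$ from \cite{CR2022} and the convolution inequality from \cite{BC2021}.
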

	\begin{proof}
		By \cite[Proposition 3.3]{CR2022}, $\mathcal{F}^{-1}\Phi_{B_\cA}\in M^{r,\infty}_{v_s\otimes 1}(\rdd)$ for every $s\geq0$, $0<r\leq\infty$. Since $W_\mathcal{A}$ belongs to the Cohen class, for every $f,g\in\mathcal{S}(\rd)$,
		\begin{align*}
			\langle Op_\mathcal{A}(a)f,g\rangle&=\langle a,W_\mathcal{A}(g,f)\rangle=\langle a, W(g,f)\ast \mathcal{F}^{-1}\Phi_{-B_\cA}\rangle\\
			&=\langle \hat a , \mathcal{F}(W(g,f))e^{-i\pi\zeta\cdot B_\mathcal{A}\zeta}\rangle=
			\langle \hat a e^{i\pi\zeta\cdot B_\mathcal{A}\zeta}, \mathcal{F}(W(g,f))\rangle\\
			&=\langle a\ast \mathcal{F}^{-1}\Phi_{B_\cA},W(g,f)\rangle=\langle Op_w(a\ast \mathcal{F}^{-1}\Phi_{B_\cA})f,g\rangle.	
		\end{align*}
		By \cite[Proposition 3.1]{BC2021} 
		\[
		\Vert a\ast\mathcal{F}^{-1}\Phi_{B_\cA}\Vert_{M^{\infty,r}_{1\otimes v_s}}\lesssim\Vert a\Vert_{M^{\infty,r}_{1\otimes v_s}}\Vert \mathcal{F}^{-1}\Phi_{B_\cA}\Vert_{M^{r,\infty}_{v_s\otimes 1}}.
		\]
		The assertion follows from \cite[Theorem 3.1]{ToftquasiBanach2017}.
	\end{proof}
	
We conclude this section by showing the validity of relations (\ref{commOpWParticolare}) on modulation spaces. 
	
	\begin{theorem}\label{5.4}
		Consider $\mathcal{A}\in Sp(2d,\mathbb{R})$, $0< p\leq \infty$, $a\in M^{\infty,r}_{1\otimes v_s}(\rdd)$, $s\geq0$, $r=\min\{1,p\}$, and $b$, $\tilde b$, $c$ defined as in (\ref{defb}), (\ref{deftb}) and (\ref{defc}), respectively. For $f,g\in\mathcal{M}^p_{v_s}(\rd)$, the following identities hold in $\cM^{p}_{v_s}(\rdd)$: 
			\begin{align}
		W_\mathcal{A}(Op_{w,2d}(a)f,g)=Op_{w,4d}(b)W_\mathcal{A}(f,g),\label{A4}\\
		W_\mathcal{A}(f,Op_{w,2d}(a)g)=Op_{w,4d}(\tilde b)W_\mathcal{A}(f,g),\label{A5}\\
		W_\mathcal{A}(Op_{w,2d}(a)f)=Op_{w,4d}(c)W_\mathcal{A}(f). \label{A6}
		\end{align}
	\end{theorem}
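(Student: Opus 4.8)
The plan is to prove the three identities first for Schwartz data, as genuine identities of functions, and then to promote them to $\cM^p_{v_s}(\rd)$ by a density argument, using the boundedness of metaplectic operators and of the Weyl operators involved on weighted modulation spaces so that both sides become continuous in $(f,g)$.

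\emph{Schwartz case.} For $f,g\in\cS(\rd)$, identity \eqref{A4} is exactly \eqref{commOpWParticolare2} of Corollary \ref{corollarioallacomm}, so nothing new is needed. For \eqref{A5} I would first record the elementary identity $\overline{Op_{w,2d}(a)g}=Op_{w,2d}(a^{\dagger})\bar g$, where $a^{\dagger}(z,\zeta)=\overline{a(z,-\zeta)}$, obtained from the integral formula for $Op_{w,2d}$ after the substitution $\zeta\mapsto-\zeta$; combining this with the obvious analogue of \cite[Lemma 4.1]{CR2021} with $Op_{w,2d}$ acting in the second tensor slot, namely $f\otimes\overline{Op_{w,2d}(a)g}=Op_{w,4d}(\tilde\sigma)(f\otimes\bar g)$ with $\tilde\sigma$ as in \eqref{deftb}, and then with the intertwining relation \eqref{commclassW}, one gets
\[
	W_\cA(f,Op_{w,2d}(a)g)=\mu(\cA)\,Op_{w,4d}(\tilde\sigma)(f\otimes\bar g)=Op_{w,4d}(\tilde\sigma\circ\cA^{-1})\,\mu(\cA)(f\otimes\bar g)=Op_{w,4d}(\tilde b)\,W_\cA(f,g).
\]
For \eqref{A6} I would write $(Op_{w,2d}(a)f)\otimes\overline{Op_{w,2d}(a)f}=Op_{w,4d}(\sigma)\,Op_{w,4d}(\tilde\sigma)(f\otimes\bar f)$; since $\sigma=a\otimes1$ depends only on the first $\rdd$-block of variables and $\tilde\sigma$ only on the second, the two $4d$-Weyl operators commute and their composition equals $Op_{w,4d}(\sigma\tilde\sigma)$, so applying $\mu(\cA)$ and \eqref{commclassW} once more gives $W_\cA(Op_{w,2d}(a)f)=Op_{w,4d}((\sigma\tilde\sigma)\circ\cA^{-1})W_\cA(f)=Op_{w,4d}(c)W_\cA(f)$, because $c=b\tilde b=(\sigma\circ\cA^{-1})(\tilde\sigma\circ\cA^{-1})=(\sigma\tilde\sigma)\circ\cA^{-1}$. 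Equivalently, \eqref{A6} follows by chaining \eqref{A4} and \eqref{A5} and using that conjugation by $\mu(\cA)$ is an algebra homomorphism, whence $Op_{w,4d}(b)\,Op_{w,4d}(\tilde b)=Op_{w,4d}(c)$.

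\emph{Passage to $\cM^p_{v_s}$.} Set $r=\min\{1,p\}$. By Proposition \ref{lemmaGrochenig}, $Op_{w,2d}(a)$ is bounded on $\cM^p_{v_s}(\rd)$, and by Lemma \ref{lemma18} the symbols $b,\tilde b,c$ belong to $M^{\infty,r}_{1\otimes v_s}(\mathbb{R}^{4d})$, so $Op_{w,4d}(b)$, $Op_{w,4d}(\tilde b)$ and $Op_{w,4d}(c)$ are bounded on $\cM^p_{v_s}(\rdd)$, again by Proposition \ref{lemmaGrochenig}. It remains to control $W_\cA$ itself. The elementary tensor estimate $\|h_1\otimes\bar h_2\|_{M^p_{v_s}(\rdd)}\lesssim\|h_1\|_{M^p_{v_s}(\rd)}\,\|h_2\|_{M^p_{v_s}(\rd)}$, which uses $v_s(z,\zeta)\le v_s(z)v_s(\zeta)$ for $s\ge0$ together with the factorization of the STFT of a tensor product, combined with the boundedness of the metaplectic operator $\mu(\cA)$ on $M^p_{v_s}(\rdd)$ — which reduces, via Proposition \ref{deGosson96}, to the boundedness on the diagonal weighted modulation space of the Fourier transform, of the rescalings $\mu(\cD_L)$ and of the chirp multiplications $\mu(V_C)$, all of which preserve $M^p_{v_s}$ since $v_s$ is symplectically moderate — shows that $W_\cA=\mu(\cA)\circ(\,\cdot\otimes\overline{\cdot}\,)$ maps $\cM^p_{v_s}(\rd)\times\cM^p_{v_s}(\rd)$ boundedly into $\cM^p_{v_s}(\rdd)$ (these operators also send $\cS$ to $\cS$, hence preserve $\cM^p_{v_s}$, the closure of $\cS$). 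Consequently both sides of each of \eqref{A4}, \eqref{A5}, \eqref{A6} are continuous $($bi$)$linear maps of $(f,g)\in\cM^p_{v_s}(\rd)\times\cM^p_{v_s}(\rd)$ with values in $\cM^p_{v_s}(\rdd)$ that agree on the dense subset $\cS(\rd)\times\cS(\rd)$ by the Schwartz case, hence they agree everywhere.

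I expect the main obstacle to be precisely this continuity step, i.e. checking that $W_\cA$ carries $\cM^p_{v_s}(\rd)\times\cM^p_{v_s}(\rd)$ boundedly into $\cM^p_{v_s}(\rdd)$ for an arbitrary $\cA\in Sp(2d,\bR)$ and all $0<p\le\infty$, $s\ge0$ — equivalently, that an arbitrary metaplectic operator is bounded on the diagonal weighted modulation space — everything else being bookkeeping with formulas already established in Sections 3--5. Some additional care is needed in the quasi-Banach range $0<p<1$, where one must invoke the quasi-Banach boundedness results of \cite{ToftquasiBanach2017} and \cite{GCFZ19} already used in Proposition \ref{lemmaGrochenig} and Lemma \ref{lemma18}.
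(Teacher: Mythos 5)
Your proposal is correct and follows essentially the same route as the paper: establish the identities for Schwartz data via the intertwining relation \eqref{commclassW} together with the tensor-slot form of the $4d$-Weyl calculus, then extend by continuity using Proposition \ref{lemmaGrochenig}, Lemma \ref{lemma18}, and the boundedness of $W_\cA$ on the diagonal weighted modulation spaces. The only difference is one of sourcing rather than substance: the paper delegates the algebraic identities to the pattern of \cite[Theorem 5.1]{CR2021} and the mapping property $W_\cA:\cM^p_{v_s}(\rd)\times\cM^p_{v_s}(\rd)\to\cM^p_{v_s}(\rdd)$ to \cite[Theorem 2.15]{CR2022}, whereas you re-derive both (the latter via the generators $\mu(J)$, $\mu(\cD_L)$, $\mu(V_C)$), which is legitimate.
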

	\begin{proof}
	If $f\in\mathcal{M}^{p}_{v_s}(\rd)$, then $Op_w(a)f\in \mathcal{M}^p_{v_s}(\rd)$ 	by \cite[Theorem 3.1]{ToftquasiBanach2017}. Hence, \cite[Theorem 2.15]{CR2022} says that $$W_\mathcal{A}(f), W_\mathcal{A}(f,g), W_\mathcal{A}(Op_w(a)f,g), W_\mathcal{A}(f,Op_w(a)g), W_\mathcal{A}(Op_w(a)f)\in\mathcal{M}^p_{v_s}(\rdd).$$ 
	Similarly, by Lemma \ref{lemma18}, the symbols $b,\tilde{b}$ and $c$ are in $M^{\infty,q}_{1\otimes v_s}(\bR^{4d})$ and the right-hand sides of formulas \eqref{A4}, \eqref{A5} and \eqref{A6} are in $\cM^p_{v_s}(\rdd)$. The equalities \eqref{A4}, \eqref{A5} and \eqref{A6} are obtained by using the same pattern as in the proof of \cite[Theorem 5.1]{CR2021}, namely replacing the symplectic matrix $\mathcal{A}_\tau$ with a general $\mathcal{A}\in Sp(2d,\bR)$.
	\end{proof}

\begin{remark}
	Observe that the previous result extends \cite[Theorem 5.1]{CR2021} to the quasi-Banach setting $0<p<1$.
\end{remark}

\section{Algebras of generalized metaplectic operators}
In this section we introduce (quasi-)algebras of FIOs which extend the ones in \cite{CGNRJMPA,CGNRJMP2014}. 

Recall the definition of a Gabor frame. Given a lattice in the phase-space $\Lambda=A\zdd$,  with $A\in GL(2d,\R)$, and a non-zero window function $g\in L^2(\rd)$,  a \emph{Gabor system} is the sequence: $$\cG(g,\Lambda)=\{\pi(\lambda)g:\
\lambda\in\Lambda\}.$$
A Gabor system $\cG(g,\Lambda)$   becomes
a Gabor frame if there exist
constants $0<A\leq B$ such that
\begin{equation}\label{gaborframe}
A\|f\|_2^2\leq\sum_{\lambda\in\Lambda}|\langle f,\pi(\lambda)g\rangle|^2\leq B\|f\|^2_2,\qquad \forall f\in L^2(\rd).
\end{equation}
Given a Gabor frame $\cG(g,\Lambda)$,  the \emph{Gabor matrix} of a linear continuous operator $T$ from $\cS(\rd)$ to $\cS'(\rd)$ is 
\begin{equation}\label{unobis2s} \langle T \pi(z)
g,\pi(u)g\rangle,\quad z,u\in \rdd.
\end{equation}

\emph{Our goal}: controlling the Gabor matrix of a metaplectic operator $T$ (or more general one) related to the symplectic matrix $\chi\in Sp(d,\bR)$ by 
$$h(\mu-\chi\lambda),\quad\lambda,\mu\in\Lambda,$$
where   $h$ is a sequence leaving  in a suitable (quasi-)algebra with respect to convolution. 

The algebras already studied in \cite{CGNRJMPA,CGNRJMP2014} where $\ell^1(\Lambda)$ and $\ell^\infty_{v_s}(\Lambda)$, $s>2d$. Here we extend to the quasi-algebras $\ell_{v_s}^q(\Lambda)$, $0<q<1$, $s\geq0$, enjoying the convolution property: 
$$\ell_{v_s}^q(\Lambda)\ast\ell_{v_s}^q(\Lambda)\hookrightarrow\ell_{v_s}^q(\Lambda),\quad 0<q<1.$$

Recall that the  Wiener amalgam spaces
 $W(C,L^p_{v_s})(\rdd)$ is defined in \eqref{Wiener-space} and the class $FIO(\chi,q,v_s)$ is defined in Definition \ref{def1.1}.


The union
\[
FIO(Sp(d,\R),q,v_s)=\bigcup_{\chi\in Sp(d,\R)} FIO(\chi,q,v_s)
\]
is called the class of \emph{generalized metaplectic operators}. Similarly to \cite[Proposition 3.1]{CGNRJMP2014} one can show: 
\begin{proposition}\label{prop3.1}
	The definition of the class $FIO(\chi,q,v_s)$ is independent of the
	window function $g\in\cS(\rd)$.
\end{proposition}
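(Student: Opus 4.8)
## Proof Proposal for Proposition~\ref{prop3.1}

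The plan is to mimic the strategy used for the analogous independence statement in the Sj\"ostrand-class setting, namely \cite[Proposition 3.1]{CGNRJMP2014}, adapting the estimates to the Wiener amalgam space $W(C,L^q_{v_s})$ and to the quasi-norm regime $0<q\leq 1$. Suppose $T\in FIO(\chi,q,v_s)$ with respect to a window $g\in\cS(\rd)$, so that
\[
|\langle T\pi(z)g,\pi(w)g\rangle|\leq H(w-\chi z),\qquad \forall w,z\in\rdd,
\]
for some $H\in W(C,L^q_{v_s})(\rdd)$. Let $\gamma\in\cS(\rd)$ be a second, arbitrary nonzero window. The goal is to produce a function $\widetilde H\in W(C,L^q_{v_s})(\rdd)$ with $|\langle T\pi(z)\gamma,\pi(w)\gamma\rangle|\leq \widetilde H(w-\chi z)$.

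First I would exploit the reproducing-type identity coming from a Gabor frame $\cG(g,\Lambda)$ with $\Lambda=A\zdd$: there is a dual window $g'\in\cS(\rd)$ (or one may use the continuous resolution of the identity over $g$, integrating against $\pi(u)g$) so that for every $h\in L^2(\rd)$ one has $h=\sum_{\lambda\in\Lambda}\langle h,\pi(\lambda)g'\rangle\pi(\lambda)g$ with convergence in suitable spaces. Inserting this twice, once for $\pi(z)\gamma$ expanded in the $\pi(\lambda)g$ system and once dualizing $\pi(w)\gamma$, yields
\[
\langle T\pi(z)\gamma,\pi(w)\gamma\rangle=\sum_{\lambda,\mu\in\Lambda}\langle \pi(z)\gamma,\pi(\lambda)g'\rangle\,\langle T\pi(\lambda)g,\pi(\mu)g\rangle\,\overline{\langle \pi(w)\gamma,\pi(\mu)g'\rangle}.
\]
Then $|\langle \pi(z)\gamma,\pi(\lambda)g'\rangle|=|V_{g'}\gamma(\lambda-z)|\,$ (up to a phase) decays rapidly since $\gamma,g'\in\cS$, hence defines a rapidly decreasing, hence $W(C,L^1_{v_s})$, envelope; the same holds for the factor with $w$ and $\mu$. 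Combining with the hypothesis $|\langle T\pi(\lambda)g,\pi(\mu)g\rangle|\leq H(\mu-\chi\lambda)$, one is led to an estimate of the form
\[
|\langle T\pi(z)\gamma,\pi(w)\gamma\rangle|\leq (F_2\ast' H\ast' F_1)(w-\chi z),
\]
where $F_1,F_2$ are the Schwartz envelopes above and $\ast'$ denotes the (twisted by $\chi$) convolution on $\Lambda$ followed by passage to the continuous variable; the key point is that $\chi\in Sp(d,\R)$ is a fixed invertible linear map, so the substitution $w-\chi z$ is compatible with translations and the convolution structure is preserved. The desired $\widetilde H$ is then this triple convolution.

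The main obstacle, and where care is needed, is the quasi-Banach range $0<q<1$: there the triangle inequality fails and $W(C,L^q_{v_s})$ is only a quasi-Banach space, so the convolution estimate $W(C,L^q_{v_s})\ast W(C,L^1_{v_s})\hookrightarrow W(C,L^q_{v_s})$ must be invoked in the form valid for quasi-norms (this is exactly the convolution property $\ell^q_{v_s}\ast\ell^q_{v_s}\hookrightarrow\ell^q_{v_s}$ and $\ell^q_{v_s}\ast\ell^1_{v_s}\hookrightarrow\ell^q_{v_s}$ for $0<q<1$, transferred to the amalgam scale, together with submultiplicativity of $v_s$). One must also check that the sums over $\Lambda$ converge absolutely and that the formal manipulations of the Gabor expansions are justified on $\cS'$, which follows from the Schwartz decay of the windows and the continuity of $T:\cS(\rd)\to\cS'(\rd)$. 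Finally, since the argument is symmetric in $g$ and $\gamma$, the two classes $FIO(\chi,q,v_s)$ defined by $g$ and by $\gamma$ coincide, which is the claim.
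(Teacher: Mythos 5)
Your strategy is the right one and is essentially what the paper intends: the paper's proof is simply a deferral to \cite[Proposition 3.1]{CGNRJMP2014}, whose argument is exactly the change-of-window computation you describe (there via the continuous STFT inversion formula rather than a discrete Gabor expansion, but this is an inessential variant), combined with a convolution estimate in the amalgam scale. One technical point needs correcting, though: the embedding $\ell^q_{v_s}\ast\ell^1_{v_s}\hookrightarrow\ell^q_{v_s}$ you invoke is \emph{false} for $0<q<1$ (take one factor a delta and the other a sequence in $\ell^1_{v_s}\setminus\ell^q_{v_s}$), so placing the Schwartz envelopes $F_1,F_2$ merely in $W(C,L^1_{v_s})$ is not enough to close the argument. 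The fix is immediate: since $V_{g'}\gamma$ is Schwartz, the envelopes $F_1,F_2$ (and their compositions with the fixed linear map $\chi^{-1}$) belong to $W(C,L^q_{v_s})$ for \emph{every} $0<q\leq 1$ and every $s\geq 0$, so one only needs the correct quasi-algebra property $\ell^q_{v_s}\ast\ell^q_{v_s}\hookrightarrow\ell^q_{v_s}$ (equivalently $W(C,L^q_{v_s})\ast W(C,L^q_{v_s})\hookrightarrow W(C,L^q_{v_s})$), which follows from the $q$-triangle inequality and the submultiplicativity of $v_s$. With that substitution your proof is complete.
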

\begin{remark}
	(i) For $q=1$ the original definition of $FIO(\chi,v_s)$ in \cite{CGNRJMP2014} was formulated in terms of a function $H\in L^1_{v_s}(\rdd)$ instead of the more restrictive condition $H\in W(C,L^1_{v_s})(\rdd)$. Though, it turns out that the two definitions are equivalent, see \cite[Proposition 3.1]{CGNRJMP2014}. \\
	(ii) Similarly  to $q=1$, one could consider   the algebra of $FIO(\chi,\infty ,v_s)$, $s>2d$ such that 
		\begin{equation}\label{asteriscoinfty}
	|\langle T \pi(z) g,\pi(w)g\rangle|\leq \la w-\chi z\ra^{-s},\qquad \forall w,z\in\rdd.
	\end{equation}
	We shall not treat this case explicitly, but we remark that it enjoys similar  properties to those we are going to establish for the cases above.
\end{remark}

\begin{theorem}\label{cara}
	Consider $T$ a continuous linear operator $\cS(\rd)\to\cS'(\rd)$,
	$\chi\in Sp(d,\bR)$, $0< q\leq 1$, $s\geq 0$. Let $\mathcal{G}(g,\Lambda)$ be a Gabor frame with
	$g\in\cS(\rd)$. Then the following properties are
	equivalent:\par
	 {(i)} there exists a function $H\in W(C,L^q_{v_s})(\rdd)$, 
	such that the kernel of $T$ with respect to \tfs s satisfies the decay
condition \eqref{asterisco};
\par {(ii)} there exists a sequence $h\in \ell^q_{v_s}(\Lambda)$, 
	such that 
	\begin{equation}\label{unobis2}
	|\langle T \pi(\lambda) g,\pi(\mu)g\rangle|\leq h( \mu-\chi(\lambda)),\qquad \forall \lambda,\mu\in \Lambda.
	\end{equation}
\end{theorem}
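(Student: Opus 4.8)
The plan is to prove the two implications separately, mirroring the proof of \cite[Proposition 3.1]{CGNRJMP2014} (see also \cite{CGNRJMPA}) but with the Wiener amalgam space $W(C,L^q_{v_s})(\rdd)$ in place of the weighted Lebesgue space used there. The direction $(i)\Rightarrow(ii)$ is essentially a sampling statement and should present no difficulty: given $H\in W(C,L^q_{v_s})(\rdd)$ as in $(i)$, I would evaluate the continuous estimate \eqref{asterisco} at $z=\lambda$, $w=\mu$ with $\lambda,\mu\in\Lambda$, obtaining $|\langle T\pi(\lambda)g,\pi(\mu)g\rangle|\le H(\mu-\chi(\lambda))$. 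By the very definition \eqref{Wiener-space} of the amalgam norm, the sequence $h(k):=\sup_{u\in[0,1]^{2d}}|H(k+u)|$ belongs to $\ell^q_{v_s}$, and since $v_s(k+u)\asymp v_s(k)$ on the unit cube one concludes \eqref{unobis2} after passing to the lattice point nearest to $\mu-\chi(\lambda)$; equivalently, one reads $h$ as the step function subordinate to a bounded uniform partition of unity (BUPU) for $\Lambda$, which also takes care of the fact that $\mu-\chi(\lambda)$ need not lie in $\Lambda$.

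The substantial part is $(ii)\Rightarrow(i)$, and the tool is the Gabor reconstruction formula. Let $\gamma:=S^{-1}g$ be the canonical dual window, $S$ the Gabor frame operator of $\mathcal G(g,\Lambda)$. Since $g\in\cS(\rd)\subset M^1_{v_s}(\rd)$ for every $s\ge0$, the Wiener-lemma property of Gabor frame operators (Gr\"ochenig--Leinert, see \cite{grochenig}) gives $\gamma\in M^1_{v_s}(\rd)$ for every $s\ge0$, whence the envelope $G:=|V_\gamma g|$ lies in $W(C,L^1_{v_s})(\rdd)$ for every $s\ge0$. Expanding both $\pi(z)g$ and $\pi(w)g$ along the frame and its dual gives
\[
\langle T\pi(z)g,\pi(w)g\rangle=\sum_{\mu,\lambda\in\Lambda}\langle\pi(z)g,\pi(\mu)\gamma\rangle\,\langle T\pi(\mu)g,\pi(\lambda)g\rangle\,\overline{\langle\pi(w)g,\pi(\lambda)\gamma\rangle},
\]
and since $|\langle\pi(z)g,\pi(\mu)\gamma\rangle|=G(\mu-z)$ and $|\langle\pi(w)g,\pi(\lambda)\gamma\rangle|=G(\lambda-w)$, the hypothesis \eqref{unobis2} yields
\[
|\langle T\pi(z)g,\pi(w)g\rangle|\le\sum_{\mu,\lambda\in\Lambda}G(\mu-z)\,h(\lambda-\chi(\mu))\,G(\lambda-w).
\]

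I would then substitute $r=\mu-z$, rewrite $\lambda-w=(\lambda-\chi(\mu))+\chi(r)-(w-\chi(z))$, carry out the inner $\lambda$-sum first and then the outer $\mu$-sum, dominating each $\Lambda$-sum by the integral of the corresponding Wiener amalgam control function; using that $\chi\in Sp(d,\bR)$ has determinant one to discard the Jacobian of $t\mapsto\chi^{-1}t$, the right-hand side is bounded by $H(w-\chi(z))$ with $H=(G\circ\chi^{-1})\ast G\ast h^{\vee}$ (up to reflections, $h$ being read as a step function). The key point of this computation is that $H$ turns out to depend on $(z,w)$ only through the combination $w-\chi(z)$, exactly the structure required in \eqref{asterisco}.

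It remains to check $H\in W(C,L^q_{v_s})(\rdd)$, which is the only genuinely technical ingredient. A BUPU identifies $h\in\ell^q_{v_s}(\Lambda)$ with a function of $W(C,L^q_{v_s})(\rdd)$; composition with the linear isomorphism $\chi$ preserves $W(C,L^1_{v_s})(\rdd)$ because $v_s(\chi\,\cdot)\asymp v_s(\cdot)$; and one needs $W(C,L^1_{v_s})\ast W(C,L^1_{v_s})\ast W(C,L^q_{v_s})\hookrightarrow W(C,L^q_{v_s})$. For $q\ge1$ this is the classical amalgam Young inequality, while for $0<q<1$ one invokes instead that $\ell^q_{v_s}$ is a quasi-Banach convolution algebra (from $|a+b|^q\le|a|^q+|b|^q$ and the submultiplicativity of $v_s$) together with boundedness of convolution by an $L^1_{v_s}$-kernel at the level of the global component. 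I expect the main obstacle throughout to be precisely this index-set bookkeeping — the argument $\lambda-\chi(\mu)$ does not lie in $\Lambda$, and the two lattice sums are coupled through $\chi$ — which is dispatched once and for all by passing from sequences on $\Lambda$ to their BUPU step functions on $\rdd$ before running the convolution estimates.
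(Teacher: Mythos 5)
Your strategy is exactly the one the paper itself points to (its "proof" is a one-line reference to Theorem 3.1 of \cite{CGNRJMPA}): sampling plus a nearest-lattice-point/BUPU device for $(i)\Rightarrow(ii)$, and for $(ii)\Rightarrow(i)$ the double Gabor expansion through the canonical dual window $\gamma=S^{-1}g$, followed by the algebraic rewriting $\lambda-w=(\lambda-\chi(\mu))+\chi(\mu-z)-(w-\chi z)$ which makes the bound depend on $(z,w)$ only through $w-\chi z$. All of that is sound and is the intended argument.

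There is, however, one genuine flaw, and it sits precisely at the only step where the case $0<q<1$ differs from the case $q=1$ already treated in \cite{CGNRJMPA,CGNRJMP2014}: the embedding $W(C,L^1_{v_s})\ast W(C,L^q_{v_s})\hookrightarrow W(C,L^q_{v_s})$ that you invoke is false for $q<1$, because at the level of the global components one would need $\ell^1_{v_s}\ast\ell^q_{v_s}\subset\ell^q_{v_s}$, and this fails: take $b=\delta_0\in\ell^q_{v_s}$ and any $a\in\ell^1_{v_s}\setminus\ell^q_{v_s}$, so that $a\ast b=a\notin\ell^q_{v_s}$. Thus "boundedness of convolution by an $L^1_{v_s}$-kernel" is not available below exponent $1$; Young's inequality does not interpolate down there, and the only usable convolution relation is $\ell^q_{v_s}\ast\ell^q_{v_s}\subset\ell^q_{v_s}$, $0<q\le1$ (from $|a+b|^q\le|a|^q+|b|^q$ and submultiplicativity of $v_s$), which is the quasi-algebra property the paper records in Section 6. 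The gap is repairable from facts you already state: since $g\in\cS(\rd)$ and $\gamma\in M^1_{v_s}(\rd)$ for \emph{every} $s\ge0$, the control sequence $k\mapsto\sup_{u\in[0,1]^{2d}}|V_\gamma g(k+u)|$ satisfies $\sup_u|V_\gamma g(k+u)|\lesssim_{s'}\langle k\rangle^{-s'}$ for every $s'$, hence $G=|V_\gamma g|$ lies in $W(C,L^q_{v_s})(\rdd)$ for every $0<q\le1$ and $s\ge0$, not merely in $W(C,L^1_{v_s})$. Placing all three factors in $W(C,L^q_{v_s})$ and using only the $q$-quasi-algebra convolution relation closes the argument; with that correction your proof is complete.
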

\begin{proof}
	It is a straightforward modification of the proof \cite[Theorem 3.1]{CGNRJMPA}.
\end{proof}

We list a series of issues which follow by easy modifications of the earlier results contained in \cite{CGNRJMPA,CGNRJMP2014}, for a detailed proof we refer to \cite{CG2022}.
\begin{theorem}\label{listprop}
(i) \textit{Boundedness}.	Fix $\chi\in\Spnr$, $0< q\leq 1$, $s\geq 0$, $m\in \cM_{v_s}$ and let $T$ be
	generalized metaplectic operator in $FIO(\chi,q,v_s)$. Then $T$ is
	bounded from $M^p_m(\rd)$ to $M^p_{m\circ\chi^{-1}}(\rd)$, $q\leq p\leq
	\infty$. \\
	(ii) \textit{Algebra property}. Let $\chi_i\in\Spnr$, $s\geq0$ and $T_i\in FIO(\chi_i,q, v_s)$, $i=1,2$. Then $T_1T_2\in FIO(\chi_1\chi_2,q,v_s)$.\\
\end{theorem}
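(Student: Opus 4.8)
The plan is to reduce both parts to the Gabor‑matrix description of $FIO(\chi,q,v_s)$ supplied by Theorem~\ref{cara}, so that the operator‑theoretic statements become (quasi‑)Young inequalities for discrete and continuous convolutions; this is exactly the strategy of \cite{CGNRJMPA,CGNRJMP2014}, and only the bookkeeping for the quasi‑Banach range $0<q<1$ needs to be redone. I would fix a window $g\in\cS(\rd)$ with $\|g\|_2=1$; by Proposition~\ref{prop3.1} the class $FIO(\chi,q,v_s)$ does not depend on $g$, and I may take $\cG(g,\Lambda)$ to be a Gabor frame whose canonical dual window $\tilde g$ is again in $\cS(\rd)$, so that $\|f\|_{M^p_m}\asymp\|(V_gf(\lambda))_{\lambda\in\Lambda}\|_{\ell^p_m(\Lambda)}$ for all $0<p\le\infty$ and $m\in\cM_{v_s}$.

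\emph{For (i),} Theorem~\ref{cara} furnishes $h\in\ell^q_{v_s}(\Lambda)$ with $|\langle T\pi(\lambda)g,\pi(\mu)g\rangle|\le h(\mu-\chi\lambda)$. Expanding $f=\sum_\lambda V_{\tilde g}f(\lambda)\,\pi(\lambda)g$ and pairing with $\pi(\mu)g$ gives
\[
|V_g(Tf)(\mu)|\le\sum_{\lambda\in\Lambda}|V_{\tilde g}f(\lambda)|\,h(\mu-\chi\lambda).
\]
Multiplying by $(m\circ\chi^{-1})(\mu)$ and using $v_s$‑moderateness of $m$ in the form $m(\chi^{-1}\mu)\lesssim v_s\bigl(\chi^{-1}(\mu-\chi\lambda)\bigr)m(\lambda)\lesssim v_s(\mu-\chi\lambda)\,m(\lambda)$ — the last step because $v_s$ is a polynomial weight and $\chi^{-1}$ is a fixed matrix — reduces matters to a cross‑lattice convolution of $v_sh\in\ell^q$ against $m\,V_{\tilde g}f\in\ell^p$. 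For $p\ge1$ this is the embedding $\ell^q\hookrightarrow\ell^1$ followed by ordinary Young, and for $q\le p<1$ it is the $q$‑Young inequality $\ell^q\ast\ell^p\hookrightarrow\ell^p$; together with the cross‑lattice convolution lemma of \cite{CGNRJMPA} it yields $\|V_g(Tf)\|_{\ell^p_{m\circ\chi^{-1}}}\lesssim\|h\|_{\ell^q_{v_s}}\|f\|_{M^p_m}$, which is the claim by the norm equivalence applied on the target side.

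\emph{For (ii),} taking $m\equiv1$ and $p=2$ in (i) shows that $T_1,T_2$, hence $T_1T_2$, are bounded on $L^2(\rd)$, so $T_1T_2\pi(z)g\in L^2(\rd)$ and its Gabor matrix is well defined. Inserting the continuous reproducing formula $\phi=\int_{\rdd}\langle\phi,\pi(u)g\rangle\pi(u)g\,du$ at $\phi=T_2\pi(z)g$, applying $T_1$, and pairing with $\pi(w)g$ — all legitimate since $H_1,H_2\in W(C,L^q_{v_s})(\rdd)\hookrightarrow L^1(\rdd)$ for $q\le1$, $s\ge0$, so the double integral converges absolutely — gives
\[
|\langle T_1T_2\pi(z)g,\pi(w)g\rangle|\le\int_{\rdd}H_2(u-\chi_2 z)\,H_1(w-\chi_1 u)\,du .
\]
The change of variables $u=\chi_2 z+\chi_1^{-1}t$, using $|\det\chi_j|=1$, rewrites the right side as $\bigl(H_2(\chi_1^{-1}\cdot)\ast H_1\bigr)(w-\chi_1\chi_2 z)$, so it remains to check that $H:=H_2(\chi_1^{-1}\cdot)\ast H_1$ lies in $W(C,L^q_{v_s})(\rdd)$. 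This rests on two facts: dilation by the fixed matrix $\chi_1^{-1}$ maps $W(C,L^q_{v_s})$ into itself (because $v_s\circ\chi_1^{-1}\asymp v_s$ and the amalgam norm does not feel a linear reshaping of the covering cubes), and $W(C,L^q_{v_s})$ is a convolution (quasi‑)algebra for $0<q\le1$, $s\ge0$ — the amalgam analogue of $\ell^q_{v_s}\ast\ell^q_{v_s}\hookrightarrow\ell^q_{v_s}$, obtained from the local estimate $L^1\ast L^\infty\hookrightarrow L^\infty$ and the global submultiplicativity of $v_s$. Hence $|\langle T_1T_2\pi(z)g,\pi(w)g\rangle|\le H(w-\chi_1\chi_2 z)$, i.e. $T_1T_2\in FIO(\chi_1\chi_2,q,v_s)$ by Definition~\ref{def1.1}.

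\emph{Where the work is.} Everything above is soft except the quasi‑Banach estimates for $0<q<1$: the $q$‑Young inequality for $\ell^q$‑spaces, the convolution (quasi‑)algebra property of $W(C,L^q_{v_s})$, its dilation invariance, and the cross‑lattice Young inequality — all ultimately powered by the $q$‑triangle inequality $(a+b)^q\le a^q+b^q$. These are precisely the points I would have to carry out carefully and which are recorded in \cite{CG2022}; a further routine check is the existence of a Schwartz dual window for the Gabor frame, which is what makes the norm equivalence usable uniformly over $0<p\le\infty$ in part (i).
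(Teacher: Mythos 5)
Your argument is correct and is precisely the one the paper intends: the paper gives no proof of Theorem \ref{listprop}, merely stating that it follows by "easy modifications" of \cite{CGNRJMPA,CGNRJMP2014} with details deferred to \cite{CG2022}, and your reconstruction (Gabor-matrix domination plus discrete/continuous quasi-Young inequalities, with the $q$-triangle inequality powering the $0<q<1$ case) is exactly that modification. No gaps; if anything you have supplied more detail than the source.
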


For the invertibility property, the algebra cases corresponding to the spaces of sequences  $\ell^1_{v_s}$ where treated in \cite{charly06} and \cite{GR} (see also earlier references therein). We extend those arguments to the quasi-Banach setting  as follows. 
\begin{definition} Consider $\cB=\ell^q_{v_s}(\Lambda)$, $0< q\leq 1$, $s\geq0$. Let $A$ be a matrix on $\Lambda$ with entries $a_{\lambda,\mu}$, for $\lambda,\mu\in \Lambda$, and let $d_A$ be the sequence with entries $d_A(\mu)$ defined by
\begin{equation}\label{3.2}
d_A(\mu)=\sup_{\lambda\in\Lambda}|a_{\lambda,\lambda-\mu}|.
\end{equation} 
We say that the matrix $A$ belongs to $\cC_\cB$ if $d_A$ belongs to $\cB$. The (quasi-)norm in $\cC_\cB$ is given by
$$\|A\|_{\cC_\cB}=\|d\|_{\cB}.$$
\end{definition}
The value $ d_A(\mu)$ is the supremum of the entries in the $\mu-th$ diagonal of $A$, thus the $\cC_\cB$-norm describes a form of the off-diagonal decay of a matrix.
\begin{theorem}\label{c-i}
Consider the (quasi-)algebra $\cB$ above. Then the following are equivalent:\\
(i) $\cB$ is inverse-closed in $B(\ell^2)$.\\
(ii) $\cC_\cB$ is inverse-closed in $B(\ell^2)$.\\
(iii) The spectrum $\widehat{\cB}\simeq \bT^d$.	
\end{theorem}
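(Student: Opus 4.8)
The plan is to run a Wiener-type argument in which (i)$\Leftrightarrow$(iii) is pure Gelfand theory for the scalar convolution algebra, while (i)$\Leftrightarrow$(ii) is the ``matrix'' counterpart, obtained by adapting the Baskakov--Sj\"ostrand--Gr\"ochenig--Klotz spectral-invariance machinery to the $q$-Banach setting. First I would fix a group isomorphism $\Lambda\cong\bZ^{2d}$ coming from $\Lambda=A\zdd$, so that the character group $\widehat\Lambda$ is a torus; by the convolution property recalled above, $\cB=\ell^q_{v_s}(\Lambda)$ is then a commutative, unital ($\delta_0$ is the unit) $q$-Banach algebra under convolution --- its quasi-norm is a $q$-norm, $\|a+b\|^q\le\|a\|^q+\|b\|^q$, because $0<q\le1$ --- which for $s\ge0$ embeds faithfully and continuously in $\ell^1(\Lambda)$. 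The basic dictionary is: via the Plancherel identification $\ell^2(\Lambda)\cong L^2(\widehat\Lambda)$, convolution $C_a:f\mapsto a\ast f$ is unitarily equivalent to multiplication by $\hat a\in C(\widehat\Lambda)$, so for $a\in\cB\subseteq\ell^1(\Lambda)$ the operator $C_a$ is invertible in $B(\ell^2)$ iff $\hat a$ is nowhere zero on $\widehat\Lambda$, i.e.\ iff $\hat a$ is invertible in $C(\widehat\Lambda)$.

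\emph{(i)$\Leftrightarrow$(iii).} The standard Gelfand theory (maximal ideals are closed kernels of continuous characters; Gelfand--Mazur; $\sigma(a)$ equals the range of the Gelfand transform) carries over to commutative unital $q$-Banach algebras, so $a\in\cB$ is invertible in $\cB$ precisely when its Gelfand transform has no zero on the maximal ideal space $\widehat\cB$, which always contains $\widehat\Lambda$. If $\widehat\cB=\widehat\Lambda$, then for $a\in\cB$ the three properties ``$C_a$ invertible in $B(\ell^2)$'', ``$\hat a$ nowhere zero on $\widehat\Lambda$'' and ``$a$ invertible in $\cB$'' coincide, giving (i). Conversely, if some character $\phi_0\in\widehat\cB\setminus\widehat\Lambda$ exists, I would build a witness to the failure of (i): since distinct maximal ideals are incomparable, for every $\theta\in\widehat\Lambda$ there is $a_\theta\in\ker\phi_0$ with $\hat a_\theta(\theta)\ne0$, and compactness of $\widehat\Lambda$ yields finitely many $a_{\theta_j}$ with $b:=\sum_j a_{\theta_j}^\ast\ast a_{\theta_j}\in\cB$ (where $a^\ast(\lambda):=\overline{a(-\lambda)}$, so that $\cB$ is closed under $\ast$) satisfying $\hat b=\sum_j|\hat a_{\theta_j}|^2>0$ on $\widehat\Lambda$ but $\phi_0(b)=0$; thus $C_b$ is invertible in $B(\ell^2)$ while $b$ is not invertible in $\cB$.

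\emph{(i)$\Leftrightarrow$(ii).} On one hand $\cC_\cB$ is a $q$-Banach $\ast$-algebra under matrix multiplication ($d_{AB}\le d_A\ast d_B$ pointwise, plus $\cB\ast\cB\hookrightarrow\cB$) continuously embedded in $B(\ell^2)$ by the Schur test (since $\cB\subseteq\ell^1$), and $a\mapsto C_a$, $(C_a)_{\lambda,\mu}=a(\lambda-\mu)$, embeds $\cB$ isometrically into $\cC_\cB$ with $d_{C_a}=|a|$. Since a convolution operator commutes with all lattice translations, so does its $B(\ell^2)$-inverse when it exists, hence that inverse is again a convolution operator $C_b$, $b=C_a^{-1}\delta_0$; if $\cC_\cB$ is inverse-closed then $C_b\in\cC_\cB$ forces $|b|=d_{C_b}\in\cB$, i.e.\ $b\in\cB$ by solidity and $b=a^{-1}$ in $\cB$. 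This proves (ii)$\Rightarrow$(i). For (i)$\Rightarrow$(ii) --- the deep direction --- given $A\in\cC_\cB$ invertible in $B(\ell^2)$ one must show $A^{-1}\in\cC_\cB$: I would first invert $A=I-B$ with $\|B\|_{\cC_\cB}$ small via the Neumann series, which converges in $\cC_\cB$ thanks to $q$-subadditivity, then reach general $A$ by a localization/covering argument over the compact spectrum of $A$, using the inverse-closedness of $\cB$ (i.e.\ (iii), available once (i)$\Leftrightarrow$(iii) is known) to re-sum the ``symbol'' pieces --- exactly the scheme of \cite{charly06,GR}.

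The decisive obstacle is this last implication (i)$\Rightarrow$(ii): transplanting the Baskakov/Sj\"ostrand domination argument (equivalently the Gr\"ochenig--Klotz derivation bootstrap) from Banach to $q$-Banach algebras. In the Banach world submultiplicativity of the norm drives the convergence of the resolvent and of the partition-of-unity estimates; here one must replace these by $q$-norm inequalities and use the GRS (polynomial) nature of $v_s$ to identify the spectral radius of $A\in\cC_\cB$ with its spectral radius in $B(\ell^2)$, while checking that all auxiliary objects in the covering argument ($\cB$, $\cC_\cB$, and their finite-range truncations) remain $q$-Banach algebras with mutually compatible quasi-norms. By contrast I expect (i)$\Leftrightarrow$(iii) and (ii)$\Rightarrow$(i) to be essentially routine once $q$-Banach Gelfand theory is in place. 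For the full details I would follow \cite{charly06,GR} and the account in \cite{CG2022}.
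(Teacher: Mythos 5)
Your proposal is correct and follows essentially the same route as the paper, which simply invokes the Gr\"ochenig--Rzeszotnik result \cite{GR} for the Banach case and observes that the quasi-algebra case ($0<q<1$) follows by the same pattern because $\ell^q_{v_s}(\Lambda)$ is a solid convolution quasi-algebra; your write-up is a more detailed unpacking of that pattern (Gelfand theory for $q$-Banach algebras for (i)$\Leftrightarrow$(iii), translation-invariance plus solidity for (ii)$\Rightarrow$(i), and the Baskakov--Sj\"ostrand bootstrap for (i)$\Rightarrow$(ii)), correctly locating the only nontrivial adaptation in the last implication.
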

\begin{proof}
 The algebra case is already proved in \cite{GR}. The quasi-algebra case follows by a similar pattern, since, for $0<q<1$, it is easy to check that $\ell^q_{v_s}(\Lambda)$ is a solid convolution quasi-algebra of sequences.
\end{proof}

As a consequence, we can state:
\begin{theorem}\label{6.8}
	The class of Weyl operators with symbols in $M^{\infty, q}_{1\otimes v_s}(\rdd)$, $0< q\leq 1$, is inverse-closed in $B(L^2(\rd))$. In other words, if $\sigma\in M^{\infty, q}_{1\otimes v_s}(\rdd)$ and $Op_w(\sigma)$ is invertible on $\lrd$, then $(Op_w(\sigma))^{-1}=Op_w(b)$ for some $b\in M^{\infty, q}_{1\otimes v_s}(\rdd)$. 
\end{theorem}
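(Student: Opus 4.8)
The plan is to transfer the statement to the level of Gabor matrices and then invoke the inverse-closedness of the matrix quasi-algebra $\cC_\cB$ with $\cB=\ell^q_{v_s}(\Lambda)$ established in Theorem \ref{c-i}. The starting point is the identification of the class $\{Op_w(\sigma)\,:\,\sigma\in M^{\infty,q}_{1\otimes v_s}(\rdd)\}$ with $FIO(I_{2d\times 2d},q,v_s)$ of Definition \ref{def1.1}, i.e. the case $\chi=I$ (so $\mu(\chi)=\mathrm{Id}$) of the representation $T=Op_w(\sigma_1)\mu(\chi)$ announced in the Introduction; for $q=1$ this is the content of \cite{CGNRJMP2014}, and the same computation — estimating $|\langle Op_w(\sigma)\pi(z)g,\pi(w)g\rangle|$ by the modulus of the STFT of $\sigma$ with the Schwartz window $W(g,g)$, and noting that the weight $1\otimes v_s$ only acts on the frequency variable of the STFT, which is comparable to $w-z$ — yields the quasi-Banach version. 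Combining this with Theorem \ref{cara}, we get: $\sigma\in M^{\infty,q}_{1\otimes v_s}(\rdd)$ if and only if, for one (equivalently, every, by Proposition \ref{prop3.1}) Gabor frame $\mathcal{G}(g,\Lambda)$ with $g\in\cS(\rd)$, the Gabor matrix $A_\sigma=\big(\langle Op_w(\sigma)\pi(\mu)g,\pi(\lambda)g\rangle\big)_{\lambda,\mu\in\Lambda}$ belongs to $\cC_\cB$.

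Fix now a Parseval Gabor frame $\mathcal{G}(g,\Lambda)$ with $g\in\cS(\rd)$ (e.g. the canonical tight window $S_{g_0}^{-1/2}g_0$ for a Schwartz $g_0$ and a suitable lattice, which stays in $\cS(\rd)$ by Gr\"ochenig--Leinert type inverse-closedness), and let $C_g\colon\lrd\to\ell^2(\Lambda)$, $C_gf=(\langle f,\pi(\lambda)g\rangle)_\lambda$, be its analysis operator, so that $C_g^\ast C_g=\mathrm{Id}_{\lrd}$ and $P:=C_gC_g^\ast$ is the orthogonal projection onto $\mathrm{Ran}(C_g)$. Then $P\in\cC_\cB$, since $(P)_{\lambda\mu}=\langle\pi(\mu)g,\pi(\lambda)g\rangle$ coincides up to a phase with $V_gg(\lambda-\mu)$ and $V_gg\in\cS(\rdd)$, so $d_P$ decays rapidly and lies in $\ell^q_{v_s}(\Lambda)$. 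For any $\sigma\in M^{\infty,q}_{1\otimes v_s}(\rdd)$ one has $A_\sigma=C_g\,Op_w(\sigma)\,C_g^\ast\in\cC_\cB$, since $C_g^\ast e_\mu=\pi(\mu)g$ gives $(C_g\,Op_w(\sigma)\,C_g^\ast)_{\lambda\mu}=\langle Op_w(\sigma)\pi(\mu)g,\pi(\lambda)g\rangle$.

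Assume now $S=Op_w(\sigma)$ with $\sigma\in M^{\infty,q}_{1\otimes v_s}(\rdd)$ is invertible on $\lrd$ (it is bounded there by Proposition \ref{lemmaGrochenig} together with \eqref{modspaceincl1}). Using $C_g^\ast C_g=\mathrm{Id}$, the operator $B:=A_\sigma+(\mathrm{Id}-P)$ acts on $\ell^2(\Lambda)=\mathrm{Ran}(C_g)\oplus\ker C_g^\ast$ as $A_\sigma|_{\mathrm{Ran}(C_g)}\oplus\mathrm{Id}$, where $A_\sigma|_{\mathrm{Ran}(C_g)}$ is unitarily equivalent to $S$ via $C_g$ and hence invertible; thus $B\in\cC_\cB$ (as $\cC_\cB$ is a quasi-algebra containing $\mathrm{Id}$ and $P$) and $B$ is invertible in $B(\ell^2(\Lambda))$. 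By Theorem \ref{c-i}, $B^{-1}\in\cC_\cB$, whence $PB^{-1}P\in\cC_\cB$. A direct check gives $S^{-1}=C_g^\ast B^{-1}C_g$ and $C_gS^{-1}C_g^\ast=PB^{-1}P$, i.e. $PB^{-1}P$ is precisely the Gabor matrix of $S^{-1}$, so that $|\langle S^{-1}\pi(\mu)g,\pi(\lambda)g\rangle|\leq d_{PB^{-1}P}(\lambda-\mu)$ with $d_{PB^{-1}P}\in\ell^q_{v_s}(\Lambda)$. By Theorem \ref{cara}, implication $(ii)\Rightarrow(i)$, this means $S^{-1}\in FIO(I_{2d\times 2d},q,v_s)$, and by the class identification of the first paragraph, $S^{-1}=Op_w(b)$ for some $b\in M^{\infty,q}_{1\otimes v_s}(\rdd)$.

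The main obstacle is the first step, namely the precise dictionary between the modulation-space regularity of the Weyl symbol ($\sigma\in M^{\infty,q}_{1\otimes v_s}$) and the off-diagonal decay of the Gabor matrix ($A_\sigma\in\cC_\cB$): for $q=1$ this is classical (Sj\"ostrand, Gr\"ochenig), while in the weighted quasi-Banach range $0<q\leq 1$, $s\geq 0$ it is the $\chi=I$ specialization of the $FIO(\chi,q,v_s)$ calculus, and one must verify carefully that the weight $1\otimes v_s$ on the frequency variable of the STFT corresponds to the weight $v_s$ along the off-diagonals — this is exactly where the identity symplectic matrix enters. A secondary technical point is the existence of a \emph{Parseval} Gabor frame with Schwartz window (or, alternatively, one may work with a Schwartz window and its canonical Schwartz dual, at the cost of a slightly heavier bookkeeping in the analysis/synthesis identities).
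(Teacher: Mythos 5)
Your proposal is correct and follows essentially the same route as the paper, whose proof of Theorem \ref{6.8} is simply the instruction to repeat the argument of Theorem 5.5 in \cite{GR} with Theorem \ref{c-i} in place of the corresponding matrix-algebra result; what you have written out (the dictionary $\sigma\in M^{\infty,q}_{1\otimes v_s}\Leftrightarrow A_\sigma\in\cC_\cB$ via the Gabor matrix characterization of \cite{BC2021}, the auxiliary operator $B=A_\sigma+(\mathrm{Id}-P)$, inversion in $\cC_\cB$, and the back-transfer $PB^{-1}P=C_gS^{-1}C_g^\ast$) is precisely the content of that reference. The two technical points you flag (the quasi-Banach/weighted version of the symbol--matrix correspondence, and the Schwartz tight window) are indeed the only places where the argument differs from the classical $q=1$ case, and both are handled exactly as you indicate.
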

\begin{proof} It follows the pattern of Theorem 5.5 in \cite{GR}, using Theorem \ref{c-i} in place of the corresponding Theorem 3.5 in the above-mentioned paper.
	\end{proof}
\begin{theorem}[Invertibility in the class $FIO(\chi,q,v_s)$]\label{inverse}
Consider $T\in FIO(\chi,q,v_s),$ such that $T$ is invertible on $L^2(\rd)$,
then $T^{-1} \in FIO(\chi^{-1},q,v_s)$.
\end{theorem}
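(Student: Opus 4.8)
The plan is to strip the metaplectic factor $\mu(\chi)$ off $T$, thereby reducing the statement to the inverse-closedness of the pseudodifferential part, which is precisely the content of Theorem \ref{6.8}. Two structural facts carry the argument: the algebra property of Theorem \ref{listprop}(ii), that is, $FIO(\chi_1,q,v_s)\,FIO(\chi_2,q,v_s)\subseteq FIO(\chi_1\chi_2,q,v_s)$, and the representation of each $T\in FIO(\chi,q,v_s)$ in the form $T=Op_w(\sigma)\mu(\chi)$ with $\sigma\in M^{\infty,q}_{1\otimes v_s}(\rdd)$ (cf.\ Section 6 and \cite{CGNRJMP2014}).

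First I would record that $\mu(\chi)\in FIO(\chi,q,v_s)$ for every $\chi\in Sp(d,\bR)$. Indeed $Op_w(1)=\mathrm{Id}$ and the constant symbol $1$ belongs to $M^{\infty,q}_{1\otimes v_s}(\rdd)$ for all $0<q\leq\infty$ and $s\geq0$, so $\mu(\chi)=Op_w(1)\mu(\chi)$ has the required form; equivalently, the Gabor matrix of $\mu(\chi)$ equals, up to phase factors, $(w,z)\mapsto V_g(\mu(\chi)g)(w-\chi z)$, a Schwartz function and hence an element of $W(C,L^q_{v_s})(\rdd)$. Since $\mu(\chi)^{-1}$ coincides with $\mu(\chi^{-1})$ up to a phase, it lies in $FIO(\chi^{-1},q,v_s)$, and the algebra property gives $S:=T\mu(\chi)^{-1}\in FIO(\chi\chi^{-1},q,v_s)=FIO(I_{2d\times 2d},q,v_s)$. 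Because $\mu(\chi)^{-1}$ is unitary on $\lrd$, the invertibility of $T$ on $\lrd$ forces that of $S$, and $S^{-1}=\mu(\chi)T^{-1}$.

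The key step is to prove that $S^{-1}\in FIO(I_{2d\times 2d},q,v_s)$, that is, that this class is inverse-closed in $B(\lrd)$. Writing $S=Op_w(\sigma)$ with $\sigma\in M^{\infty,q}_{1\otimes v_s}(\rdd)$ (the case $\chi=I$ of the representation recalled above) and invoking Theorem \ref{6.8}, one obtains $(Op_w(\sigma))^{-1}=Op_w(b)$ for some $b\in M^{\infty,q}_{1\otimes v_s}(\rdd)$, whence $S^{-1}=Op_w(b)\in FIO(I_{2d\times 2d},q,v_s)$. The same conclusion can be reached with the tools of this section alone, on the level of Gabor matrices: by Theorem \ref{cara} the Gabor matrix of $S$ is dominated by a convolution kernel in $\ell^q_{v_s}(\Lambda)$, hence belongs to the matrix quasi-algebra $\cC_\cB$ with $\cB=\ell^q_{v_s}(\Lambda)$; Theorem \ref{c-i}, whose hypotheses hold here since $\ell^q_{v_s}(\Lambda)$ is a solid convolution quasi-algebra, then shows that the Gabor matrix of $S^{-1}=\mu(\chi)T^{-1}$ again belongs to $\cC_\cB$, and Theorem \ref{cara} converts this back into the kernel estimate \eqref{asterisco} for $S^{-1}$ with $\chi=I$. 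Finally, composing $\mu(\chi)^{-1}\in FIO(\chi^{-1},q,v_s)$ with $S^{-1}\in FIO(I_{2d\times 2d},q,v_s)$ through the algebra property yields
\[
T^{-1}=\mu(\chi)^{-1}S^{-1}\in FIO(\chi^{-1},q,v_s),
\]
which is the claim.

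The delicate part, as I see it, is the inverse-closedness step together with the verification that every ingredient survives the quasi-Banach range $0<q<1$: that $\ell^q_{v_s}(\Lambda)$ is a solid convolution quasi-algebra and that Theorem \ref{6.8} (equivalently Theorem \ref{c-i}) remains valid for $q<1$, which is where the solidity arguments and \cite{GCFZ19} enter, plus the minor bookkeeping that the image lattice $\chi\Lambda$ need not equal $\Lambda$. Granting these routine facts, the argument is the one of \cite[Theorem 5.5]{GR} and \cite{CGNRJMPA,CGNRJMP2014} transplanted to the present class.
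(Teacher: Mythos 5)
Your argument is correct and ends at the same place as the paper's proof --- the inverse-closedness of the $\chi=\mathrm{Id}$ class, i.e.\ Theorem \ref{6.8} --- but it gets there by a different decomposition. The paper forms $P=T^{\ast}T$: it first shows $T^{\ast}\in FIO(\chi^{-1},q,v_s)$ by a Gabor-matrix computation (which requires checking that $\cI(H\circ\chi)\in W(C,L^q_{v_s})$, using $v_s\circ\chi^{-1}\asymp v_s$), identifies $P\in FIO(\mathrm{Id},q,v_s)$ with a Weyl operator having symbol in $M^{\infty,q}_{1\otimes v_s}(\rdd)$ via \cite[Theorem 3.2]{BC2021}, inverts it with Theorem \ref{6.8}, and concludes $T^{-1}=P^{-1}T^{\ast}$. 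You instead strip the metaplectic factor and form $S=T\mu(\chi)^{-1}$, which buys a slightly cleaner first step: the Gabor matrix of $\mu(\chi^{-1})$ is dominated by a Schwartz envelope, so its membership in $FIO(\chi^{-1},q,v_s)$ is immediate, with no need to discuss adjoints. Both routes then rest on the same two pillars: the characterization of $FIO(\mathrm{Id},q,v_s)$ as Weyl operators with symbols in $M^{\infty,q}_{1\otimes v_s}(\rdd)$, and Theorem \ref{6.8}.

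One caution on logical ordering. You quote the factorization $T=Op_w(\sigma)\mu(\chi)$ as an input; in the paper that is Theorem \ref{pseudomu}, which is proved \emph{after} --- and \emph{using} --- the present invertibility theorem, so invoking it wholesale would be circular. Your proof does not actually need it: the only instances you use are (a) $\mu(\chi)\in FIO(\chi,q,v_s)$, for which your direct Gabor-matrix argument is self-contained (so the alternative justification via ``$\mu(\chi)=Op_w(1)\mu(\chi)$'' should be dropped), and (b) the $\chi=\mathrm{Id}$ case, which is exactly the external characterization \cite[Theorem 3.2]{BC2021} that the paper itself cites at the corresponding point; cite that directly rather than Theorem \ref{pseudomu}. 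Finally, your alternative matrix-level route via Theorems \ref{cara} and \ref{c-i} is sketchier than you suggest: passing from invertibility of $S$ on $\lrd$ to invertibility of its Gabor matrix on $\ell^2(\Lambda)$ and back to a kernel estimate for $S^{-1}$ requires the frame-operator bookkeeping of \cite[Theorem 5.5]{GR}, which is precisely what Theorem \ref{6.8} packages; with that route as a remark rather than the main line, the argument stands.
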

\begin{proof} The pattern is similar to Theorem 3.7 in \cite{CGNRJMPA}. We detail the differences. We first show that the adjoint operator $T^\ast$ 
belongs to the class $FIO(\chi \inv, q,v_s)$. By Definition \ref{def1.1}:
\begin{align*}|\langle T^*\pi(z)g,\pi(w) g \rangle|&=|\langle \pi(z)g,T(\pi(w) g) \rangle|=|\langle T(\pi(w) g),\pi(z)g \rangle|\\
&\leq H( z-\chi(w))=\cI (H\circ\chi)(w-\chi^{-1}z).
\end{align*}
It is easy to check that $\cI (H\circ\chi)\in W(C,L^q_{v_s})$ for $H\in W(C,L^q_{v_s})$, since $v_s\circ \chi^{-1}\asymp v_s$, and the claim follows.
Hence, by Theorem \ref{listprop} $(ii)$, the operator $P:=T^\ast T$ is in
$FIO(\mathrm{Id},q,{v_s})$ and satisfies the estimate \eqref{unobis2}, that is: $$|\langle P\pi
(\lambda )g, \pi (\mu )g\rangle | \leq h (\lambda - \mu
), \quad\forall \lambda , \mu \in \Lambda, $$
 and a suitable sequence $h\in \ell^q_{v_s}(\Lambda)$.
 The characterization for pseudodifferential operators  in
 Theorem 3.2 \cite{BC2021} says that $P$ is a Weyl operator  $P=Op_w(\sigma)$ with a symbol $\sigma$ in
 $M^{\infty,q}_{1\otimes v_s}(\rdd)$. Since $T$ and therefore
 $T^\ast$ are invertible on $L^2(\rd)$, $P$ is also invertible on
 $L^2(\rd)$. Now we apply Theorem~\ref{6.8} and conclude that the inverse
 $P^{-1}=Op_w(\tau)$ is a  Weyl 
 operator with symbol in $\tau\in M^{\infty,q}_{1\otimes v_s}(\rdd)$.
 Hence $P^{-1}$ is in $FIO(\mathrm{Id},q,v_s)$. Eventually, using the
 algebra property of Theorem \ref{listprop} $(ii)$, we obtain that $T^{-1}=P^{-1}
 T^\ast$ is in $FIO(\chi \inv ,q,v_s)$.
\end{proof}

\begin{theorem}\label{pseudomu}
	Fix $0< q\leq 1$, $\chi \in \Spnr $. A linear continuous operator $T:
	\cS(\rd)\to\cS'(\rd)$ is in  $FIO(\chi,q,v_s)$ if and only if there
	exist symbols $\sigma_1, \sigma_2 \in
	M^{\infty,q}_{1\otimes v_s}(\rdd)$, 
	such that
	\begin{equation}\label{pseudomu1}
	T=Op_w(\sigma_1)\mu(\chi)\quad \mbox{and}\quad
	T=\mu(\chi)Op_w(\sigma_2).
	\end{equation}
	The symbols $\sigma _1$ and $\sigma _2$ are related by
	\begin{equation}\label{hormander}\sigma_2=\sigma_1\circ\chi.\end{equation}
\end{theorem}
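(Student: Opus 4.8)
The plan is to reduce the statement to the case $\chi=\mathrm{Id}$, where $FIO(\mathrm{Id},q,v_s)$ is already known to coincide with the class of Weyl operators $Op_w(\sigma)$ with $\sigma\in M^{\infty,q}_{1\otimes v_s}(\rdd)$ --- this is the characterization of \cite[Theorem 3.2]{BC2021}, already invoked in the proof of Theorem \ref{inverse} --- and then to transport everything by composing with $\mu(\chi)$.

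First I would record that $\mu(\chi)\in FIO(\chi,q,v_s)$ for every $0<q\le 1$, $s\ge 0$. Fix $g\in\cS(\rd)\setminus\{0\}$. Using the (projective) intertwining $\mu(\chi)\pi(z)=c_z\,\pi(\chi z)\mu(\chi)$ with $|c_z|=1$, one computes $|\langle\mu(\chi)\pi(z)g,\pi(w)g\rangle|=|V_\psi\phi(\chi z-w)|$ for suitable $\psi,\phi\in\cS(\rd)$ (one being $g$, the other $\mu(\chi)g$); since $V_\psi\phi\in\cS(\rdd)$ it is rapidly decreasing, hence dominated by an element of $W(C,L^q_{v_s})(\rdd)$ for all $0<q\le\infty$, $s\ge 0$. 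By Definition \ref{def1.1}, $\mu(\chi)\in FIO(\chi,q,v_s)$, and the same computation for $\chi^{-1}$ shows $\mu(\chi)^{-1}=\mu(\chi^{-1})\in FIO(\chi^{-1},q,v_s)$ (the phase ambiguity is irrelevant for membership).

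Now the two implications are formal. If $T=Op_w(\sigma_1)\mu(\chi)$ with $\sigma_1\in M^{\infty,q}_{1\otimes v_s}(\rdd)$, then $Op_w(\sigma_1)\in FIO(\mathrm{Id},q,v_s)$ by \cite[Theorem 3.2]{BC2021}, so $T\in FIO(\mathrm{Id}\,\chi,q,v_s)=FIO(\chi,q,v_s)$ by the algebra property of Theorem \ref{listprop}(ii); the representation $T=\mu(\chi)Op_w(\sigma_2)$ is handled identically. Conversely, if $T\in FIO(\chi,q,v_s)$, then $T\mu(\chi^{-1})\in FIO(\chi\chi^{-1},q,v_s)=FIO(\mathrm{Id},q,v_s)$ by the algebra property, hence $T\mu(\chi^{-1})=Op_w(\sigma_1)$ for some $\sigma_1\in M^{\infty,q}_{1\otimes v_s}(\rdd)$, i.e.\ $T=Op_w(\sigma_1)\mu(\chi)$; likewise $\mu(\chi^{-1})T\in FIO(\mathrm{Id},q,v_s)$ gives $\mu(\chi^{-1})T=Op_w(\sigma_2)$, i.e.\ $T=\mu(\chi)Op_w(\sigma_2)$.

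Finally, to obtain \eqref{hormander} I would equate the two representations: $Op_w(\sigma_1)\mu(\chi)=\mu(\chi)Op_w(\sigma_2)$, so $Op_w(\sigma_2)=\mu(\chi)^{-1}Op_w(\sigma_1)\mu(\chi)=\mu(\chi^{-1})Op_w(\sigma_1)\mu(\chi^{-1})^{-1}$, which by the symplectic covariance of the Weyl calculus (formula \eqref{commclassW} in dimension $2d$) equals $Op_w(\sigma_1\circ(\chi^{-1})^{-1})=Op_w(\sigma_1\circ\chi)$; injectivity of the Weyl correspondence then yields $\sigma_2=\sigma_1\circ\chi$. I expect the only step requiring an actual estimate, rather than a formal manipulation, to be the amalgam bound in the second paragraph --- namely that the Gabor matrix of a metaplectic operator, being a shifted STFT of Schwartz functions, lies in $W(C,L^q_{v_s})$ for every $0<q\le 1$ (and not merely for $q=1$ as in \cite{CGNRJMP2014}); this holds because the rapid decay of $V_\psi\phi$ beats both the weight $v_s^q$ and the $\ell^q$-summation, but it is the one place where the quasi-Banach range $0<q<1$ must be checked by hand.
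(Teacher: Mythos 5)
Your proposal is correct and follows essentially the same route as the paper: both reduce to the case $\chi=\mathrm{Id}$ via the algebra property, using that the Gabor matrix of $\mu(\chi)$ is a shifted STFT of Schwartz functions (hence lies in $\cS(\rdd)\subset W(C,L^q_{v_s})$), and then invoke the characterization of $FIO(\mathrm{Id},q,v_s)$ as Weyl operators with symbols in $M^{\infty,q}_{1\otimes v_s}$ from \cite[Theorem 3.2]{BC2021}. The only cosmetic differences are that the paper obtains $\mu(\chi)^{-1}\in FIO(\chi^{-1},q,v_s)$ by citing Theorem \ref{inverse} rather than by a direct computation, and defers the second factorization and the relation $\sigma_2=\sigma_1\circ\chi$ to \cite[Theorem 3.8]{CGNRJMP2014}, whereas you spell out the symplectic-covariance argument explicitly.
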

\begin{proof}
It follows the same pattern of the proof of \cite[Theorem 3.8]{CGNRJMP2014}. The main tool is the characterization in Theorem 3.2 of \cite{BC2021} which extends Theorem 4.6 in \cite{GR} to the case $0<q<1$. We recall the main steps for the benefit of the reader. \par 
Assume $T\in FIO(\chi,q,v_s)$ and fix $g\in\cS(\rd)$. We first prove the factorization $T=\sigma_1^w\mu(\chi)$.
For every $\chi\in\Spnr$, the kernel of $\mu (\chi )$ with respect to
\tfs s can be written as
$$|\langle \mu(\chi) \pi(z)g,\pi(w)g\rangle|=|V_{g}\big(\mu(\chi)g\big)\big(w-\chi z\big)|.$$
Since both  $g\in \cS (\rd )$ and
$\mu(\chi)g\in\cS(\rd)$, we have
$V_{g}(\mu(\chi)g)\in\cS(\rdd)$ (see e.g., \cite{Elena-book}).
Consequently, we have found a function $H=|V_{g}\big(\mu(\chi)g\big)|\in \cS(\rdd)\subset W(C, L^q_{v_s})$
such that
\begin{equation}
|\langle \mu(\chi) \pi(z)g,\pi(w)g\rangle| \leq H(w-\chi z) \, \quad w,z\in \rdd . \label{stimam}
\end{equation}
Since $\mu(\chi)^{-1}=\mu(\chi^{-1})$ is in $FIO( \chi \inv ,q,v_s)$ by
Theorem \ref{inverse}, the algebra property of Theorem~\ref{listprop} $(ii)$
implies that
$T\mu(\chi^{-1})\in FIO(\mathrm{Id},q,v_s)$. Now Theorem 3.2 in \cite{BC2021}
implies the existence of a symbol
$\sigma _1 \in M^{\infty,q}_{1\otimes v_s}(\rdd)$, such that
$T\mu(\chi)^{-1}=Op_w(\sigma_1)$, as claimed. The rest goes exactly as in \cite[Theorem 3.8]{CGNRJMP2014}.
\end{proof}
\section{Applications to Schr\"odinger Equations}
The theory developed in the previous sections finds a natural application in quantum mechanics. In particular, we focus on the Cauchy problems for Schr\"odinger equations announced in the Introduction, cf. \eqref{C1}, with Hamiltonian of the form \eqref{C1bis}:
$$
H=Op_w(a)+ Op_w(\sigma),
$$
where $Op_w(a)$ is the Weyl quantization of a real homogeneous quadratic polynomial on
$\rdd$ and $Op_w(\sigma)$ is a pseudodifferential operator with a
symbol $\sigma\in M^{\infty,q}_{1\otimes v_s}(\rdd)$. 

Proposition \ref{lemmaGrochenig} (see also Theorem \ref{listprop} $(i)$ with $\chi=Id$ or \cite[Theorem 3.1]{ToftquasiBanach2017}) gives
\begin{corollary}
If $\sigma\in M^{\infty,q}_{1\otimes v_s}(\rdd)$, $s\geq0$, $0<q\leq1$, then the operator $Op_w(\sigma)$ is bounded on all \modsp s $M^p_{v_s}(\rd)$, for $q\leq p\leq \infty$. In particular, $Op_{w}(\sigma)$ is bounded
on $\lrd $.
\end{corollary}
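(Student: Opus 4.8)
The plan is to deduce the statement directly from the boundedness results already established, so the proof is essentially a matter of matching indices. Fix $p$ with $q\le p\le\infty$ and put $r=\min\{1,p\}$. Since $0<q\le 1$ and $q\le p$, we have $q\le r$, so by the inclusion relations \eqref{modspaceincl1} for modulation spaces $M^{\infty,q}_{1\otimes v_s}(\rdd)\subseteq M^{\infty,r}_{1\otimes v_s}(\rdd)$, and because $s\ge0$ we have $v_{|s|}=v_s$; hence $\sigma\in M^{\infty,r}_{1\otimes v_{|s|}}(\rdd)$. Proposition \ref{lemmaGrochenig}, applied with the pair of indices $(p,p)$ (so that $r=\min\{1,p,p\}$), then gives that $Op_w(\sigma)$ extends to a bounded operator on $\mathcal{M}^{p}_{v_s}(\rd)$, which coincides with $M^{p}_{v_s}(\rd)$ for $p<\infty$.

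To cover the full range uniformly, including the endpoint $p=\infty$, I would instead route the argument through the algebra $FIO(\mathrm{Id},q,v_s)$: by the characterization in Theorem \ref{pseudomu} with $\chi=\mathrm{Id}$ (so $\mu(\chi)$ is the identity), the hypothesis $\sigma\in M^{\infty,q}_{1\otimes v_s}(\rdd)$ is exactly the condition for $Op_w(\sigma)$ to belong to $FIO(\mathrm{Id},q,v_s)$. Theorem \ref{listprop}$(i)$ with $\chi=\mathrm{Id}$ then yields boundedness of $Op_w(\sigma)$ from $M^{p}_{m}(\rd)$ to $M^{p}_{m\circ\mathrm{Id}}(\rd)=M^{p}_{m}(\rd)$ for every $m\in\mathcal{M}_{v_s}$ and every $q\le p\le\infty$. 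Choosing $m=v_s$ gives the first assertion; choosing the (trivially $v_s$-moderate) constant weight $m\equiv1$ gives boundedness on $M^{p}(\rd)$, and in particular on $M^{2}(\rd)=L^2(\rd)$ for $p=2$. Alternatively, the $L^2$-boundedness can be seen at once from $v_s\ge1$ ($s\ge0$) and $q\le1$, which force $\sigma\in M^{\infty,q}_{1\otimes v_s}(\rdd)\subseteq M^{\infty,1}(\rdd)$, the Sj\"ostrand class.

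There is no genuine obstacle here; the statement is a corollary in the strict sense. The only point that deserves a line of care is the endpoint $p=\infty$, where one must distinguish the Schwartz-closure $\mathcal{M}^{\infty}_{v_s}(\rd)$ appearing in Proposition \ref{lemmaGrochenig} from the full space $M^{\infty}_{v_s}(\rd)$; this is precisely why invoking Theorem \ref{listprop}$(i)$, whose conclusion is stated on $M^{p}_{m}(\rd)$ rather than on its Schwartz-closure, is the cleaner path.
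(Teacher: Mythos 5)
Your proposal is correct and matches the paper's own (one-line) justification, which derives the corollary from Proposition \ref{lemmaGrochenig} and, alternatively, from Theorem \ref{listprop}~$(i)$ with $\chi=\mathrm{Id}$ — exactly the two routes you spell out, with the index bookkeeping $q\le r=\min\{1,p\}$ and the $\mathcal{M}^{p}_{v_s}$ versus $M^{p}_{v_s}$ caveat handled correctly.
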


This implies that the operator  $H$ in \eqref{C1bis} is a bounded perturbation of the generator $H_0 = Op_w(a)$ of a unitary
group (cf. \cite{RS75}), and $H$ is the generator of a well-defined
(semi-)group.


\begin{theorem}\label{teofinal}
	Let $H$ be the Hamiltonian in \eqref{C1bis} with
	homogeneous polynomial $a$ and $\sigma\in
	M^{\infty,q}_{1\otimes v_s}(\rdd )$, $0<q\leq 1$, $s\geq0$. Let $U (t) = e^{itH}$ be the corresponding
	propagator. Then $U(t)$ is a generalized metaplectic operator for each
	$t \in\bR$.
Namely, the solution of the homogenous problem $iu_t + Op_w(a) u = 0$ is given by a
	metaplectic operator $\mu(
	\mathcal{\chi}_t)$ in \eqref{e7}, and $e^{itH}$
	is of the from
	\[
	e^{itH} = \mu(\chi_t)Op_w(b_t)
	\]
	for some symbol $b_t\in M^{\infty,q}_{1\otimes v_s}(\rdd)$.
\end{theorem}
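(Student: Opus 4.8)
The plan is to reduce the perturbed propagator to the free one by a Duhamel argument and then to run a Picard iteration inside the quasi-Banach algebra $FIO(\mathrm{Id},q,v_s)$, concluding with the factorization theorem.

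First I would set $R(t):=\mu(\chi_t)^{-1}U(t)=\mu(\chi_{-t})U(t)$, so that $U(t)=\mu(\chi_t)R(t)$ and $R(0)=\mathrm{Id}$. Differentiating, using the equations \eqref{C1} and \eqref{C12} together with $\mu(\chi_t)=e^{itOp_w(a)}$ from \eqref{e7} and the fact that $e^{itOp_w(a)}$ commutes with $Op_w(a)$, the quadratic part cancels and one is left with $\dot R(t)=iOp_w(\sigma_t)R(t)$, where $Op_w(\sigma_t):=\mu(\chi_t)^{-1}Op_w(\sigma)\mu(\chi_t)$. By symplectic covariance of the Weyl calculus (the dimension $2d$ analogue of \eqref{commclassW}) one has $\sigma_t=\sigma\circ\chi_t$. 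A change of variables on the STFT side — writing $V_g(\sigma\circ\chi_t)(z,\zeta)=V_{g\circ\chi_t^{-1}}\sigma(\chi_t z,\chi_t^{-T}\zeta)$ up to a phase, using $|\det\chi_t|=1$ and $v_s(\chi_t^{T}\cdot)\asymp v_s$ — shows that $\sigma_t\in M^{\infty,q}_{1\otimes v_s}(\rdd)$ with norm depending continuously on $t$; hence $\sup_{|t|\le T_0}\|\sigma_t\|_{M^{\infty,q}_{1\otimes v_s}}<\infty$ on every compact interval $[-T_0,T_0]$.

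Next I would solve the integral equation $R(t)=\mathrm{Id}+i\int_0^tOp_w(\sigma_\tau)R(\tau)\,d\tau$ by the iteration $R_0(t)=\mathrm{Id}$, $R_{n+1}(t)=i\int_0^tOp_w(\sigma_\tau)R_n(\tau)\,d\tau$, so that $R(t)=\sum_{n\ge0}R_n(t)$ with $R_n(t)$ an $n$-fold time-ordered integral of products $Op_w(\sigma_{\tau_1})\cdots Op_w(\sigma_{\tau_n})$ over the simplex $\{0\le\tau_n\le\cdots\le\tau_1\le t\}$, of volume $|t|^n/n!$. By the $\chi=\mathrm{Id}$ case of Theorem \ref{pseudomu} each $Op_w(\sigma_\tau)$ lies in $FIO(\mathrm{Id},q,v_s)$, and by the uniform estimate above their kernels with respect to time-frequency shifts can be dominated by a single $H\in W(C,L^q_{v_s})(\rdd)$ for all $|\tau|\le T_0$. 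The composition mechanism underlying the algebra property (Theorem \ref{listprop}(ii)), which at the level of dominating kernels is the convolution embedding $W(C,L^q_{v_s})\ast W(C,L^q_{v_s})\hookrightarrow W(C,L^q_{v_s})$ valid for $0<q\le1$, then bounds the kernel of $R_n(t)$ by $\frac{|t|^n}{n!}H^{\ast n}$, with $\|H^{\ast n}\|_{W(C,L^q_{v_s})}\le C^{\,n-1}\|H\|_{W(C,L^q_{v_s})}^n$. Using $q$-subadditivity of the quasi-norm, $\sum_{n\ge0}\frac{|t|^{nq}}{(n!)^q}\|H^{\ast n}\|_{W(C,L^q_{v_s})}^q<\infty$, so $\sum_nR_n(t)$ converges in $FIO(\mathrm{Id},q,v_s)$; since $FIO(\mathrm{Id},q,v_s)\hookrightarrow B(\lrd)$ by Theorem \ref{listprop}(i), it also converges in operator norm, and uniqueness of the Dyson series in $B(\lrd)$ (standard bounded-perturbation theory, $Op_w(\sigma)$ being bounded on $\lrd$) identifies its sum with $R(t)=\mu(\chi_{-t})U(t)$. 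Hence $R(t)\in FIO(\mathrm{Id},q,v_s)$.

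Finally, $\mu(\chi_t)\in FIO(\chi_t,q,v_s)$ since $|\langle\mu(\chi_t)\pi(z)g,\pi(w)g\rangle|=|V_g(\mu(\chi_t)g)(w-\chi_tz)|$ with $V_g(\mu(\chi_t)g)\in\cS(\rdd)\subset W(C,L^q_{v_s})(\rdd)$ (as in the proof of Theorem \ref{pseudomu}); composing with $R(t)$ and using the algebra property gives $U(t)=\mu(\chi_t)R(t)\in FIO(\chi_t,q,v_s)$ for every $t\in\bR$. Theorem \ref{pseudomu} then yields the factorization $U(t)=\mu(\chi_t)Op_w(b_t)$ with $b_t\in M^{\infty,q}_{1\otimes v_s}(\rdd)$; indeed $b_t$ is just the Weyl symbol of $R(t)$, which is a Weyl operator with symbol in that modulation space by the $\chi=\mathrm{Id}$ case of the same theorem. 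I expect the main obstacle to be the quasi-Banach bookkeeping of the Picard iteration: producing a single dominating amalgam-space kernel uniformly in $\tau\in[-T_0,T_0]$ and summing the series with respect to the merely $q$-subadditive quasi-norm rather than a genuine norm; the symplectic covariance, the factorization theorem, and the algebra property are quoted from the results established above.
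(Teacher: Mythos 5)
Your proof is correct and follows essentially the same route as the paper, which simply cites the $q=1$ case of \cite{CGNRJMP2014} (conjugate out the metaplectic propagator, expand the remainder in a Dyson series, use symplectic invariance of the symbol class and the algebra property) and notes that it extends to $0<q<1$; you have written out that argument in full with the quasi-Banach modifications. The only cosmetic difference is that you sum the series at the level of amalgam-dominated Gabor matrices in $FIO(\mathrm{Id},q,v_s)$, whereas the reference works at the symbol level with the Weyl product algebra on $M^{\infty,q}_{1\otimes v_s}$ — the two viewpoints are equivalent via the characterization of Theorem \ref{cara}.
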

\begin{proof}
	The proof of the above result was shown for $q=1$ in \cite{CGNRJMP2014} and it easily extends to any $0<q<1$. In fact, the main ingredients to use are
	the invariance of $M^{\infty,q}_{1\otimes v_s}(\rdd) $ under metaplectic operators, plus
		 the properties of that symbol class: the
		boundedness on \modsp s and the algebra  property of the corresponding Weyl operators.
\end{proof}
\begin{corollary}\label{corfinal}
	Assume $\sigma\in M^{\infty,q}_{1\otimes v_s}(\rdd)$, $0< q\leq 1$, $s\geq0$, $m\in\cM_{v_s}$. If the initial condition $u_0$ is in $M^p_m(\rd)$, with $q\leq p\leq\infty$, then $u(t,\cdot)\in M^p_{m\circ\chi^{-1}}(\rd)$ for every $t\in\bR$. In particular, if $m\circ\chi^{-1}\asymp m$ for every $\chi\in Sp(d,\bR)$ (as for $v_s$)
 the time evolution leaves $M^p_m(\rd)$ invariant: the Schr\"odinger evolution 	preserves the phase space concentration of the initial condition.
\end{corollary}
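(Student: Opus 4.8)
The plan is to read this off directly from the structural results of Sections~6 and~7: Theorem~\ref{teofinal} tells us the propagator is a generalized metaplectic operator, and Theorem~\ref{listprop}~$(i)$ tells us how such operators act on weighted modulation spaces. No new computation is required; the substantive work has already been done in the proofs of those two theorems, and this statement is their corollary. Accordingly, the ``hard part'' is entirely upstream, and what remains is bookkeeping.

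First I would fix $t\in\bR$ and recall from Theorem~\ref{teofinal} that, since $\sigma\in M^{\infty,q}_{1\otimes v_s}(\rdd)$ with $0<q\leq1$ and $s\geq0$, the propagator $U(t)=e^{itH}$ belongs to the class $FIO(\chi_t,q,v_s)$, where $\chi_t\in\Spnr$ is the symplectic matrix of the homogeneous flow in \eqref{e7}; indeed $U(t)=\mu(\chi_t)Op_w(b_t)$ with $b_t\in M^{\infty,q}_{1\otimes v_s}(\rdd)$, so that $U(t)\in FIO(\chi_t,q,v_s)$ follows from $\mu(\chi_t)\in FIO(\chi_t,q,v_s)$, $Op_w(b_t)\in FIO(\mathrm{Id},q,v_s)$ (via the characterization by Weyl symbols), and the algebra property Theorem~\ref{listprop}~$(ii)$. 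Next, since $m\in\cM_{v_s}$, I would apply Theorem~\ref{listprop}~$(i)$ with $T=U(t)$ and $\chi=\chi_t$: this gives that $U(t)$ is bounded from $M^p_m(\rd)$ into $M^p_{m\circ\chi_t^{-1}}(\rd)$ for every $q\leq p\leq\infty$. Hence, if $u_0\in M^p_m(\rd)$, then $u(t,\cdot)=U(t)u_0\in M^p_{m\circ\chi_t^{-1}}(\rd)$, which is the first assertion (with $\chi=\chi_t$).

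For the invariance statement, assume in addition that $m\circ\chi^{-1}\asymp m$ for every $\chi\in Sp(d,\bR)$. Then in particular $m\circ\chi_t^{-1}\asymp m$, so the spaces $M^p_{m\circ\chi_t^{-1}}(\rd)$ and $M^p_m(\rd)$ coincide with equivalent (quasi-)norms, and the previous step yields $u(t,\cdot)\in M^p_m(\rd)$ for all $t\in\bR$: the flow leaves $M^p_m(\rd)$ invariant, hence preserves the phase-space concentration measured by $m$. The model weight $v_s$ satisfies this hypothesis because, for each fixed invertible $\chi$, one has $|\chi^{-1}z|\asymp|z|$ and hence $v_s(\chi^{-1}z)=(1+|\chi^{-1}z|^2)^{s/2}\asymp(1+|z|^2)^{s/2}=v_s(z)$ since $s\geq0$. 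I do not expect a genuine obstacle here: the only point to keep in mind is that $m\circ\chi_t^{-1}$ is again a $v_s$-moderate weight, so that $M^p_{m\circ\chi_t^{-1}}(\rd)$ is well defined, which is immediate because $\chi_t$ is a fixed element of $GL(2d,\bR)$; all the analytic content is carried by Theorems~\ref{teofinal} and \ref{listprop}.
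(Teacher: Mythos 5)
Your proposal is correct and follows essentially the same route as the paper: Theorem \ref{teofinal} together with the characterization in Theorem \ref{pseudomu} gives $e^{itH}\in FIO(\chi_t,q,v_s)$, and Theorem \ref{listprop}~$(i)$ then yields the mapping property, with the invariance statement following from $m\circ\chi_t^{-1}\asymp m$. Your extra remarks on why $v_s$ satisfies the hypothesis and why $m\circ\chi_t^{-1}$ is again $v_s$-moderate are harmless elaborations of points the paper leaves implicit.
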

\begin{proof}[Proof of Corollary \ref{corfinal}]
	It follows from Theorem \ref{teofinal} and the representation in
	Theorem \ref{pseudomu} that $e^{itH}\in FIO(\chi,q,v_s)$, so that the
	claim is direct consequence of Theorem \ref{listprop} $(i)$.
\end{proof}

We can now study the Wigner  kernel of $e^{itH}$, namely $k(z,w)$, $w,z\in\rdd$, such that
$$ W(e^{itH}u_0)(z)=\int k(t,z,w) Wu_0(w)\,dw,$$
and possible generalizations to $\A$-Wigner transforms.

For sake of clarity, we start with a symbol $\sigma$ in the H\"{o}rmander class $S^0_{0,0}(\rdd)$, that is $\sigma \in\cC^{\infty}(\rdd)$ such that for every $\alpha\in\bN^d$ there exists a $C_\alpha>0$ for which 
$$|\partial^\alpha \sigma(z) |\leq C_\alpha,\quad \forall z\in\rdd.$$
We recall that  $S^0_{0,0}(\rdd)$ can be viewed as the intersection of  modulation spaces \cite{BC2021}
$$S^0_{0,0}(\rdd)=\bigcap_{s\geq 0} M^{\infty,q}_{1\otimes \la \cdot\ra^s}(\rdd),\quad 0<q\leq\infty.$$
Let $\cA$ be a covariant, shift-invertible matrix.  Actually, when working in the $L^2$ setting, 
the assumption of shift-invertibility will be not essential in the sequel. 
We may argue in terms of the Cohen class $Q_{\Sigma}$ in Theorem \ref{ThmCohen}:
$$W_\cA f=Wf\ast \Sigma= Q_{\Sigma}f$$
	where $\Sigma$ is related to $\cA$ by \eqref{CohenSymbol}, \eqref{covBCohen}.
	
	Let us define 
	$\Sigma_t(z)=\Sigma(\chi_t(z))$ and denote by $\cA_t$ the corresponding covariant matrix, such that $W_{\cA_t}=Q_{\Sigma_t}$, see Proposition 4.4 in \cite{CR2022} for details. Note that in the case of the standard Wigner transform we have $W=W_\cA=W_{\cA_t}$ for every $t$, since $\Sigma=\delta$.
	
	The following proposition is the Wigner counterpart for $e^{itH}$ of the almost-diagonalization in Definition \ref{def1.1}.
	\begin{proposition}\label{Y} Under the assumptions above, for $z\in\rdd$, $t\in\bR,$
		\begin{equation}\label{Yeq}
		W_\cA(e^{itH}u_0)(z)=\int_{\rdd} k_{\cA} (t,z,w) (W_{\cA_t}u_0)(w)\,dw,
		\end{equation}
	where for every $N\geq0$,
	$$k_{\cA} (t,z,w)\la w-\chi_t^{-1}(z)\ra ^{2N}$$
	is the kernel of an operator bounded on $\lrdd$.
	\end{proposition}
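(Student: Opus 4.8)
The plan is to exploit the factorization $e^{itH}=\mu(\chi_t)Op_w(b_t)$ together with the Cohen-class structure of $W_\cA$, and then to reduce the weighted kernel estimate to the $L^2$-boundedness of Weyl operators with symbols in $S^0_{0,0}$. First, by Theorem~\ref{teofinal} (together with Theorem~\ref{pseudomu}) we may write $e^{itH}=\mu(\chi_t)Op_w(b_t)$; since $\sigma\in S^0_{0,0}(\rdd)=\bigcap_{s\ge0}M^{\infty,q}_{1\otimes v_s}(\rdd)$, inspecting that proof shows $b_t\in S^0_{0,0}(\rdd)$ as well. Next, since $\cA$ is covariant, $W_\cA f=Wf\ast\Sigma$ by Theorem~\ref{ThmCohen}; combining the classical metaplectic covariance $W(\mu(\chi_t)g)=Wg\circ\chi_t^{-1}$ with the substitution $y\mapsto z-\chi_t w$ (recall $\det\chi_t=1$) gives, for every $g\in\lrd$,
\[
W_\cA(\mu(\chi_t)g)(z)=\int_{\rdd}Wg(w)\,\Sigma(z-\chi_t w)\,dw=(W_{\cA_t}g)(\chi_t^{-1}z),
\]
which is precisely the Cohen-class form of the free-evolution identity of \cite{CR2022} (Proposition~4.4 there); note that shift-invertibility is not needed here.

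Applying this identity with $g=Op_w(b_t)u_0$ reduces the problem to computing $W_{\cA_t}(Op_w(b_t)u_0)$. Here I would invoke Theorem~\ref{5.4}, relation~\eqref{A6}, with $\cA_t$ in place of $\cA$, $a=b_t$, $f=u_0$, $p=2$, $s=0$ (so that $\cM^2(\rd)=\lrd$): it yields $W_{\cA_t}(Op_w(b_t)u_0)=Op_{w,4d}(c_t)\,W_{\cA_t}u_0$ in $\lrdd$, where $c_t=b\tilde b$ with $b,\tilde b$ built from $b_t$ and $\cA_t^{-1}$ as in \eqref{defb}--\eqref{defc}; by Lemma~\ref{lemma18} (applied for every $s\ge0$) one obtains $c_t\in S^0_{0,0}(\bR^{4d})$. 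Denoting by $\tilde k_t$ the Weyl kernel of $Op_{w,4d}(c_t)$ and chaining the previous steps,
\[
W_\cA(e^{itH}u_0)(z)=\big(Op_{w,4d}(c_t)W_{\cA_t}u_0\big)(\chi_t^{-1}z)=\int_{\rdd}\tilde k_t(\chi_t^{-1}z,w)\,W_{\cA_t}u_0(w)\,dw,
\]
so \eqref{Yeq} holds with $k_\cA(t,z,w)=\tilde k_t(\chi_t^{-1}z,w)$ (a tempered distribution a priori; the estimate below is understood in that sense).

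It remains to prove the weighted estimate, which is the technical core. Since
\[
k_\cA(t,z,w)\la w-\chi_t^{-1}z\ra^{2N}=\big(\tilde k_t(X,Y)\la X-Y\ra^{2N}\big)\big|_{X=\chi_t^{-1}z,\,Y=w},
\]
I would start from the Weyl-kernel formula $\tilde k_t(X,Y)=\int_{\rdd}c_t\big(\tfrac{X+Y}{2},\Xi\big)e^{2\pi i(X-Y)\cdot\Xi}\,d\Xi$ and use $\la X-Y\ra^{2N}e^{2\pi i(X-Y)\cdot\Xi}=\big(1-\tfrac{1}{4\pi^2}\Delta_\Xi\big)^{N}e^{2\pi i(X-Y)\cdot\Xi}$ (for $N\in\bN$; the general $N\ge0$ then follows by convolving the symbol in $\Xi$ with a Bessel kernel); integrating by parts in $\Xi$ shows that $\tilde k_t(X,Y)\la X-Y\ra^{2N}$ is the Weyl kernel of $Op_{w,4d}(c_{t,N})$ with $c_{t,N}:=\big(1-\tfrac{1}{4\pi^2}\Delta_\Xi\big)^{N}c_t\in S^0_{0,0}(\bR^{4d})$, hence bounded on $\lrdd$ by Proposition~\ref{lemmaGrochenig} with $p=q=2$, $s=0$ (equivalently, the Calder\'on--Vaillancourt theorem, since $S^0_{0,0}\subset M^{\infty,1}$). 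Finally, the operator with kernel $k_\cA(t,z,w)\la w-\chi_t^{-1}z\ra^{2N}$ equals $C_{\chi_t^{-1}}\circ Op_{w,4d}(c_{t,N})$, where $C_{\chi_t^{-1}}F=F\circ\chi_t^{-1}$ is an isometry of $\lrdd$ (again $\det\chi_t=1$), so it is bounded on $\lrdd$; for the standard Wigner case $\Sigma=\delta$, $W=W_\cA=W_{\cA_t}$, this specializes to the formula for $k(t,z,w)$ announced in the introduction. The main obstacle is exactly this last step: recognizing the weighted Weyl kernel as a genuine Weyl operator with an $S^0_{0,0}$ symbol, and tracking the substitution $z\mapsto\chi_t^{-1}z$ so that conjugation by $\mu(\chi_t)$ does not destroy the $L^2$-boundedness; the rest is an assembly of the covariance identity with Theorem~\ref{5.4} and the boundedness results established above.
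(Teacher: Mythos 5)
Your proof is correct and follows essentially the same route as the paper: factorize $e^{itH}=\mu(\chi_t)Op_w(b_t)$ via Theorem \ref{teofinal}, apply the covariance identity $W_\cA(\mu(\chi_t)g)=W_{\cA_t}g\circ\chi_t^{-1}$ (the paper's Lemma \ref{YY}, which you re-derive from the Cohen-class structure), intertwine via \eqref{A6} to get $Op_{w,4d}(c_t)$ with $c_t\in S^0_{0,0}(\bR^{4d})$, and read off $k_\cA(t,z,w)=\tilde k_t(\chi_t^{-1}z,w)$. The only difference is that where the paper simply cites \cite[Lemma 5.3]{CR2021} for the $L^2$-boundedness of the operator with kernel $\la X-Y\ra^{2N}\tilde k_t(X,Y)$, you reprove it directly by integration by parts in the Weyl kernel followed by Calder\'on--Vaillancourt, which is precisely the content of that lemma.
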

We need the following preliminary result, cf. Proposition 4.1, formula $(96)$ in \cite{CR2022}. To benefit the reader, we report here the proof.
	\begin{lemma}\label{YY} Under the assumptions above,
		$$
		W_\cA (\mu(\chi_t )f)(z)=W_{\cA_t}f(\chi_t^{-1}z).
		$$
	\end{lemma}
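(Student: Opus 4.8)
The plan is to reduce the statement to two ingredients already available: the symplectic covariance of the classical Wigner distribution, and a change of variables inside the Cohen-class convolution. First I would recall that for any $\chi\in Sp(d,\bR)$ one has the exact identity $W(\mu(\chi)f)=Wf\circ\chi^{-1}$ — this is precisely the relation already used in the Introduction, $Wu(t,z)=Wu_0(\chi_t^{-1}z)$, and it holds without phase ambiguity because in $W(f,f)$ the phase factor of the projective metaplectic representation cancels against its conjugate. Combining this with the Cohen-class representation $W_\cA=W\ast\Sigma$ of Theorem \ref{ThmCohen}, applied to $\mu(\chi_t)f\in L^2(\rd)$, gives
\[
	W_\cA(\mu(\chi_t)f)=W(\mu(\chi_t)f)\ast\Sigma=(Wf\circ\chi_t^{-1})\ast\Sigma .
\]

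Next I would expand the right-hand side of the claimed formula. By definition $W_{\cA_t}f=Q_{\Sigma_t}f=Wf\ast\Sigma_t$ with $\Sigma_t=\Sigma\circ\chi_t$, so for $z\in\rdd$
\[
	W_{\cA_t}f(\chi_t^{-1}z)=\int_{\rdd}Wf(\chi_t^{-1}z-w)\,\Sigma(\chi_t w)\,dw .
\]
Substituting $w=\chi_t^{-1}u$, which has unit Jacobian since $\det\chi_t=1$ for $\chi_t\in Sp(d,\bR)$, turns $\Sigma(\chi_t w)$ into $\Sigma(u)$ and $\chi_t^{-1}z-w$ into $\chi_t^{-1}(z-u)$, whence
\[
	W_{\cA_t}f(\chi_t^{-1}z)=\int_{\rdd}(Wf\circ\chi_t^{-1})(z-u)\,\Sigma(u)\,du=\big((Wf\circ\chi_t^{-1})\ast\Sigma\big)(z).
\]
This is exactly the expression obtained above for $W_\cA(\mu(\chi_t)f)(z)$, which proves the lemma.

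There is no genuinely hard step here: the only points that need a word of care are the meaning of the convolution $W\ast\Sigma$ when $\Sigma\in\mathcal{S}'(\rdd)$ (handled by the continuity statements of Proposition \ref{propWellDefOpA} together with the standing assumption $\sigma\in S^0_{0,0}$, which keeps everything in the relevant function/distribution class), and the use of $\det\chi_t=1$ in the change of variables. I would also point out that this argument is nothing but Proposition 4.1, formula $(96)$, of \cite{CR2022}, reproduced here for the reader's convenience, so one may alternatively just cite it.
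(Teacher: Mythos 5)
Your proof is correct and follows essentially the same route as the paper's: both rest on the symplectic covariance $W(\mu(\chi_t)f)=Wf\circ\chi_t^{-1}$ combined with the Cohen-class representation $W_\cA f=Wf\ast\Sigma$ and a unit-Jacobian change of variables; the paper merely organizes the computation as a single chain starting from $Q_\Sigma(\mu(\chi_t)f)$ (first for $\Sigma\in\cS(\rdd)$, then by approximation for $\Sigma\in\cS'$), whereas you expand both sides and meet at $(Wf\circ\chi_t^{-1})\ast\Sigma$. No gap.
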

\begin{proof}
	From \cite[Proposition 1.3.7]{Elena-book} we have
	$$W(\mu(\chi_t)f)(z)=Wf(\chi_t^{-1}z),\quad f\in\cS(\rd),$$
	so that for any $\Sigma\in\cS(\rdd)$, $f\in\cS(\rd)$, 
	\begin{align*}Q_{\Sigma}(\mu(\chi_t)f)(z)&=[\Sigma\ast W(\mu(\chi_t)f)](z)\\
		&=\intrdd W(\mu(\chi_t)f)(u)\Sigma(z-u)\,du\\
			&=\intrdd Wf(\chi_t^{-1}u)\Sigma(\chi_t(\chi_t^{-1}z-\chi_t^{-1}u))\,du\\
				&=\intrdd Wf(\zeta)\Sigma(\chi_t(\chi_t^{-1}z-\zeta))\,d\zeta\\
				&=(Wf\ast \Sigma_t)(\chi_t^{-1}z)=Q_{\Sigma_t}f(\chi_t^{-1}z).
		\end{align*}
	For $\Sigma\in\cS'(\rdd)$ we may use standard approximation arguments. Since in our case $Q_{\Sigma}(\mu(\chi_t)f)=W_\cA(\mu(\chi_t)f)$ and $Q_{\Sigma_t}f(\chi_t^{-1}z)=W_{\cA_t}f(\chi_t^{-1}z)$, this concludes the proof.
\end{proof}
\begin{proof}[Proof of Proposition \ref{Y}]
	From Theorem \ref{teofinal} we have $$W_\cA (e^{it H}u_0) = W_\cA(\mu(\chi_t)Op_w(b_t)u_0) .$$
	In view of Lemma \ref{YY} 
	$$ W_\cA (e^{it H}u_0) (z)= W_{\cA_t}(Op_w(b_t)u_0)(\chi_t^{-1}z) .$$
	We now apply formula \eqref{A6} to obtain
	$$W_{\cA_t} (Op_w(b_t)u_0)= Op_w(c_t)(W_{\cA_t}u_0) $$
	where the symbol $c_t\in S^0_{0,0}(\bR^{4d})$ is given by \eqref{defc}. Summing up
\begin{equation}\label{(*)}
	W_{\cA} (e^{it H}u_0) (z)=Op_w(c_t)(W_{\cA_t}u_0)(\chi_t^{-1}z).
\end{equation}
	Writing $h(t,z,w)$ for the kernel of $Op_w(c_t)$,
	$$W_{\cA} (e^{it H}u_0) (z)=\int h(t, \chi_t^{-1}(z),w)(W_{\cA_t}u_0)(w)\,dw,$$
	that is 
	$$k_\cA(t,z,w) = h(t,\chi_t^{-1}z,w).$$
	Now, observe that for every $N\geq 0$,
	$$h_N(t,z,w)= \la z-w\ra^{2N} h(t,z,w) $$
	is the kernel of bounded operator on $\lrd$, see \cite[Lemma 5.3]{CR2021}. Hence the operator with kernel 
	$$k_\cA(t,z,w)\la \chi_t^{-1}(z)-w\ra ^{2N} =h_N(t,\chi_t^{-1}(z),w)$$
	is bounded as well.
\end{proof}
\begin{definition}\label{YYY}Fix $\cA\in Sp(2d,\bR)$ covariant and shift-invertible. For $f\in\lrd$ we define $\cW\cF_\cA (f)$, the $\cA$-Wigner wave front set of $f$, as follows. A point $z_0=(x_0,\xi_0)\not=0$ is not in $\cW\cF_\cA (f)$ if there exists a conic open neighbourhood $\Gamma_{z_0}\subset\rdd$ of $z_0$ such  that for every integer $N\geq0$ 
	$$\int_{\Gamma_{z_0}} \la z\ra^{2N} |W_\cA f(z)|^2  dz<\infty.$$
\end{definition}
In the case of the standard Wigner transform $W_\cA=W$, we write for short $\cW \cF_\cA(f)= \cW \cF(f)$. Note that $\cW\cF_\cA (f) $ is a closed cone in $\rdd\setminus\{0\}$. We have  $\cW\cF_\cA (f)=\emptyset $ if and only if $f\in\cS(\rd)$, cf. Proposition 4.7 in \cite{CR2022} and the arguments in the sequel. \par
First, we shall give the following extension of Theorem 1.6 in \cite{CR2021} concerning the $\tau$-Wigner case. 
\begin{theorem}\label{?}
	Consider $a\in S_{0,0}^0(\rdd)$. Then, for every $f\in\lrd$,
	$$\cW\cF_\cA(Op_w(a)f)\subset \cW\cF_\cA(f).$$
\end{theorem}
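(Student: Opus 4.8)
Fix $z_0=(x_0,\xi_0)\neq0$ with $z_0\notin\cW\cF_\cA(f)$, so that there is an open conic neighbourhood $\Gamma$ of $z_0$ with $\int_\Gamma\la z\ra^{2N}|W_\cA f(z)|^2\,dz<\infty$ for every $N\ge0$; the goal is to produce an open conic neighbourhood of $z_0$ on which $W_\cA(Op_w(a)f)$ satisfies the same bound. The plan is to reduce this to a conic-propagation property of a $4d$-dimensional Weyl operator via the intertwining identity \eqref{A6}, and then to argue as in the $\tau$-Wigner case of \cite[Theorem 1.6]{CR2021}. First I would note that $a\in S^0_{0,0}(\rdd)\subset M^{\infty,1}(\rdd)$, so $Op_w(a)$ is bounded on $\lrd$ and hence $Op_w(a)f\in\lrd$; then formula \eqref{A6} of Theorem \ref{5.4}, applied with $p=2$, $s=0$ (so $r=1$ and $a\in M^{\infty,1}_{1\otimes v_0}$), gives in $\lrdd$ the identity $W_\cA(Op_w(a)f)=Op_{w,4d}(c)\,W_\cA f$, with $c=b\tilde b$ the symbol of \eqref{defc}. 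Since $a\otimes 1$ and the companion symbol $\tilde\sigma$ lie in $S^0_{0,0}(\mathbb{R}^{4d})$ and $S^0_{0,0}$ is stable under products and under linear changes of variables, $c\in S^0_{0,0}(\mathbb{R}^{4d})$ (equivalently, this is Lemma \ref{lemma18} with $q=\infty$ and all $s\ge 0$, using $S^0_{0,0}=\bigcap_{s\ge0}M^{\infty,\infty}_{1\otimes\la\cdot\ra^s}$). So everything reduces to the following: if $g\in\lrdd$ satisfies $\int_\Gamma\la z\ra^{2N}|g(z)|^2\,dz<\infty$ for all $N\ge0$, then $\int_{\Gamma'}\la z\ra^{2N}|Op_{w,4d}(c)g(z)|^2\,dz<\infty$ for all $N\ge0$, for every open conic neighbourhood $\Gamma'$ of $z_0$ with $\overline{\Gamma'}\setminus\{0\}\subset\Gamma$ (which forces $\Gamma'\subset\Gamma$ and $\overline{\Gamma'}\cap\Gamma^c\subseteq\{0\}$).

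To prove this I would split $g=g_1+g_2$ with $g_1=\mathbf 1_\Gamma\,g$ and $g_2=\mathbf 1_{\Gamma^c}\,g$, so that $\la\cdot\ra^N g_1\in\lrdd$ for every $N$ while $g_2\in\lrdd$ is supported in $\Gamma^c$. The contribution of $g_1$ will have rapid $L^2$-decay everywhere: by \cite[Lemma 5.3]{CR2021} (the tool already used in the proof of Proposition \ref{Y}), for each $N$ the map $h\mapsto\la\cdot\ra^N Op_{w,4d}(c)(\la\cdot\ra^{-N}h)$ is bounded on $\lrdd$, because its kernel equals $\la z\ra^N\la w\ra^{-N}$ times the distributional kernel $K_c$ of $Op_{w,4d}(c)$, one has $\la z\ra^N\la w\ra^{-N}\lesssim\la z-w\ra^N$, and multiplying $K_c$ by $\la z-w\ra^{2M}$ produces the kernel of $Op_{w,4d}((1-(4\pi^2)^{-1}\Delta_\zeta)^M c)$, which is still $L^2$-bounded since the new symbol remains in $S^0_{0,0}$; hence $\la\cdot\ra^N Op_{w,4d}(c)g_1$ is a bounded operator applied to $\la\cdot\ra^N g_1\in\lrdd$. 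For $g_2$ I would use conic separation: since $\overline{\Gamma'}\cap\Gamma^c\subseteq\{0\}$, a homogeneity/compactness argument yields $c_0>0$ with $|z-w|\ge c_0(|z|+|w|)$ for all $z\in\Gamma'$ and $w\in\Gamma^c$, hence $\la z\ra\lesssim\la z-w\ra$ on $\Gamma'\times\Gamma^c$; then for each $N$ the map $h\mapsto\mathbf 1_{\Gamma'}\la\cdot\ra^N Op_{w,4d}(c)(\mathbf 1_{\Gamma^c}h)$ is bounded on $\lrdd$ (on the support of its kernel $\la z\ra^N\lesssim\la z-w\ra^N$, so the same reduction applies), and $\mathbf 1_{\Gamma'}\la\cdot\ra^N Op_{w,4d}(c)g_2$ is this operator applied to $g\in\lrdd$. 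Adding the two contributions gives $\mathbf 1_{\Gamma'}\la\cdot\ra^N Op_{w,4d}(c)g\in\lrdd$ for every $N$, which by the identity of the previous paragraph is precisely $z_0\notin\cW\cF_\cA(Op_w(a)f)$.

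The step I expect to be the real obstacle is not conceptual but the careful justification of the "kernel $\times\,\la z-w\ra^{2N}$" manipulations: since $S^0_{0,0}$ symbols carry no decay in the covariable, $Op_{w,4d}(c)$ and the auxiliary operators above have genuinely distributional kernels, so the boundedness claims must be read at the level of symbols — where multiplication of the Weyl kernel by $\la z-w\ra^{2N}$ corresponds to applying $(1-(4\pi^2)^{-1}\Delta_\zeta)^N$ to the symbol, an operation preserving $S^0_{0,0}$, followed by the standard $L^2$-boundedness of $S^0_{0,0}$ operators on $\mathbb{R}^{4d}$ — which is exactly the content of \cite[Lemma 5.3]{CR2021}; the whole argument is then the $W_\cA$-counterpart of \cite[Theorem 1.6]{CR2021}. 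The conic-separation geometry in the $g_2$ step is elementary. A technically cleaner variant, avoiding distributional kernels altogether, would be to work on the coefficient side: encode the conic $L^2$-decay of $W_\cA f$ as super-polynomial decay, over a cone, of its Gabor coefficients; propagate this through $Op_{w,4d}(c)$ using the super-polynomial off-diagonal decay of the Gabor matrix of a Weyl operator with symbol in $\bigcap_{s\ge0}M^{\infty,\infty}_{1\otimes\la\cdot\ra^s}(\mathbb{R}^{4d})$ (\cite[Theorem 3.2]{BC2021}), splitting the index sum into the cone and its complement and using Cauchy--Schwarz together with $\ell^2$-membership of the coefficients; and convert back via the equivalence between conic decay of a function and of its Gabor coefficients that underlies the wave front set of \cite{CR2021,CR2022}.
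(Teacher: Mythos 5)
Your proposal is correct and follows essentially the same route as the paper: the paper's entire proof consists of invoking the intertwining identity \eqref{A6} to reduce the claim to a microlocal propagation statement for the $4d$-dimensional Weyl operator $Op_{w,4d}(c)$ with $c\in S^0_{0,0}(\bR^{4d})$, and then citing the argument of \cite[Theorem 1.6]{CR2021} verbatim --- which is precisely the cone-splitting argument you write out in detail. The only point to watch is the one you already flag: pointwise domination of kernels ($\la z\ra^{N}\la w\ra^{-N}\lesssim\la z-w\ra^{N}$) does not by itself transfer $L^2$-boundedness, so the step must indeed be carried out at the symbol level (e.g.\ expanding $\la z\ra^{2N}$ as a polynomial in $z-w$ and $w$, so that each factor $(z-w)^{\alpha}$ acting on the kernel corresponds to a derivative of $c$, which remains in $S^0_{0,0}$ and hence $L^2$-bounded), which is exactly what \cite[Lemma 5.3]{CR2021} --- already used by the paper in Proposition \ref{Y} --- packages.
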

\begin{proof}
	Arguing exactly as in the proof of Theorem 1.6 in \cite{CR2021} and replacing $\tau$-Wigner with $\cA$-Wigner distributions, we apply the identity 
	$$W_\cA(Op_w(a)f)=Op_w(c)W_\cA f$$
	with the symbol $c$ as in \eqref{defc}  and using \eqref{A6} in Theorem \ref{5.4}  we obtain the inclusion.
\end{proof}
\begin{theorem}\label{YYYY} For $u_0\in\lrd$ we have
	$$\cW \cF_\cA(e^{it H}u_0) =\chi_t (\cW \cF_{\cA_t}(u_0)).$$
\end{theorem}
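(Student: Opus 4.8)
The plan is to deduce the equality from the two inclusions, reading them off the identity recorded in the proof of Proposition~\ref{Y} together with a microlocality property of $Op_{w,4d}$. First I would set up language: for $g\in L^2(\mathbb{R}^n)$ and an open cone $\Gamma\subseteq\mathbb{R}^n\setminus\{0\}$, call $g$ \emph{$\Gamma$-negligible} if $\int_\Gamma\la z\ra^{2N}|g(z)|^2\,dz<\infty$ for every $N\geq0$; by Definition~\ref{YYY}, $z_0\notin\cW\cF_\cA(f)$ precisely when $W_\cA f$ is $\Gamma$-negligible for some open cone $\Gamma\ni z_0$, and the same with $\cA_t$ in place of $\cA$ — here $\cA_t$ is covariant by its very definition (see the discussion preceding Proposition~\ref{Y}), so $\cW\cF_{\cA_t}$ is well defined, shift-invertibility being inessential in the $L^2$ theory. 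I would also record that the cone condition is transported by the linear symplectic map $\chi_t$: since $|\det\chi_t|=1$ and $\la\chi_t z\ra\asymp\la z\ra$, a change of variables gives that $g$ is $\Gamma$-negligible if and only if $g\circ\chi_t^{-1}$ is $\chi_t(\Gamma)$-negligible, and $\Gamma\mapsto\chi_t(\Gamma)$ maps the open cones around a point $w_0$ bijectively onto the open cones around $\chi_t(w_0)$.

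The two analytic inputs I would invoke are: (a) formula~\eqref{(*)} from the proof of Proposition~\ref{Y}, i.e.\ $W_\cA(e^{itH}u_0)=\big(Op_{w,4d}(c_t)W_{\cA_t}u_0\big)\circ\chi_t^{-1}$ with $c_t\in S^0_{0,0}(\bR^{4d})$; and (b) the microlocality of $Op_{w,4d}(c_t)$ on $\lrdd$: if $v$ is $\Gamma$-negligible, then $Op_{w,4d}(c_t)v$ is $\Gamma'$-negligible for every open cone $\Gamma'$ with $\overline{\Gamma'}\setminus\{0\}\subseteq\Gamma$. Statement (b) is exactly the argument already used for Theorem~\ref{?}: decompose $v=v\,\mathbf{1}_{\Gamma}+v\,\mathbf{1}_{\mathbb{R}^{2d}\setminus\Gamma}$ and apply the off-diagonal kernel estimate \cite[Lemma~5.3]{CR2021} for $S^0_{0,0}$ Weyl operators on $\bR^{4d}$; I would simply cite it rather than repeat it.

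Granting (a) and (b), the inclusion $\cW\cF_\cA(e^{itH}u_0)\subseteq\chi_t(\cW\cF_{\cA_t}(u_0))$ is immediate. If $z_0\notin\chi_t(\cW\cF_{\cA_t}(u_0))$, put $w_0:=\chi_t^{-1}(z_0)\notin\cW\cF_{\cA_t}(u_0)$, so $W_{\cA_t}u_0$ is $\Gamma_0$-negligible for some open cone $\Gamma_0\ni w_0$; choose $\Gamma'$ with $w_0\in\Gamma'$ and $\overline{\Gamma'}\setminus\{0\}\subseteq\Gamma_0$; by (b), $Op_{w,4d}(c_t)W_{\cA_t}u_0$ is $\Gamma'$-negligible, hence by (a) and the change of variables $W_\cA(e^{itH}u_0)$ is $\chi_t(\Gamma')$-negligible, and $\chi_t(\Gamma')$ is an open cone containing $z_0$. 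Thus $z_0\notin\cW\cF_\cA(e^{itH}u_0)$.

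For the reverse inclusion I would use the group law. From $\chi_s\chi_r=\chi_{s+r}$ and ${B}_{(\cA_s)_r}=\chi_r^{-T}{B}_{\cA_s}\chi_r^{-1}=(\chi_s\chi_r)^{-T}B_\cA(\chi_s\chi_r)^{-1}={B}_{\cA_{s+r}}$ one obtains $(\cA_t)_{-t}=\cA_0=\cA$ and $\chi_{-t}=\chi_t^{-1}$. The inclusion just proved is valid for any covariant matrix and any time, so applying it to the data $(\cA_t,-t,e^{itH}u_0)$ and using $e^{-itH}(e^{itH}u_0)=u_0$ gives
\[
	\cW\cF_{\cA_t}(u_0)\subseteq\chi_{-t}\big(\cW\cF_{(\cA_t)_{-t}}(e^{itH}u_0)\big)=\chi_t^{-1}\big(\cW\cF_\cA(e^{itH}u_0)\big),
\]
that is $\chi_t(\cW\cF_{\cA_t}(u_0))\subseteq\cW\cF_\cA(e^{itH}u_0)$, and together with the first inclusion this is the claimed equality. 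The step I expect to be the only real obstacle is the microlocality assertion (b); the rest is the transport of cones under $\chi_t$ and the identities $(\cA_t)_{-t}=\cA$, $\chi_{-t}=\chi_t^{-1}$. Since (b) reproduces the argument already in place for Theorem~\ref{?} and ultimately rests on \cite[Lemma~5.3]{CR2021}, I anticipate no new difficulty.
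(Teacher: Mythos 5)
Your proof is correct, and your argument for the first inclusion $\cW\cF_\cA(e^{itH}u_0)\subseteq\chi_t(\cW\cF_{\cA_t}(u_0))$ is exactly the paper's: the identity \eqref{(*)}, the change of variables $z=\chi_t\zeta$ together with $\la\chi_t\zeta\ra\asymp\la\zeta\ra$, and the microlocality of $Op_{w,4d}(c_t)$ taken from the proof of Theorem \ref{?} (ultimately \cite[Lemma 5.3]{CR2021}). Your bookkeeping of the cones is in fact the internally consistent one: the paper's proof writes $\zeta_0=\chi_t(z_0)$ and $\Lambda_{\zeta_0}=\chi_t(\Gamma_{z_0})$, whereas the substitution $z=\chi_t\zeta$ sends $\Gamma_{z_0}$ to $\chi_t^{-1}(\Gamma_{z_0})$, so those should be read as $\zeta_0=\chi_t^{-1}(z_0)$, $\Lambda_{\zeta_0}=\chi_t^{-1}(\Gamma_{z_0})$, which is what you use. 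Where you genuinely depart from the paper is the reverse inclusion: the paper dismisses it with ``similarly, one can prove the opposite inclusion'' (presumably a second direct cone estimate), while you derive it formally from the forward inclusion by time reversal, using $\chi_{-t}=\chi_t^{-1}$, $B_{(\cA_t)_{-t}}=\chi_t^{T}B_{\cA_t}\chi_t=B_\cA$ hence $(\cA_t)_{-t}=\cA$, and $e^{-itH}e^{itH}=I$ on $\lrd$ (legitimate: $H$ is a bounded perturbation of the generator of a group, and Theorem \ref{teofinal} and Proposition \ref{Y} apply at every time, in particular at $-t$, with the covariant matrix $\cA_t$ in place of $\cA$). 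This costs only the observation, which you make explicitly, that the forward inclusion is uniform over all covariant matrices and all times, and it buys a complete proof of the equality without any additional analysis such as a microlocal inversion of $Op_w(c_t)$.
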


The proof follows the lines of the corresponding one in Theorem 1.6 \cite{CR2021}, basing on the preceding Proposition \ref{Y}, in particular on the identity in \eqref{(*)}, see the sketch below. For the Wigner distribution the previous result reads as follows:
\begin{corollary}\label{7.7}
	For $u_0\in\lrd$, 	$$\cW \cF(e^{it H}u_0) =\chi_t (\cW \cF (u_0)).$$
\end{corollary}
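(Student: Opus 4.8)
The plan is to read off Corollary \ref{7.7} from Theorem \ref{YYYY} by choosing $\cA$ to be the symplectic matrix associated with the classical Wigner distribution; then the whole content reduces to checking that this $\cA$ is admissible (covariant and shift-invertible) and that $\cA_t=\cA$ for every $t\in\bR$, so that $\cW\cF_\cA=\cW\cF_{\cA_t}=\cW\cF$.

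Concretely, I would take $\cA:=\cA_{1/2,2d}$, the matrix of Example \ref{es1} with $\tau=1/2$, for which $W_\cA=W$. Comparing its blocks with \eqref{blockAcov}, it is covariant in the sense of Subsection \ref{subsec:ss42}, with $A_{11}=\tfrac12 I_{d\times d}$ and $A_{13}=A_{21}=0_{d\times d}$; moreover it is totally Wigner-decomposable with $L_{1/2}=\begin{pmatrix} I_{d\times d} & \tfrac12 I_{d\times d}\\ I_{d\times d} & -\tfrac12 I_{d\times d}\end{pmatrix}$, whose lower-right $d\times d$ blocks are invertible, so $L_{1/2}$ is right-regular and $\cA$ is shift-invertible by Lemma \ref{ShiftInvert} (equivalently $E_\cA=\tfrac12 I_{2d\times2d}$ is invertible). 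Hence $\cW\cF_\cA$ of Definition \ref{YYY} makes sense and, by the stated convention, $\cW\cF_\cA=\cW\cF$. Next, with $B_\cA$ as in \eqref{covBCohen}, the computation in Example \ref{E2} in the case $\tau=1/2$ gives $B_\cA=0_{2d\times2d}$, hence $\Sigma_\cA=\cF^{-1}\Phi_{-B_\cA}=\cF^{-1}(1)=\delta$ (this is the observation recorded just before Proposition \ref{Y}); since $\chi_t\in Sp(d,\bR)$ has $|\det\chi_t|=1$ we get $\Sigma_{\cA,t}(z)=\Sigma_\cA(\chi_t z)=\delta(z)$, equivalently $B_{\cA_t}=\chi_t^{-T}B_\cA\chi_t^{-1}=0$, so the covariant matrix $\cA_t$ attached to $\Sigma_{\cA,t}$ coincides with $\cA$; in particular $W_{\cA_t}=W$ and $\cW\cF_{\cA_t}=\cW\cF$.

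Applying Theorem \ref{YYYY} to this $\cA$ then gives
$$\cW\cF(e^{itH}u_0)=\cW\cF_\cA(e^{itH}u_0)=\chi_t\big(\cW\cF_{\cA_t}(u_0)\big)=\chi_t\big(\cW\cF(u_0)\big),$$
which is the claim. I do not expect any genuine obstacle: all the analytic work sits in Theorem \ref{YYYY} (hence in Proposition \ref{Y} and the factorization $e^{itH}=\mu(\chi_t)Op_w(b_t)$ with $b_t\in M^{\infty,q}_{1\otimes v_s}$), and the only step deserving a line of care is the identity $\cA_t=\cA$, i.e. the vanishing of the Cohen matrix $B_\cA$ in the Weyl/Wigner normalization together with the fact that $\chi_t$, being symplectic, preserves Lebesgue measure so that $\delta$ is a fixed point of composition with $\chi_t$.
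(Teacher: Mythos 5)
Your proposal is correct and follows exactly the paper's route: Corollary \ref{7.7} is obtained there as the specialization of Theorem \ref{YYYY} to $\cA=\cA_{1/2,2d}$, using precisely the observation (recorded just before Proposition \ref{Y}) that $B_\cA=0$, hence $\Sigma_\cA=\delta$ and $W=W_\cA=W_{\cA_t}$ for all $t$. Your verification of covariance and shift-invertibility of $\cA_{1/2,2d}$ just makes explicit what the paper leaves implicit.
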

\begin{proof}[Proof of Theorem \ref{YYYY}]
	Fix $t\in\bR$, $z_0\in\rdd\setminus\{0\}$, $\Gamma_{z_0}$ small conic neighbourhood of $z_0$, $\zeta_0=\chi_t(z_0)$, $\Lambda_{\zeta_0}=\chi_t(\Gamma_{z_0})$ corresponding conic neighbourhood of $\zeta_0$. Assume $\zeta_0\notin\cW\cF_{\cA_t} (u_0)$, that is, for every $N\geq0$,
	$$\int_{\Lambda_{\zeta_0}} \la \zeta\ra^{2N} |W_{\cA_t}u_0(\zeta)|^2 d\zeta<\infty.$$
	We want to prove that $z_0\notin \cW\cF_{\cA} (e^{it H}u_0)$, that is, for every $N\geq0$,
	$$I:=\int_{\Gamma_{z_0}} \la z\ra^{2N} |W_{\cA}(e^{it H}u_0)(z)|^2 dz<\infty.$$
	By applying the basic identity \eqref{(*)} in the proof of Proposition \ref{Y} we obtain
	$$I=\int_{\Gamma_{z_0}} \la z\ra^{2N}|[Op_w(c_t)W_{\cA_t}u_0](\chi_t^{-1}z)|^2 dz$$
	and after the change of variables $z=\chi_t \zeta$, observing that $\la \chi_t \zeta\ra\asymp\la\zeta\ra$:
	$$I\asymp \int_{\Lambda_{\zeta_0}} \la\zeta\ra^{2N} |Op_w(c_t)W_{\cA_t}u_0|^2\,d\zeta.$$
	We are therefore reduced to the pseudodifferential case, cf. the preceding Theorem \ref{?}. Arguing again as in the proof of Theorem 1.6 in \cite{CR2021} and using the assumption, we obtain $\cW \cF_\cA(e^{it H}u_0)\subset \chi_t (\cW \cF_{\cA_t}(u_0))$. Similarly, one can prove the opposite inclusion.
\end{proof}
\section{Appendix. Comparison with the {H}\"ormander wave front set}
Corollary \ref{7.7} is similar to other results in the literature, concerning propagation of micro-singularities for the Schr\"odinger equation, cf. \cite{CarypisWahlberg21,CGNRJMPA,CGNRJMP2014,CNR2015,CNRadv2015,Elena-book,Craig1995,HW2005,Ito2006,Ito2009,Martinez2009,Nakamura2005,NR2015,Robbiano1999,PRW2018,P2018,W2018,Wunsch1999}. They mainly concern the global wave front set $\cW\cF_G (f)$ of H\"ormander  \cite{hormanderglobalwfs91}.  It is interesting to compare the different  microlocal contents of   $\cW\cF_\cA (f)$ and $\cW\cF_G (f)$. We recall the definition of $\cW\cF_G (f)$, following the notation and the equivalent time-frequency setting in \cite{Elena-book}. 
\begin{definition}\label{Appendix1}
	Consider $f\in\lrd$ and  $z_0\in\rdd\setminus\{0\}$. We say that  $z_0\notin \cW\cF_G (f)$  if there exists a  conic neighbourhood  $\Gamma_{z_0}\subset\rdd$ of $z_0$ such that for every integer $N\geq 0$
	\begin{equation}\label{App1}
		\int_{\Gamma_{z_0}} \la z\ra^{2N}|V_gf(z)|^2\,dz<\infty,
	\end{equation} 
where we fix $g\in\cS(\rd)\setminus\{0\}$ in the definition of the STFT $V_gf$. 
\end{definition}
As before, we assume $\cA\in Sp(2d,\bR)$ covariant and shift-invertible, then:
\begin{theorem}\label{Appendix2}
	For all $f\in\lrd$ we have
	\begin{equation}\label{App2}
	\cW\cF_G (f)\subset \cW\cF_\cA (f).
	\end{equation}
\end{theorem}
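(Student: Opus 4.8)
The plan is to reduce \eqref{App2} to the classical Wigner picture by exploiting the \emph{covariance} of $\cA$ together with the elementary fact that the STFT-based condition \eqref{App1} is really a statement about a \emph{spectrogram}, which lies in the Cohen class with a Schwartz kernel. Concretely, I would first record two structural identities for $f\in\lrd$ and a fixed window $g\in\cS(\rd)\setminus\{0\}$: (a) the spectrogram identity $|V_gf|^2 = Wf\ast\Theta_g$, where $\Theta_g$ is $W(g,g)$ up to a reflection, so $\Theta_g\in\cS(\rdd)$; and (b) Theorem~\ref{ThmCohen}, $W_\cA f = Wf\ast\Sigma_\cA$ with $\Sigma_\cA=\cF^{-1}\Phi_{-B_\cA}$. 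Since $B_\cA$ is real symmetric, $\Phi_{-B_\cA}$ is unimodular, so convolution by $\Sigma_\cA$ is a Fourier multiplier of modulus one, hence invertible on $\lrdd$, with inverse the convolution by $\Sigma_\cA^{-1}:=\cF^{-1}\Phi_{B_\cA}$. Thus $Wf = W_\cA f\ast\Sigma_\cA^{-1}$, and combining with (a),
\[
|V_gf|^2 = W_\cA f\ast\Psi,\qquad \Psi:=\Sigma_\cA^{-1}\ast\Theta_g .
\]
The key point is that $\Psi\in\cS(\rdd)$: on the Fourier side $\widehat\Psi=\Phi_{B_\cA}\,\widehat\Theta_g$ is the product of a Schwartz function with a smooth function whose derivatives grow at most polynomially, hence Schwartz. (Note this uses only covariance; shift-invertibility, part of the standing hypothesis of the appendix, is not needed here, and $B_\cA$ need not be invertible.)

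With this representation, the inclusion becomes the standard fact that convolution by a Schwartz kernel propagates conic $L^2$-decay. Assume $z_0\notin\cW\cF_\cA(f)$, so there is an open conic neighbourhood $\Gamma$ of $z_0$ with $\int_\Gamma\la w\ra^{2M}|W_\cA f(w)|^2\,dw<\infty$ for all $M\ge 0$. I would fix nested open conic neighbourhoods $\Gamma'\subset\Gamma''\subset\Gamma$ of $z_0$ with $\overline{\Gamma'}\cap S^{2d-1}$ in the interior of $\Gamma''\cap S^{2d-1}$, so that the cone-separation estimate $\la z-w\ra\ge c\la z\ra$ and $\la z-w\ra\ge c\la w\ra$ holds for all $z\in\Gamma'$, $w\in\rdd\setminus\Gamma''$, with $c\in(0,1]$ depending only on the cones. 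Splitting $W_\cA f = W_\cA f\,\mathbf{1}_{\Gamma''}+W_\cA f\,\mathbf{1}_{(\Gamma'')^c}$ inside $|V_gf|^2 = W_\cA f\ast\Psi$, I estimate $\int_{\Gamma'}\la z\ra^{2N}|V_gf(z)|^2\,dz$ term by term: for the $\Gamma''$-part, Peetre's inequality $\la z\ra^{2N}\le 2^N\la z-w\ra^{2N}\la w\ra^{2N}$ together with the fast decay of $\Psi$ and a Cauchy–Schwarz passage from the hypothesis on $\Gamma$ (an $L^2$ bound on $W_\cA f$) to an $L^1$ bound with a polynomial weight gives finiteness; for the $(\Gamma'')^c$-part, the cone separation yields $|\Psi(z-w)|\le C_M c^{-2M}\la z\ra^{-M}\la w\ra^{-M}$, and since $W_\cA f\in\lrdd$ one has $\int_{\rdd}\la w\ra^{-M}|W_\cA f(w)|\,dw<\infty$ for $M$ large, while $\int_{\rdd}\la z\ra^{2N-M}\,dz<\infty$ once $M>2N+2d$. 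Choosing the decay order of $\Psi$ large relative to $N$ makes the whole integral finite for every $N$, i.e. $z_0\notin\cW\cF_G(f)$; since \eqref{App1} is window-independent, \eqref{App2} follows.

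The conceptual heart — and the only genuinely delicate step — is the first one: recognising that $|V_gf|^2$ is a Schwartz-kernel convolution of $W_\cA f$ itself (not of $W_\cA$ against a fixed window), which forces one to work at the quadratic/spectrogram level rather than with $W_\cA(f,g)$ and modulation-space machinery. Once the identity $|V_gf|^2 = W_\cA f\ast\Psi$ with $\Psi\in\cS(\rdd)$ is in hand, the cone decomposition is routine; the minor technical care needed is (i) justifying the ``inverse Cohen kernel'' manipulation at the level of tempered distributions, which is clean precisely because the multiplier $\Phi_{-B_\cA}$ is unimodular so everything stays inside $\lrdd$ after Fourier transform, and (ii) verifying the cone-separation constant uniformly, including near the origin, which follows from the identity $|z-w|^2=(|z|-|w|)^2+|z||w|\,\bigl|\,z/|z|-w/|w|\,\bigr|^2$.
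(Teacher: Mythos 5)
Your proposal is correct and follows essentially the same route as the paper: your identity $|V_gf|^2=W_\cA f\ast\Psi$ with $\Psi=\Sigma_\cA^{-1}\ast\Theta_g\in\cS(\rdd)$ is exactly Lemma \ref{Appendix3} (the paper's $\Psi_\cA=\cI Wg\ast\tau_\cA$ with $\tau_\cA=\cF^{-1}\Phi_{B_\cA}$), proved by the same unimodular-multiplier inversion of the Cohen kernel. The only difference is that you write out the conic-decay propagation (Peetre plus cone separation plus Cauchy--Schwarz) explicitly, whereas the paper delegates that step to the proof of Theorem 5.5 in \cite{CR2021}; your execution of it is sound.
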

The proof requires the following preliminary issue.
\begin{lemma}\label{Appendix3}
	Fix $g\in\cS(\rd)\setminus\{0\}$ and consider $\cA\in Sp(2d,\bR)$ covariant and shift-invertible. There exists $\Psi_\cA\in\cS(\rdd)$, depending on $\cA$ and $g$ such that for every $f\in\lrd$
	\begin{equation}\label{App3}
	|V_gf|^2= \Psi_\cA\ast W_\cA f.
	\end{equation} 
\end{lemma}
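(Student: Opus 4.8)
The plan is to exhibit both the spectrogram $|V_gf|^2$ and $W_\cA f$ as elements of the Cohen class — convolutions of the Wigner distribution $Wf$ with fixed kernels — and then to pass from $W_\cA f$ to $|V_gf|^2$ by convolving with the ``quotient'' of the two kernels, which is legitimate precisely because the Cohen kernel of $W_\cA$ has a nowhere‑vanishing Fourier transform.

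\smallskip
\noindent\textit{Step 1: the spectrogram lies in the Cohen class.} First I recall the classical identity
\[
|V_gf(z)|^2=(Wf\ast\Phi_0)(z),\qquad \Phi_0(z):=Wg(-z),\quad z\in\rdd,
\]
valid for every $f\in\lrd$ (see e.g. \cite{grochenig}). A quick proof uses that $W=W_{\cA_{1/2,2d}}=\mu(\cA_{1/2,2d})(\cdot\otimes\overline{\,\cdot\,})$ is, up to a phase, unitary: hence
$|V_gf(z)|^2=|\langle f\otimes\bar f,(\pi(z)g)\otimes\overline{\pi(z)g}\rangle|=\langle Wf,W(\pi(z)g)\rangle$, and the covariance $W(\pi(z)g)=T_zWg$ together with $\overline{Wg}=Wg$ yields the claim. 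Since $g\in\cS(\rd)$ we have $Wg\in\cS(\rdd)$, hence $\Phi_0\in\cS(\rdd)$.

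\smallskip
\noindent\textit{Step 2: the two Cohen kernels and their Fourier transforms.} By Theorem \ref{ThmCohen} and \eqref{Cohen-chirp}, $W_\cA f=Wf\ast\Sigma_\cA$ with $\Sigma_\cA=\cF^{-1}\Phi_{-B_\cA}$, so $\widehat{\Sigma_\cA}=\Phi_{-B_\cA}$ is the chirp $\zeta\mapsto e^{-i\pi\zeta\cdot B_\cA\zeta}$. This function has modulus one everywhere; in particular $\widehat{\Sigma_\cA}$ is nowhere zero, and both $\widehat{\Sigma_\cA}$ and $1/\widehat{\Sigma_\cA}=\Phi_{B_\cA}$ belong to $\cO_M(\rdd)$ (all their derivatives are polynomially bounded). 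Consequently $\Sigma_\cA\in\cO_C'(\rdd)$, the convolution $Wf\ast\Sigma_\cA$ is well defined in $\cS'(\rdd)$, and $\cF(Wf\ast\Sigma_\cA)=\widehat{Wf}\,\widehat{\Sigma_\cA}$.

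\smallskip
\noindent\textit{Step 3: definition of $\Psi_\cA$ and conclusion.} Set $\Psi_\cA:=\cF^{-1}\bigl(\Phi_{B_\cA}\cdot\cF\Phi_0\bigr)$. Since $\cF\Phi_0\in\cS(\rdd)$ and multiplication by the $\cO_M$‑function $\Phi_{B_\cA}$ maps $\cS(\rdd)$ into itself, we have $\Psi_\cA\in\cS(\rdd)$, and it depends only on $\cA$ (through $B_\cA$) and on $g$ (through $\Phi_0$). By construction $\widehat{\Psi_\cA}\,\widehat{\Sigma_\cA}=\cF\Phi_0$, i.e. $\Psi_\cA\ast\Sigma_\cA=\Phi_0$. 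For $f\in\lrd$ both $Wf$ and $W_\cA f$ lie in $L^2(\rdd)$ (Proposition \ref{propWellDefOpA}), so using the exchange formula of Step 2 and associativity of convolution for the triple $(\Psi_\cA,\Sigma_\cA,Wf)\in\cS(\rdd)\times\cO_C'(\rdd)\times L^2(\rdd)$,
\[
\Psi_\cA\ast W_\cA f=\Psi_\cA\ast(\Sigma_\cA\ast Wf)=(\Psi_\cA\ast\Sigma_\cA)\ast Wf=\Phi_0\ast Wf=|V_gf|^2,
\]
which is \eqref{App3}. Equivalently one may argue entirely on the Fourier side: $\cF(\Psi_\cA\ast W_\cA f)=\widehat{\Psi_\cA}\,\widehat{Wf}\,\widehat{\Sigma_\cA}=\cF\Phi_0\cdot\widehat{Wf}=\cF(\Phi_0\ast Wf)$, each factor being the product of an $L^2$‑function with a bounded smooth one.

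\smallskip
\noindent\textit{Main obstacle.} There is no real analytic difficulty; the only delicate point is making the convolution identities rigorous, since $Wf$ is merely square integrable while $\Sigma_\cA$ is a genuine (chirp‑type) tempered distribution, not an $L^1$ kernel. This is settled either by the $\cO_M$/$\cO_C'$ bookkeeping above, or by the Fourier‑side reformulation, or — avoiding distributions altogether — by first proving the identity for $f\in\cS(\rd)$, where every quantity is Schwartz, and then extending to $f\in\lrd$ by density, using that $f\mapsto|V_gf|^2$ and $f\mapsto\Psi_\cA\ast W_\cA f$ are both continuous from $L^2(\rd)$ (say into $L^\infty(\rdd)$). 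The conceptual crux is simply that $\widehat{\Sigma_\cA}$ is a unimodular chirp, hence invertible as a Fourier multiplier.
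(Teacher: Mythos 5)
Your proof is correct and follows essentially the same route as the paper: both express $|V_gf|^2$ and $W_\cA f$ as Cohen-class convolutions of $Wf$ and invert the unimodular chirp multiplier $\Phi_{-B_\cA}$, and indeed your $\Psi_\cA=\cF^{-1}\bigl(\Phi_{B_\cA}\cdot\cF\Phi_0\bigr)$ coincides with the paper's $\cI Wg\ast\tau_\cA$, where $\tau_\cA=\cF^{-1}\Phi_{B_\cA}$. Your Fourier-side bookkeeping is, if anything, slightly more explicit than the paper's about why the convolution identities are legitimate for $Wf$ merely in $L^2(\rdd)$.
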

\begin{proof}
	We start with the well-known identity 
	\begin{equation}\label{App4}
		|V_gf|^2= \cI W g\ast Wf,
	\end{equation}
where $\cI W g(z)=Wg(-z)$, see for example (156) in \cite{CR2021}. If $\cA$ is covariant, we have from \eqref{CohenCovWA} 
\begin{equation}\label{App5}
	W_\cA f=\Sigma_\cA\ast Wf
\end{equation}
with $\Sigma_\cA$ given by \eqref{CohenSymbol} $\Sigma_\cA(z)=\cF^{-1} (e^{-\pi i \zeta\cdot B_\cA \zeta})$. If we define 
$$\tau_\cA(z)=\cF^{-1}(e^{\pi i \zeta\cdot B_\cA \zeta})$$
we then have, for all $h\in\lrdd$, 
$$\tau_\cA\ast \Sigma_\cA\ast h=h,$$
hence from \eqref{App4} 
$$|V_gf|^2= \cI Wg\ast \tau_\cA\ast \Sigma_\cA\ast Wf=\Psi_\cA\ast W f,$$
with $\Psi_\cA= \cI Wg\ast \tau_\cA$. \par To prove that $\Psi_\cA\in\cS(\rdd)$, we observe $\cI W g\in\cS(\rdd)$, in view of the regularity property of the Wigner distribution, and $\tau_\cA\,\, \ast: \cS(\rdd)\to \cS(\rdd)$, since for every $h\in\cS(\rdd)$ we have
$$ e^{\pi i \zeta\cdot B_\cA \zeta} h(\zeta)\in\cS(\rdd).$$
This concludes the proof.
\end{proof}
\begin{proof}[Proof of Theorem \ref{Appendix2}]
The pattern is similar to the the proof of Theorem 5.5 in \cite{CR2021}, after replacing Lemma 5.4 in \cite{CR2021} with our present Lemma \ref{Appendix3}.
\end{proof}
\begin{corollary}\label{Appendix4}
Let $\cA\in Sp(2d,\bR)$ as before and $f\in\lrd$. We have $f\in\cS(\rd)$ if and only if $\cW\cF_\cA(f) =\emptyset$.
\end{corollary}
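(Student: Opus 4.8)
The plan is to prove the two implications separately. The forward direction $f\in\cS(\rd)\Rightarrow\cW\cF_\cA(f)=\emptyset$ is immediate: by Proposition \ref{propWellDefOpA}$(ii)$ the representation $W_\cA$ maps $\cS(\rd)\times\cS(\rd)$ continuously into $\cS(\rdd)$, so $W_\cA f=W_\cA(f,f)\in\cS(\rdd)$; then $\int_{\rdd}\la z\ra^{2N}|W_\cA f(z)|^2\,dz<\infty$ for every $N\geq0$, and a fortiori the local integral over any conic neighbourhood $\Gamma_{z_0}$ is finite, so no $z_0\neq0$ belongs to $\cW\cF_\cA(f)$ by Definition \ref{YYY}.

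For the converse I would argue as follows. Assume $\cW\cF_\cA(f)=\emptyset$. A short route is to invoke Theorem \ref{Appendix2}, which gives $\cW\cF_G(f)\subseteq\cW\cF_\cA(f)=\emptyset$, and then use the classical fact that the vanishing of the H\"ormander global wave front set characterizes Schwartz functions (see \cite{hormanderglobalwfs91}, \cite{Elena-book}; cf.\ also \cite[Proposition 4.7]{CR2022} for the $\tau$-Wigner case). To keep the argument within the tools developed here, I would instead base it on Lemma \ref{Appendix3}. First, from $\cW\cF_\cA(f)=\emptyset$ and the compactness of the unit sphere of $\rdd$ one extracts finitely many open cones covering $\rdd\setminus\{0\}$ on each of which $\la z\ra^{2M}|W_\cA f(z)|^2$ is integrable for all $M\geq0$; combined with $W_\cA f\in L^2(\rdd)$ (Proposition \ref{propWellDefOpA}$(i)$) and the boundedness of $\la z\ra$ on the unit ball, this yields $W_\cA f\in L^2_{v_M}(\rdd)$ for every $M\geq0$. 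By Lemma \ref{Appendix3} there exists $\Psi_\cA\in\cS(\rdd)$ with $|V_gf|^2=\Psi_\cA\ast W_\cA f$; using $\la z\ra^{2N}\lesssim\la z-w\ra^{2N}\la w\ra^{2N}$, Young's inequality, and Cauchy--Schwarz (pulling out a factor $\la w\ra^{-(d+1)}\in L^2(\rdd)$ and borrowing the corresponding extra decay from $W_\cA f$), one obtains
\[
	\int_{\rdd}\la z\ra^{2N}|V_gf(z)|^2\,dz\ \lesssim\ \|\Psi_\cA\,\la\cdot\ra^{2N}\|_{L^1}\,\|W_\cA f\,\la\cdot\ra^{2N+d+1}\|_{L^2}\,\|\la\cdot\ra^{-(d+1)}\|_{L^2}<\infty
\]
for every $N\geq0$. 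Hence $V_gf\in L^2_{v_N}(\rdd)$, i.e.\ $f\in M^2_{v_N}(\rd)$, for all $N$; since $\cS(\rd)=\bigcap_{N\geq0}M^2_{v_N}(\rd)$, we conclude $f\in\cS(\rd)$, which closes the equivalence.

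I expect the delicate point to be the transfer of decay: passing from ``$W_\cA f$ decays rapidly (in the $L^2$ sense) along every ray'' to the global weighted bounds $W_\cA f\in L^2_{v_M}$ for all $M$ (the compactness covering of the sphere), and then transporting that decay through the convolution with the Schwartz kernel $\Psi_\cA$ without spoiling the integrability of the weights --- this is precisely why one borrows the extra powers $\la\cdot\ra^{d+1}$ from $W_\cA f$ before applying Cauchy--Schwarz. Everything else is routine: the mapping property $W_\cA:\cS\times\cS\to\cS$ of Proposition \ref{propWellDefOpA}, the Cohen-class identity of Lemma \ref{Appendix3}, and the standard realization of $\cS(\rd)$ as the projective limit $\bigcap_{N\geq0}M^2_{v_N}(\rd)$ of weighted modulation spaces (which itself follows from the inclusions \eqref{modspaceincl1}).
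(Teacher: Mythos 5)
Your proof is correct. The forward implication is exactly the paper's: $W_\cA f\in\cS(\rdd)$ by Proposition \ref{propWellDefOpA}$(ii)$, so the estimates of Definition \ref{YYY} hold trivially. For the converse, the paper's official argument is precisely the ``short route'' you mention and then set aside: apply Theorem \ref{Appendix2} to get $\cW\cF_G(f)=\emptyset$ and invoke the classical fact that the global H\"ormander wave front set is empty only for Schwartz functions. What you develop in detail is the \emph{alternative} route that the paper only alludes to in one sentence (``one can follow the pattern of Theorem 5.4 in \cite{CR2021}, using again Lemma \ref{Appendix3}''): the compactness covering of the unit sphere to upgrade the conic estimates to $W_\cA f\in L^2_{v_M}(\rdd)$ for all $M$, the transfer of decay through $|V_gf|^2=\Psi_\cA\ast W_\cA f$ via Peetre's inequality and Cauchy--Schwarz, and the identification $\cS(\rd)=\bigcap_N M^2_{v_N}(\rd)$. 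This buys a self-contained argument within the tools of the paper, at the cost of more bookkeeping; the paper's route is shorter but outsources the final step to the literature. One small quibble: the identity $\cS(\rd)=\bigcap_{N\geq0}M^2_{v_N}(\rd)$ does not follow from the unweighted inclusions \eqref{modspaceincl1} alone --- it requires the weighted embeddings between modulation spaces with sufficiently separated weight parameters (standard, e.g.\ in \cite{grochenig} or \cite{Elena-book}), so the citation should be adjusted, but the fact itself is correct and the argument stands.
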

\begin{proof}
	If $f\in\cS(\rd)$, then $W_\cA f\in\cS(\rdd)$ in view of Proposition \ref{propWellDefOpA} $(ii)$. The estimates in Definition \ref{YYY} are obviously satisfied for any $z_0\in \rdd\setminus\{0\}$, hence $\cW\cF_\cA(f) =\emptyset$. In the opposite direction, assume $\cW\cF_\cA(f) =\emptyset$.  Theorem \ref{Appendix2} yields $\cW\cF_G(f)=\emptyset$ and this implies $f\in\cS(\rdd)$, cf. \cite{Elena-book}. Alternatively, one can follow the pattern of Theorem 5.4 in \cite{CR2021}, using again Lemma \ref{Appendix3}.
\end{proof}

Comparing now $\cW\cF_G(f)$ and $\cW\cF_\cA(f)$, we first observe that the definition of $\cW\cF_G(f)$ can be extended to $f\in\cS'(\rd)$ , cf. \cite{Elena-book}, whereas Definition \ref{YYY} refers to $f\in\lrd$. With some more technicalities  the definition of  $\cW\cF_\cA(f)$ can be extended to $f\in\cS'(\rd)$ as well. The substantial difference between $\cW\cF_G(f)$ and $\cW\cF_\cA(f)$ is that the inclusion in \eqref{App2} is strict in general, since  $\cW\cF_\cA(f)$ includes a ghost part depending on $\cA$, as already observed in \cite{CR2021}. \par To better understand this issue, we will use the Shubin class of symbols $H^m$, $m\in\bR$, defined by the estimates
\begin{equation}\label{shubin}
	|\partial^\alpha a(z)|\leq c_\alpha \la z\ra^{m-\alpha}, \quad z=\phas\in\rdd.
\end{equation}
Further, assume $a\in H^m_{cl}$, that is $a(z)$ has the homogeneous principal part $a_m(z)$: 
$$a_m(\lambda z)=\lambda^m a_m(z),\quad \lambda>0,$$
such that, cutting off $a_m(z)$ for small $|z|$, we have for some $\epsilon>0$, 
$a-a_m\in H^{m-\eps}$. 

Define the characteristic manifold
$$\Sigma= \{z\in\rdd, a_m(z)=0, z\not=0 \}.$$
\begin{theorem}\label{Appendix5}
Assume that $a\in H^m_{cl}$ is globally elliptic, i.e. $\Sigma=\emptyset$. Then for all $f\in\lrd$, 
$$\cW\cF_\cA(Op_w(a)f) =\cW\cF_\cA(f).$$
\end{theorem}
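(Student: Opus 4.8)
The plan is to reduce the equality to a microlocality statement for Weyl operators with Shubin symbols and then invoke the Shubin parametrix. Since $a\in H^m_{cl}(\rdd)$ is globally elliptic ($\Sigma=\emptyset$), the Shubin calculus (see e.g. \cite{Elena-book}) produces a parametrix $b\in H^{-m}_{cl}(\rdd)$ and operators $R_1,R_2$ with symbols in $\cS(\rdd)$ such that $Op_w(b)Op_w(a)=\mathrm{Id}+R_1$ and $Op_w(a)Op_w(b)=\mathrm{Id}+R_2$. Throughout we use the extension of $\cW\cF_\cA$ to $\cS'(\rd)$ alluded to above (for $f$ in a weighted $M^2$-space the defining integral of Definition \ref{YYY} still makes sense, since $W_\cA f$ is then a function). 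This extension remains insensitive to Schwartz perturbations, i.e. $\cW\cF_\cA(g+\varphi)=\cW\cF_\cA(g)$ for $\varphi\in\cS(\rd)$: writing $W_\cA(g+\varphi)=W_\cA g+W_\cA(g,\varphi)+W_\cA(\varphi,g)+W_\cA\varphi$, one has $W_\cA\varphi\in\cS(\rdd)$ by Proposition \ref{propWellDefOpA}(ii), while the two cross terms, by the integral formula of Theorem \ref{charWAcovariant}, are short-time Fourier transforms of $g$ against Schwartz windows convolved with the Schwartz kernel $\Sigma_\cA$ of Theorem \ref{ThmCohen}, hence decay conically outside $\cW\cF_\cA(g)$ by Theorem \ref{Appendix2}, a decay preserved by convolution. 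In particular $R_jf\in\cS(\rd)$ for $f\in\cS'(\rd)$, so $\cW\cF_\cA(R_jf)=\emptyset$ by Corollary \ref{Appendix4} and the remainders are negligible.

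Granting this, both inclusions follow from the single microlocality fact
\[
\cW\cF_\cA(Op_w(p)f)\subseteq\cW\cF_\cA(f),\qquad p\in H^\ell_{cl}(\rdd),\ \ell\in\bR,\ f\in\cS'(\rd).\qquad(\ast)
\]
Indeed, $(\ast)$ with $p=a$ is one inclusion; for the other, writing $f=Op_w(b)(Op_w(a)f)-R_1f$ and using the Schwartz-insensitivity together with $(\ast)$ applied to $b$ gives $\cW\cF_\cA(f)=\cW\cF_\cA(Op_w(b)(Op_w(a)f))\subseteq\cW\cF_\cA(Op_w(a)f)$.

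To prove $(\ast)$: if $\ell\le0$ then $H^\ell_{cl}(\rdd)\subseteq H^0(\rdd)\subseteq S^0_{0,0}(\rdd)$ and $(\ast)$ is Theorem \ref{?} read on $\cS'(\rd)$. For $\ell>0$ one uses the commutation identity \eqref{A6}: there is a symbol $c$ on $\bR^{4d}$, built from $p$ by the recipe \eqref{defb}--\eqref{defc} (so $c=b_p\tilde b_p$ with $b_p=(p\otimes1)\circ\cA^{-1}$), such that $W_\cA(Op_w(p)f)=Op_{w,4d}(c)W_\cA f$. Since $p\in H^\ell_{cl}$, the symbol $p\otimes1$ belongs to an anisotropic Shubin class on $\bR^{4d}$ — of Shubin type $H^\ell$ along the $2d$ directions carrying the variables of $p$, constant in the complementary ones — a class stable under the linear substitution $\cA^{-1}$ and under products, whence $c$ belongs to it. Consequently the Weyl kernel $k_c(Z,W)$ of $Op_{w,4d}(c)$ satisfies, for every $N\ge0$, $|k_c(Z,W)|\le C_N\la Z-W\ra^{-N}P(Z,W)$ with $P$ of polynomial growth — the Shubin-class version of \cite[Lemma 5.3]{CR2021}. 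Given $z_0\notin\cW\cF_\cA(f)$, fix a conic neighbourhood $\Gamma_{z_0}$ on which $\int_{\Gamma_{z_0}}\la W\ra^{2M}|W_\cA f(W)|^2\,dW<\infty$ for every $M$ and a narrower cone $\Gamma'$. For $z\in\Gamma'$ split $W_\cA(Op_w(p)f)(z)=\int_{\Gamma_{z_0}}k_c(z,W)W_\cA f(W)\,dW+\int_{\rdd\setminus\Gamma_{z_0}}k_c(z,W)W_\cA f(W)\,dW$: on the first integral the rapid decay of $W_\cA f$ absorbs $P$ and gives rapid decay in $z$; on the second, cone separation gives $\la z-W\ra\gtrsim\la z\ra+\la W\ra$, so $\la z-W\ra^{-N}$ produces a factor $\la z\ra^{-N/2}$ times an integrable tail in $W$ (using that $W_\cA f$ lies in a weighted $L^2$-space on $\rdd$, as $f\in\cS'(\rd)$), again rapidly decaying in $z$. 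Hence $z_0\notin\cW\cF_\cA(Op_w(p)f)$.

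The main obstacle is precisely the case $\ell>0$ of $(\ast)$: one must (a) verify that the commutation identity \eqref{A6}, established in Theorem \ref{5.4} for symbols in modulation spaces, persists for $p\in H^\ell_{cl}$ with $\ell>0$ — by an approximation/duality argument, since $Op_w(p)f$ need not lie in a modulation space — and (b) set up the anisotropic Shubin class on $\bR^{4d}$ to which $c$ belongs and prove the off-diagonal kernel estimate extending \cite[Lemma 5.3]{CR2021}. With these two technical points in place, the cone-splitting argument is routine, parallel to the proof of Theorem 5.5 in \cite{CR2021} and to our Theorems \ref{?} and \ref{Appendix2}.
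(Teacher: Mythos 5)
Your proof follows the same route as the paper's: the forward inclusion is the Shubin-class variant of Theorem \ref{?}, and the reverse inclusion comes from the parametrix $Op_w(b)Op_w(a)=I+Op_w(r)$ with $r\in\cS(\rdd)$, so that $f$ differs from $Op_w(b)(Op_w(a)f)$ by a Schwartz function. The technical points you flag --- microlocality for positive-order Shubin symbols via the commutation identity \eqref{A6} together with an off-diagonal kernel estimate, and the insensitivity of $\cW\cF_\cA$ to Schwartz perturbations --- are precisely the steps the paper leaves implicit under the phrase ``easy variant,'' and your sketch of them is consistent with the paper's own arguments.
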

\begin{proof}
The inclusion $\cW\cF_\cA(Op_w(a)f) \subset\cW\cF_\cA(f)$ follows from the easy variant of Theorem \ref{?} for the class $H^m$. To obtain the opposite inclusion under the assumption of global ellipticity, we construct as in \cite{shubin} a parametrix of $Op_w(a)$. Namely, there exists a $b\in H^{-m}_{cl}$ such that 
$$ Op_w(b)Op_w(a)=I+ Op_w(r),$$
where $I$ is the identity operator and the symbol $r$ is in $\cS(\rdd)$, hence $Op_w(r): \cS'(\rd)\to\cS(\rd)$ is a regularizing operator. Therefore, 
$$f= Op_w(b)Op_w(a)f -Op_w(r)f,\quad \forall f\in\lrd,$$
with $Op_w(r)f\in\cS(\rd)$. Invoking Theorem \ref{?} 
$$\cW\cF_\cA(f)= \cW\cF_\cA(Op_w(b)Op_w(a)f)\subset \cW\cF_\cA(Op_w(a)f).$$
This completes the proof. 
\end{proof}

Theorem \ref{Appendix5} shows a similarity of $\cW\cF_\cA(f)$ with $\cW\cF_G(f)$. Though, in the non-elliptic case the classical microlocal inclusion 
$$\cW\cF_G(u)\subset \cW\cF_G(Op_w(a) u)\cup \Sigma,\quad u\in\cS'(\rd),$$
fails for $\cW\cF_\cA(u)$. Consider for simplicity the case $v=Op_w(a)u\in\cS(\rd)$, so that for the solutions of $Op_w(a)u=v$ we have
\begin{equation}\label{App6}
	\cW\cF_G(u)\subset \Sigma
\end{equation}
and test the same inclusion for $\cW\cF_\cA(u)$, $u\in\lrd$, as follows. For simplicity, we will consider only the Wigner wave front $\cW \cF (u)$ and consider in dimension $d=1$ the operator 
$$Pu=xD^2 x u=Op_w(a)u$$
where the homogeneous principal part of $a\in H^4_{cl}$ is given by 
$$a_4(x,\xi)=x^2\xi^2$$
so that $\Sigma$ is the union of the $x$ and $\xi$ axes 
\begin{equation}\label{App7}
\Sigma=\{(x,\xi)\in\bR^2, x=0 \,\mbox{or}\,\xi=0, (x,\xi)\not=(0,0)\}.
\end{equation}
We now address to the example at the end of \cite{CR2021}, where $f,g\in\lrd$ are defined such that
\begin{align}\label{App8}
Df&=i\delta-i f'\\
xg&=-i-ih,\label{App9}
\end{align}
with $f'\in\cS(\bR)$, $h\in\cS(\bR)$ and 
$$\cW \cF f=\cW \cF_G f=\{(x,\xi)\in\bR^2, x=0, \xi\not=0\},$$
$$\cW \cF g=\cW \cF_G g=\{(x,\xi)\in\bR^2, \xi=0,x\not=0\}.$$
By using \eqref{App8}, \eqref{App9}, a simple calculation shows that $Pf\in\cS(\bR), Pg\in\cS(\bR)$  and therefore for $u=f+g$ we have $Pu\in\cS(\bR)$. Then for $\Sigma$ as in \eqref{App7} we obtain $\cW\cF_G u=\Sigma$ as expected from \eqref{App6}. Instead, the non-linearity of the Wigner transform (see \cite{CR2021}) gives
$$\cW \cF u=\bR^2\setminus\{0\}.$$
To sum up, the appearance of ghost frequencies in the Wigner wave front is natural in Quantum Mechanics, but it contradicts the H\"ormander's  result for micro-ellipticity. 
\section*{Acknowledgements}
The authors have been supported by the Gruppo Nazionale per l’Analisi Matematica, la Probabilità e le loro Applicazioni (GNAMPA) of the Istituto Nazionale di Alta Matematica (INdAM).

\end{document}